\documentclass[a4paper,american,11pt]{amsart}

\usepackage{babel}
\usepackage{csquotes}

\usepackage{amsmath}
\usepackage{amssymb}
\usepackage{amsfonts}
\usepackage{amscd}

\usepackage{graphicx}
\usepackage{caption}
\usepackage{subcaption}
\usepackage{wrapfig}
\usepackage{pdflscape}
\usepackage{float}

\input xy
\xyoption{all}

\usepackage{listings}
\usepackage[cmyk,table]{xcolor}
\definecolor{cobalt}{rgb}{0.0, 0.28, 0.67}

\usepackage{tikz-cd}
\usepackage{tikz}
\usetikzlibrary{arrows.meta,positioning,fit}
\usetikzlibrary{fadings}
\usetikzlibrary{decorations.markings}
\usetikzlibrary{patterns}
\usetikzlibrary{3d}

\usepackage{pgfplots}
\pgfplotsset{compat=1.16}

\usepackage{tkz-fct}
\usepackage{tikz-qtree}

\usepackage[maxnames=5]{biblatex}
\usepackage{hyperref}
\hypersetup{
  colorlinks=true,
  linkcolor=cobalt,
  citecolor=cobalt,
  urlcolor=cobalt
}

\theoremstyle{plain}
\newtheorem{theorem}{Theorem}[section]
\newtheorem{proposition}[theorem]{Proposition}
\newtheorem{lemma}[theorem]{Lemma}
\newtheorem{corollary}[theorem]{Corollary}

\theoremstyle{definition}
\newtheorem{definition}[theorem]{Definition}
\newtheorem{example}[theorem]{Example}
\newtheorem{remark}[theorem]{Remark}

\newcommand{\N}{\mathbb N}
\newcommand{\Z}{\mathbb Z}
\newcommand{\Q}{\mathbb Q}
\newcommand{\R}{\mathbb R}
\newcommand{\C}{\mathbb C}

\let\ZZ\Z

\let\RR\R

\newcommand{\ind}{\mathbf 1}
\newcommand{\dd}{\,d}
\newcommand{\floor}[1]{\left\lfloor #1\right\rfloor}
\newcommand{\ceil}[1]{\left\lceil #1\right\rceil}

\DeclareMathOperator*{\Res}{Res}
\DeclareMathOperator{\Area}{Area}
\DeclareMathOperator{\Length}{Length}

\DeclareMathOperator{\TV}{TV}

\begin{document}

\title[Residues of a tropical zeta function]
{Residues of a Tropical Zeta Function for Convex Domains}

\author{Nikita Kalinin \and Ernesto Lupercio \and Mikhail Shkolnikov}

\date{June 23, 2026}

\begin{abstract}
For a compact convex domain \(\Omega\subset\mathbb{R}^n\), we introduce the
\(\mathrm{SL}(n,\mathbb{Z})\)-invariant tropical zeta function
\[
Z_\Omega(s)=\int_\Omega \rho_\Omega(x)^{\,s-n}\,dx,
\]
where \(\rho_\Omega\) is the minimum of the supporting affine functions with
primitive integral gradients. Its singularities encode geometric properties
of \(\partial\Omega\).

In dimension \(2\), every compact convex domain has a canonical
rational-slope minimal model \(\widehat\Omega\), from which it is obtained
by unimodular corner cuts. After multiplication by \(s(s-1)\),
\(Z_\Omega(s)\) differs from the negative of the Dirichlet series of the
cut sizes by an explicit holomorphic term determined by
\(\widehat\Omega\). On each boundary arc, this series is also given by the
coefficients of the Legendre dual in Hata's Schauder basis. Thus the study
of \(Z_\Omega\) becomes a boundary problem in convex geometry, Farey
arithmetic, and tropical optics.

For rational-slope polygons, \(Z_\Omega\) extends meromorphically to
\(\mathbb{C}\), with simple poles at \(s=1\) and \(s=0\). Their residues
are the lattice perimeter and, when defined, the negative self-intersection
of the canonical class of the associated toric surface. For a strictly
convex planar domain with \(C^3\) boundary and everywhere nonvanishing
curvature, the pole at \(s=1\) disappears, while primitive supporting
directions produce a simple pole at \(s=\frac23\):
\[
\operatorname*{Res}_{s=2/3}Z_\Omega(s)
=
\frac{3^{5/2}}{2^{5/3}\pi^3}
\Gamma\!\left(\frac13\right)^3
\operatorname{Length}_{\mathrm{equiaffine}}(\partial\Omega).
\]
Hence the first smooth singularity converts the arithmetic
\(\mathrm{SL}(2,\mathbb{Z})\)-symmetry into the
\(\mathrm{SL}(2,\mathbb{R})\)-invariant equiaffine geometry of the
boundary. We also continue the boundary series to
\(\Re(s)>\frac35\) and derive, by a Tauberian argument, the small-time
asymptotic for the lattice perimeter of the tropical wave fronts
\(\Omega_t=\{x\in\Omega:\rho_\Omega(x)\geq t\}\).

Finally, we study the parabolic domain
\[
L=\{(x,y)\in\mathbb{R}^2:
\sqrt{1-|x|}+\sqrt{1-|y|}\geq1\}.
\]
Its tropical zeta function is explicitly expressible through Witten's
\(\mathrm{SU}(3)\) zeta function, and \(L\) is the deterministic limit
shape for convex lattice polygons in the square. The same Farey cells carry
the cubic weights \(m_{p,q}=pq(p+q)\) governing \(Z_L(s)\). In the exact
head--tail decomposition of the averaged lattice-point count, the apparent
\(N^{2/3}\) contribution from the pole at \(s=\frac23\) cancels against
explicit arithmetic corrections from the rational tropical coefficients.
The first surviving term is
\[
N^{-3/2}\sum_{n\leq N}
\left(
|nL\cap\mathbb{Z}^2|-\frac{10}{3}n^2
\right)
\longrightarrow
\frac{32}{3}\zeta\!\left(-\frac12\right)
\frac{\zeta(2)}{\zeta(3)}.
\]
Thus, for \(L\), one Farey decomposition determines the Witten
\(\mathrm{SU}(3)\) expression for \(Z_L\), its equiaffine residue, and the
first surviving averaged lattice-point fluctuation.
\end{abstract}

\maketitle
\newpage
\tableofcontents

\section{Introduction}

\subsection{Broader view}\label{sec1.1}

The Gauss circle problem \cite{gauss1826nexu} is an archetypal problem at
the interface of analytic number theory and planar geometry. Although it has
been studied intensively for more than two centuries, it remains open, despite
substantial progress obtained by a wide range of methods and ideas
\cite{BerndtKimZaharescu2018}. One aim of the present article is to introduce
a new tool into this circle of questions, arising from recent developments in
what may be called ``non-algebraic tropical geometry'' or ``tropical optics.''
Namely, to a compact convex domain \(\Omega\) one can canonically associate a
function \(Z_\Omega(s)\) of one complex variable, which we call its tropical
zeta function. Just as the zeros of the Riemann zeta function govern fine
asymptotic properties of the distribution of prime numbers
\cite{Riemann1859,MazurStein2016}, we expect the poles of the tropical zeta
function to play a central role in obtaining sharp bounds on the error term in
lattice-point counting for dilates of the domain.

For a planar convex domain, the main term in lattice-point counting is
quadratic and equals the area of the domain. A general bound for the error
term is linear in the dilation parameter \(R\). The coefficient of this linear
term is determined by the straight segments of the boundary, and in the
strictly convex case one can do better. Assuming that the boundary is
sufficiently smooth and has everywhere nonvanishing curvature, van der Corput
proved \cite{vanDerCorput1920} about a century ago that the error term is at
most of order \(R^{2/3}\); see also
\cite{IvicKratzelKuehleitnerNowak2004,Huxley2000}. In terms of tropical zeta
functions, the first bound corresponds to the existence of a rightmost pole at
\(s=1\) for domains whose boundary contains rational straight segments, with
residue equal to the lattice perimeter. For domains with sufficiently smooth
boundary, the next pole, moving from right to left, appears at
\(s=\frac23\), and the corresponding residue is universally proportional to
the equiaffine arc length of the boundary. This is the main result of the paper
and the technically deepest part of the argument.

The definition of the tropical zeta function considered in this paper
originates in the mathematical study of self-organized criticality. More
precisely, finite-point perturbations of the maximal stable state in the
Abelian sandpile model give rise to tropical analytic curves and to the
tropical power series defining them
\cite{KalininShkolnikov2015TropicalCurvesSandpileModels,
KalininShkolnikov2020SandpileSolitons,
KalininShkolnikov2018IntroductionTropicalSeries,
KalininShkolnikov2016TropicalCurvesSandpiles}. One such series,
corresponding to the action of a point perturbation at the distinguished
central point, is the tropical distance function of a convex domain, while the
associated tropical analytic curve is the so-called tropical caustic
\cite{MikhalkinShkolnikov2023WaveFrontsCaustics,
Shkolnikov2025PlanarTropicalCaustics}. The convex domain itself appears as
the domain of convergence of this tropical series, which defines a positive,
concave, piecewise-linear function on its interior. Integrating its \(s\)-th
power over the domain then produces a function of the complex variable \(s\),
which we call the tropical zeta function of the domain. The central conjecture
for this class of functions is that they admit meromorphic continuation to the
entire complex plane.

One elementary case in which the conjecture holds is that of polygons whose
sides have rational slopes. Such polygons are naturally viewed as moment
polytopes of compact symplectic toric surfaces. From the algebro-geometric
point of view, these surfaces may be obtained from a minimal model by a
sequence of blow-ups at torus-fixed points. From the symplectic point of view,
each blow-up comes with a natural ``size'', measured by the symplectic area of
the corresponding exceptional divisor. In this setting, our zeta function may
be interpreted as a Dirichlet generating series for the sizes of these
blow-ups, with the symplectic structure encoded through tropical optics and
with a particular minimal model and blow-up chain chosen canonically. From
this perspective, one sees that the corresponding zeta function extends
meromorphically to the whole complex plane. Moreover, it has only two poles,
both reflecting basic geometric information about the toric surface: the
residue at \(s=1\) again corresponds to the symplectic area of the
anticanonical class, while the residue at \(s=0\) is the negative of its
self-intersection. The same, though infinite, triangle-cutting mechanism also
appears in the algebro-geometric story
\cite{KramerPippich2015Snapshot,BurgosGilKramerKuehn2016} behind the
universal elliptic curve over the moduli space \(A_1\) of elliptic curves.

Beyond the algebraic polygonal case, there is essentially one domain for which
we can currently write the tropical zeta function explicitly in terms of known
special functions. This domain arises as the limit shape obtained from
concentration of measure for the uniform distribution on lattice polygons
contained in a fixed square, as the lattice mesh tends to zero
\cite{Vershik1994,Barany1995,Barany2008}. For this particular domain, the
tropical zeta function is
\[
\left(8-2^{2-s}\frac{\zeta_{\mathrm{SU}(3)}(s)}{\zeta(3s)}\right)
\frac{1}{s(s-1)},
\]
where \(\zeta_{\mathrm{SU}(3)}\) denotes Witten's \(\mathrm{SU}(3)\) zeta
function, arising in the context of two-dimensional Yang--Mills theory, and
\(\zeta\) is the Riemann zeta function. The pole structure of
\(\zeta_{\mathrm{SU}(3)}\) was described by Romik \cite{Romik2017}, who
showed that all of its poles are simple and occur at \(s=\frac23\) and at
\(s=\frac12-k\) for \(k=0,1,2,\dots\).

The same special domain also provides a test case for the relation between
tropical zeta functions and lattice-point counting. The tropical zeta function
naturally organizes the Farey triangles by the cubic weight
\[
m_{p,q}=pq(p+q),
\]
and the averaged lattice discrepancy is subtler than the residue at
\(s=\frac23\) alone. In Section~\ref{sec_6} we show that the apparent bulk
contribution cancels after an exact head--tail decomposition of the Farey
cells. The first surviving term comes from a second layer of Farey-edge
effects, and gives the asymptotic
\[
N^{-3/2}\sum_{n\le N}
\left(|nL\cap\Z^2|-\frac{10}{3}n^2\right)
\longrightarrow
\frac{32}{3}\zeta\!\left(-\frac12\right)
\frac{\zeta(2)}{\zeta(3)}.
\]

and, for \(\psi(x)=\{x\}-\frac12\), gives
\begin{equation*}
\lim_{N\to\infty}N^{-3/2}
\sum_{1\le i<j\le N}\psi(2\sqrt{ij})
=
\frac43\zeta\!\left(-\frac12\right)
\left(1-2\frac{\zeta(2)}{\zeta(3)}\right).
\end{equation*}

This lattice-counting theorem also suggests a broader program connected with
the Gauss circle problem. The parabolic domain \(L\) and the Euclidean disk
have the same tropical caustic combinatorics: the same Farey tree of primitive
directions and the same directions of caustic edges occur in both cases. What
changes are the sizes assigned to these edges. For \(L\), these sizes are
governed by the rational cubic weights \(m_{p,q}=pq(p+q)\), leading to the
\(\mathrm{SU}(3)\) zeta function and to the exact averaged discrepancy
computed in Section~\ref{sec_6}. For the disk, one expects the same
combinatorial decomposition to carry quadratic-irrational size data, governed
by square roots, continued fractions, and Diophantine approximation.

This raises a natural intermediate question between the classical Gauss circle
problem and smoothed lattice-point estimates: to understand the averaged circle
discrepancy through the tropical caustic decomposition and determine whether
the apparent bulk term cancels as it does for \(L\). A positive answer would
indicate that the pointwise Gauss error is governed by a second layer of
caustic arithmetic. In this sense, Section~\ref{sec_6} provides an exactly
solvable model for a tropical-geometric approach to one of the classical
problems of lattice-point theory.

We also emphasize that, although the main body of the present article is
devoted to the two-dimensional case, the underlying construction is meaningful
in arbitrary dimensions. One may also hope that the convexity assumption can
eventually be weakened, or perhaps removed entirely. In such a framework, it
is conceivable that tropical zeta functions could be attached to a broader
class of arithmetico-geometric objects and would define a homomorphism from a
suitable motivic ring to the field of meromorphic functions, somewhat in the
spirit of the Hodge--Deligne polynomial in birational geometry
\cite{DeligneHodgeII,DeligneHodgeIII,DanilovKhovanskii1987} and of Igusa
zeta functions in arithmetic geometry \cite{Igusa1974,Igusa2000}. Broader
continuation questions, as well as higher-dimensional analogues of the affine
residue, remain open.

\subsection{Main results}\label{ss:mainresults}

Let \(\Omega\subset \R^2\) be a compact convex domain with nonempty interior,
and let
\[
h_\Omega(u)=\min_{x\in\Omega}\langle u,x\rangle,\qquad u\in\R^2,
\]
be its lower support function. We define the associated tropical
distance-to-the-boundary function by
\[
\rho_\Omega(x)
=
\min_{u\in\Z^2_{\mathrm{prim}}}
\bigl(\langle u,x\rangle-h_\Omega(u)\bigr),
\qquad x\in\Omega^\circ,
\]
where \(\Z^2_{\mathrm{prim}}\) denotes the primitive lattice directions. This
function is concave, positive, and piecewise linear on the interior
\(\Omega^\circ\), and extends continuously by zero to the boundary
\(\partial\Omega\).

\begin{definition}\label{def_zeta}
The tropical zeta function of \(\Omega\) is defined by
\[
Z_\Omega(s)=\int_\Omega \rho_\Omega(x)^{\,s-2}\,dV,
\]
where \(dV\) denotes Lebesgue measure, and \(\Re(s)\) is initially assumed to
be sufficiently large.\footnote{More precisely, the integral converges at least
for \(\Re(s)>1\).}
\end{definition}

Thus one associates to \(\Omega\) an \(\mathrm{SL}(2,\Z)\)-invariant zeta
function built from the canonical tropical distance function.

The function \(Z_\Omega(s)\) admits a natural Mellin interpretation. If
\[
\Omega_t:=\{x\in\Omega:\rho_\Omega(x)\ge t\}
\]
denotes the inward tropical wave front at time \(t\), and if \(P_\Omega(t)\)
denotes its lattice perimeter, then
\[
Z_\Omega(s)=\int_0^{m_\Omega} t^{s-2}P_\Omega(t)\,dt,
\qquad
m_\Omega:=\max_\Omega \rho_\Omega.
\]
Thus the tropical zeta function is the Mellin transform of the
lattice-perimeter profile of the tropical wave fronts. In particular, the
singularities of \(Z_\Omega(s)\) are governed by the short-time asymptotics
of this boundary evolution.

Our first result concerns the polygonal case. If \(\Omega\) is a polygon whose
sides have rational slopes, then \(Z_\Omega(s)\) extends meromorphically to
the whole complex plane, and its rightmost pole occurs at \(s=1\). The
residue at this pole is the lattice length of \(\partial\Omega\). Thus, for
rational polygons, the leading singularity records the lattice-visible part of
the boundary.

Our main result concerns the opposite regime, namely smooth convex domains
with everywhere nonvanishing curvature. Let \(\Omega\subset\R^2\) be a compact
convex domain with \(C^3\)-smooth boundary and everywhere nonvanishing
curvature. Then the pole at \(s=1\) disappears, and the leading singularity
moves to \(s=\tfrac23\). More precisely, we prove that \(Z_\Omega(s)\) admits
a meromorphic continuation to a neighborhood of \(s=\tfrac23\), with a simple
pole there, and
\[
\Res_{s=2/3} Z_\Omega(s)
=
\frac{3^{5/2}}{2^{5/3}\pi^3}\Gamma\!\left(\frac13\right)^3
\Length_{\mathrm{equiaffine}}(\partial\Omega),
\]
where \(\Gamma(\cdot)\) is Euler's gamma function.

Equivalently,
\[
\Res_{s=2/3} Z_\Omega(s)
=
\frac{3^{5/2}}{2^{5/3}\pi^3}\Gamma\!\left(\frac13\right)^3
\int_{\partial\Omega}\kappa^{1/3}\,ds.
\]

\begin{figure}[p]
\centering

\begin{tikzpicture}[
  >=Latex,
  font=\small,
  main/.style={
    draw,
    rounded corners=3pt,
    align=center,
    inner sep=7pt,
    text width=0.78\textwidth
  },
  final/.style={
    draw,
    very thick,
    rounded corners=4pt,
    align=center,
    inner sep=9pt,
    text width=0.80\textwidth
  },
  note/.style={
    draw,
    rounded corners=3pt,
    align=left,
    inner sep=6pt,
    text width=0.36\textwidth
  },
  arrow/.style={->, thick},
  dashedarrow/.style={->, thick, dashed}
]

\node[main] (boundary) {
\textbf{Boundary reduction}\\[2pt]
Theorem~3.4 gives, near \(s=\frac23\),
\[
s(s-1)Z_\Omega(s)
=
-F_{\partial\Omega}(s)+H_{\widehat\Omega}(s),
\]
where \(H_{\widehat\Omega}\) is holomorphic.
};

\node[main, below=9mm of boundary] (arcs) {
\textbf{Arc decomposition}\\[2pt]
The boundary is decomposed into finitely many arcs \(\Gamma\), and
\[
F_{\partial\Omega}(s)=\sum_{\Gamma} F_\Gamma(s).
\]
Thus the residue is computed arc by arc.
};

\node[main, below=9mm of arcs] (hata) {
\textbf{Hata--Farey identification}\\[2pt]
For the Legendre-dual function \(\widetilde g(u)=g^*(-u)\),
\[
\operatorname*{Res}_{s=2/3}F_\Gamma(s)
=
\operatorname*{Res}_{s=2/3}Z_{\widetilde g}(s).
\]
This uses Lemma~4.15 and Corollary~4.16.
};

\node[main, below=9mm of hata] (farey) {
\textbf{Local Farey residue theorem}\\[2pt]
Theorem~4.7 gives
\[
\operatorname*{Res}_{s=2/3}Z_f(s)
=
\frac{\sqrt3\,\Gamma(\frac13)^3}{2^{2/3}\pi^3}
\int_0^1 |f''(u)|^{2/3}\,du .
\]
};

\node[main, below=9mm of farey] (legendre) {
\textbf{Legendre duality and equiaffine length}\\[2pt]
For \(f=\widetilde g\), Lemma~4.17 gives
\[
\int_0^1(\widetilde g''(u))^{2/3}\,du
=
\operatorname{Length}_{\mathrm{equiaffine}}(\Gamma).
\]
};

\node[final, below=10mm of legendre] (finalres) {
\textbf{Residue formula}\\[2pt]
Summing over the arcs of \(\partial\Omega\) yields
\[
\operatorname*{Res}_{s=2/3} Z_\Omega(s)
=
\frac{3^{5/2}}{2^{5/3}\pi^3}
\Gamma\!\left(\frac13\right)^3
\operatorname{Length}_{\mathrm{equiaffine}}(\partial\Omega).
\]
};

\draw[arrow] (boundary) -- (arcs);
\draw[arrow] (arcs) -- (hata);
\draw[arrow] (hata) -- (farey);
\draw[arrow] (farey) -- (legendre);
\draw[arrow] (legendre) -- (finalres);

\node[note, right=8mm of farey] (ingredients) {
\textbf{Analytic inputs to Theorem~4.7}\\[3pt]
\(\bullet\) Lemma~4.10: mean-value formula for \(T_I(f)\).\\[2pt]
\(\bullet\) Lemma~4.11: endpoint replacement.\\[2pt]
\(\bullet\) Lemma~4.12: parametrization by coprime pairs.\\[2pt]
\(\bullet\) Proposition~4.14: the leading term is the same as if the Farey pairs were equidistributed.
};

\draw[dashedarrow] (ingredients.west) -- (farey.east);

\end{tikzpicture}

\caption{Dependency structure for the residue formula at \(s=\frac23\).
The interior tropical zeta function is first reduced to the boundary
Dirichlet series by Theorem~3.4.  The boundary is then decomposed into
arcs, each arc is identified with a Farey--Hata zeta function through
Legendre duality, and Theorem~4.7 computes the local residue.  Lemma~4.17
converts the resulting integral into equiaffine arc length, and summing over
the arcs gives the stated residue of \(Z_\Omega(s)\).}
\label{fig:residue-dependency}
\end{figure}

Thus, in the smooth strictly convex case with everywhere nonvanishing
curvature, the leading singularity of the tropical zeta function is governed
by equiaffine geometry.

The half-plane continuation to \(\Re(s)>\frac35\), combined with a Tauberian
argument, gives the short-time asymptotic of the lattice perimeter
\(P_\Omega(t)\) of the tropical wave front. In particular, as \(t\to0^+\), the
leading term is of order \(t^{1/3}\). In the special parabolic model this
asymptotic can be sharpened: the next term is proportional to \(t^{1/2}\),
and, assuming the Riemann hypothesis, the following term is of order at most
\(t^{5/6}\). We expect this behavior to be universal for sufficiently smooth
convex domains with everywhere nonvanishing curvature.

The analytic extension and the computation of the residue are based on a
reduction from the interior Mellin integral to a boundary Dirichlet series. In
dimension two, one associates to \(\partial\Omega\) a boundary zeta series
\[
F_{\partial\Omega}(s)
=
\sum_{\substack{a,b,c,d\in\Z\\ ad-bc=1}}
f_{\partial\Omega}(a,b,c,d)^s,
\]
where the summands are determined by neighboring primitive directions and the
corresponding support defects. If \(\widehat\Omega\) denotes the minimal model
of \(\Omega\), then one has an exact identity
\[
s(s-1)Z_\Omega(s)
=
-
F_{\partial\Omega}(s)+H_{\widehat\Omega}(s),
\]
where \(H_{\widehat\Omega}(s)\) is an explicitly computable holomorphic
function coming from the symplectic minimal model of \(\Omega\). Thus the
singular behavior of \(Z_\Omega(s)\) near \(s=\frac{2}{3}\) is entirely
governed by the boundary series.

A model case is provided by an arc of a parabola. For the corresponding
parabolic domain, the boundary zeta series can be computed explicitly and
turns out to be the primitive Mordell--Tornheim series, or equivalently,
Witten's \(\mathrm{SU}(3)\) zeta function after the natural normalization. This model
already contains the pole at \(s=\tfrac23\) and explains the appearance of the
same transcendental constant as in the general residue formula.

The parabolic case is the simplest model because the second derivative of the
Legendre dual is constant; consequently, the support defect is governed only
by the universal arithmetic weight attached to a Farey pair, and the boundary
zeta series reduces to a primitive Mordell--Tornheim series. For a general
smooth convex arc, the second derivative varies along the slope parameter, so
one is led to a Farey sum with a nonconstant weight. The main analytic task is
therefore to separate this slowly varying geometric factor from the arithmetic
summation. We do this by replacing the exact defect by an endpoint model and
then studying the resulting weighted Farey series using Fej\'er approximation
together with power-saving estimates for incomplete Kloosterman sums. This
yields the residue as an averaged slope-space density
\[
\bigl((g^\ast)''(\alpha)\bigr)^{2/3}\,d\alpha,
\]
and Legendre duality converts this density into the equiaffine arc length
element
\[
\bigl((g^\ast)''(\alpha)\bigr)^{2/3}\,d\alpha
=
(g''(x))^{1/3}\,dx
=
\kappa^{1/3}\,ds.
\]
Thus the explicit parabolic computation extends to the general case after
decoupling the variable curvature from the Farey arithmetic.

The same special parabolic domain also yields a main result in lattice-point
counting. Let
\[
L=\{(x,y)\in\R^2:\sqrt{1-|x|}+\sqrt{1-|y|}\ge 1\},
\]
and put
\[
N_L(n)=|nL\cap\Z^2|,
\qquad
E_L(n)=N_L(n)-\frac{10}{3}n^2.
\]
In Section~\ref{sec_6} we prove the averaged discrepancy formula
\[
N^{-3/2}\sum_{n\le N}E_L(n)
\longrightarrow
\frac{32}{3}\zeta\!\left(-\frac12\right)\frac{\zeta(2)}{\zeta(3)}.
\]

Among zeta-type constructions attached to convex domains, the present one is
especially well suited to arithmetic questions: it is \(\mathrm{SL}(n,\Z)\)-
invariant, so it respects the natural symmetry of the lattice, and in the
planar smooth convex case with everywhere nonvanishing curvature its leading
nontrivial residue is the equiaffine length of the boundary. This is exactly
the type of affine invariant that is known to control the behavior of rational
and lattice points near convex curves; see
\cite{Petrov2006,HowardTrifonov2022,Howard2023}. In this sense, the tropical
zeta function captures the arithmetic geometry of the boundary in affine
terms.

The paper is organized as follows. Figure~\ref{fig:structureofpaper}
summarizes the main tropical-zeta structure, including the lattice-counting
application in Section~\ref{sec_6}. In Section~\ref{sec1}, we define the
tropical zeta function in integral form and derive its Mellin representation in
terms of tropical wave fronts. In Section~\ref{sec2}, we specialize to
dimension \(2\), introduce the boundary zeta series, and prove the
integral--boundary identity. Routine computations, together with background
material on symplectic minimal models and the residues at \(s=0\) and \(s=1\)
in the polygonal case, are collected in Appendix~\ref{app1}. Essential toric
and symplectic background, as well as the Tauberian passage from the boundary
series to the short-time asymptotic of the tropical wave-front lattice
perimeter, are contained in Appendix~\ref{app2}. In Section~\ref{sec3}, we
analyze the resulting Farey-type Dirichlet series, prove meromorphic
continuation near \(s=\frac23\), and compute the residue; the analytic
estimates, including the half-plane continuation to \(\Re(s)>\frac35\), are
deferred to Appendix~\ref{app3}. Section~\ref{sec4} treats the parabolic model
and identifies its boundary series with the primitive Mordell--Tornheim series,
equivalently Witten's \(\mathrm{SU}(3)\) zeta function. Section~\ref{sec_6}
applies the same Farey-cell structure to the averaged lattice-point
discrepancy of the special parabolic domain \(L\), proving an explicit
\(\sqrt N\)-asymptotic. Appendix~\ref{app:section6} contains the detailed
proof, including the head--tail decomposition, the phase identity, the three
second-layer contributions, and the final algebra. Finally,
Appendix~\ref{sec_survey} surveys several other zeta functions attached to
convex domains and compares them with the tropical zeta function studied here.

\clearpage
\begin{landscape}
\thispagestyle{empty}
\begingroup



\definecolor{residuecolor}{RGB}{181,116,58}
\definecolor{wavecolor}{RGB}{63,126,139}
\definecolor{countcolor}{RGB}{103,99,151}
\definecolor{bridgecolor}{RGB}{42,135,105}
\definecolor{contextcolor}{RGB}{112,128,112}


\tikzset{
  country/.style={
    line width=.9pt,
    rounded corners=10pt
  },
  wave country/.style={
    country,
    draw=wavecolor!80!black,
    fill=wavecolor!10
  },
  residue country/.style={
    country,
    draw=residuecolor!80!black,
    fill=residuecolor!10
  },
  count country/.style={
    country,
    draw=countcolor!80!black,
    fill=countcolor!10
  },
  bridge country/.style={
    country,
    draw=bridgecolor!82!black,
    fill=bridgecolor!12
  },
  context country/.style={
    country,
    draw=contextcolor!72!black,
    fill=contextcolor!8
  },
  country title/.style={
    font=\sffamily\bfseries\large,
    align=center
  },
  box/.style={
    draw=black!22,
    fill=white,
    rounded corners=4pt,
    inner xsep=6pt,
    inner ysep=5pt,
    align=center,
    font=\sffamily\scriptsize,
    text width=3.55cm
  },
  widebox/.style={
    box,
    text width=3.90cm
  },
  sidebox/.style={
    draw=black!14,
    fill=white,
    rounded corners=3pt,
    inner xsep=4pt,
    inner ysep=4pt,
    align=center,
    font=\sffamily\scriptsize,
    text width=1.80cm
  },
  bridgebox/.style={
    draw=bridgecolor!35,
    fill=white,
    rounded corners=4pt,
    inner xsep=7pt,
    inner ysep=6pt,
    align=center,
    font=\sffamily\scriptsize,
    text width=5.00cm
  },
  contextnote/.style={
    align=center,
    font=\sffamily\scriptsize,
    text width=4.30cm
  },
  route/.style={
    -{Stealth[length=2.2mm,width=1.4mm]},
    line width=.85pt,
    draw=black!65
  },
  support/.style={
    -{Stealth[length=1.7mm,width=1.05mm]},
    dashed,
    line width=.62pt,
    draw=black!43
  },
  contextlink/.style={
    -{Stealth[length=1.5mm,width=.95mm]},
    dotted,
    line width=.60pt,
    draw=black!38
  }
}


\begin{figure}[p]
\centering
\vspace*{-12mm}

\makebox[\linewidth][c]{%
\begin{tikzpicture}[
  x=.92cm,
  y=.92cm,
  line join=round,
  line cap=round
]


\path[wave country]
  (0.30,6.50) rectangle (8.15,15.10);

\path[residue country]
  (8.65,6.50) rectangle (18.35,15.10);

\path[count country]
  (18.85,6.50) rectangle (26.70,15.10);

\path[bridge country]
  (5.00,2.55) rectangle (22.00,6.00);

\path[context country]
  (0.30,0.20) rectangle (26.70,2.20);


\node[
  country title,
  text=wavecolor!45!black
] at (4.22,14.50)
{Tropical wave-front asymptotics};

\node[
  country title,
  text=residuecolor!45!black
] at (13.50,14.50)
{Residues of the tropical zeta function};

\node[
  country title,
  text=countcolor!45!black
] at (22.78,14.50)
{Lattice-point counting on \(L\)};

\node[
  draw=bridgecolor!75!black,
  fill=white,
  rounded corners=4pt,
  inner xsep=10pt,
  inner ysep=4pt,
  font=\sffamily\bfseries\normalsize,
  text=bridgecolor!45!black
] at (13.50,5.55)
{Section 5: the parabolic bridge};

\node[
  font=\sffamily\bfseries\normalsize,
  text=contextcolor!40!black
] at (13.50,1.82)
{THE CONTEXT};


\node[widebox] (continuation) at (4.22,12.55) {
  \textbf{Boundary-series continuation}\\[3pt]
  \(F_{\partial\Omega}(s)\) for
  \(\Re(s)>\frac35\)\\
  Appendix D.5
};

\node[widebox] (tauberian) at (4.22,10.15) {
  \textbf{Triangle counting and Tauberian step}\\[3pt]
  \(
    \displaystyle
    N_\Omega^{\mathrm{cut}}(t)
    \sim
    \frac32r_\Omega t^{-2/3}
  \)\\
  Appendices B.4--B.5
};

\node[widebox] (wavefinal) at (4.22,7.75) {
  \textbf{Wave-front perimeter}\\[3pt]
  \(
    \displaystyle
    \Length_{\Z}(\partial\Omega_t)
    =
    \Res_{s=2/3}Z_\Omega(s)t^{1/3}
    +
    o(t^{1/3})
  \)
};

\draw[route] (continuation) -- (tauberian);
\draw[route] (tauberian) -- (wavefinal);


\node[widebox] (mellin) at (13.50,12.65) {
  \textbf{Tropical distance and Mellin form}\\
  Sections 1.2 and 2
};

\node[widebox] (reduction) at (13.50,10.25) {
  \textbf{Boundary and Farey--Hata reduction}\\
  Sections 3 and 4\\[3pt]
  \(
    s(s-1)Z_\Omega
    =
    -F_{\partial\Omega}+H_{\widehat\Omega}
  \)
};

\node[widebox] (residue) at (13.50,7.85) {
  \textbf{Smooth residue at \(s=\frac23\)}\\[3pt]
  \(
    \displaystyle
    \Res_{s=2/3}Z_\Omega
    =
    C_{\mathrm{aff}}\,
    \Length_{\mathrm{equiaffine}}(\partial\Omega)
  \)
};

\draw[route] (mellin) -- (reduction);
\draw[route] (reduction) -- (residue);

\node[sidebox] (polygonal) at (9.92,9.25) {
  \textbf{Polygonal poles}\\
  \(s=1\): perimeter\\
  \(s=0\): \(-K^2\)\\
  Appendix A
};

\node[sidebox] (appendixD) at (17.08,9.25) {
  \textbf{Appendix D}\\
  Fej\'er approximation\\
  Kloosterman bounds\\
  \(\Re(s)>\frac35\)
};

\draw[support]
  (reduction.west)
  to[out=190,in=15]
  (polygonal.east);

\draw[support]
  (appendixD.west)
  to[out=165,in=-10]
  (reduction.east);

\draw[route]
  (residue.west)
  to[out=185,in=-5]
  (continuation.east);


\node[bridgebox] (parabolic) at (9.70,3.85) {
  \textbf{5.1--5.4: Explicit parabolic model}\\[4pt]
  The domain \(L\), the weights
  \(m_{p,q}=pq(p+q)\), and the
  Mordell--Tornheim/\(\mathrm{SU}(3)\) series
};

\node[bridgebox] (broader) at (17.30,3.85) {
  \textbf{5.5--5.6: Broader appearances}\\[4pt]
  Moduli-space volume and
  concentration of measure
};

\draw[route]
  (residue.south)
  to[out=-90,in=90]
  (parabolic.north);

\draw[contextlink]
  (parabolic.east)
  --
  (broader.west);


\node[widebox] (scale) at (22.00,12.55) {
  \textbf{6.1: Tropical-zeta scale}\\[3pt]
  \(m_{p,q}=pq(p+q)\)
};

\node[widebox] (headtail) at (22.00,10.10) {
  \textbf{6.2--6.3: Head--tail mechanism}\\[3pt]
  \(
    E_L
    =
    (H_0+T_1)+H_{12}+R_{T_0}+1
  \)\\
  Three second-layer packages
};

\node[widebox] (countfinal) at (22.00,7.70) {
  \textbf{Averaged discrepancy}\\[3pt]
  \(
    \displaystyle
    N^{-3/2}\sum_{n\le N}E_L(n)
    \longrightarrow
    \frac{32}{3}
    \zeta\!\left(-\frac12\right)
    \frac{\zeta(2)}{\zeta(3)}
  \)
};

\draw[route]
  (parabolic.east)
  to[out=10,in=-135]
  (scale.south west);

\draw[route] (scale) -- (headtail);
\draw[route] (headtail) -- (countfinal);

\node[sidebox] (future) at (25.55,10.10) {
  \textbf{6.4--6.5}\\
  column comparison\\
  sawtooth conjectures\\
  disk program\\
  QFT analogy
};

\draw[contextlink]
  (countfinal.east)
  to[out=5,in=-105]
  (future.south);


\node[contextnote] at (3.70,.82) {
  \textbf{Introduction 1.1}\\
  Gauss circle problem,
  sandpiles, tropical optics
};

\node[contextnote] at (10.20,.82) {
  \textbf{Appendix A}\\
  minimal models,
  corner cuts, polygonal residues
};

\node[contextnote] at (16.80,.82) {
  \textbf{Appendix B.1--B.3}\\
  toric and symplectic geometry,
  wave fronts, caustics
};

\node[contextnote] at (23.30,.82) {
  \textbf{Appendix C}\\
  gauge, distance, tube,
  and spectral zeta functions
};

\end{tikzpicture}%
}

\caption[Topographic structure of the paper]{
Topographic structure of the paper. The central country develops the tropical
zeta function, its boundary Farey--Hata reduction, and the residue theorem.
The left and right countries lead respectively to tropical wave-front
asymptotics and to lattice-point counting on \(L\). Section~5 forms the
parabolic bridge between the general theory and the explicit arithmetic
model, while the lower country records the geometric and zeta-theoretic
context.
}
\label{fig:structureofpaper}

\end{figure}
\endgroup
\end{landscape}
\clearpage

\section{The tropical zeta function in integral form}\label{sec1}

In this section, we define the zeta function of a convex body by an integral
over the body itself. In dimension \(2\), this integral presentation is
equivalent, up to an explicit holomorphic correction term, to a boundary
Dirichlet series built from primitive lattice directions and Farey neighbors.

\subsection{The tropical distance function}

Let \(\Omega\subset \R^n\) be a compact convex body with nonempty interior.
For
\[
u\in \R^n
\]
let
\[
h_\Omega(u):=\min_{x\in \Omega}\langle u,x\rangle
\]
be the lower support function of \(\Omega\). Denote by
\[
\Z^n_{\mathrm{prim}}
:=
\{u\in \Z^n\setminus\{0\}:\gcd(u_1,\dots,u_n)=1\}
\]
the set of primitive lattice directions.

Since
\[
\langle mu,x\rangle-h_\Omega(mu)
=
m\bigl(\langle u,x\rangle-h_\Omega(u)\bigr)
\qquad (m\in \N),
\]
it suffices to minimize over primitive lattice vectors.

\begin{definition}
For \(x\in \Omega^\circ\), define the \emph{tropical distance function} of
\(x\) by
\[
\rho_\Omega(x)
:=
\min_{u\in \Z^n_{\mathrm{prim}}}
\bigl(\langle u,x\rangle-h_\Omega(u)\bigr).
\]
\end{definition}

Indeed, if \(x\in \Omega^\circ\), then
\[
\langle u,x\rangle-h_\Omega(u) \ge c_x \|u\|
\]
for some \(c_x>0\), so the minimum is attained.

The function \(\rho_\Omega\) is continuous on \(\Omega^\circ\), positive in
the interior, and extends continuously by zero to \(\partial\Omega\). It may
be viewed as an \(\mathrm{SL}(n,\Z)\)-invariant analogue of a
distance-to-the-boundary function, built from primitive lattice support
functions rather than Euclidean normals.

\begin{definition}
The \emph{tropical zeta function, in integral form,} of \(\Omega\) is
\[
Z_\Omega(s):=\int_\Omega \rho_\Omega(x)^{\,s-n}\,dV,
\]
initially defined for \(\Re(s)\) sufficiently large.
\end{definition}

The function \(Z_\Omega\) inherits \(\mathrm{SL}(n,\Z)\)-invariance and
translation invariance from the affine-lattice definition of
\(\rho_\Omega\); that is,
\[
Z_{A(\Omega)+v}(s)=Z_{\Omega}(s)
\]
for all \(A\in\mathrm{SL}(n,\Z)\) and \(v\in\R^n\). As a conceptual remark,
we emphasize that the tropical zeta function by itself is not sufficient to
yield an exact formula for the number of lattice points in the domain, since
this number varies under continuous families of translations of the domain.

The normalization \(s-n\) is chosen so that
\[
Z_\Omega(n)=\operatorname{Vol}(\Omega).
\]
More generally, this normalization ensures that \(Z_\Omega\) is homogeneous
with respect to dilations of \(\Omega\) in the sense that
\[
Z_{r\Omega}(s)=r^sZ_\Omega(s)
\]
for every \(r>0\). In particular, a residue of its analytic continuation at a
real point \(d\) is a homogeneous invariant of \(\Omega\) of degree \(d\).

\subsection{Mellin form}

Let
\[
m_\Omega:=\max_{x\in \Omega}\rho_\Omega(x)
\]
and, for \(t\in[0,m_\Omega]\), define
\[
\Omega_t:=\{x\in \Omega:\rho_\Omega(x)\ge t\}.
\]
Then for \(\Re(s)>n\), the layer-cake formula gives
\[
Z_\Omega(s)
=
(s-n)\int_0^{m_\Omega} t^{\,s-n-1}\operatorname{Vol}(\Omega_t)\,dt.
\]
Thus \(Z_\Omega(s)\) is the Mellin transform of the volume profile of the
family \(\{\Omega_t\}_{t\ge0}\) given by the tropical wave-front propagation.

For every \(0<t<m_\Omega\), the set \(\Omega_t\) is a convex polytope with
finitely many facets, each of whose supporting hyperplanes has a primitive
lattice normal. The finiteness follows because, on \(\Omega_t\), only
primitive support functions belonging to a bounded set of lattice directions
can become active.

\begin{definition}\label{def:surfacevolume}
Let \(F\) be an \((n-1)\)-dimensional polytope contained in an affine
hyperplane
\[
H=\{x\in\R^n:\langle u,x\rangle=\lambda\},
\]
where \(u\in\Z^n\) is primitive and \(\lambda\in\R\). Let
\[
\Lambda_u:=u^\perp\cap\Z^n
\]
be the lattice parallel to \(H\). The \emph{lattice volume} of \(F\) is
defined by
\[
\operatorname{Vol}_{\Z}(F)
:=
\frac{\operatorname{Vol}_{n-1}(F)}
{\operatorname{covol}(\Lambda_u)},
\]
where \(\operatorname{Vol}_{n-1}\) denotes Euclidean \((n-1)\)-dimensional volume in \(H\).
\end{definition}

\begin{definition}\label{def:wfsurfacevolume}
For \(t>0\), the \emph{lattice surface volume} of \(\Omega_t\) is the sum of
the lattice volumes of all facets of \(\Omega_t\). We denote it by
\[
P_\Omega(t).
\]
\end{definition}

Since \(\operatorname{covol}(\Lambda_u)\) is equal to the length of the primitive integer
normal vector \(u\), Proposition~\ref{prop:twf_surfacevol} gives, for
\(t>0\),
\[
P_\Omega(t)=-\frac{d}{dt}\operatorname{Vol}(\Omega_t).
\]

\begin{proposition}[Lattice surface volume form of the Mellin transform]
\label{prop:perimeter-mellin}
For \(\Re(s)>n\),
\[
Z_\Omega(s)=\int_0^{m_\Omega} t^{s-n}P_\Omega(t)\,dt.
\]
\end{proposition}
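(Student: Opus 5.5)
Starting from the layer-cake identity already established,
\[
Z_\Omega(s)=(s-n)\int_0^{m_\Omega} t^{s-n-1}\operatorname{Vol}(\Omega_t)\,dt,\qquad \Re(s)>n,
\]
we integrate by parts, using \(P_\Omega(t)=-\frac{d}{dt}\operatorname{Vol}(\Omega_t)\) for \(t>0\). Put \(V(t):=\operatorname{Vol}(\Omega_t)\). The relevant antiderivative is \(t^{s-n}\), and
\[
(s-n)\int_\varepsilon^{m_\Omega} t^{s-n-1}V(t)\,dt
=\bigl[t^{s-n}V(t)\bigr]_\varepsilon^{m_\Omega}
+\int_\varepsilon^{m_\Omega} t^{s-n}P_\Omega(t)\,dt .
\]
We then let \(\varepsilon\to0^+\).

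\textbf{Boundary terms.} At \(t=m_\Omega\), the set \(\Omega_{m_\Omega}\) is the maximum set of the concave function \(\rho_\Omega\). It is a convex set on which \(\rho_\Omega\) is constant. Since \(\rho_\Omega\) is a minimum of affine functions with nonzero gradients, only finitely many of which are active near this set, it has empty interior; hence \(V(m_\Omega)=0\). At the lower end, \(V(\varepsilon)\le\operatorname{Vol}(\Omega)\), so \(|\varepsilon^{s-n}V(\varepsilon)|\le \varepsilon^{\Re(s)-n}\operatorname{Vol}(\Omega)\to0\) because \(\Re(s)>n\). Both boundary terms therefore vanish.

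\textbf{Justifying the integration by parts.} This is the main technical point. We need \(V\) to be absolutely continuous on \([\varepsilon,m_\Omega]\) with derivative \(-P_\Omega\) almost everywhere. Concavity gives this directly. By Brunn--Minkowski, \(V^{1/n}\) is concave on \([0,m_\Omega]\), since \(\Omega_{\lambda t+(1-\lambda)t'}\supseteq\lambda\Omega_t+(1-\lambda)\Omega_{t'}\). Hence \(V\) is locally Lipschitz on \((0,m_\Omega)\) and continuous up to \(m_\Omega\). It is also monotone, so it is absolutely continuous on each \([\varepsilon,m_\Omega]\).

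Alternatively, the coarea formula applied to the Lipschitz function \(\rho_\Omega\) on \(\Omega_\varepsilon\) gives
\[
V(\varepsilon)=\int_\varepsilon^{m_\Omega}\int_{\{\rho=t\}}|\nabla\rho|^{-1}\,d\mathcal H^{n-1}\,dt .
\]
On the facet of \(\partial\Omega_t\) with primitive normal \(u\), we have \(|\nabla\rho|=\|u\|\). Since \(\operatorname{covol}(\Lambda_u)=\|u\|\), the inner integral is exactly \(P_\Omega(t)\) as defined in Definition~\ref{def:wfsurfacevolume}. This route gives \(V(\varepsilon)=\int_\varepsilon^{m_\Omega}P_\Omega\) directly, and it also reproves the identity quoted from Proposition~\ref{prop:twf_surfacevol}.

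\textbf{Passing to the limit.} We have \(P_\Omega\ge0\) and \(\int_0^{m_\Omega}P_\Omega=\operatorname{Vol}(\Omega)<\infty\). Since \(|t^{s-n}|\le m_\Omega^{\Re(s)-n}\) for \(\Re(s)>n\), the integrand \(t^{s-n}P_\Omega(t)\) is absolutely integrable on \((0,m_\Omega]\). Dominated convergence then lets us let \(\varepsilon\to0\) on both sides, which gives the stated formula.

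The same formula also follows without the layer-cake step, by applying the coarea formula directly to \(\int_\Omega\rho_\Omega^{s-n}\,dV\), since \(|\nabla\rho_\Omega|=\|u\|\) almost everywhere.
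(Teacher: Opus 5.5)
Your proof is correct and follows the paper's argument: you start from the layer-cake formula, integrate by parts using $P_\Omega(t)=-\frac{d}{dt}\operatorname{Vol}(\Omega_t)$, and make the boundary terms vanish via $\operatorname{Vol}(\Omega_{m_\Omega})=0$ and $t^{\Re(s)-n}\to0$. Your extra material (absolute continuity of $t\mapsto\operatorname{Vol}(\Omega_t)$ via Brunn--Minkowski, or the coarea formula with $|\nabla\rho_\Omega|=\|u\|=\operatorname{covol}(\Lambda_u)$ on each facet) justifies the integration by parts, which the paper takes for granted, and gives an elementary replacement for the paper's symplectic proof of Proposition~\ref{prop:twf_surfacevol}.
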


\begin{proof}
Starting from
\[
Z_\Omega(s)
=
(s-n)\int_0^{m_\Omega} t^{s-n-1}\operatorname{Vol}(\Omega_t)\,dt,
\]
integrate by parts:
\[
(s-n)\int_0^{m_\Omega} t^{s-n-1}\operatorname{Vol}(\Omega_t)\,dt
=
\Bigl[t^{s-n}\operatorname{Vol}(\Omega_t)\Bigr]_0^{m_\Omega}
-
\int_0^{m_\Omega} t^{s-n}
\frac{d}{dt}\operatorname{Vol}(\Omega_t)\,dt.
\]
Since \(\operatorname{Vol}(\Omega_{m_\Omega})=0\) and
\[
t^{s-n}\operatorname{Vol}(\Omega_t)\to 0
\qquad\text{as }t\to0^+
\]
for \(\Re(s)>n\), this gives
\[
Z_\Omega(s)
=
\int_0^{m_\Omega} t^{s-n}
\left(-\frac{d}{dt}\operatorname{Vol}(\Omega_t)\right)\,dt.
\]
By Proposition~\ref{prop:twf_surfacevol}, the expression in parentheses is
\(P_\Omega(t)\). This proves the formula.
\end{proof}

In particular, for \(n=2\), \(Z_\Omega(s)\) is the Mellin transform of the
lattice-perimeter profile of the tropical wave fronts \(\Omega_t\).

\begin{remark}
It is natural to ask whether \(Z_\Omega(s)\) satisfies a functional equation,
or more generally whether its Mellin transform structure reflects a duality on
tropical wave fronts. A possible answer may involve a correspondence between
several domains, together with character weights for \(\mathrm{SL}(n,\Z)\) in
a Dirichlet-series formulation. We do not pursue this question here.
\end{remark}

\section{The planar boundary zeta series}\label{sec2}

The following definition makes sense for \(\Omega\) in arbitrary dimension.

\begin{definition}[Minimal model]\label{def:minimalmodel}
For \(\Omega\subset\R^n\), let
\[
m_\Omega:=\max_{x\in \Omega}\rho_\Omega(x),
\qquad
M_\Omega:=\{x\in\Omega:\rho_\Omega(x)=m_\Omega\}.
\]
Let
\[
\mathcal E_\Omega
:=
\left\{
u\in\Z^n_{\mathrm{prim}}:
\exists x\in M_\Omega
\text{ with }
m_\Omega=\langle u,x\rangle-h_\Omega(u)
\right\},
\]
where
\[
h_\Omega(u):=\min_{x\in \Omega}\langle u,x\rangle.
\]
The \emph{minimal model} of \(\Omega\) is
\[
\widehat\Omega
=
\bigcap_{u\in\mathcal E_\Omega}
\{x\in\R^n:\langle u,x\rangle\ge h_\Omega(u)\}.
\]
\end{definition}

Since \(\mathcal E_\Omega\) is a finite set, \(\widehat\Omega\) is a polytope
with rational-slope facets. It is characterized by the properties that
\[
m_\Omega=m_{\widehat\Omega},
\qquad
M_\Omega=M_{\widehat\Omega},
\]
and \(\rho_{\widehat\Omega}\) coincides with \(\rho_\Omega\) in a neighborhood
of \(M_\Omega\). Clearly, \(\Omega\subset\widehat\Omega\).

The terminology ``minimal model'' (or, more accurately, ``symplectic minimal
model,'' used for the first time in \cite{Shkolnikov17Theis}, with the word
``symplectic'' usually suppressed in the present article for brevity) comes
from the analogy with the minimal model program in algebraic geometry,
transplanted into the symplectic-geometric setting, where blow-ups come with
sizes; see Subsection~\ref{ssec:sympltor}. Taking sums, or sums of squares, of
these sizes gives formulas for the differences between the lattice perimeters
or areas of convex domains and those of their minimal models; see
\cite{kalinin2017number,kalinin2019tropical}, where further formulas, for
example for the Euclidean perimeter, are also described. Thus, the main result
of the present paper extends this framework\footnote{In principle, one may ask
whether other geometric invariants of convex domains can be expressed in terms
of symplectic minimal models and sizes of blow-ups.}, allowing one to express
the equiaffine perimeter in symplectic/tropical terms.

From now on, we specialize to the two-dimensional case.

\begin{definition}
A \(\Gamma\)-triangle is a planar region bounded by an embedded planar arc
\(\Gamma\) and two straight segments of rational slope. The primitive direction
vectors of the straight segments form a basis of the lattice, and \(\Gamma\),
together with the segment joining its endpoints, bounds a convex region.
\end{definition}

From the discussion in Appendix~\ref{ssec:tropop}, it follows that the
complement of the interior of \(\Omega\) in \(\widehat\Omega\) is a finite
union of \(\Gamma\)-triangles. In fact, there are at most six of them; see
Appendix~\ref{ss:tropmmzeta}, where all types of minimal models are
enumerated.

The basis condition, also known as unimodularity, implies that, after
composing an \(\mathrm{SL}(2,\Z)\) transformation with a translation, one may
place the \(\Gamma\)-triangle in the first quadrant so that its straight sides
are segments of the coordinate axes. In particular, \(\Gamma\) is then
represented by the graph of a convex function
\[
g:[0,r]\to[0,u],
\qquad
g(0)=u,\quad g(r)=0.
\]

For each primitive vector
\[
(a,b)\in \Z_{\ge0}^2\setminus\{(0,0)\},
\]
let
\[
ax+by=\gamma_{a,b}
\]
be the supporting line to the graph of \(g\) with normal \((a,b)\). If
\[
a,b,c,d\in \Z_{\ge0},
\qquad
ad-bc=1,
\]
define
\[
f_\Gamma(a,b,c,d)
:=
\bigl|\gamma_{a,b}+\gamma_{c,d}-\gamma_{a+c,b+d}\bigr|.
\]

\begin{definition}
The \emph{boundary zeta series} attached to \(\Gamma\) is
\[
F_\Gamma(s)
:=
\sum_{\substack{a,b,c,d\in\Z_{\ge0}\\ ad-bc=1}}
f_\Gamma(a,b,c,d)^s.
\]
For the whole boundary, after normalizing each \(\Gamma\)-triangle separately
as above, we set
\[
F_{\partial\Omega}(s):=\sum_{\Gamma}F_\Gamma(s),
\]
where the sum runs over the finitely many \(\Gamma\)-triangles, namely
\(3\), \(4\), \(5\), or \(6\) of them, forming
\(\widehat\Omega\setminus\Omega^\circ\).
\end{definition}

The integral form of the tropical zeta function and the boundary Dirichlet
series are related by an exact identity.

\begin{theorem}\label{thm:integral-boundary}
Let \(\Omega\subset \R^2\) be a compact convex domain, and let
\(\widehat\Omega\supset\Omega\) be its minimal model. Then for \(\Re(s)>2\),
\[
s(s-1)Z_\Omega(s)
=
-
F_{\partial\Omega}(s)+H_{\widehat\Omega}(s),
\]
where
\[
H_{\widehat\Omega}(s):=s(s-1)Z_{\widehat\Omega}(s)
\]
is a holomorphic function of the form
\[
m_{\widehat\Omega}^{s-1}
\bigl(2\ell_{\widehat\Omega}s
+k_{\widehat\Omega}m_{\widehat\Omega}\bigr),
\]
where the real numbers \(m_{\widehat\Omega}\) and
\(\ell_{\widehat\Omega}\), and the integer \(k_{\widehat\Omega}\), explicitly
depend on the minimal model \(\widehat\Omega\) of \(\Omega\).
\end{theorem}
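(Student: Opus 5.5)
We will prove the identity by passing from $\widehat\Omega$ to $\Omega$ through unimodular corner cuts, computing the change in $s(s-1)Z$ for each cut. The two facts we need are that $\Omega$ is the limit of a sequence of rational polygons obtained from $\widehat\Omega$ by successively cutting corners, and that each cut contributes exactly $-(\text{size})^s$ to $s(s-1)Z$.

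\emph{Step 1: corner cuts.} Fix one $\Gamma$-triangle, normalized as in the definition with straight sides on the coordinate axes. Starting from the quadrant corner at the origin, cut by the supporting lines $ax+by=\gamma_{a,b}$, with primitive normals enumerated along the Stern--Brocot (Farey) tree. When two Farey neighbours $(a,b),(c,d)$ with $ad-bc=1$ already support adjacent sides, their common vertex is a unimodular corner. The next line, with normal $(a+c,b+d)$, removes a unimodular triangle from that corner. Its lattice size (the common lattice length of its legs) is $f_\Gamma(a,b,c,d)=|\gamma_{a,b}+\gamma_{c,d}-\gamma_{a+c,b+d}|$. Every such triple occurs exactly once, and the polygons $\Omega^{(N)}$ decrease to $\Omega$.

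\emph{Step 2: effect of one cut.} For a rational polygon $P$ and a unimodular corner cut of size $\varepsilon$ smaller than the relevant inradius scale, we claim
\[
s(s-1)Z_{P'}(s)=s(s-1)Z_P(s)-\varepsilon^s .
\]
To see this, use Proposition~\ref{prop:perimeter-mellin}. Locally near the corner, $\rho_P$ is $\min(x,y)$ in the $\mathrm{SL}(2,\Z)$-normalized coordinates, while $\rho_{P'}=\min(x,y,x+y-\varepsilon)$. The wave fronts of $P$ and $P'$ agree for $t\ge\varepsilon$, because the new line is then inactive. For $t<\varepsilon$, the wave front of $P'$ has a cut corner of lattice size $\varepsilon-t$. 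This replaces two legs of lattice length $\varepsilon-t$ by a hypotenuse of lattice length $\varepsilon-t$, so $P_{P'}(t)=P_P(t)-(\varepsilon-t)$. Hence
\[
\Delta Z=-\int_0^\varepsilon t^{s-2}(\varepsilon-t)\,dt=-\frac{\varepsilon^s}{s(s-1)},
\]
which is the claimed $-\varepsilon^s$ after multiplying by $s(s-1)$.

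The main obstacle is showing that the cut stays local. Each new cut must only interact with the corner it removes, i.e. $\varepsilon$ must be no larger than the time at which that corner's wave-front vertex meets other caustic edges. This should follow from the ordering of the Farey refinement: the new side has normal strictly between its neighbours, and convexity of $g$ gives nested sizes. The tropical-caustic description in Appendix~\ref{ssec:tropop} is what I would invoke here.

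\emph{Step 3: limit and the minimal model term.} Summing over cuts gives
\[
s(s-1)Z_{\Omega^{(N)}}=H_{\widehat\Omega}-\sum_{\text{cuts}\le N}f^s .
\]
For $\Re s>2$, we have $\rho_{\Omega^{(N)}}\to\rho_\Omega$ monotonically, so dominated convergence passes the left side to the limit. The series converges absolutely because $\sum f^2$ is bounded by the area of $\widehat\Omega\setminus\Omega$ up to a factor.

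Finally, compute $Z_{\widehat\Omega}$ directly. The minimal model is a rational polygon whose wave fronts shrink homothetically toward $M_\Omega$ until time $m$. Its lattice perimeter is therefore affine in $t$, $P(t)=2\ell+k(m-t)$ up to normalization, with $k$ an integer computed from the normals as a sum of determinants. Integrating gives
\[
\int_0^m t^{s-2}P(t)\,dt=\frac{m^{s-1}(2\ell s+km)}{s(s-1)},
\]
which is the stated holomorphic form of $H_{\widehat\Omega}$.
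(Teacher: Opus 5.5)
Your argument is essentially the paper's proof. You start from $\widehat\Omega$, insert the mediant supporting lines in Stern--Brocot order, show that each unimodular cut lowers $s(s-1)Z$ by exactly $f_\Gamma^s$, and pass to the limit by dominated convergence for $\Re(s)>2$. Your deviations are cosmetic: you compute the per-cut change through the perimeter Mellin form rather than the layer-cake Lemma~\ref{lem:onecut}, and you obtain $H_{\widehat\Omega}$ from an affine perimeter profile rather than from Proposition~\ref{prop:minmodzeta}. That profile is affine because the dual fan of the wave fronts is fixed, not because of homothety, which fails for instance for rectangles.

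The locality of each cut, which you flag, is exactly the step the paper attributes to the defining property of the minimal model. Nested sizes alone do not give it. The actual input is the global bound $f_\Gamma\le m_\Omega$, which yields $\rho_P(w)\ge\rho_\Omega(w)\ge f_\Gamma$ for every intermediate polygon $P\supseteq\Omega$, at the point $w$ where both old facet functions equal $f_\Gamma$.
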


See the proof on page~\pageref{proof:thm:integral-boundary}; all planar
minimal models are enumerated in Appendix~\ref{ss:tropmmzeta}. More precisely,
by Proposition~\ref{prop:minmodzeta}, \(m_{\widehat\Omega}=m_{\Omega}\) is the
maximal value of \(\rho_\Omega\), \(\ell_{\widehat\Omega}\) is the lattice
length of the locus
\[
M_\Omega=M_{\widehat\Omega}
\]
where this maximum is attained, and \(m_{\widehat\Omega}k_{\widehat\Omega}\)
is the lattice perimeter of \(\widehat\Omega\) minus
\(2\ell_{\widehat\Omega}\), in elementary terms.\footnote{In
algebro-geometric terms, \(k_{\widehat\Omega}\) is the self-intersection of
the canonical class on the toric surface defined by the fan dual to the
polygon \(\widehat\Omega\), and, from the symplectic-geometric perspective,
the lattice perimeter of \(\widehat\Omega\) is the symplectic area of the
anticanonical class on the symplectic toric surface with moment polygon
\(\widehat\Omega\). From the tropical-geometric point of view, the meaning of
\(2\ell_{\widehat\Omega}\) is that of the limit of the modulus, namely the
length of the unique cycle, of the tropical elliptic curve defined by
\(\min(\rho_{\widehat\Omega},m_{\widehat\Omega}-\varepsilon)\) as
\(\varepsilon\to0^+\).}

\begin{remark}
In the special case when \(\widehat\Omega=[0,P]\times[0,Q]\) with
\(P\ge Q>0\), one has four arcs and
\[
H_{\widehat\Omega}(s)
=
8\Bigl(\frac Q2\Bigr)^s
+
2s(P-Q)\Bigl(\frac Q2\Bigr)^{s-1}.
\]
\end{remark}

\subsection{The first singularity and equiaffine perimeter}

We now state the main boundary residue formula.

\begin{theorem}\label{thm:boundary-residue}
Let \(\Gamma\) be a \(C^3\)-smooth convex planar arc with everywhere nonvanishing
curvature. Then \(F_\Gamma(s)\) admits a meromorphic continuation to a
neighborhood of \(s=\frac23\), with a simple pole at \(s=\frac23\), and
\[
\Res_{s=2/3}F_\Gamma(s)
=
\frac{\sqrt3}{4^{1/3}\pi^3}\Gamma\!\left(\frac13\right)^3
\Length_{\mathrm{equiaffine}}(\Gamma).
\]
\end{theorem}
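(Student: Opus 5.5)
The proof follows the dependency chain of Figure~\ref{fig:residue-dependency}, restricted to a single arc. First we normalize \(\Gamma\) by \(\mathrm{SL}(2,\Z)\) and a translation, so that it is the graph of a convex \(g:[0,r]\to[0,u]\) in the first quadrant. Both sides of the claimed identity are \(\mathrm{SL}(2,\Z)\)-invariant: the left side by construction, and the equiaffine length because it is \(\mathrm{SL}(2,\R)\)-invariant. Next we rewrite the support numbers. Setting \(\widetilde g(\alpha)=g^*(-\alpha)\), the Legendre dual in slope coordinates, we get \(\gamma_{a,b}=b\,\widetilde g(a/b)\) up to sign conventions, i.e.\ the homogeneous extension of \(\widetilde g\). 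A Farey pair \((a,b),(c,d)\) with \(ad-bc=1\) corresponds to neighbours \(a/b<c/d\) with mediant \((a+c)/(b+d)\). Then \(f_\Gamma(a,b,c,d)\) is exactly \(b+d\) times the second difference of \(\widetilde g\) at the mediant, relative to the chord between the Farey neighbours. These are the Hata (Faber--Schauder) coefficients of \(\widetilde g\). So \(F_\Gamma(s)=Z_{\widetilde g}(s)\), the Farey--Hata zeta function of \(\widetilde g\), up to finitely many terms and boundary-direction terms that are entire near \(s=\frac23\). Since the curvature of \(\Gamma\) is nonvanishing and \(g\in C^3\), \(\widetilde g\in C^3\) with \(\widetilde g''\) bounded above and below.

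The core step is the local residue formula
\[
\operatorname*{Res}_{s=2/3}Z_f(s)=\frac{\sqrt3\,\Gamma(\frac13)^3}{2^{2/3}\pi^3}\int |f''|^{2/3}.
\]
Taylor expansion at the mediant \(x=(a+c)/(b+d)\) gives the coefficient as
\[
\tfrac12 f''(x)\,\frac{1}{bd(b+d)}\bigl(1+O(1/\min(b,d))\bigr).
\]
This is the ``endpoint model'', which replaces the exact defect by the parabolic weight \(m_{b,d}^{-1}\) times \(\tfrac12 f''(x)\), using the cubic weight \(m_{p,q}=pq(p+q)\) of the parabolic case. The error terms are summable in a half-plane \(\Re s>\frac23-\delta\), because the extra \(1/\min(b,d)\) improves convergence; \(C^3\) gives exactly one extra order. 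This leaves
\[
\sum_{\gcd(b,d)=1}\Bigl(\tfrac12 f''(x_{b,d})\Bigr)^s (bd(b+d))^{-s},
\]
where \(x_{b,d}\) is the mediant determined by \(b,d\) (via the inverse of \(d\) modulo \(b\)). We then localize: partition \([0,1]\) into short intervals \(I\) on which \(f''\) is almost constant, and show that, to leading order, the Farey mediants with given denominators are equidistributed in \(I\) with density \(|I|\). Each \(I\) then contributes \(|I|\,(\tfrac12 f''(\xi_I))^{s}\) times the primitive Mordell--Tornheim series. Its residue at \(s=\frac23\) comes from the parabolic computation, i.e.\ Romik's pole of \(\zeta_{\mathrm{SU}(3)}\) at \(\frac23\), and the constant \(\frac{\sqrt3\,\Gamma(1/3)^3}{2^{2/3}\pi^3}\) should come out of it with \(2^{-2/3}\) absorbing the factor \(\tfrac12\). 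Summing over \(I\) gives a Riemann sum for \(\int |f''|^{2/3}\).

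The main obstacle is justifying the equidistribution with enough uniformity to keep the pole simple and to capture its residue. We would smooth \(\mathbf 1_I\) with a Fejér kernel and expand it in Fourier modes \(e(hx_{b,d})\). The nonzero modes give incomplete Kloosterman-type sums in \(\bar d \bmod b\). Weil-type bounds with power saving make their Dirichlet series holomorphic slightly to the left of \(\frac23\), leaving only the zero mode. A smoothing-width versus interval-size trade-off then controls the boundary error uniformly in \(s\) near \(\frac23\).

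Finally, we convert the integral to equiaffine length. Under the Legendre transform \(\alpha=g'(x)\) we have \(\widetilde g''(\alpha)=1/g''(x)\) and \(d\alpha=g''(x)\,dx\), so
\[
\int(\widetilde g'')^{2/3}\,d\alpha=\int (g'')^{1/3}\,dx=\int_\Gamma\kappa^{1/3}\,ds=\Length_{\mathrm{equiaffine}}(\Gamma).
\]
The stated constant \(\frac{\sqrt3}{4^{1/3}\pi^3}\Gamma(\frac13)^3\) results after combining the normalization factor above.
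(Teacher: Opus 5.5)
The gap is in your localization step. Most of the rest matches the paper: the Hata/Legendre identification of $F_\Gamma$ with $Z_{\widetilde g}$, the endpoint replacement, Fej\'er smoothing with incomplete Kloosterman (Weil) bounds, and the Legendre computation of $\int(\widetilde g'')^{2/3}$.

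When you freeze $f''$ at $\xi_I$ on each short interval $I$, you discard the term
\[
\sum_{\gcd(b,d)=1}\frac{|f''(x_{b,d})|^s-|f''(\xi_{I(x_{b,d})})|^s}{(2bd(b+d))^{s}} .
\]
This term is small only relative to a series that itself diverges at $s=\frac23$. It is not holomorphic there. It is a series of exactly the same type as the original, and its singular part at $\frac23$ is governed by $\int_0^1\bigl(|f''(x)|^{2/3}-|f''(\xi_{I(x)})|^{2/3}\bigr)\,dx$, which is small but generally nonzero.

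Consequently the partition and Riemann-sum argument can at best compute the Abelian limit $\lim_{\sigma\downarrow 2/3}(\sigma-\frac23)F_\Gamma(\sigma)$. It does not show that $F_\Gamma$ continues meromorphically to a neighborhood of $\frac23$, nor that the pole is simple, and both are part of the statement. The same objection applies to your ``smoothing-width versus interval-size trade-off'': bounds of size $\delta\times(\text{divergent series})$ say nothing for $\Re s\le\frac23$ until a continuation has been constructed.

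The fix, which is what the paper does, is to drop the partition and run your Fourier argument on the $s$-dependent sampling function $B_s(x)=|f''(x)|^s$ itself. This function is $C^1$ with bounds uniform for $s$ in compact sets.

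\begin{enumerate}
\item Parametrize by coprime $(b,d)$, write $d=kb+r$, and sum over $k$. This gives $Z_f^{\mathrm{end}}(s)=2^{-s}\sum_b b^{-3s}\Sigma_b(s)$ with $\Sigma_b(s)=\sum_{(r,b)=1}H_s(r/b)\,B_s(\overline r/b)$. It also makes precise the joint equidistribution of the weight variable $r/b$ and the position $\overline r/b$, which your ``equidistributed with given denominators'' implicitly needs.
\item Subtract the linear interpolant of $B_s$ and Fej\'er-smooth the periodic remainder. Bound the nonzero modes by incomplete Kloosterman sums plus Abel summation against $H_s$. This yields $\Sigma_b(s)=\varphi(b)\int_0^1H_s\int_0^1B_s+R_b(s)$, with each $R_b$ holomorphic and $R_b(s)\ll b^{(1+\Re s)/2+\varepsilon}$ uniformly (Proposition~\ref{prop:spec}).
\item The series $E(s)=2^{-s}\sum_b b^{-3s}R_b(s)$ then converges uniformly on a disk about $\frac23$, so $E$ is holomorphic there.
\item Hence $Z_f^{\mathrm{end}}(s)=A(s)\zeta(3s-1)/\zeta(3s)+E(s)$ gives the continuation and the simplicity of the pole at once. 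The residue $A(\frac23)/(3\zeta(2))$ follows directly from the Beta integral $\int_0^1H_{2/3}=\Gamma(\frac13)^2/\Gamma(\frac23)$.
\end{enumerate}

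This route needs no Riemann sum over a partition and no appeal to Romik's residue for $\zeta_{\mathrm{SU}(3)}$.
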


See the proof on page~\pageref{proof:thm:boundary-residue}.

Since \(\partial\Omega\) is decomposed into three to six such arcs, according
to the minimal model, summing Theorem~\ref{thm:boundary-residue} over these
pieces gives
\[
\Res_{s=2/3}F_{\partial\Omega}(s)
=
\frac{\sqrt3}{4^{1/3}\pi^3}\Gamma\!\left(\frac13\right)^3
\Length_{\mathrm{equiaffine}}(\partial\Omega).
\]

Combining this with the identity of Theorem~\ref{thm:integral-boundary},
which for \(\Re(s)>2\) expresses \(Z_\Omega(s)\) as
\[
-\frac{F_{\partial\Omega}(s)}{s(s-1)}
\]
plus a holomorphic term, we obtain a corresponding residue formula for the
interior zeta function \(Z_\Omega(s)\). In particular, \(Z_\Omega(s)\) admits
a meromorphic continuation to a neighborhood of \(s=\frac23\), and
\[
\Res_{s=2/3}Z_\Omega(s)
=
\frac92\,\Res_{s=2/3}F_{\partial\Omega}(s).
\]
Therefore
\[
\Res_{s=2/3}Z_\Omega(s)
=
\frac{9\sqrt3}{2\cdot 4^{1/3}\pi^3}
\Gamma\!\left(\frac13\right)^3
\Length_{\mathrm{equiaffine}}(\partial\Omega).
\]

Thus the first singularity of \(Z_\Omega(s)\), for \(\Omega\) with \(C^3\)-smooth
boundary and everywhere nonvanishing curvature, is governed by the equiaffine
perimeter of \(\partial\Omega\).

\begin{remark}
We conjecture that, in the \(n\)-dimensional case, the first pole of the
tropical zeta function of a sufficiently smooth convex domain with everywhere
nonvanishing curvature occurs at
\[
s=\frac{n(n-1)}{n+2},
\]
and that the corresponding residue is proportional to the equiaffine surface
volume of \(\partial\Omega\). We are currently unable to prove this conjecture.
In particular, the analytic continuation of the zeta function to the left of
\(s=n-1\), where the residue is the boundary lattice volume of \(\Omega\),
remains open.
\end{remark}

We now recall the basic facts about equiaffine arc length used in the statement
of the theorem.

\subsection{Equiaffine arc length}

Let \(\Gamma:[0,1]\to\R^2\) be a \(C^2\) regular curve,
\[
\Gamma(t)=(x(t),y(t)),
\qquad
\dot\Gamma(t)\neq 0.
\]
Its Euclidean curvature is
\[
\kappa(t)=
\frac{|\det(\dot\Gamma(t),\ddot\Gamma(t))|}
{\|\dot\Gamma(t)\|^3},
\]
and its equiaffine arc length element is
\[
ds_{\mathrm{equiaffine}}
=
|\det(\dot\Gamma(t),\ddot\Gamma(t))|^{1/3}\,dt.
\]
Accordingly,
\[
\Length_{\mathrm{equiaffine}}(\Gamma)
=
\int_\Gamma \kappa^{1/3}\,ds
=
\int_0^1 |\det(\dot\Gamma(t),\ddot\Gamma(t))|^{1/3}\,dt.
\]

If \(\Gamma\) is written locally as the graph of a convex \(C^2\)-function
\(y=g(x)\), then
\[
\Length_{\mathrm{equiaffine}}(\Gamma)
=
\int (g''(x))^{1/3}\,dx.
\]

There is also a geometric interpretation in terms of support triangles: if
\(\Gamma\) is partitioned into sufficiently small arcs and \(\Delta_i\)
denotes the support triangle corresponding to the \(i\)-th arc, then
\cite{Blaschke1923,Ludwig1999}
\begin{equation}\label{23affine}
\Length_{\mathrm{equiaffine}}(\Gamma)
=
\lim_{\max_i |t_i-t_{i-1}|\to 0}
\sum_i 2 \bigl(\Area(\Delta_i)\bigr)^{1/3}.
\end{equation}
The formula is local. Equiaffine length is additive under subdivision, so one
may break the arc into very small pieces and ask what a single infinitesimal
piece contributes. After an affine change of coordinates, such a small convex
arc can be written as the graph of a strictly convex \(C^2\) function
\(y=g(x)\), and in these coordinates the equiaffine length is measured by the
integral \(\int (g''(x))^{1/3}\,dx\). The point is that, at sufficiently small
scale, the curve is governed by its osculating parabola, which in affine
geometry plays the role that the osculating circle plays in Euclidean
geometry. For a parabola, one finds directly that the support triangle
determined by the endpoint tangents over an interval of width \(h\) has area
\[
\frac18\,g''(\xi)h^3+o(h^3)
\]
for some intermediate point \(\xi\). Thus its contribution to the sum is
\[
2\,\Area(\Delta)^{1/3}=(g''(\xi))^{1/3}h+o(h),
\]
which is precisely the infinitesimal equiaffine arc length element. Summing
over all subarcs therefore produces a Riemann sum for
\(\int (g'')^{1/3}\,dx\), and letting the mesh tend to zero gives the claimed
limit. In this way, the support triangles provide a polygonal model for affine
length: each small triangle records the local cubic bending of the curve, and
the whole sum recovers the equiaffine measure of the arc.

\section{Farey intervals and the zeta function}\label{sec3}

\begin{definition}[Farey interval]
A \emph{Farey interval} is an interval
\[
I=\left[\frac cd,\,\frac ab\right]\subset[0,1]
\]
with \(b,d\in\N\), \(a,c\in\Z\) satisfying
\[
0\le c\le d,\qquad 0\le a\le b,\qquad
\gcd(a,b)=\gcd(c,d)=1,
\]
and
\[
ad-bc=1.
\]
We write its mediant as
\[
\mu_I:=\frac{a+c}{b+d}.
\]
\end{definition}

\begin{definition}[Hata coefficient and its renormalization]
Let \(f\in C^2([0,1])\). For
\[
I=\left[\frac cd,\frac ab\right]
\]
define
\begin{equation}\label{coeff_hata}
c_I(f)
:=
f\!\left(\frac{a+c}{b+d}\right)
-\frac{b}{b+d}f\!\left(\frac ab\right)
-\frac{d}{b+d}f\!\left(\frac cd\right).
\end{equation}
Define
\[
T_I(f):=(b+d)c_I(f).
\]
\end{definition}

\begin{definition}[Farey zeta function]\label{def:Fareyzeta}
For \(s\in\C\) with \(\Re(s)\) large, define
\[
Z_f(s):=\sum_I |T_I(f)|^s,
\]
where the sum runs over all Farey intervals \(I\subset[0,1]\).
\end{definition}

\begin{remark}[On residues]
\label{rem:residue}
Throughout, \(\Res_{s=2/3}\) denotes the residue of a meromorphic
continuation of the relevant function to a punctured neighborhood of
\(s=\frac23\). Accordingly, whenever we compute a residue, we also prove
that, after subtracting the explicit polar part, the remainder is holomorphic
on some disk
\[
|s-\tfrac23|<\eta.
\]
\end{remark}

\begin{remark}
Removing or adding finitely many intervals, for instance those adjacent to
\(0/1\) or \(1/1\), changes \(Z_f(s)\) by an entire function. Hence it does
not affect the residue at \(s=\frac23\).
\end{remark}

\begin{remark}[The convergence threshold]
By Theorem~5 of \cite{kalinin2024legendre}, if
\(f\in C^3([0,1])\) and \(|f''|>c>0\), then the Dirichlet-type series in
Definition~\ref{def:Fareyzeta} converges for \(s>\frac23\) and diverges at
\(s=\frac23\). The mechanism behind this threshold is simple. The normalized
Hata coefficient \(T_I(f)=(b+d)c_I(f)\) is a second-order defect attached to
the Farey interval
\[
I=\left[\frac cd,\frac ab\right].
\]
As the mean-value formula below makes precise, one has
\[
T_I(f)
=
-\frac{f''(\xi_I)}{2bd(b+d)}
\]
for some \(\xi_I\in(c/d,a/b)\). Thus, when \(|f''|\) is bounded above and
below, the summands of \(Z_f(s)\) are comparable to
\[
\frac{1}{(bd(b+d))^s}.
\]
The convergence problem is therefore governed by the primitive
Mordell--Tornheim-type series
\[
\sum_{\substack{b,d\ge1\\(b,d)=1}}
\frac{1}{(bd(b+d))^s}.
\]
A dyadic block with \(b\asymp d\asymp R\) contains \(\asymp R^2\) coprime
pairs, and each summand has size \(\asymp R^{-3s}\). Hence the diagonal
blocks have size \(\asymp R^{2-3s}\), giving the critical exponent
\(s=\frac23\). The off-diagonal blocks are controlled by the same
Mordell--Tornheim estimate and do not change the threshold. In the parabolic
case the comparison is exact, while in the general \(C^3\) case Legendre
duality and the bounds on \(f''\) give uniform comparison with this arithmetic
model.
\end{remark}

Assume from now on that
\begin{equation}
\label{eq:ass}
f\in C^3([0,1])
\qquad\text{and}\qquad
0<m\le |f''(x)|
\quad\text{for all }x\in[0,1].
\end{equation}

\begin{theorem}[Local residue formula]
\label{thm:res}
Assume \eqref{eq:ass}. Then \(Z_f(s)\) admits a meromorphic continuation to
a neighborhood of \(s=\frac23\); more precisely, for every
\(0<\eta<1/21\) it extends meromorphically to the disk
\[
|s-\tfrac23|<\eta,
\]
with a simple pole at \(s=\frac23\). Moreover,
\[
\Res_{s=2/3} Z_f(s)
=
\frac{\sqrt3\,\Gamma\!\left(\frac13\right)^3}{2^{2/3}\pi^3}
\int_0^1 |f''(v)|^{2/3}\,dv.
\]
\end{theorem}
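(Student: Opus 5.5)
The plan is to compare \(Z_f(s)\) with a weighted primitive Mordell--Tornheim series. We remove the variation of \(f''\) by an endpoint replacement, and then show that the arithmetic weights are asymptotically equidistributed in the slope variable.

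\textbf{Step 1 (mean value and endpoint replacement).} By Taylor expansion at the mediant, for each Farey interval \(I=[c/d,a/b]\) one has
\[
T_I(f)=-\frac{f''(\xi_I)}{2bd(b+d)},\qquad \xi_I\in I .
\]
Since \(|I|=1/(bd)\) and \(f''\) is \(C^1\) with \(|f''|\ge m\), we may replace \(\xi_I\) by the endpoint \(a/b\). This gives
\[
|T_I(f)|^s=\Bigl|\tfrac12 f''(a/b)\Bigr|^s\,(bd(b+d))^{-s}\bigl(1+O_s(1/(bd))\bigr),
\]
uniformly for \(s\) in compact sets. The error series \(\sum (bd)^{-1}(bd(b+d))^{-\Re s}\) is dominated dyadically by \(\sum_R R^{2}\cdot R^{-2-3\Re s}\), so it converges for \(\Re s>0\). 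It is therefore holomorphic near \(s=\tfrac23\) and does not affect the residue.

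\textbf{Step 2 (parametrization by coprime pairs).} Next we parametrize the Farey intervals by coprime pairs \((b,d)\) with \(b,d\ge1\). Given such a pair, \(a\) is determined by \(a\equiv d^{-1}\pmod b\) with \(0\le a\le b\), and then \(c=(ad-1)/b\). This is a bijection, up to finitely many boundary intervals, and those only contribute an entire function. Writing \(w_s(x)=|f''(x)/2|^s\), the main term becomes
\[
S(s)=\sum_{(b,d)=1} w_s\!\left(\frac{\overline d}{b}\right)(bd(b+d))^{-s}.
\]
We split \(w_s=\int_0^1 w_s+(w_s-\int_0^1 w_s)\).

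\textbf{Step 3 (the constant part).} The constant part is \(\bigl(\int_0^1 w_s\bigr)\cdot \zeta(3s)^{-1}\sum_{b,d\ge1}(bd(b+d))^{-s}\). The full Mordell--Tornheim sum is a lattice sum of a function homogeneous of degree \(-3s\). Its polar part at \(s=\tfrac23\) comes from the Euler--Maclaurin comparison with \(\iint (xy(x+y))^{-s}\,dx\,dy\). In polar-type coordinates \(x=rt\), \(y=r(1-t)\), its residue is
\[
\tfrac13\int_0^1 (t(1-t))^{-2/3}\,dt=\tfrac13 B(\tfrac13,\tfrac13)=\frac{\sqrt3\,\Gamma(\tfrac13)^3}{6\pi}.
\]
Dividing by \(\zeta(2)=\pi^2/6\) and inserting \(2^{-2/3}\int|f''|^{2/3}\) gives exactly
\[
\frac{\sqrt3\,\Gamma(\frac13)^3}{2^{2/3}\pi^3}\int_0^1|f''|^{2/3},
\]
as claimed. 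The continuation of this part near \(s=\tfrac23\) is standard, via the Mellin--Barnes or Romik description of the Witten \(\mathrm{SU}(3)\) zeta function.

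\textbf{Step 4 (oscillating part; main obstacle).} The remaining task, and the expected obstacle, is to show that the oscillating part
\[
\sum_{(b,d)=1}\bigl(w_s(\overline d/b)-\textstyle\int w_s\bigr)(bd(b+d))^{-s}
\]
is holomorphic on a disk \(|s-\tfrac23|<\eta\). This amounts to equidistribution of the modular inverses \(\overline d/b\) with a power saving, uniformly in the smooth weight \((bd(b+d))^{-s}\). The approach is to approximate the \(C^1\) function \(w_s\) by its Fej\'er mean \(\sigma_K w_s\) of degree \(K\), with error \(O(K^{-1}\log K)\) in sup norm. Each frequency \(k\ne0\) then produces incomplete Kloosterman-type sums \(\sum_{d\le D,(d,b)=1}e(k\overline d/b)\). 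By Weil's bound these are \(\ll b^{1/2+\varepsilon}(k,b)^{1/2}\), and partial summation in \(d\) handles the weight.

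Balancing the Fej\'er error \(K^{-1}\) against the Kloosterman loss \(K\,b^{1/2+\varepsilon}/D\) in dyadic blocks \(b\asymp B\), \(d\asymp D\) yields a saving \(R^{-\delta}\) over the trivial bound \(R^{2-3\Re s}\). This gives absolute convergence for \(\Re s>\tfrac23-\delta'\). The bookkeeping of the unbalanced blocks (\(b\ll d\) versus \(d\ll b\), using symmetry \(b\leftrightarrow d\) to put the inverse on the larger modulus) is what fixes the admissible radius, and I expect this to produce \(\eta<1/21\). Subtracting the polar part then leaves a function holomorphic on that disk, as required by Remark~\ref{rem:residue}.
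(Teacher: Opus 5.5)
Your overall route works and differs from the paper's, but one part of Step~4 fails as written. The admissible radius is also asserted rather than derived.

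\textbf{The failing step.} You claim the Fej\'er mean $\sigma_K w_s$ approximates $w_s(x)=|f''(x)/2|^s$ to within $O(K^{-1}\log K)$ in sup norm. That bound needs a Lipschitz \emph{periodic} function, and $w_s$ is not one. In general $w_s(0)\neq w_s(1)$, so the $1$-periodic extension jumps at $0$, and $|w_s-\sigma_K w_s|\asymp|w_s(1)-w_s(0)|$ within distance $\ll 1/K$ of the jump. Your sample points do reach that region: $\overline d=1$ gives $a/b=1/b$, and $\overline d=b-1$ gives $1-1/b$. The paper handles exactly this by writing $|f''|^s=\ell_s+G_s$, with $\ell_s$ the affine interpolant between the endpoint values.
\begin{itemize}
\item $G_s$ vanishes at $0$ and $1$, so it is periodic Lipschitz and Fej\'er applies to it.
\item $\ell_s$ is expanded exactly on the grid $\frac1b\mathbb Z/\mathbb Z$. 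Its discrete Fourier coefficients are $\ll 1/|n|$ and feed the same Weil bound at the cost of a factor $\log b$.
\end{itemize}
You need this device or an equivalent treatment of the sawtooth. Once the coefficients decay like $1/|k|$, the frequency sum costs only $\log K$, not the factor $K$ in your balancing.

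\textbf{Symmetry and radius.} Your ``symmetry $b\leftrightarrow d$'' is not a symmetry of $w_s(\overline d/b)$. Justify it by moving the sampling point from $a/b$ to the other endpoint $c/d\equiv-\overline b/d\pmod 1$. This costs a relative $O(1/(bd))$, hence a series holomorphic for $\Re s>0$, exactly as in Step~1. Alternatively, keep modulus $b$: for $d\gg b$, complete periods contribute Ramanujan sums $(D/b)\,c_b(k)$, which are harmless. The bare bound $b^{1/2+\varepsilon}$ you wrote is false in that range.

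For the radius, take a block with modulus $Q=\max(b,d)$ and short variable $P=\min(b,d)$. Weil saves a factor $Q^{1/2+\varepsilon}/P$ over the trivial bound $P^{1-\sigma}Q^{1-2\sigma}$, so it helps only when $P\gg Q^{1/2}$. The worst blocks are $P\asymp Q^{1/2}$, contributing $Q^{3/2-5\sigma/2+\varepsilon}$. Hence the oscillating part converges absolutely for $\Re s>3/5$; the bottleneck is this unbalanced range, not the diagonal. This covers every disk with $\eta<1/15$, in particular all $\eta<1/21$.

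\textbf{Comparison.} The paper uses neither dyadic blocks nor the endpoint switch.
\begin{itemize}
\item It writes $d=kb+r$ and sums over $k$ exactly into the kernel $H_s(u)=\sum_{k\ge0}(k+u)^{-s}(k+1+u)^{-s}$ at $u=r/b$. Every inner sum then runs over one full period of reduced residues mod $b$.
\item Its main term $\varphi(b)\int_0^1 H_s\int_0^1|f''|^s$ places the pole in the elementary factor $\zeta(3s-1)/\zeta(3s)$. The constant comes from $\int_0^1 H_{2/3}=B(\frac13,\frac13)$.
\item So the argument is self-contained and needs no continuation of the Mordell--Tornheim/Witten series.
\end{itemize}
Your version has an exact main term, the primitive Mordell--Tornheim series times $\int w_s$, but imports its continuation and residue from Romik. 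Your constant $\frac13B(\frac13,\frac13)/\zeta(2)$ agrees with the paper's $A(2/3)/(3\zeta(2))$.

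Both analyses share the same bottleneck. The paper's error $b^{(1+\sigma)/2+\varepsilon}$ comes from balancing a trivially bounded initial segment $r\le b^{1/2}$ against the Weil-bounded tail. On that initial segment $H_s(r/b)\approx(b/r)^{s}$ is large, i.e.\ $d\approx b^{1/2}$, which is your $P\asymp Q^{1/2}$ regime. The paper's half-plane refinement reaches the same threshold $3/5$; its radius $1/21$ comes from crudely bounding the exponent by $\sigma_\pm$.
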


A stronger continuation statement is proved later using this result in
Appendix~\ref{app3}, Subsection~\ref{ssec:striprefinement}: under the same
assumptions, \(Z_f(s)\) extends meromorphically to the half-plane
\[
\Re(s)>\frac35,
\]
and is holomorphic there except for the same simple pole at \(s=\frac23\).

See the proof on page~\pageref{proof:thm:res}. We first reduce the geometric
quantity \(T_I(f)\) to the local curvature \(f''\) on the interval \(I\). This
leads to an endpoint model \(Z_f^{\mathrm{end}}(s)\), which has the same
residue at \(s=\frac23\). We then reparametrize Farey intervals by coprime
denominator pairs, reorganize the series by the first denominator \(b\), and
show that the inner sums admit an equidistributed asymptotic with a
power-saving error. The pole then comes entirely from the classical Dirichlet
series
\[
\sum_{b\ge1}\varphi(b)b^{-3s}.
\]

Hata associates to every Farey interval \(I\) the piecewise-linear function
\cite{Hata1995Farey}
\[
S_I(x)
:=
\frac{b+d}{2}
\Bigl(
|a-bx|+|c-dx|-|a+c-(b+d)x|
\Bigr).
\]

\begin{lemma}
For every Farey interval
\[
I=\left[\frac cd,\frac ab\right],
\]
the function \(S_I\) satisfies
\[
S_I(x)\ge 0,
\qquad
\operatorname{supp}(S_I)=I,
\qquad
S_I\!\left(\frac cd\right)=S_I\!\left(\frac ab\right)=0,
\qquad
S_I(\mu_I)=1.
\]
Moreover, \(S_I\) is piecewise linear and attains its unique maximum at the
mediant \(\mu_I\).
\end{lemma}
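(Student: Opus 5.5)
The plan is to analyze the three absolute-value terms in
\[
S_I(x)=\frac{b+d}{2}\Bigl(|a-bx|+|c-dx|-|a+c-(b+d)x|\Bigr)
\]
through their sign changes. Each term \(|\alpha-\beta x|\) with \(\beta>0\) is piecewise linear, with a single kink at \(x=\alpha/\beta\). The kinks therefore sit at \(c/d\), \(\mu_I\) and \(a/b\). Since \(ad-bc=1\), we have
\[
\frac cd<\frac{a+c}{b+d}<\frac ab,
\]
using \(a/b-\mu_I=1/(b(b+d))>0\) and \(\mu_I-c/d=1/(d(b+d))>0\). So \(S_I\) is piecewise linear, with at most four linear pieces separated by these three points.

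Next I will treat \(x\) outside \(I\). For \(x\le c/d\), all three quantities \(a-bx\), \(c-dx\) and \(a+c-(b+d)x\) are nonnegative. The absolute values drop, and the bracket becomes
\[
(a-bx)+(c-dx)-(a+c-(b+d)x)=0.
\]
For \(x\ge a/b\), all three quantities are nonpositive, and the bracket is again \(0\) by the same linear cancellation. Hence \(S_I\) vanishes off the open interval \((c/d,a/b)\), and in particular \(S_I(c/d)=S_I(a/b)=0\).

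Inside \(I\), the quantity \(c-dx\) is negative while \(a-bx\) is positive. On \([c/d,\mu_I]\) we have \(a+c-(b+d)x\ge0\), so the bracket equals
\[
(a-bx)-(c-dx)-(a+c-(b+d)x)=2(dx-c).
\]
This gives \(S_I(x)=(b+d)(dx-c)\), which increases from \(0\). On \([\mu_I,a/b]\) the sign of the last term flips, and the bracket equals \(2(a-bx)\). This gives \(S_I(x)=(b+d)(a-bx)\), which decreases to \(0\). Evaluating at the mediant,
\[
S_I(\mu_I)=(b+d)\Bigl(d\,\frac{a+c}{b+d}-c\Bigr)=ad-bc=1.
\]
So \(S_I\) is a tent: strictly increasing then strictly decreasing, positive on the open interval, and zero outside \(I\). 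This yields \(S_I\ge0\), \(\operatorname{supp}(S_I)=I\), and a unique maximum at \(\mu_I\).

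There is no real obstacle here. The only point needing care is the sign bookkeeping, namely the strict ordering \(c/d<\mu_I<a/b\), which follows from \(ad-bc=1\) and \(b,d\ge1\).
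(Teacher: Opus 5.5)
Your proof is correct and complete. The ordering \(c/d<\mu_I<a/b\) follows from \(ad-bc=1\). The bracket cancels exactly outside \(I\). Inside \(I\) you get the explicit tent \((b+d)(dx-c)\) on \([c/d,\mu_I]\) and \((b+d)(a-bx)\) on \([\mu_I,a/b]\), whose slopes are nonzero because \(b,d\ge1\), and its value at the mediant is \(ad-bc=1\); together these give every claim, including the uniqueness of the maximum.

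The paper states this lemma without proof, as a standard property of Hata's functions, so there is no alternative argument to compare with. Your sign-by-sign case analysis is the natural verification.
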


\begin{theorem}[\cite{Hata1995Farey}]
Every continuous function \(f\in C[0,1]\) admits the expansion
\[
f(x)=f(0)+(f(1)-f(0))x+\sum_I c_I(f)\,S_I(x),
\]
where the sum is taken over all Farey intervals in Stern--Brocot order and
converges uniformly on \([0,1]\). The coefficient of \(S_I\) is given by
\eqref{coeff_hata}.
\end{theorem}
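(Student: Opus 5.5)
The plan is to identify the partial sums of the series with piecewise linear interpolants of \(f\) on the successive Stern--Brocot partitions of \([0,1]\), and then to use uniform continuity of \(f\).

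\textbf{Step 1: the partitions and interpolants.} Set \(\mathcal P_0=\{0/1,1/1\}\). Obtain \(\mathcal P_{n+1}\) from \(\mathcal P_n\) by inserting the mediant of every pair of consecutive nodes. By induction, each pair of consecutive nodes \(c/d<a/b\) in \(\mathcal P_n\) satisfies \(ad-bc=1\), so it spans a Farey interval. Every Farey interval \(I\subset[0,1]\) is split exactly once, at some level. Let \(L_n\) denote the continuous function that is linear on each interval of \(\mathcal P_n\) and agrees with \(f\) on \(\mathcal P_n\). Then \(L_0(x)=f(0)+(f(1)-f(0))x\).

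\textbf{Step 2: the key identity.} For each interval \(I=[c/d,a/b]\) of \(\mathcal P_n\) I would show that, on \(I\),
\[
L_{n+1}=L_n+c_I(f)\,S_I .
\]
Both sides are continuous and piecewise linear on \(I\). Indeed, \(S_I\) has kinks only at \(a/b\), \(c/d\) and \(\mu_I\), since these are the zeros of the three absolute values. By the preceding lemma, \(S_I\) vanishes at the endpoints and equals \(1\) at \(\mu_I\). So it suffices to check that \(L_n(\mu_I)+c_I(f)=f(\mu_I)\).

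This is a direct computation. From \(ad-bc=1\),
\[
\mu_I-\tfrac cd=\frac{1}{d(b+d)},\qquad \tfrac ab-\mu_I=\frac1{b(b+d)},\qquad |I|=\frac1{bd}.
\]
Hence linear interpolation at \(\mu_I\) gives
\[
L_n(\mu_I)=\frac{b}{b+d}f\!\left(\tfrac ab\right)+\frac{d}{b+d}f\!\left(\tfrac cd\right),
\]
and this is exactly what \eqref{coeff_hata} subtracts.

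\textbf{Step 3: partial sums.} The supports of the \(S_I\) for the intervals \(I\) of a fixed level have disjoint interiors. Summing the identity of Step 2 over them gives
\[
L_N=L_0+\sum_{\text{level}(I)<N}c_I(f)S_I .
\]
Within a level, adding the terms in any order only refines the partition. So every partial sum in Stern--Brocot order is again a piecewise linear interpolant of \(f\), on a partition that refines \(\mathcal P_{n}\) for some \(n\to\infty\).

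\textbf{Step 4: mesh and uniform convergence.} The mesh estimate is the only real point. Inserting a mediant strictly increases the denominator sum \(b+d\) along any chain of nested intervals. Hence an interval of level \(n\) satisfies \(b+d\ge n+2\), so \(bd\ge b+d-1\ge n+1\) and its length \(1/(bd)\) is at most \(1/(n+1)\).

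A piecewise linear interpolant on a partition of mesh \(\delta\) satisfies \(\|f-L\|_\infty\le \omega_f(\delta)\), where \(\omega_f\) is the modulus of continuity. Therefore the partial sums converge uniformly to \(f\), which gives the expansion with the stated coefficients.
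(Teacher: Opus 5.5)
Your proof is correct. The paper gives no proof of this statement; it is quoted from Hata. Your argument is the standard Faber--Schauder one: the partial sums taken in Stern--Brocot order are exactly the piecewise-linear interpolants of \(f\) on partitions refining \(\mathcal P_n\), whose mesh is at most \(1/(n+1)\), and uniform continuity of \(f\) does the rest.

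The one fact you assert without proof is that every Farey interval in \([0,1]\) occurs as an interval of some \(\mathcal P_n\). This is what identifies your tree sum with the sum over \emph{all} Farey intervals, and it follows by induction on \(b+d\):
\begin{itemize}
\item If \(b>d\), then \([c/d,(a-c)/(b-d)]\) is a Farey interval in \([0,1]\), since its determinant is \((a-c)d-(b-d)c=ad-bc=1\). It has smaller denominator sum and mediant \(a/b\), so \(I\) is one of its two children.
\item If \(d>b\), use \([(c-a)/(d-b),a/b]\) symmetrically.
\item If \(b=d\), then \(d(a-c)=1\) forces \(I=[0/1,1/1]\).
\end{itemize}

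Your argument only uses that each interval is summed after its parent. That is the real content of ``Stern--Brocot order'' here. Some such restriction is genuinely needed: \(C[0,1]\) has no unconditional basis, so for general continuous \(f\) the expansion is not unconditionally convergent. The same proof also covers ordering by the mediant denominator \(b+d\). There the complete partial sums are the interpolants on the Farey sequences of order \(Q\).
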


For our purposes, the importance of Hata's coefficient is that, after the
normalization
\[
T_I(f)=(b+d)c_I(f),
\]
it behaves like a second-order local quantity attached to the interval \(I\).
Thus the zeta function \(Z_f(s)\) can be expected to be controlled by the
local curvature \(f''\), with a geometric weight depending only on the Farey
denominators \(b,d,b+d\). The next lemma makes this heuristic exact.

\begin{lemma}[Mean-value formula for \(T_I(f)\)]
\label{lemma:exactT}
Let \(I=[c/d,a/b]\) be a Farey interval and let \(f\in C^2([0,1])\). Then
there exists \(\xi_I\in(c/d,a/b)\) such that
\[
T_I(f)=-\frac{f''(\xi_I)}{2bd(b+d)}.
\]
\end{lemma}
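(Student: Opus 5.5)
The plan is to recognize \(c_I(f)\) as a second divided difference of \(f\) at three nodes and then apply the standard mean-value theorem for divided differences. Put \(x_0=c/d\), \(x_2=a/b\) and \(x_1=\mu_I=(a+c)/(b+d)\). Since
\[
\frac{a+c}{b+d}=\frac{b}{b+d}\cdot\frac ab+\frac{d}{b+d}\cdot\frac cd,
\]
the mediant is a convex combination of the endpoints with weights \(\lambda_2=b/(b+d)\) and \(\lambda_0=d/(b+d)\). Therefore \(c_I(f)=f(x_1)-\lambda_2f(x_2)-\lambda_0f(x_0)\) is the error of the linear interpolant of \(f\) at the interior node \(x_1\). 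It vanishes on affine functions. Because \(ad-bc=1\), the nodes are distinct, with \(x_0<x_1<x_2\).

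The first step is the node spacings, computed from the unimodularity relation:
\[
x_1-x_0=\frac{1}{d(b+d)},\qquad x_2-x_1=\frac{1}{b(b+d)},\qquad x_2-x_0=\frac{1}{bd}.
\]
The second step uses the Newton form of interpolation, which gives
\[
f(x_1)-\bigl(\lambda_2f(x_2)+\lambda_0f(x_0)\bigr)=-(x_1-x_0)(x_2-x_1)\,f[x_0,x_1,x_2].
\]
This identity can be checked directly: the interpolation error of the line through \((x_0,f(x_0))\) and \((x_2,f(x_2))\), evaluated at \(x_1\), equals \(f[x_0,x_2,x_1](x_1-x_0)(x_1-x_2)\). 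The third step applies the mean-value theorem for divided differences to \(f\in C^2\). It provides \(\xi_I\in(x_0,x_2)\) with \(f[x_0,x_1,x_2]=f''(\xi_I)/2\). For a self-contained argument, I would apply Rolle's theorem twice to \(f-p-K(x-x_0)(x-x_1)(x-x_2)\), where \(p\) is the quadratic interpolant. Alternatively, Peano's kernel gives \(c_I(f)=-\int_I K(t)f''(t)\,dt\) with \(K\ge0\), followed by the integral mean-value theorem.

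Combining the three steps gives
\[
c_I(f)=-\frac{1}{d(b+d)}\cdot\frac{1}{b(b+d)}\cdot\frac{f''(\xi_I)}{2}.
\]
Multiplying by \(b+d\) then yields \(T_I(f)=-f''(\xi_I)/(2bd(b+d))\), as claimed.

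There is no real obstacle here. The only points needing care are the sign convention and the fact that \(\xi_I\) lies in the open interval. The open interval follows from Rolle's theorem applied on \((x_0,x_1)\) and \((x_1,x_2)\), or from the kernel \(K\) being positive on the interior. As a sanity check, take \(f(x)=x^2\) on \(I=[0,1]\), so \(b=d=1\). Then \(c_I=\tfrac14-\tfrac12=-\tfrac14\) and \(T_I=-\tfrac12\), which agrees with \(-2/(2\cdot1\cdot1\cdot2)=-\tfrac12\).
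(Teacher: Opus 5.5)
Your argument is correct and is essentially the paper's. The paper proves $f(m)-\lambda f(x)-(1-\lambda)f(y)=-\tfrac12\lambda(1-\lambda)f''(\xi)(x-y)^2$ by applying Rolle's theorem twice to $\varphi(t)-L(t)-A\,t(t-1)/(\lambda(\lambda-1))$, which is just $f$ minus its quadratic interpolant at the three nodes, and then sets $\lambda=b/(b+d)$; your factor $(x_1-x_0)(x_2-x_1)$ is exactly $\lambda(1-\lambda)(x_2-x_0)^2$.

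The only slip is in your self-contained aside: the cubic term $K(x-x_0)(x-x_1)(x-x_2)$ does not belong there, since that device controls the interpolation error at a fourth point and needs $f\in C^3$. Applying Rolle twice to $f-p$ itself already gives $f''(\xi_I)=p''=2f[x_0,x_1,x_2]$.
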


The proof is given on page~\pageref{proof:lemma:exactT}.

Thus \(c_I(f)\) is a weighted discrete second derivative of \(f\) along the
Farey interval \(I\). Lemma~\ref{lemma:exactT} shows that each summand of
\(Z_f(s)\) is governed by the value of \(f''\) at some interior point
\(\xi_I\), multiplied by the universal arithmetic weight
\[
(bd(b+d))^{-1}.
\]
To isolate the arithmetic structure, we now replace the interior point
\(\xi_I\) by one endpoint. This produces a simpler model series
\(Z_f^{\mathrm{end}}(s)\), and the next lemma shows that this modification
does not change the residue at \(s=\frac23\).

Define
\[
Z_f^{\mathrm{end}}(s)
:=
2^{-s}
\sum_{I=[c/d,a/b]}
\frac{|f''(a/b)|^s}{(bd(b+d))^s}.
\]
The next lemma shows that replacing \(\xi_I\) by this endpoint does not
change the residue at \(s=\frac23\).

\begin{lemma}[Endpoint replacement]
\label{lem:replace}
There exists \(\eta_0>0\) such that
\[
Z_f(s)-Z_f^{\mathrm{end}}(s)
\]
is holomorphic on the disk
\[
|s-\tfrac23|<\eta_0.
\]
In particular,
\[
\Res_{s=2/3}Z_f(s)=\Res_{s=2/3}Z_f^{\mathrm{end}}(s).
\]
\end{lemma}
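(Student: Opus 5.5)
By Lemma~\ref{lemma:exactT}, every summand of \(Z_f(s)\) can be written as
\[
|T_I(f)|^s=2^{-s}\frac{|f''(\xi_I)|^s}{(bd(b+d))^s},
\]
so the difference is
\[
Z_f(s)-Z_f^{\mathrm{end}}(s)
=
2^{-s}\sum_{I}\frac{|f''(\xi_I)|^s-|f''(a/b)|^s}{(bd(b+d))^s}.
\]
Each term is entire in \(s\). I will show that this series converges absolutely and locally uniformly on a disk \(|s-\tfrac23|<\eta_0\); the Weierstrass theorem then gives holomorphy, and equality of residues follows at once. The idea is that the endpoint replacement costs one factor of the interval length \(|I|=1/(bd)\), and this extra decay pushes the abscissa of convergence strictly below \(\tfrac23\).

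\textbf{Step 1 (pointwise bound).} Under \eqref{eq:ass}, \(f''\) is continuous and never zero on \([0,1]\). So \(f''\) has constant sign and \(m\le|f''|\le M\). Also \(|f''|\) is \(C^1\) with derivative bounded by \(\|f'''\|_\infty\), since \(f\in C^3\). Write \(|f''|^s=e^{s\log|f''|}\). For \(s\) in a compact disk \(K\) around \(\tfrac23\), the mean value theorem for \(e^{s\phi}\), with \(\phi=\log|f''|\) Lipschitz, gives
\[
\bigl||f''(\xi_I)|^s-|f''(a/b)|^s\bigr|
\le C_K\,|\xi_I-a/b|
\le C_K\,|I|
=\frac{C_K}{bd}.
\]
Here \(C_K\) depends only on \(m\), \(M\), \(\|f'''\|_\infty\) and \(K\).

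\textbf{Step 2 (summing).} Put \(\sigma=\Re s\). The \(I\)-th term of the difference is bounded by
\[
C'_K\,\frac{1}{bd\,(bd(b+d))^{\sigma}}.
\]
Farey intervals in \([0,1]\) correspond injectively to coprime pairs \((b,d)\) with \(b,d\ge1\): the pair \((b,d)\) determines \(a\) and \(c\) through \(ad-bc=1\) and the constraints \(0\le a\le b\), \(0\le c\le d\), up to finitely many boundary cases. It therefore suffices to show
\[
\sum_{b,d\ge1}(bd)^{-1-\sigma}(b+d)^{-\sigma}<\infty
\qquad\text{for }\sigma>\sigma_0,
\]
with some \(\sigma_0<\tfrac23\).

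I will use a dyadic decomposition with \(b\asymp B\), \(d\asymp D\), and \(B\le D\) by symmetry. Such a block has \(\asymp BD\) terms, each of size \(\asymp (BD)^{-1-\sigma}D^{-\sigma}\). Its total is therefore \(\asymp B^{-\sigma}D^{-2\sigma}\). Summing over dyadic \(B\le D\) gives
\[
\sum_D D^{-3\sigma}\log D
\]
when \(\sigma\ge0\), which converges for \(\sigma>0\). The same conclusion follows from the crude bound \((b+d)^{-\sigma}\le1\), which leaves \(\bigl(\sum_b b^{-1-\sigma}\bigr)^2\). Either way the series converges absolutely for \(\sigma>0\), uniformly on compact subsets. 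Taking \(\eta_0<\tfrac23\) then gives holomorphy on \(|s-\tfrac23|<\eta_0\), in fact on all of \(\Re s>0\).

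\textbf{Main obstacle.} The only delicate point is Step 1: the bound must be uniform in complex \(s\). This is where nonvanishing of \(f''\) is essential. It keeps \(\log|f''|\) Lipschitz, so the oscillating factor \(|f''|^{i\tau}\) is controlled. The \(C^3\) hypothesis supplies the Lipschitz constant, and \(\xi_I\in I\) supplies the gain of one factor of \(1/(bd)\). The remaining checks are routine: the finitely many exceptional boundary intervals only contribute entire terms, as noted in the earlier remark, and the bound \(\sum_{\text{block}}1\le BD\) holds trivially without any coprimality information.
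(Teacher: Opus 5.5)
Your proof is correct and is essentially the paper's argument: a mean-value bound $\bigl||f''(\xi_I)|^s-|f''(a/b)|^s\bigr|\le C_K/(bd)$, uniform for $s$ in a compact disk, followed by the Weierstrass $M$-test with the majorant $b^{-(\sigma_-+1)}d^{-(\sigma_-+1)}(b+d)^{-\sigma_-}$, which is summable because each of $b,d$ carries exponent $>1$ (your crude bound $(b+d)^{-\sigma}\le 1$). One small slip, in the optional dyadic aside only: summing $B^{-\sigma}D^{-2\sigma}$ over dyadic $B\le D$ gives $\asymp D^{-2\sigma}$, dominated by $B=1$, not $D^{-3\sigma}\log D$; this does not affect your conclusion.
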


The proof is given on page~\pageref{proof:lem:replace}.

\begin{lemma}[Parametrization by modular inverses]
\label{lem:param}
Farey intervals
\[
I=\left[\frac cd,\frac ab\right]
\]
with \(b,d\ge1\) are in bijection with pairs
\[
(b,d)\in\N^2,
\qquad
\gcd(b,d)=1,
\]
via
\[
a\equiv d^{-1}\pmod b,
\qquad
1\le a\le b,
\qquad
c=\frac{ad-1}{b}.
\]
\end{lemma}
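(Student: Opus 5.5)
The plan is to verify directly that the map \(I=[c/d,a/b]\mapsto(b,d)\) is well defined into coprime pairs and that the stated formulas give its two-sided inverse. The only input is the defining relation \(ad-bc=1\) together with the normalizations \(0\le a\le b\) and \(0\le c\le d\).

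\emph{Forward direction.} Let \(I=[c/d,a/b]\) be a Farey interval with \(b,d\ge1\).
\begin{itemize}
\item Since \(ad-bc=1\), any common divisor of \(b\) and \(d\) divides \(1\), so \(\gcd(b,d)=1\).
\item Reducing \(ad-bc=1\) modulo \(b\) gives \(ad\equiv1\pmod b\), that is, \(a\equiv d^{-1}\pmod b\).
\item The case \(a=0\) is impossible: it would give \(-bc=1\) with \(c\ge0\). Hence \(1\le a\le b\).
\item Among the integers \(1,\dots,b\) there is exactly one representative of each residue class modulo \(b\). So \(a\) is the unique element of \(\{1,\dots,b\}\) congruent to \(d^{-1}\) modulo \(b\). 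This includes the degenerate case \(b=1\), where the representative is \(a=1\).
\item Finally \(c=(ad-1)/b\) is forced by the relation \(ad-bc=1\).
\end{itemize}
Thus \(I\) is completely determined by \((b,d)\), and the map is injective.

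\emph{Inverse direction.} Given \((b,d)\in\N^2\) with \(\gcd(b,d)=1\), let \(a\in\{1,\dots,b\}\) be the unique residue with \(ad\equiv1\pmod b\), and set \(c=(ad-1)/b\).
\begin{itemize}
\item \(c\) is an integer by the choice of \(a\).
\item \(c\ge0\), since \(ad\ge1\).
\item \(c<d\), since \(ad-1\le bd-1<bd\).
\item \(ad-bc=1\) holds by construction.
\item This relation gives \(\gcd(a,b)=\gcd(c,d)=1\).
\item The bounds \(0\le a\le b\) and \(0\le c\le d\) hold, so \(c/d\) and \(a/b\) lie in \([0,1]\).
\item \(a/b-c/d=1/(bd)>0\), so \(I=[c/d,a/b]\) is a genuine nondegenerate interval in \([0,1]\).
\end{itemize}
Hence this is a Farey interval whose denominator pair is \((b,d)\), so the map is surjective. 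Combined with the injectivity above, this proves the bijection.

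There is no real obstacle here. The only point requiring care is the boundary bookkeeping: excluding \(a=0\), and choosing the residue range \(1\le a\le b\) rather than \(0\le a<b\). This choice is what makes the case \(b=1\), corresponding to the intervals \([c/d,1/1]\), fit the formula correctly.
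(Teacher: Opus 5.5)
Your proof is correct and follows the same route as the paper: reduce \(ad-bc=1\) modulo \(b\) for the forward direction, and for the inverse take the representative of \(d^{-1}\) in \(\{1,\dots,b\}\) and set \(c=(ad-1)/b\). You also check the normalizations that the paper leaves implicit, namely \(a\neq0\), \(0\le c<d\), and the role of the range \(1\le a\le b\) when \(b=1\); these confirm that the constructed interval really satisfies the Farey-interval definition.
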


\begin{proof}
If \(ad-bc=1\), then \(ad\equiv1\pmod b\), so \(\gcd(b,d)=1\) and
\(a\equiv d^{-1}\pmod b\). Conversely, for coprime \(b,d\), choose \(a\)
with
\[
a\equiv d^{-1}\pmod b,
\qquad
1\le a\le b,
\]
and define
\[
c=\frac{ad-1}{b}\in\Z.
\]
Then \(ad-bc=1\).
\end{proof}

As we explain below, after writing \(d=kb+r\), the dependence on \(k\)
becomes one-dimensional and is absorbed into a universal kernel
\(H_s(r/b)\). Hence, using the parametrization by \((b,d)\), the original
Farey sum is reorganized into outer sums over \(b\) and inner reduced-residue
sums over the inverse of \(r\) modulo \(b\).

For \(\Re(s)>\frac12\) and \(u\in(0,1]\), define
\[
H_s(u):=\sum_{k=0}^\infty (k+u)^{-s}(k+1+u)^{-s}.
\]

For \(\Re(s)>\frac23\), where the original Farey series converges
absolutely, Lemma~\ref{lem:param} and the decomposition
\(d=kb+r\), with \(\gcd(r,b)=1\), give
\begin{equation}
\label{eq:Hsdecomp}
Z_f^{\mathrm{end}}(s)
=
2^{-s}
\sum_{b\ge1}b^{-3s}
\sum_{\substack{1\le r\le b\\(r,b)=1}}
|f''(\overline r/b)|^s\,H_s(r/b),
\end{equation}
where \(\overline r\) denotes the inverse of \(r\) modulo \(b\), represented in
\(\{1,\dots,b\}\); the convention at \(b=1\) affects only a harmless
holomorphic contribution. The individual functions \(H_s(u)\) and the
finite reduced-residue sums occurring below are holomorphic for
\(\Re(s)>\frac12\). Below the line \(\Re(s)=\frac23\), however,
\eqref{eq:Hsdecomp} is used only through the meromorphic continuation
constructed below. Note that the arithmetic kernel depends on \(r/b\),
while the geometric sampling point is \(\overline r/b\), and this is
precisely why both \(r\) and \(\overline r\) appear simultaneously.

\begin{lemma}[Integral of \(H_s\)]
\label{lem:intHs}
For \(\frac12<\Re(s)<1\),
\[
\int_0^1 H_s(u)\,du
=
\int_0^\infty t^{-s}(1+t)^{-s}\,dt
=
\frac{\Gamma(1-s)\Gamma(2s-1)}{\Gamma(s)}.
\]
In particular,
\[
\int_0^1 H_{2/3}(u)\,du
=
\frac{\Gamma\!\left(\frac13\right)^2}{\Gamma\!\left(\frac23\right)}.
\]
\end{lemma}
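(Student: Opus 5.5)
The plan is to interchange sum and integral, telescope in \(k\), and recognize a Beta integral. For fixed \(s\) with \(\frac12<\Re(s)<1\), each term \((k+u)^{-s}(k+1+u)^{-s}\) is continuous on \((0,1]\). Near \(u=0\) the only singular term is \(k=0\), where the integrand behaves like \(u^{-s}\); this is integrable because \(\Re(s)<1\). For large \(k\) the terms are \(O(k^{-2\Re(s)})\) uniformly in \(u\), which is summable because \(\Re(s)>\frac12\). Hence the series of absolute values is integrable, and Fubini/Tonelli gives
\[
\int_0^1 H_s(u)\,du=\sum_{k\ge0}\int_0^1 (k+u)^{-s}(k+1+u)^{-s}\,du
=\sum_{k\ge0}\int_k^{k+1} t^{-s}(1+t)^{-s}\,dt=\int_0^\infty t^{-s}(1+t)^{-s}\,dt,
\]
using the substitution \(t=k+u\) in each term. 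The last integral converges at \(0\) because \(\Re(s)<1\), and at \(\infty\) because the integrand is \(O(t^{-2\Re(s)})\) with \(2\Re(s)>1\).

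Next, I would evaluate this integral with the Beta integral \(\int_0^\infty t^{x-1}(1+t)^{-x-y}\,dt=B(x,y)=\Gamma(x)\Gamma(y)/\Gamma(x+y)\), valid for \(\Re x,\Re y>0\). Here \(x=1-s\) and \(x+y=s\), so \(y=2s-1\). Both real parts are positive exactly in the strip \(\frac12<\Re(s)<1\), which gives
\[
\int_0^\infty t^{-s}(1+t)^{-s}\,dt=\frac{\Gamma(1-s)\Gamma(2s-1)}{\Gamma(s)}.
\]
If one prefers to avoid citing the complex-parameter form, the identity can be checked for real \(s\) in the strip and then extended by analytic continuation, since both sides are holomorphic there.

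Finally, at \(s=\frac23\) we have \(1-s=\frac13\), \(2s-1=\frac13\) and \(\Gamma(s)=\Gamma(\frac23)\), which gives \(\Gamma(\frac13)^2/\Gamma(\frac23)\).

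There is no real obstacle here. The only point needing care is justifying the interchange of sum and integral near \(u=0\), and the domination argument in the first step handles it.
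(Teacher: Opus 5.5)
Your proposal is correct and follows essentially the same route as the paper. Both justify Fubini by the integrability of the \(k=0\) term near \(u=0\) and the \(k^{-2\Re(s)}\) domination of the tail, substitute \(t=k+u\) to tile \([0,\infty)\), and recognize the Beta integral; you also make explicit the parameters \(x=1-s\), \(y=2s-1\), which the paper leaves implicit.
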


\begin{proof}
For \(\frac12<\Re(s)<1\), the series is absolutely summable after
integration over \(u\in(0,1)\). Indeed, the terms with \(k\ge1\) are
controlled by
\[
\sum_{k\ge1}k^{-2\Re(s)},
\]
while the \(k=0\) term is integrable near \(u=0\) because
\(\Re(s)<1\). Hence Fubini's theorem gives
\[
\int_0^1 H_s(u)\,du
=
\sum_{k\ge0}\int_0^1
(k+u)^{-s}(k+1+u)^{-s}\,du.
\]
Substituting \(t=k+u\) in each term and observing that the intervals
\([k,k+1]\) partition \([0,\infty)\), we obtain
\[
\int_0^\infty t^{-s}(1+t)^{-s}\,dt,
\]
which is the stated Beta integral.
\end{proof}

The decomposition \eqref{eq:Hsdecomp} reduces the residue problem to
understanding, for each fixed \(b\), the reduced-residue average of the product
\[
H_s(r/b)\,|f''(\overline r/b)|^s.
\]
We denote this inner sum by \(\Sigma_b(s)\). The key analytic step is that
\(\Sigma_b(s)\) admits an asymptotic formula with a power-saving error,
uniformly in \(s\) on compact subsets of
\[
\left\{\frac12<\Re(s)<1\right\}.
\]

Define
\[
\Sigma_b(s)
:=
\sum_{\substack{1\le r\le b\\(r,b)=1}}
H_s(r/b)\,|f''(\overline r/b)|^s.
\]
We prove the following equidistribution estimate for \(\Sigma_b(s)\).

\begin{proposition}
\label{prop:spec}
Fix a compact set
\[
K\subset\left\{\frac12<\Re(s)<1\right\}
\]
and assume \eqref{eq:ass}. Then for every \(\varepsilon>0\) and \(s\in K\)
we have
\[
\Sigma_b(s)
=
\varphi(b)
\left(\int_0^1 H_s(u)\,du\right)
\left(\int_0^1 |f''(v)|^s\,dv\right)
+
O_{f,K,\varepsilon}
\bigl(b^{\frac{1+\Re(s)}{2}+\varepsilon}\bigr),
\]
with implied constant uniform for \(s\in K\).
\end{proposition}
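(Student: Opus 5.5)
The plan is to treat $\Sigma_b(s)$ as a correlation sum over the graph of modular inversion, $\{(r/b,\overline r/b)\}$, and to show that these points are equidistributed in $[0,1]^2$. The tools are Fejér (trigonometric polynomial) approximation of both factors, followed by Kloosterman-sum bounds. Put $G_s(v):=|f''(v)|^s$. Since $|f''|\ge m>0$ and $f\in C^3$, $G_s$ is $C^1$ on $[0,1]$, with norms bounded uniformly for $s\in K$. The kernel $H_s$ is smooth on $(0,1]$ but has a singularity $u^{-s}(1+u)^{-s}$ at $u=0$ coming from the $k=0$ term. So the first step is to split
\[
H_s(u)=u^{-s}(1+u)^{-s}\chi(u)+\widetilde H_s(u),
\]
where $\chi$ is a cutoff near $0$ and $\widetilde H_s$ is Lipschitz on $[0,1]$ (indeed smooth up to the periodic seam, after correcting the boundary mismatch at $u=1$ by a linear term). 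The singular piece will be handled by truncation at $u\ge \delta$. The contribution of $r/b<\delta$ is
\[
\ll \sum_{r\le \delta b} (r/b)^{-\Re s}\ll b\,\delta^{1-\Re s},
\]
and this is balanced against the Fourier cost of the truncated function, whose derivative is of size $\delta^{-\Re s-1}$.

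Next, approximate the periodized versions of $\widetilde H_s$ (or of the truncated singular piece) and of $G_s$ by Fejér means of orders $M$ and $N$. The approximation error for a Lipschitz function is $O(M^{-1}\log M)$ in sup norm, and it is cheaper still in $L^1$. Multiplying out, the main term comes from the frequency $(0,0)$. It equals
\[
\varphi(b)\,\hat H_s(0)\,\hat G_s(0)=\varphi(b)\int_0^1 H_s\int_0^1 G_s,
\]
up to errors that are absorbed. The cross terms with $(h,k)\ne(0,0)$ are
\[
\hat H(h)\,\hat G(k)\sum_{(r,b)=1} e\!\left(\frac{hr+k\overline r}{b}\right)=\hat H(h)\,\hat G(k)\,S(h,k;b),
\]
which are Kloosterman sums. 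The Weil bound gives
\[
|S(h,k;b)|\le \tau(b)\,\gcd(h,k,b)^{1/2}\,b^{1/2}.
\]
When $h=0$ or $k=0$ the sum is a Ramanujan sum, bounded by $\gcd$. Summing $|\hat H(h)|\,|\hat G(k)|$, with the Fourier decay $|\hat G(k)|\ll 1/|k|$ and $|\hat H(h)|\ll \min(1,\delta^{-\Re s-1}h^{-2})$ or similar, the nonzero frequencies contribute $\ll b^{1/2+\varepsilon}\,(\log M)(\log N)$ times a power of $\delta^{-1}$.

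Finally, optimize the parameters. The two sources of error are the truncation, of size $b\,\delta^{1-\Re s}$, and the Kloosterman part, of size roughly $b^{1/2+\varepsilon}\delta^{-\Re s}$ (the $\ell^1$ norm of $\hat H$ for a function with a jump-type singularity of height $\delta^{-\Re s}$, up to logs). Equating them gives $\delta=b^{-1/2}$, and the error becomes
\[
b^{1-(1-\Re s)/2}=b^{\frac{1+\Re s}{2}},
\]
which matches the claimed exponent $\frac{1+\Re(s)}{2}+\varepsilon$. The Fejér errors, $\varphi(b)\cdot M^{-1}$ and similar, are made negligible by taking $M,N$ to be large powers of $b$; this is affordable because the Kloosterman bound costs only logarithms in $M$ and $N$. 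Uniformity in $s\in K$ follows because every constant depends only on $\sup_K\Re s<1$, $\inf_K \Re s>1/2$, and the $C^1$ norms of $G_s$. The main obstacle is this bookkeeping near $u=0$. One must make sure that the singular part of $H_s$, once truncated and smoothed, has Fourier $\ell^1$-mass of only $\delta^{-\Re s}$ up to logarithms, rather than $\delta^{-1}$. I would first try to obtain this by splitting dyadically in $u$ and applying the Kloosterman estimate to incomplete sums over $r$ in dyadic ranges, via the completion method. This gives $b^{1/2+\varepsilon}$ per range, weighted by $(2^{-j})^{-\Re s}$; summing over $2^{-j}\ge\delta$ yields exactly $b^{1/2+\varepsilon}\delta^{-\Re s}$.
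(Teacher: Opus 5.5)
Your proposal is correct. It reaches the same exponent through the same balancing: your $\delta=b^{-1/2}$ is the paper's split point $R=\lfloor b^{1/2}\rfloor$. It distributes the Fourier analysis differently, though.

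\emph{The paper's route.} The paper expands only the geometric factor $|f''|^s$, in the $\overline r$ variable. It first subtracts the affine interpolant $\ell_s$, which is expanded exactly on the residue grid. It therefore never Fourier-analyses the singular kernel. The resulting sums $S_b(n)=\sum_{(r,b)=1}H_s(r/b)\,e(n\overline r/b)$ are weighted incomplete Kloosterman sums. They are bounded trivially for $r\le R$, and for $r>R$ by Abel summation against the completion bound of Lemma~\ref{lem:inckloo}; this uses only pointwise bounds on $H_s$ and $\partial_u H_s$. The zero mode then needs the separate reduced-residue Riemann-sum estimate of Lemma~\ref{lem:unitavg}.

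\emph{Your route.} You expand both factors and reduce to complete sums $S(h,k;b)$. This is more symmetric: it is a genuine two-dimensional equidistribution statement for the points $(r/b,\overline r/b)$. Your main term falls out of the $(0,0)$ coefficient. The $(h,0)$ terms are Ramanujan sums bounded by $(h,b)$, and they take the place of Lemma~\ref{lem:unitavg}. The two arguments are dual, since completing an incomplete sum is exactly Fourier-expanding the cutoff in $r$. Your dyadic fallback is essentially the paper's proof.

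\emph{The step you flag as the main obstacle.} It is easier than you fear and needs no dyadic splitting. Write $\sigma=\Re(s)$. The smoothly truncated, periodized kernel has total variation $\ll\delta^{-\sigma}$: integrate $|\partial_u H_s|\ll u^{-\sigma-1}$ (Lemma~\ref{lem:Hsderiv}) from $\delta$ to $1$, and add the cutoff and seam contributions. Hence $|\hat H(h)|\ll\min(1,\delta^{-\sigma}/|h|)$, and so $\sum_{|h|\le M}|\hat H(h)|\ll\delta^{-\sigma}\log M$. That is all you need, because logarithms in $M$ are free.

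By contrast, the bound $\min(1,\delta^{-\sigma-1}h^{-2})$ that you wrote would give only $\ell^1$-mass $\delta^{-(1+\sigma)/2}$. With $\delta=b^{-1/2}$ this yields an error of order $b^{(3+\sigma)/4}$, which exceeds $b^{(1+\sigma)/2}$ for $\sigma<1$. It is the first-order variation bound, not the $C^2$ bound, that matches the truncation error $b\,\delta^{1-\sigma}$.

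\emph{One small correction.} $|f''|^s$ is $C^1$ on $[0,1]$, but its periodization has a jump at the seam unless $|f''(0)|=|f''(1)|$. So the sup-norm Fej\'er estimate you quote does not apply to it as stated. You can either subtract the affine interpolant as the paper does, or note the following. With $N\ge b$, and with weights bounded by $\delta^{-\sigma}$ after truncation, the jump costs only $O(\delta^{-\sigma}\log b)$, because $r\mapsto\overline r$ is a bijection of the reduced residues.
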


The proof is given on page~\pageref{proof:prop:spec}.

At this point the analytic difficulty is essentially finished. Indeed,
Proposition~\ref{prop:spec} turns \(Z_f^{\mathrm{end}}(s)\) into an explicit
main term involving
\[
\sum_{b\ge1}\varphi(b)b^{-3s},
\]
plus an error which is holomorphic near \(s=\frac23\). Thus the pole and its
residue come from the Euler-factor Dirichlet series for \(\varphi\).

\begin{proof}[Proof of Theorem~\ref{thm:res}]
\label{proof:thm:res}
By Lemma~\ref{lem:replace}, it suffices to compute the residue of
\(Z_f^{\mathrm{end}}(s)\). Fix a disk
\[
D:=\{s:|s-\tfrac23|\le \eta\}
\]
with \(0<\eta<1/21\), and set
\[
\sigma_-:=\frac23-\eta,
\qquad
\sigma_+:=\frac23+\eta.
\]

From \eqref{eq:Hsdecomp} and Proposition~\ref{prop:spec}, applied on a
compact set containing \(D\) but lying in
\[
\left\{\frac12<\Re(s)<1\right\},
\]
we obtain, on the nonempty open set
\[
D^\circ\cap\left\{\Re(s)>\frac23\right\},
\]
the identity
\[
Z_f^{\mathrm{end}}(s)
=
A(s)\sum_{b\ge1}\frac{\varphi(b)}{b^{3s}}
+
E(s),
\]
where
\[
A(s):=
2^{-s}
\left(\int_0^1 H_s(u)\,du\right)
\left(\int_0^1 |f''(v)|^s\,dv\right)
\]
and
\[
E(s):=
2^{-s}\sum_{b\ge1}b^{-3s}R_b(s),
\qquad
|R_b(s)|
\ll_{f,D,\varepsilon}
b^{\frac{1+\Re(s)}{2}+\varepsilon}.
\]

For \(s\in D\),
\[
|b^{-3s}R_b(s)|
\ll_{f,D,\varepsilon}
b^{-3\sigma_-+\frac{1+\sigma_+}{2}+\varepsilon}
=
b^{-\frac76+\frac{7\eta}{2}+\varepsilon}.
\]
Choose \(\varepsilon>0\) so small that
\[
-\frac76+\frac{7\eta}{2}+\varepsilon<-1,
\]
which is possible because \(\eta<1/21\). Hence
\[
\sum_{b\ge1}b^{-7/6+7\eta/2+\varepsilon}
\]
converges, and the series defining \(E(s)\) converges uniformly on \(D\)
by the Weierstrass \(M\)-test. Since each summand is holomorphic in \(s\),
the function \(E(s)\) is holomorphic on \(D^\circ\).

The function \(A(s)\) is holomorphic on
\[
\frac12<\Re(s)<1
\]
by Lemma~\ref{lem:intHs} and dominated convergence. The classical identity
\[
\sum_{n\ge1}\varphi(n)n^{-w}
=
\frac{\zeta(w-1)}{\zeta(w)},
\qquad
\Re(w)>2,
\]
provides the meromorphic continuation of the totient Dirichlet series.
Define, for \(s\in D^\circ\),
\[
\widetilde Z_f^{\mathrm{end}}(s)
:=
A(s)\frac{\zeta(3s-1)}{\zeta(3s)}
+
E(s).
\]
Because \(\eta<1/21\), one has \(\Re(3s)>1\) throughout \(D\), so
\(\zeta(3s)\neq0\) there. Thus
\(\widetilde Z_f^{\mathrm{end}}\) is meromorphic on \(D^\circ\), with at
most a simple pole at \(s=\frac23\).

On
\[
D^\circ\cap\left\{\Re(s)>\frac23\right\},
\]
the original series converges absolutely and
\[
\sum_{b\ge1}\frac{\varphi(b)}{b^{3s}}
=
\frac{\zeta(3s-1)}{\zeta(3s)}.
\]
Consequently,
\[
\widetilde Z_f^{\mathrm{end}}(s)
=
Z_f^{\mathrm{end}}(s)
\]
on this overlap. Hence \(\widetilde Z_f^{\mathrm{end}}\) is the meromorphic
continuation of \(Z_f^{\mathrm{end}}\) to \(D^\circ\).

By Lemma~\ref{lem:replace}, the difference
\(Z_f-Z_f^{\mathrm{end}}\) is holomorphic near \(s=\frac23\).
Therefore \(Z_f\) has the same pole and residue as
\(\widetilde Z_f^{\mathrm{end}}\), and
\[
\Res_{s=2/3} Z_f(s)
=
A(2/3)\cdot\frac{1}{3\zeta(2)}.
\]

Now
\[
A(2/3)
=
2^{-2/3}
\left(\int_0^1 H_{2/3}(u)\,du\right)
\left(\int_0^1 |f''(v)|^{2/3}\,dv\right).
\]
By Lemma~\ref{lem:intHs},
\[
\int_0^1 H_{2/3}(u)\,du
=
\frac{\Gamma\!\left(\frac13\right)^2}
{\Gamma\!\left(\frac23\right)}.
\]
Using
\[
\zeta(2)=\frac{\pi^2}{6},
\qquad
\Gamma\!\left(\frac13\right)\Gamma\!\left(\frac23\right)
=
\frac{2\pi}{\sqrt3},
\]
the constant simplifies to
\[
\frac{\sqrt3\,\Gamma\!\left(\frac13\right)^3}{2^{2/3}\pi^3}.
\]
This proves the theorem.
\end{proof}

\subsection{Hata's basis and Legendre duality}

We now explain that the boundary zeta series is, up to terms holomorphic at
\(s=\frac23\), the zeta series attached to the Hata coefficients of the
Legendre dual. Thus Theorem~\ref{thm:boundary-residue} is a direct
consequence of Theorem~\ref{thm:res} applied to a suitable dual function.

Let \(\Gamma\subset \partial\Omega\) be one of the \(3\), \(4\), \(5\), or \(6\) arcs
cut out by the chosen supporting lines from the minimal model. After an
\(\mathrm{SL}(2,\Z)\) transformation followed by a translation, we may assume
that \(\Gamma\) is the graph of a convex, decreasing \(C^3\) function
\[
y=g(x),
\qquad
x\in[0,A],
\]
and that the parameter
\[
u=-g'(x)
\]
runs through \([0,1]\) along \(\Gamma\).

Define
\[
\widetilde g(u):=g^*(-u),
\qquad
u\in[0,1],
\]
where
\[
g^*(\alpha):=\sup_{x\in[0,A]}(\alpha x-g(x))
\]
is the Legendre transform of \(g\).

A supporting line to \(y=g(x)\) with primitive integer normal
\[
(a,b)\in \Z_{\ge 0}^2\setminus\{(0,0)\}
\]
has equation
\[
ax+by=\gamma_{a,b}.
\]
If \(b>0\), then
\[
y=-\frac abx+\frac{\gamma_{a,b}}{b}.
\]
Its slope is \(-a/b\), and comparison with the Legendre representation
\[
y=\alpha x-g^*(\alpha)
\]
shows that
\[
\widetilde g\!\left(\frac ab\right)
=
g^*\!\left(-\frac ab\right)
=
-\frac{\gamma_{a,b}}{b}.
\]
Thus the values of \(\widetilde g\) at rational points encode the support
data of \(\Omega\).

\begin{lemma}[Identification of the geometric weights]
\label{lem:dual-identification}
Let
\[
I=\left[\frac cd,\frac ab\right]
\]
be a Farey interval, and let
\[
v_I=(a+c,b+d)
\]
be the associated primitive vector. Then
\[
T_I(\widetilde g)
:=
(b+d)c_I(\widetilde g)
=
-\gamma_{a+c,b+d}+\gamma_{a,b}+\gamma_{c,d}.
\]
In particular,
\[
|T_I(\widetilde g)|=f_\Gamma(a,b,c,d).
\]
\end{lemma}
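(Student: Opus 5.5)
The identity is a direct computation from the definition of the Hata coefficient together with the formula \(\widetilde g(a/b)=-\gamma_{a,b}/b\) just derived. We substitute this formula at the two endpoints and at the mediant of \(I\) and check that the denominators cancel against the weights in \(c_I\).

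First, we justify the endpoint formula for all three rationals \(a/b\), \(c/d\), \((a+c)/(b+d)\). The Farey condition forces \(b,d\ge1\), hence \(b+d\ge1\), so the relation \(\widetilde g(p/q)=-\gamma_{p,q}/q\) applies to each. The vector \((a+c,b+d)\) is primitive because
\[
(a+c)d-(b+d)c=ad-bc=1 .
\]
Each of these slopes lies in \([0,1]\), the range of \(u=-g'\), so the supporting line with that normal is attained on \(\Gamma\) and the Legendre computation applies. If a slope is an endpoint \(0\) or \(1\) where the tangency degenerates to an endpoint of the arc, the supremum defining \(g^*\) is still attained on \([0,A]\). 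The identity \(y=\alpha x-g^*(\alpha)\) for the supporting line therefore persists in that case too.

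Next, we compute:
\[
(b+d)c_I(\widetilde g)
=(b+d)\widetilde g\!\left(\tfrac{a+c}{b+d}\right)-b\,\widetilde g\!\left(\tfrac ab\right)-d\,\widetilde g\!\left(\tfrac cd\right).
\]
Substituting the endpoint formula gives
\[
(b+d)c_I(\widetilde g)=-\gamma_{a+c,b+d}+\gamma_{a,b}+\gamma_{c,d}.
\]
This is the claimed expression for \(T_I(\widetilde g)\).

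Finally, taking absolute values and comparing with the definition of \(f_\Gamma(a,b,c,d)\) gives the second assertion. The quadruple \((a,b,c,d)\) has nonnegative entries and satisfies \(ad-bc=1\), so it is exactly an index of the sum defining \(F_\Gamma\). There is no genuine obstacle. The only points needing care are the sign and normalization conventions: normals \((a,b)\) versus slopes \(-a/b\), and the use of \(g^*(-u)\). There is also the boundary behaviour at \(u\in\{0,1\}\), but any discrepancy there affects only finitely many intervals.
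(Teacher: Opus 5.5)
Your proposal is correct and follows the paper's proof: substitute \(\widetilde g(p/q)=-\gamma_{p,q}/q\) at \(a/b\), \(c/d\), and the mediant into the Hata coefficient, then multiply by \(b+d\). Your closing hedge is inaccurate, since infinitely many Farey intervals have an endpoint at \(0/1\) or \(1/1\) (e.g.\ \([0/1,1/b]\) for every \(b\)). It is also unnecessary, because \(g^*(-p/q)=-\frac1q\min_{x\in[0,A]}\bigl(px+q\,g(x)\bigr)=-\gamma_{p,q}/q\) holds for every \(q\ge1\) directly from the definition of the Legendre transform on \([0,A]\), with no tangency condition.
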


\begin{proof}
By the coefficient formula,
\begin{align*}
c_I(\widetilde g)
&=
\widetilde g\!\left(\frac{a+c}{b+d}\right)
-\frac{b}{b+d}\widetilde g\!\left(\frac ab\right)
-\frac{d}{b+d}\widetilde g\!\left(\frac cd\right) \\
&=
-\frac{\gamma_{a+c,b+d}}{b+d}
+\frac{\gamma_{a,b}}{b+d}
+\frac{\gamma_{c,d}}{b+d}.
\end{align*}
Multiplying by \(b+d\) gives the formula.
\end{proof}

The previous lemma identifies the nonzero summands in the geometric
Dirichlet series attached to \(\Gamma\) with the corresponding summands in
the Hata-coefficient zeta series of the dual function \(\widetilde g\).
Only the finitely many endpoint conventions in the two Stern--Brocot
parametrizations can differ.

\begin{corollary}
\label{cor:Fgamma-is-Zg}
There exists a finite Dirichlet polynomial \(Q_\Gamma(s)\), hence an entire
function, such that in the half-plane of absolute convergence
\[
F_\Gamma(s)
=
Z_{\widetilde g}(s)+Q_\Gamma(s).
\]
Consequently, every meromorphic continuation of
\(Z_{\widetilde g}(s)\) induces one of \(F_\Gamma(s)\), with the same poles
and residues. In particular,
\[
\Res_{s=2/3}F_\Gamma(s)
=
\Res_{s=2/3}Z_{\widetilde g}(s).
\]
If the same endpoint convention is used in both parametrizations, then
\(Q_\Gamma=0\).
\end{corollary}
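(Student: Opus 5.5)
The plan is to compare the two index sets term by term, using Lemma~\ref{lem:dual-identification} as the only analytic input. The boundary series \(F_\Gamma(s)\) is indexed by quadruples \((a,b,c,d)\in\Z_{\ge0}^4\) with \(ad-bc=1\). The Farey zeta function \(Z_{\widetilde g}(s)\) is indexed by Farey intervals \(I=[c/d,a/b]\subset[0,1]\) with \(b,d\ge1\), \(0\le a\le b\), \(0\le c\le d\).

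\textbf{Step 1: compare the index sets.} Send a Farey interval to the quadruple \((a,b,c,d)\); this is injective. I would then characterize the quadruples not in its image. These are exactly those with \(b=0\) or \(d=0\) (forcing the vector \((1,0)\) to appear), or with \(a/b>1\) or \(c/d>1\). The latter lie outside the slope range \(u=-g'\in[0,1]\) of the normalized arc.

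\textbf{Step 2: show the excess terms are finite in number or vanish.} For the quadruples with \(b=0\) or \(d=0\), unimodularity pins down the configuration: \(b=0\) forces \(a=1\) and \(d=1\), leaving a one-parameter family \((1,0,c,1)\). For quadruples whose slopes exceed \(1\), or lie at the boundary of the range, the supporting lines \(\gamma_{a,b}\) pass through the endpoint of the arc (the corner of the \(\Gamma\)-triangle). There \(\gamma\) is a linear function of \((a,b)\), so the defect \(\gamma_{a,b}+\gamma_{c,d}-\gamma_{a+c,b+d}\) vanishes.

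Only finitely many quadruples remain, namely those straddling the endpoints \(0/1\), \(1/1\), \(1/0\) at the top of the Stern--Brocot tree. The nonzero excess terms \(f^s\) with \(f>0\) then form a finite Dirichlet polynomial \(Q_\Gamma(s)\), which is entire. On the Farey intervals themselves, Lemma~\ref{lem:dual-identification} gives \(|T_I(\widetilde g)|=f_\Gamma(a,b,c,d)\) exactly, so the corresponding summands coincide.

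\textbf{Step 3: conclude.} In the half-plane of absolute convergence this gives \(F_\Gamma=Z_{\widetilde g}+Q_\Gamma\). Since \(Q_\Gamma\) is entire, any meromorphic continuation of one side transfers to the other with identical poles and residues. In particular \(\Res_{s=2/3}F_\Gamma=\Res_{s=2/3}Z_{\widetilde g}\). If both sums use the same endpoint convention, the index sets agree and \(Q_\Gamma=0\).

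The main obstacle is Step 2. One must check carefully that the normalization of the \(\Gamma\)-triangle (legs along the axes, \(u\in[0,1]\)) makes every out-of-range quadruple give zero defect, rather than infinitely many nonzero terms. This means verifying that \(\widetilde g\) extends linearly beyond the endpoint slopes, where the support point is fixed at a corner. If a genuinely infinite but nonvanishing family survived, I would instead show its contribution is a geometric-type series such as \(\sum_c f(c)^s\) with \(f(c)\) decaying exponentially or staying constant. Such a series is not allowed here, so the linearity argument is the one I would rely on.
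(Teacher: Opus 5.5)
Your proposal is correct and is essentially the paper's own argument. Lemma~\ref{lem:dual-identification} matches the Farey-interval terms exactly; out-of-range quadruples have zero defect because the arc's support point is frozen at its endpoint, so \(\gamma_{a,b}\) is linear on that cone; and the finitely many straddling terms form an entire Dirichlet polynomial.

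In the normalization you adopt (\(-g'\in[0,1]\), quadruples ranging over all of \(\Z_{\ge0}^4\)), this can be made fully explicit, as follows.
\begin{itemize}
\item The case \(d=0\) never occurs.
\item Every quadruple with \(a>b\) other than \((1,0,0,1)\) has all three normals in the closed cone \(\{a\ge b\}\), where \(\gamma_{a,b}=b\,g(0)\), so its defect vanishes.
\item The quadruple \((1,0,0,1)\) is the only Farey pair straddling \(1/1\).
\end{itemize}
Hence \(Q_\Gamma(s)=(g(0)-g(A))^s\) is a single term.
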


\begin{proof}
Lemma~\ref{lem:dual-identification} gives, term by term,
\[
|T_I(\widetilde g)|^s
=
f_\Gamma(a,b,c,d)^s
\]
for the Farey interval \(I=[c/d,a/b]\) and its corresponding unimodular
pair. The Farey intervals and the unimodular triangles in the
Stern--Brocot decomposition are indexed by the same adjacent primitive
pairs. Terms whose slopes lie strictly outside the interval swept out by
\(-g'\) contribute zero because the support function is affine on the
corresponding endpoint cones. Thus the only possible discrepancy comes
from the finitely many endpoint conventions. Their contribution is a
finite sum of functions of the form \(q^s\), and therefore defines an
entire function \(Q_\Gamma(s)\).
\end{proof}

It remains to identify the integral appearing in Theorem~\ref{thm:res} with
the equiaffine arc length of \(\Gamma\).

\begin{lemma}[Legendre duality and equiaffine arc length]
\label{lem:Legendre-ea}
The function \(\widetilde g\) satisfies the assumptions of
Theorem~\ref{thm:res}. Moreover,
\[
\int_0^1 (\widetilde g''(u))^{2/3}\,du
=
\int_0^A (g''(x))^{1/3}\,dx
=
\Length_{\mathrm{equiaffine}}(\Gamma).
\]
\end{lemma}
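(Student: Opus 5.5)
The plan is to use the standard Legendre inverse relation between \(g'\) and \((g^*)'\), and then make a single change of variables \(u=-g'(x)\) in the integral.

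\emph{Regularity.} Since \(g\in C^3([0,A])\) has nonvanishing curvature and is convex, \(g''>0\) on \([0,A]\). By compactness, \(0<m_1\le g''\le M_1\). Hence \(g'\) is a \(C^2\) diffeomorphism of \([0,A]\) onto its image, and by the normalization this image, up to sign, is \([-1,0]\), i.e.\ \(u=-g'(x)\) runs over \([0,1]\).

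\emph{Legendre inversion.} For \(\alpha=g'(x)\) the supremum defining \(g^*(\alpha)\) is attained at the interior critical point \(x=(g')^{-1}(\alpha)\). This gives \(g^*(\alpha)=\alpha x(\alpha)-g(x(\alpha))\), and differentiating yields \((g^*)'(\alpha)=x(\alpha)=(g')^{-1}(\alpha)\). Differentiating once more,
\[
(g^*)''(\alpha)=\frac{1}{g''(x(\alpha))}.
\]
Thus \((g^*)''\) is \(C^1\), so \(g^*\in C^3\), and it is bounded below by \(1/M_1>0\).

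\emph{Transfer to \(\widetilde g\).} Since \(\widetilde g(u)=g^*(-u)\), we have \(\widetilde g''(u)=(g^*)''(-u)=1/g''(x(u))\), where \(x(u)\) is defined by \(g'(x(u))=-u\). Hence \(\widetilde g\in C^3([0,1])\) and \(\widetilde g''\ge 1/M_1>0\), which is exactly assumption \eqref{eq:ass}. This proves the first claim.

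\emph{The integral identity.} Substitute \(u=-g'(x)\), so \(du=-g''(x)\,dx\), with orientation fixed so that the measure is \(g''(x)\,dx\). Then
\[
\int_0^1(\widetilde g''(u))^{2/3}\,du=\int_0^A g''(x)^{-2/3}\,g''(x)\,dx=\int_0^A g''(x)^{1/3}\,dx .
\]
By the graph formula for equiaffine length recalled in the subsection on equiaffine arc length, the right-hand side equals \(\Length_{\mathrm{equiaffine}}(\Gamma)\). That formula is invariant under the \(\mathrm{SL}(2,\Z)\) change and the translation used in the normalization, since \(|\det(\dot\Gamma,\ddot\Gamma)|\) is preserved by unimodular maps.

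\emph{Main obstacle.} The only delicate point is the endpoints. The normalization must ensure that \(-g'\) maps \([0,A]\) exactly onto \([0,1]\). If instead the image is a proper subinterval, then \(\widetilde g\) is affine outside it: \(g^*\) is a supremum attained at an endpoint of \([0,A]\) there. In that case \(\widetilde g''=0\) on that part, which would violate \eqref{eq:ass}. I would handle this by invoking the minimal-model construction: the two straight sides of the \(\Gamma\)-triangle are tangent to \(\Gamma\) at its endpoints, so after normalization the endpoint tangent slopes are exactly \(0\) and \(-1\).
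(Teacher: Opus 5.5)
Your proposal is correct and follows the paper's proof. Both bound $g''$ between positive constants, use the Legendre relation $\widetilde g''(u)=1/g''(x)$ at $u=-g'(x)$, and change variables with $du=-g''(x)\,dx$. Your additional remarks make explicit what the paper leaves to its appeal to ``standard Legendre duality'' and to its normalization of $\Gamma$: the explicit derivation of $(g^*)''=1/g''$, the $\mathrm{SL}(2,\Z)$-invariance of $|\det(\dot\Gamma,\ddot\Gamma)|$, and the endpoint-tangency argument showing that $-g'$ sweeps exactly $[0,1]$.
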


\begin{proof}
Since \(g\in C^3\) is convex and has nonvanishing curvature on a compact
interval, there exist constants \(0<m\le M<\infty\) such that
\[
0<m\le g''(x)\le M
\qquad
(x\in[0,A]).
\]
Standard Legendre duality implies that \(\widetilde g\in C^3([0,1])\) and,
if \(u=-g'(x)\), then
\[
\widetilde g''(u)=\frac{1}{g''(x)}.
\]
Also,
\[
du=-g''(x)\,dx.
\]
Since \(u=-g'(x)\) decreases from \(1\) to \(0\) as \(x\) increases from
\(0\) to \(A\), we have
\[
\int_0^1 (\widetilde g''(u))^{2/3}\,du
=
\int_A^0 (g''(x))^{-2/3}\,(-g''(x))\,dx
=
\int_0^A (g''(x))^{1/3}\,dx.
\]
This proves the claim.
\end{proof}

\begin{proof}[Proof of Theorem~\ref{thm:boundary-residue}]
\label{proof:thm:boundary-residue}
By Corollary~\ref{cor:Fgamma-is-Zg}, the residue of \(F_\Gamma(s)\) at
\(s=\frac23\) is the same as the residue of \(Z_{\widetilde g}(s)\). Applying
Theorem~\ref{thm:res} with \(f=\widetilde g\), we obtain
\[
\Res_{s=2/3}F_\Gamma(s)
=
\frac{\sqrt3\,\Gamma\!\left(\frac13\right)^3}{2^{2/3}\pi^3}
\int_0^1 (\widetilde g''(u))^{2/3}\,du.
\]
By Lemma~\ref{lem:Legendre-ea},
\[
\int_0^1 (\widetilde g''(u))^{2/3}\,du
=
\Length_{\mathrm{equiaffine}}(\Gamma).
\]
Hence
\[
\Res_{s=2/3}F_\Gamma(s)
=
\frac{\sqrt3\,\Gamma\!\left(\frac13\right)^3}{2^{2/3}\pi^3}
\Length_{\mathrm{equiaffine}}(\Gamma).
\]
This proves the theorem.
\end{proof}

\subsection{Geometric meaning of the terms of the boundary zeta series}
\label{sseq:geommeaning}

The summands of the boundary zeta series admit a simple geometric
interpretation in terms of small triangles cut out by supporting lines.

Let \((a,b)\) and \((c,d)\) be a unimodular pair, so that \(ad-bc=1\), and
let \(\Delta_{a,b,c,d}\) be the triangle bounded by the three supporting lines
with primitive normals
\[
(a,b),
\qquad
(c,d),
\qquad
(a+c,b+d).
\]
Then
\[
\Area(\Delta_{a,b,c,d})
=
\frac12
\Bigl(\gamma_{a,b}+\gamma_{c,d}-\gamma_{a+c,b+d}\Bigr)^2.
\]

Indeed, after applying an \(\mathrm{SL}(2,\Z)\) transformation and then a
translation, we may reduce to the case
\[
(a,b)=(1,0),
\qquad
(c,d)=(0,1),
\qquad
\gamma_{a,b}=\gamma_{c,d}=0.
\]
In these coordinates, the three supporting lines are
\[
x=0,
\qquad
y=0,
\qquad
x+y=\gamma_{a+c,b+d}.
\]
Thus \(\Delta_{a,b,c,d}\) is a right isosceles triangle whose legs have length
\[
\bigl|\gamma_{a,b}+\gamma_{c,d}-\gamma_{a+c,b+d}\bigr|.
\]
Its area is therefore exactly
\[
\frac12
\Bigl(\gamma_{a,b}+\gamma_{c,d}-\gamma_{a+c,b+d}\Bigr)^2.
\]

Consequently, the terms in
\[
F_\Gamma(s)
=
\sum_{\substack{a,b,c,d\in\Z_{\ge0}\\ ad-bc=1}}
f_\Gamma(a,b,c,d)^s
\]
are given by
\[
f_\Gamma(a,b,c,d)
=
\bigl|\gamma_{a,b}+\gamma_{c,d}-\gamma_{a+c,b+d}\bigr|
=
\sqrt{2\,\Area(\Delta_{a,b,c,d})}.
\]

Thus the coefficients of Hata's expansion of the Legendre dual have a direct
geometric meaning: they measure the sizes of the support triangles determined
by neighboring primitive normals.

Theorem~\ref{thm:boundary-residue} shows that the first singularity of the
boundary Dirichlet series is governed by the asymptotic distribution of these
small support triangles. Since
\[
f_\Gamma(a,b,c,d)=\sqrt{2\,\Area(\Delta_{a,b,c,d})},
\]
the residue at \(s=\frac23\) may be viewed as encoding a \(1/3\)-power
summation law for the areas of these triangles. This is precisely the exponent
that appears in the formula \eqref{23affine} for the equiaffine length.
\section{Example: The parabolic model}\label{sec4}

In this section we compute explicitly the boundary zeta series for the
parabolic arc
\[
\Gamma_{\mathrm{par}}
:=
\{(x,y)\in \R_{\ge 0}^2:\sqrt{x}+\sqrt{y}=1\},
\]
and then deduce a closed formula for the tropical zeta function of the
symmetric domain
\[
L
=
\Bigl\{
(x,y)\in\R^2:\sqrt{1-|x|}+\sqrt{1-|y|}\ge 1
\Bigr\}.
\]
The resulting Dirichlet series is the primitive Mordell--Tornheim series and
therefore can be expressed in terms of Witten's \(\mathrm{SU}(3)\) zeta
function.

\subsection{The parabolic arc}

Let \(\Gamma_{\mathrm{par}}\) be the arc joining \((1,0)\) and \((0,1)\), and
let
\[
\Omega_{\mathrm{par}}
:=
\{(x,y)\in \R_{\ge0}^2:x+y\le 1,\ \sqrt{x}+\sqrt{y}\ge 1\}
\]
be the convex domain bounded by \(\Gamma_{\mathrm{par}}\) and the two
coordinate axes.

For a primitive vector
\[
(a,b)\in \Z_{\ge0}^2\setminus\{(0,0)\},
\]
let
\[
ax+by=\gamma_{a,b}
\]
be the supporting line to \(\Omega_{\mathrm{par}}\) with inward normal
\((a,b)\).

\begin{lemma}\label{lem:parabola-support}
For every \(a,b\ge 0\), not both zero, one has
\[
\gamma_{a,b}=\frac{ab}{a+b},
\]
with the convention that \(\gamma_{1,0}=\gamma_{0,1}=0\).
\end{lemma}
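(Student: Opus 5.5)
We compute \(\gamma_{a,b}=h_{\Omega_{\mathrm{par}}}(a,b)=\min_{(x,y)\in\Omega_{\mathrm{par}}}(ax+by)\) directly. Since \(a,b\ge0\), the linear functional \(ax+by\) is nondecreasing in each coordinate. Therefore its minimum over \(\Omega_{\mathrm{par}}\) is attained on the lower-left part of the boundary, namely on the arc \(\Gamma_{\mathrm{par}}\).

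\textbf{Degenerate directions.} If \(b=0\), then \(a=1\). The minimum of \(x\) over \(\Omega_{\mathrm{par}}\) is \(0\), attained at \((0,1)\). Thus \(\gamma_{1,0}=0\), and likewise \(\gamma_{0,1}=0\). This agrees with the stated convention, and also with the formula \(ab/(a+b)=0\).

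\textbf{Main case \(a,b>0\).} Parametrize the arc by \(x=t^2\), \(y=(1-t)^2\) for \(t\in[0,1]\). Then
\[
\phi(t)=at^2+b(1-t)^2
\]
is a strictly convex quadratic in \(t\). Its critical point is \(t_0=b/(a+b)\in(0,1)\). Substituting gives
\[
\phi(t_0)=\frac{ab^2+ba^2}{(a+b)^2}=\frac{ab}{a+b}.
\]

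\textbf{Justifying the reduction to the arc.} For any \((x,y)\in\Omega_{\mathrm{par}}\) we have \(\sqrt{x}+\sqrt{y}\ge1\). Choose \(0\le x'\le x\) with \(\sqrt{x'}+\sqrt{y}=1\) when \(\sqrt{y}\le1\) (the case \(y>1\) is impossible, since \(x+y\le1\)). The point \((x',y)\) then lies on \(\Gamma_{\mathrm{par}}\) and satisfies \(ax'+by\le ax+by\). Hence the minimum over \(\Omega_{\mathrm{par}}\) equals the minimum over the arc.

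Alternatively, one could argue by Cauchy--Schwarz. The inequality
\[
(\sqrt{x}+\sqrt{y})^2\le\Bigl(\frac1a+\frac1b\Bigr)(ax+by)
\]
together with \(\sqrt{x}+\sqrt{y}\ge1\) gives \(ax+by\ge ab/(a+b)\) immediately, with equality at \(t_0\). This route avoids the reduction step altogether, so I would present it as the main argument.

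The only mildly delicate point is this reduction to the arc, and the Cauchy--Schwarz argument bypasses it. Nothing else is an obstacle.
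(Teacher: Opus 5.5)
Your proposal is correct. Its first computation, parametrizing the arc by \(x=t^2\), \(y=(1-t)^2\) and minimizing \(at^2+b(1-t)^2\) at \(t=b/(a+b)\), is exactly the paper's proof.

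The paper minimizes over the arc without comment. Your monotonicity reduction \((x,y)\mapsto((1-\sqrt y)^2,y)\) fills in that step. To get equality of the two minima you also need \(\Gamma_{\mathrm{par}}\subset\Omega_{\mathrm{par}}\); this holds because \(t^2+(1-t)^2=1-2t(1-t)\le 1\).

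The Cauchy--Schwarz argument, which you would make primary, is a genuinely different route. It proves \(ax+by\ge ab/(a+b)\) in one line on the larger region \(\{x,y\ge0,\ \sqrt x+\sqrt y\ge1\}\). Its equality case, \(\sqrt x=b/(a+b)\) and \(\sqrt y=a/(a+b)\), gives a point of \(\Omega_{\mathrm{par}}\) that attains the bound. So no reduction to the boundary and no calculus are needed.

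The cost is the restriction \(a,b>0\), which forces your separate treatment of the axis directions. The paper's quadratic minimization is valid uniformly whenever \(a+b>0\). It gives \(t=1\) or \(t=0\) in the degenerate cases, so the convention \(\gamma_{1,0}=\gamma_{0,1}=0\) comes out of the same formula. Both routes produce the same contact point: the parametric one as an explicit function of the normal direction, the Cauchy--Schwarz one through its equality case.
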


\begin{proof}
The boundary arc admits the parametrization
\[
x=t^2,\qquad y=(1-t)^2,\qquad 0\le t\le 1.
\]
Hence
\[
ax+by=a t^2+b(1-t)^2.
\]
Differentiating with respect to \(t\), we obtain
\[
\frac{d}{dt}\bigl(a t^2+b(1-t)^2\bigr)=2(a+b)t-2b,
\]
so the minimum is attained at
\[
t=\frac{b}{a+b}.
\]
Substituting this value gives
\[
\gamma_{a,b}
=
a\Bigl(\frac{b}{a+b}\Bigr)^2
+
b\Bigl(\frac{a}{a+b}\Bigr)^2
=
\frac{ab}{a+b}.
\]
\end{proof}

Now let \((a,b)\) and \((c,d)\) be a unimodular pair, so that
\[
a,b,c,d\in \Z_{\ge0},
\qquad
ad-bc=1.
\]
Recall that
\[
f_{\Gamma_{\mathrm{par}}}(a,b,c,d)
=
\bigl|\gamma_{a,b}+\gamma_{c,d}-\gamma_{a+c,b+d}\bigr|.
\]

\begin{proposition}\label{prop:parabola-defect}
For every unimodular pair \((a,b),(c,d)\), one has
\[
f_{\Gamma_{\mathrm{par}}}(a,b,c,d)
=
\frac{1}{(a+b)(c+d)(a+b+c+d)}.
\]
\end{proposition}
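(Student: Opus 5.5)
The plan is a direct algebraic computation, using Lemma~\ref{lem:parabola-support} to express all three support values in closed form. I will abbreviate the lattice lengths
\[
p:=a+b,\qquad q:=c+d,
\]
so that the mediant normal \((a+c,b+d)\) has coordinate sum \(p+q\). Lemma~\ref{lem:parabola-support} then gives
\[
\gamma_{a,b}=\frac{ab}{p},\qquad
\gamma_{c,d}=\frac{cd}{q},\qquad
\gamma_{a+c,b+d}=\frac{(a+c)(b+d)}{p+q}.
\]
The degenerate endpoint cases \((1,0)\) or \((0,1)\) need no special handling, since the formula \(ab/(a+b)\) already returns \(0\) there.

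First, I will put the defect \(\gamma_{a,b}+\gamma_{c,d}-\gamma_{a+c,b+d}\) over the common denominator \(pq(p+q)\). After expanding \((a+c)(b+d)=ab+ad+bc+cd\), the \(abpq\) and \(cdpq\) terms cancel. The numerator reduces to
\[
N:=ab\,q^2+cd\,p^2-(ad+bc)\,pq .
\]

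Second, the key observation is that \(N\) factors as
\[
N=(aq-cp)(bq-dp),
\]
which I will verify by expanding the right-hand side. Substituting \(p=a+b\) and \(q=c+d\) into the two factors gives
\[
aq-cp=ad-bc=1,\qquad bq-dp=bc-ad=-1,
\]
by unimodularity. Hence \(N=-1\), and
\[
\gamma_{a,b}+\gamma_{c,d}-\gamma_{a+c,b+d}=-\frac{1}{pq(p+q)}.
\]
Taking absolute values yields the claimed formula for \(f_{\Gamma_{\mathrm{par}}}(a,b,c,d)\).

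There is no real obstacle. The only step requiring any insight is spotting the factorization of the quadratic form \(N\); if it were not visible, one could instead apply an \(\mathrm{SL}(2,\Z)\) normalization, but the factorization makes that unnecessary. As a consistency check, the negative sign agrees with Lemma~\ref{lem:exactT}: the corresponding Hata coefficient of the Legendre dual \(\widetilde g\) has a fixed sign, determined by \(\widetilde g''\) being constant for the parabola. This constancy is also why the weight \(1/(bd(b+d))\) appears exactly here, rather than only up to bounded factors.
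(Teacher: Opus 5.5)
Your proof is correct and takes the same route as the paper: you substitute \(\gamma_{a,b}=ab/(a+b)\) from Lemma~\ref{lem:parabola-support} and simplify over the common denominator \((a+b)(c+d)(a+b+c+d)\). Your factorization \(N=(aq-cp)(bq-dp)=-(ad-bc)^2\) is exactly the ``direct simplification'' that the paper states without working it out, so your version makes that step explicit.
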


\begin{proof}
By Lemma~\ref{lem:parabola-support},
\[
\gamma_{a,b}+\gamma_{c,d}-\gamma_{a+c,b+d}
=
\frac{ab}{a+b}+\frac{cd}{c+d}
-\frac{(a+c)(b+d)}{a+b+c+d}.
\]
A direct simplification gives
\[
\frac{ab}{a+b}+\frac{cd}{c+d}
-\frac{(a+c)(b+d)}{a+b+c+d}
=
-\frac{(ad-bc)^2}{(a+b)(c+d)(a+b+c+d)}.
\]
Since \(ad-bc=1\), taking the absolute value yields
\[
f_{\Gamma_{\mathrm{par}}}(a,b,c,d)
=
\frac{1}{(a+b)(c+d)(a+b+c+d)}.
\]
\end{proof}

\subsection{The boundary series of the parabola}

We now identify the boundary zeta series of \(\Gamma_{\mathrm{par}}\) with a
primitive Mordell--Tornheim series.

\begin{lemma}\label{lem:parabola-bijection}
The map
\[
(a,b,c,d)\longmapsto (p,q):=(a+b,c+d)
\]
is a bijection between the set of quadruples
\[
a,b,c,d\in \Z_{\ge0},
\qquad
ad-bc=1,
\]
and the set of coprime pairs \((p,q)\in\N^2\) with \(\gcd(p,q)=1\).
\end{lemma}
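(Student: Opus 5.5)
The plan is to rewrite the unimodularity condition in the new variables, which makes the forward map obviously well defined, and then to construct the inverse by solving a linear congruence. The key identity is
\[
(a+b)d-(c+d)b=ad-bc,
\]
so that \(ad-bc=1\) is equivalent to \(pd-qb=1\) with \(p=a+b\) and \(q=c+d\).

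\textbf{Forward map.} Let \(a,b,c,d\in\Z_{\ge0}\) with \(ad-bc=1\).
\begin{enumerate}
\item Neither \((a,b)\) nor \((c,d)\) can be \((0,0)\), since otherwise \(ad-bc=0\). Hence \(p,q\ge1\).
\item By the identity above, \(pd-qb=1\), which is a B\'ezout relation. Hence \(\gcd(p,q)=1\).
\end{enumerate}
So the map does land in the set of coprime pairs \((p,q)\in\N^2\).

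\textbf{Inverse map.} Given coprime \(p,q\ge1\), we must recover \((a,b,c,d)\), and it suffices to recover \((b,d)\), since then \(a=p-b\) and \(c=q-d\). The constraints on \((b,d)\) are
\[
pd-qb=1,\qquad 0\le b\le p,\qquad 0\le d\le q.
\]
\begin{enumerate}
\item \emph{Excluding the endpoint \(b=p\).} If \(b=p\), then \(a=0\), and \(ad-bc=-bc\le0\), a contradiction. So in fact \(0\le b<p\).
\item \emph{Uniqueness.} The relation \(pd-qb=1\) forces \(qb\equiv-1\pmod p\). Since \(\gcd(p,q)=1\), this determines \(b\) modulo \(p\). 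Within the range \(0\le b<p\) it therefore determines \(b\) uniquely, and then \(d=(1+qb)/p\) is determined as well.
\item \emph{Existence.} Take \(b\in[0,p)\) to be the unique solution of \(qb\equiv-1\pmod p\), and set \(d=(1+qb)/p\in\Z\). Then:
\begin{itemize}
\item \(d\ge1\), since \(1+qb\ge1\);
\item \(d\le q\), since \(d\le \frac{1+q(p-1)}{p}\le q\), which is equivalent to \(1\le q\).
\end{itemize}
Hence \(a=p-b\ge1\) and \(c=q-d\ge0\), all four entries are nonnegative integers, and \(ad-bc=pd-qb=1\).
\end{enumerate}
The degenerate case \(p=1\) fits this scheme: it gives \(b=0\), \(d=1\), hence the quadruple \((1,0,q-1,1)\). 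In particular \(p=q=1\) returns \((1,0,0,1)\).

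\textbf{Conclusion.} The forward map is injective by the uniqueness step and surjective by the existence step, so it is a bijection.

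The only delicate point is the choice of representative range for \(b\). Excluding \(b=p\) via the observation that \(a=0\) is impossible is exactly what makes the representative unique. Checking that the resulting \(d\) stays in \([0,q]\) is the single inequality verified above. I expect no further obstacle.
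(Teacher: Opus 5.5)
Your proof is correct and takes the same route as the paper. The forward direction is the identity \((a+b)d-(c+d)b=ad-bc\), and the inverse is built from the unique \(b\in[0,p)\) with \(qb\equiv-1\pmod p\) and \(d=(qb+1)/p\). Your explicit exclusion of \(b=p\) (via \(a=0\Rightarrow ad-bc\le0\)) supplies the step that the paper's one-line uniqueness claim leaves implicit.
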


\begin{proof}
If \(p=a+b\) and \(q=c+d\), then
\[
pd-qb=(a+b)d-(c+d)b=ad-bc=1,
\]
hence \(\gcd(p,q)=1\).

Conversely, let \(p,q\in\N\) be coprime. Choose the unique integer \(b\) with
\[
0\le b<p,
\qquad
qb\equiv -1 \pmod p.
\]
Then
\[
d:=\frac{qb+1}{p}\in\N.
\]
Since \(0\le b<p\), we have \(0<d\le q\). Now define
\[
a:=p-b,
\qquad
c:=q-d.
\]
Then \(a,b,c,d\in \Z_{\ge0}\), and
\[
ad-bc=(p-b)d-b(q-d)=pd-bq=1.
\]
Thus every coprime pair \((p,q)\) arises. Uniqueness follows from the
uniqueness of \(b\) modulo \(p\) in the range \(0\le b<p\).
\end{proof}

\begin{proposition}\label{prop:parabola-boundary-series}
For \(\Re(s)\) sufficiently large,
\[
F_{\Gamma_{\mathrm{par}}}(s)
=
\sum_{\substack{a,b,c,d\in\Z_{\ge0}\\ ad-bc=1}}
\frac{1}{\bigl((a+b)(c+d)(a+b+c+d)\bigr)^s}
=
\sum_{\substack{p,q\ge1\\ \gcd(p,q)=1}}
\frac{1}{\bigl(pq(p+q)\bigr)^s}.
\]
Consequently,
\[
F_{\Gamma_{\mathrm{par}}}(s)
=
\frac{1}{\zeta(3s)}
\sum_{p,q\ge1}\frac{1}{\bigl(pq(p+q)\bigr)^s}.
\]
\end{proposition}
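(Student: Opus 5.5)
The plan is to combine the two results just proved: Proposition~\ref{prop:parabola-defect} gives the summands explicitly, and Lemma~\ref{lem:parabola-bijection} reindexes the sum. Then a standard Möbius inversion passes from coprime pairs to all pairs.

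\emph{First equality.} By definition, \(F_{\Gamma_{\mathrm{par}}}(s)\) is the sum of \(f_{\Gamma_{\mathrm{par}}}(a,b,c,d)^s\) over quadruples \(a,b,c,d\in\Z_{\ge0}\) with \(ad-bc=1\). Proposition~\ref{prop:parabola-defect} replaces each summand by \(\bigl((a+b)(c+d)(a+b+c+d)\bigr)^{-s}\), which is the first equality. One point needs a check: the normalization of \(\Gamma_{\mathrm{par}}\) as a \(\Gamma\)-triangle must match the convention of Section~\ref{sec2}. The arc joins \((1,0)\) and \((0,1)\), its straight sides lie on the coordinate axes, and we use \(\gamma_{1,0}=\gamma_{0,1}=0\). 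With this, the index set is exactly the one in the definition of \(F_\Gamma\), and no finite Dirichlet-polynomial correction is needed.

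\emph{Second equality.} Apply Lemma~\ref{lem:parabola-bijection}. The map \((a,b,c,d)\mapsto(p,q)=(a+b,c+d)\) is a bijection onto coprime pairs in \(\N^2\). Under this map \(a+b+c+d=p+q\), so each summand becomes \((pq(p+q))^{-s}\). Absolute convergence for \(\Re(s)>\frac23\), for instance by the dyadic comparison in the convergence-threshold remark, justifies the rearrangement.

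\emph{Third equality.} Group all pairs \((p,q)\in\N^2\) by \(g=\gcd(p,q)\) and write \(p=gp'\), \(q=gq'\) with \(\gcd(p',q')=1\). Then \(pq(p+q)=g^3p'q'(p'+q')\), so
\[
\sum_{p,q\ge1}\frac{1}{(pq(p+q))^s}=\zeta(3s)\sum_{\substack{p,q\ge1\\ \gcd(p,q)=1}}\frac{1}{(pq(p+q))^s}.
\]
For \(\Re(s)\) large, \(\zeta(3s)\ne0\), so dividing by \(\zeta(3s)\) gives the claim. All three steps are routine, and the only point needing care is the endpoint convention in the first equality.
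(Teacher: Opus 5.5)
Your proposal is correct and follows the paper's proof. You substitute Proposition~\ref{prop:parabola-defect} into each summand, reindex with the bijection of Lemma~\ref{lem:parabola-bijection}, and then factor the full double sum by $g=\gcd(p,q)$ to pull out $\zeta(3s)$; your added remarks on absolute convergence and $\zeta(3s)\neq0$ are harmless refinements.
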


\begin{proof}
The first identity follows from Proposition~\ref{prop:parabola-defect} and
Lemma~\ref{lem:parabola-bijection}.

For the second, write
\[
\sum_{p,q\ge1}\frac{1}{\bigl(pq(p+q)\bigr)^s}
=
\sum_{m\ge1}\ 
\sum_{\substack{u,v\ge1\\ \gcd(u,v)=1}}
\frac{1}{\bigl(mu\cdot mv\cdot m(u+v)\bigr)^s}.
\]
This becomes
\[
\sum_{m\ge1}\frac{1}{m^{3s}}
\sum_{\substack{u,v\ge1\\ \gcd(u,v)=1}}
\frac{1}{\bigl(uv(u+v)\bigr)^s}
=
\zeta(3s)\,F_{\Gamma_{\mathrm{par}}}(s),
\]
which proves the claim.
\end{proof}

Therefore,
\[
F_{\Gamma_{\mathrm{par}}}(s)
=
2^{-s}\frac{\zeta_{\mathrm{SU}(3)}(s)}{\zeta(3s)},
\]
where
\[
\zeta_{\mathrm{SU}(3)}(s)
:=
2^s\sum_{p,q\ge1}\frac{1}{\bigl(pq(p+q)\bigr)^s}
\]
is Witten's \(\mathrm{SU}(3)\) zeta function
\cite{Witten1991QuantumGauge2D}, which we discuss in
Subsection~\ref{ss:wittensu3}.

\subsection{\texorpdfstring{The special domain \(L\)}{The special domain L}}\label{ss:specialdomain}

We now return to the symmetric domain
\[
L
=
\Bigl\{
(x,y)\in\R^2:\sqrt{1-|x|}+\sqrt{1-|y|}\ge 1
\Bigr\}.
\]
Its boundary consists of four congruent parabolic arcs. In the first quadrant
the boundary is given by
\[
\sqrt{1-x}+\sqrt{1-y}=1,
\qquad
0\le x\le 1,\quad 0\le y\le 1.
\]
Equivalently, after the affine change of variables
\[
u=1-x,
\qquad
v=1-y,
\]
this arc is identified with \(\Gamma_{\mathrm{par}}\).

\begin{figure}[htbp]
    \centering
    \includegraphics[width=\linewidth]{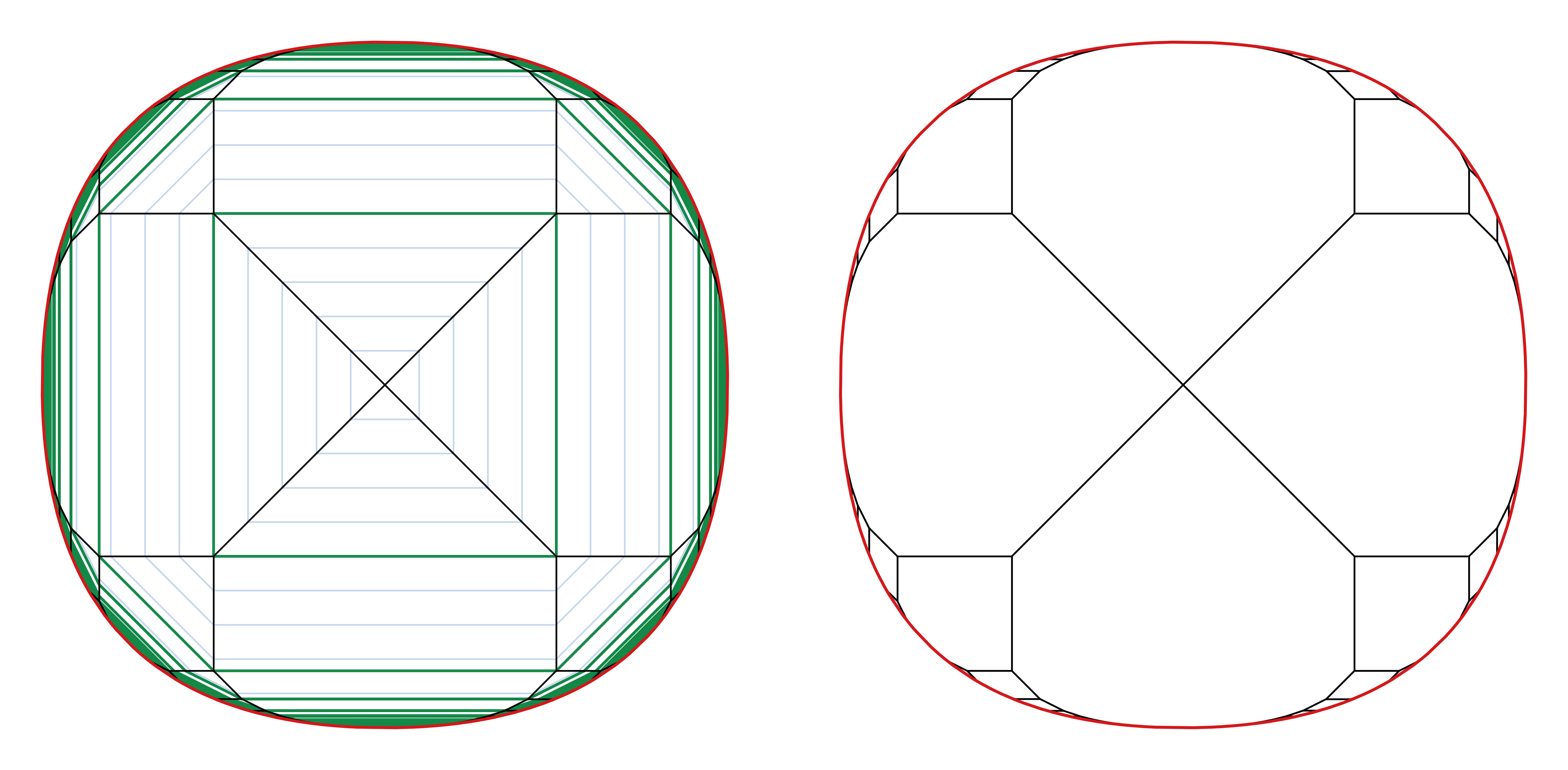}
    \caption{Tropical wave fronts and caustic in the parabolic model \(L\).
    The red boundary consists of four parabolic arcs. The blue and green
    polygons are tropical wave fronts, produced by successive unimodular
    corner cuts governed by the Stern--Brocot/Farey recursion. The
    right-hand picture isolates the corresponding caustic tree. The support
    defects attached to neighboring primitive directions are exactly the
    sizes of the triangles cut by this flow.}
    \label{fig:parabolicwavefront}
\end{figure}

\begin{proposition}\label{prop:L-affine-length}
The equiaffine arc length of one parabolic arc of \(\partial L\) is
\(4^{1/3}\). Hence
\[
\Length_{\mathrm{equiaffine}}(\partial L)=4^{4/3}.
\]
\end{proposition}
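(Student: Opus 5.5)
We compute directly from the parametric formula for equiaffine arc length recalled in the subsection on equiaffine arc length,
\[
\Length_{\mathrm{equiaffine}}(\Gamma)=\int_0^1 |\det(\dot\Gamma(t),\ddot\Gamma(t))|^{1/3}\,dt .
\]
First we reduce to the model arc \(\Gamma_{\mathrm{par}}\). In the first quadrant the arc of \(\partial L\) is mapped onto \(\Gamma_{\mathrm{par}}\) by the affine map \((x,y)\mapsto(1-x,1-y)\). Its linear part is \(-\mathrm{Id}\), which has determinant \(1\). The integrand \(|\det(\dot\Gamma,\ddot\Gamma)|^{1/3}\) is multiplied by \(|\det A|^{1/3}\) under a linear map \(A\) and is unchanged by translations. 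So this arc has the same equiaffine length as \(\Gamma_{\mathrm{par}}\). The other three arcs of \(\partial L\) are images of this one under the reflections \(x\mapsto -x\) and \(y\mapsto -y\), whose determinants are \(\pm1\). Hence all four arcs have equal equiaffine length.

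Next we compute the length of \(\Gamma_{\mathrm{par}}\) using the parametrization \(x=t^2\), \(y=(1-t)^2\), \(0\le t\le1\), already used in the proof of Lemma~\ref{lem:parabola-support}. Then
\[
\dot\Gamma(t)=(2t,\,-2(1-t)),
\qquad
\ddot\Gamma(t)=(2,\,2),
\]
so
\[
\det(\dot\Gamma,\ddot\Gamma)=4t+4(1-t)=4.
\]
The determinant is constant, as expected for a parabola, since the osculating parabola is the curve itself. Integrating \(4^{1/3}\) over \([0,1]\) gives \(4^{1/3}\).

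One point needs care: the parametrization is regular only on \((0,1]\) and \([0,1)\), because \(\dot\Gamma(t)\neq0\) for every \(t\in[0,1]\). We also check that reparametrization invariance holds. Under \(t=\phi(\tau)\) the determinant picks up the factor \(\phi'^3\), so the \(1/3\)-power density transforms as a \(1\)-form, and the result does not depend on the chosen parametrization.

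Summing over the four congruent arcs gives \(4\cdot4^{1/3}=4^{4/3}\). There is no real obstacle in this argument; the only point requiring attention is the affine-invariance bookkeeping for the maps between the arcs, which is settled by their determinants being \(\pm1\).
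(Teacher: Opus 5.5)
Your argument is correct and is essentially the paper's proof. The paper parametrizes the first-quadrant arc of \(\partial L\) directly by \(\gamma(t)=(1-t^2,\,2t-t^2)\), which is exactly the image of your parametrization of \(\Gamma_{\mathrm{par}}\) under \((x,y)\mapsto(1-x,1-y)\), gets \(\det(\gamma',\gamma'')=4\), and multiplies by the four congruent arcs; your affine-invariance bookkeeping just makes explicit what the paper takes for granted. The only blemish is your garbled regularity remark: \(\dot\Gamma(0)=(0,-2)\) and \(\dot\Gamma(1)=(2,0)\) are nonzero, so the parametrization is regular on the whole closed interval \([0,1]\) and no extra care is needed there.
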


\begin{proof}
Consider the arc in the first quadrant. It is parametrized by
\[
\gamma(t)=(1-t^2,\ 2t-t^2),
\qquad
0\le t\le 1.
\]
Indeed,
\[
\sqrt{1-(1-t^2)}+\sqrt{1-(2t-t^2)}
=
t+(1-t)=1.
\]

Now
\[
\gamma'(t)=(-2t,\ 2-2t),
\qquad
\gamma''(t)=(-2,\ -2),
\]
hence
\[
\det(\gamma'(t),\gamma''(t))
=
\det
\begin{pmatrix}
-2t & -2\\
2-2t & -2
\end{pmatrix}
=4.
\]
Therefore the equiaffine speed is constant and equal to \(4^{1/3}\). Thus
one arc has equiaffine length
\[
\int_0^1 4^{1/3}\,dt=4^{1/3}.
\]
Since \(\partial L\) consists of four congruent arcs, the total equiaffine
length is
\[
4\cdot 4^{1/3}=4^{4/3}.
\]
\end{proof}

\begin{theorem}\label{thm:L-zeta}
For \(\Re(s)>2\),
\[
Z_L(s)
=
\left(
8-2^{2-s}\frac{\zeta_{\mathrm{SU}(3)}(s)}{\zeta(3s)}
\right)\frac{1}{s(s-1)}.
\]
In particular, \(Z_L(s)\) admits a meromorphic continuation to every point
where \(\zeta_{\mathrm{SU}(3)}(s)\) does.
\end{theorem}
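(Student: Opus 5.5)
The plan is to apply Theorem~\ref{thm:integral-boundary} to \(\Omega=L\). Two inputs are needed: the minimal model \(\widehat L\) with its holomorphic term \(H_{\widehat L}(s)\), and the boundary series \(F_{\partial L}(s)\), which will come from Proposition~\ref{prop:parabola-boundary-series}.

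\textbf{Step 1: the minimal model.} The domain \(L\) is symmetric under \(x\mapsto -x\), \(y\mapsto -y\), and \((x,y)\mapsto(y,x)\), all of which lie in \(\mathrm{GL}(2,\Z)\). Hence \(\rho_L\) is symmetric too, and its maximum is attained at the origin. I will compute the primitive support values at the origin.
\begin{itemize}
\item For the axis directions \(u=(\pm1,0),(0,\pm1)\), the support lines are \(|x|=1\) and \(|y|=1\), since \(L\) touches them at \((\pm1,0)\) and \((0,\pm1)\). This gives \(\langle u,0\rangle-h_L(u)=1\).
\item For \(u=(a,b)\) with \(a,b\ge1\) coprime, I translate Lemma~\ref{lem:parabola-support} through \(x=1-X\), \(y=1-Y\). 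The first-quadrant arc then gives \(\max_L(ax+by)=a+b-\frac{ab}{a+b}\), and the other arcs lie below this.
\item So \(\rho\) at the origin in direction \((a,b)\) equals \(a+b-\frac{ab}{a+b}\ge \frac32\) for \(a,b\ge1\), which is larger than \(1\).
\end{itemize}
Thus \(\mathcal E_L=\{(\pm1,0),(0,\pm1)\}\) and \(\widehat L=[-1,1]^2\), with \(m_L=1\) attained only at the origin. This needs \(M_L=\{0\}\): on the square \(\rho_{\widehat L}=1-\max(|x|,|y|)\) has a unique maximum there, and \(\rho_{\widehat L}\) agrees with \(\rho_L\) near \(M_L\).

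\textbf{Step 2: the holomorphic term.} I apply the Remark after Theorem~\ref{thm:integral-boundary}, with \(\widehat L\) a translate of \([0,2]\times[0,2]\), so \(P=Q=2\). It gives \(H_{\widehat L}(s)=8\).

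As a cross-check, a direct computation gives \(Z_{[-1,1]^2}(s)=\int_0^1 t^{s-2}\cdot 8(1-t)\,dt=\frac{8}{s(s-1)}\), since the lattice perimeter of \(\Omega_t\) is \(8(1-t)\). This is consistent.

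\textbf{Step 3: the boundary series.} The set \(\widehat L\setminus L^\circ\) consists of four \(\Gamma\)-triangles, one at each corner of the square. Each is carried by a map in \(\mathrm{GL}(2,\Z)\) composed with a translation onto the triangle bounded by \(\Gamma_{\mathrm{par}}\) and the coordinate axes. For the first quadrant the map is \(u=1-x\), \(v=1-y\); the others follow by reflections. The reflections have determinant \(-1\), but the relevant series is unchanged: \(f_\Gamma\) is defined through support values and unimodular pairs, and the normalization of each \(\Gamma\)-triangle is made separately anyway.

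By Proposition~\ref{prop:parabola-boundary-series},
\[
F_{\partial L}(s)=4F_{\Gamma_{\mathrm{par}}}(s)=4\cdot 2^{-s}\frac{\zeta_{\mathrm{SU}(3)}(s)}{\zeta(3s)}=2^{2-s}\frac{\zeta_{\mathrm{SU}(3)}(s)}{\zeta(3s)}.
\]
Substituting into \(s(s-1)Z_L=-F_{\partial L}+8\) and dividing by \(s(s-1)\) gives the formula for \(\Re(s)>2\).

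\textbf{Step 4: continuation.} The right-hand side continues meromorphically wherever \(\zeta_{\mathrm{SU}(3)}\) does. The factors \(1/\zeta(3s)\) and \(1/(s(s-1))\) are meromorphic on all of \(\C\), so the stated continuation follows immediately.

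The main obstacle I expect is the bookkeeping in Step 3. I need to check that the summation conventions agree: the boundary series runs over \(a,b,c,d\ge0\) including the endpoint pairs, and Lemma~\ref{lem:parabola-bijection} must match this exactly with no stray finite Dirichlet polynomial. For instance, the pair \((1,0),(0,1)\) gives \(p=q=1\), with defect \(\frac12\); it should appear exactly once. I also need to confirm that the sign and orientation choices under the reflections leave each corner's series identical to \(F_{\Gamma_{\mathrm{par}}}\).
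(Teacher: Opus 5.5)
Your proposal is correct and follows essentially the same route as the paper: you apply Theorem~\ref{thm:integral-boundary} with $\widehat L=[-1,1]^2$, so $H_{\widehat L}=8$ from the rectangle formula with $P=Q=2$, and $F_{\partial L}=4F_{\Gamma_{\mathrm{par}}}$ from Proposition~\ref{prop:parabola-boundary-series}. You also fill in details the paper only asserts, namely that $\mathcal E_L$ consists of the four axis directions and the cross-check $Z_{[-1,1]^2}(s)=8/(s(s-1))$; and your determinant worry in Step~3 disappears, because the four corners are exchanged by the rotation $(x,y)\mapsto(-y,x)\in\mathrm{SL}(2,\Z)$, which preserves $L$.
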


\begin{proof}
By symmetry, the four boundary arcs of \(\partial L\) contribute the same
boundary series, namely
\[
F_{\partial L}(s)=4F_{\Gamma_{\mathrm{par}}}(s).
\]
Using Proposition~\ref{prop:parabola-boundary-series}, we obtain
\[
F_{\partial L}(s)
=
4\cdot 2^{-s}\frac{\zeta_{\mathrm{SU}(3)}(s)}{\zeta(3s)}
=
2^{2-s}\frac{\zeta_{\mathrm{SU}(3)}(s)}{\zeta(3s)}.
\]

On the other hand, the minimal model of \(L\) is the square
\[
\widehat L=[-1,1]^2.
\]
Applying the rectangle formula from the remark after
Theorem~\ref{thm:integral-boundary} with
\[
P=Q=2,
\]
we get
\[
H_{\widehat L}(s)=8.
\]
Therefore Theorem~\ref{thm:integral-boundary} yields
\[
s(s-1)Z_L(s)
=
-F_{\partial L}(s)+8,
\]
that is,
\[
Z_L(s)
=
\left(
8-2^{2-s}\frac{\zeta_{\mathrm{SU}(3)}(s)}{\zeta(3s)}
\right)\frac{1}{s(s-1)}.
\]
\end{proof}

\begin{corollary}\label{cor:L-residue}
The function \(Z_L(s)\) has a simple pole at \(s=\frac23\), and
\[
\Res_{s=2/3} Z_L(s)
=
\frac{18\sqrt3}{\pi^3}\Gamma\!\left(\frac13\right)^3.
\]
Equivalently,
\[
\Res_{s=2/3} Z_L(s)
=
\frac{9\sqrt3}{2\cdot 4^{1/3}\pi^3}
\Gamma\!\left(\frac13\right)^3
\Length_{\mathrm{equiaffine}}(\partial L).
\]
\end{corollary}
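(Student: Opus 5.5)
The plan is to read the residue off the closed formula of Theorem~\ref{thm:L-zeta} and to get the residue of the parabolic boundary series by an exact instance of the local residue formula, Theorem~\ref{thm:res}. Theorem~\ref{thm:L-zeta} and Proposition~\ref{prop:parabola-boundary-series} give, for \(\Re(s)>2\),
\[
Z_L(s)=\frac{8-4F_{\Gamma_{\mathrm{par}}}(s)}{s(s-1)},
\qquad
F_{\Gamma_{\mathrm{par}}}(s)=\sum_{\substack{p,q\ge1\\ \gcd(p,q)=1}}\frac{1}{(pq(p+q))^s}.
\]
Consequently any meromorphic continuation of \(F_{\Gamma_{\mathrm{par}}}\) to a disk around \(s=\tfrac23\) induces one of \(Z_L\). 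The factor \(1/(s(s-1))\) is holomorphic and nonzero there, and the constant \(8\) contributes nothing to the residue.

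To handle \(F_{\Gamma_{\mathrm{par}}}\), I take \(f(u)=u^2\) on \([0,1]\), which satisfies \eqref{eq:ass} with \(|f''|\equiv2\). For a quadratic, the Hata coefficient is computed exactly: Lemma~\ref{lemma:exactT} holds with \(f''(\xi_I)=2\) for every \(I\), so \(|T_I(f)|=1/(bd(b+d))\). By Lemma~\ref{lem:param}, Farey intervals correspond bijectively to coprime pairs \((b,d)\), hence
\[
Z_f(s)=F_{\Gamma_{\mathrm{par}}}(s)
\]
up to at most finitely many boundary terms, which are entire. Alternatively, one may invoke Theorem~\ref{thm:boundary-residue} directly for the \(C^3\) arc \(\Gamma_{\mathrm{par}}\) with nonvanishing curvature, together with Proposition~\ref{prop:L-affine-length}. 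Theorem~\ref{thm:res} then gives the continuation with a simple pole at \(s=\tfrac23\), and
\[
\Res_{s=2/3}F_{\Gamma_{\mathrm{par}}}(s)
=\frac{\sqrt3\,\Gamma(\frac13)^3}{2^{2/3}\pi^3}\cdot 2^{2/3}
=\frac{\sqrt3\,\Gamma(\frac13)^3}{\pi^3}.
\]
This agrees with Theorem~\ref{thm:boundary-residue}, since the equiaffine length of one arc is \(4^{1/3}\).

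The final assembly uses \(s(s-1)\big|_{s=2/3}=-\tfrac29\), which gives
\[
\Res_{s=2/3}Z_L(s)=-4\cdot\Bigl(-\frac92\Bigr)\Res_{s=2/3}F_{\Gamma_{\mathrm{par}}}(s)=\frac{18\sqrt3}{\pi^3}\Gamma\!\left(\frac13\right)^3 .
\]
The equivalent form follows by inserting \(\Length_{\mathrm{equiaffine}}(\partial L)=4^{4/3}\) from Proposition~\ref{prop:L-affine-length}. Indeed, \(\frac{9\sqrt3}{2\cdot4^{1/3}}\cdot4^{4/3}=18\sqrt3\).

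I expect no real analytic obstacle, since everything rests on Theorem~\ref{thm:res}. The one point that needs care is checking that the pole is genuinely simple and nonzero, so that the residue is not cancelled. The computed residue is a positive multiple of \(\Gamma(\tfrac13)^3\), so it does not vanish. As a cross-check, I would also verify the constant through the factorization \(F_{\Gamma_{\mathrm{par}}}=2^{-s}\zeta_{\mathrm{SU}(3)}(s)/\zeta(3s)\), noting that \(\zeta(3s)\) is nonzero and holomorphic near \(s=\tfrac23\).
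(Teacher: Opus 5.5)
Your proof is correct, but it is organized differently from the paper's. The paper obtains the corollary as a direct instance of the general result. It combines Theorem~\ref{thm:integral-boundary} with Theorem~\ref{thm:boundary-residue} summed over the four arcs, which in turn rests on the Legendre-duality identification of Corollary~\ref{cor:Fgamma-is-Zg} and Lemma~\ref{lem:Legendre-ea}. It derives the equiaffine form first and then inserts \(\Length_{\mathrm{equiaffine}}(\partial L)=4^{4/3}\). It also needs a footnote to justify applying a \(C^3\) statement to a boundary that is only \(C^1\) at four points.

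You instead start from the exact identity \(s(s-1)Z_L=8-4F_{\Gamma_{\mathrm{par}}}\) of Theorem~\ref{thm:L-zeta} and note that \(F_{\Gamma_{\mathrm{par}}}=Z_{u^2}\). This equality is in fact exact, with no Dirichlet-polynomial correction, because Lemma~\ref{lem:param} pairs Farey intervals with all coprime \((b,d)\) and \(|T_I(u^2)|=1/(bd(b+d))\). Theorem~\ref{thm:res} with \(|f''|\equiv 2\) then gives the continuation and the residue \(\sqrt3\,\Gamma(\frac13)^3/\pi^3\) directly. In your route the numerical value comes first and the equiaffine form is pure arithmetic.

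What your route buys:
\begin{itemize}
\item It bypasses the Legendre-dual machinery and the junction issue altogether, because the global geometry of \(L\) is already packaged in Theorem~\ref{thm:L-zeta}.
\item Up to an affine term that Hata coefficients do not see, \(u^2\) is exactly the Legendre dual \(\widetilde g(u)=u^2-u\) of a parabolic arc in the normalization of Corollary~\ref{cor:Fgamma-is-Zg}. Your shortcut is that identification made explicit.
\item Your \(\zeta_{\mathrm{SU}(3)}\) cross-check via Romik's residue independently confirms the constant.
\end{itemize}

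What the paper's route buys: it exhibits the value as the general equiaffine residue law specialized to \(L\), and it needs no parabola-specific computation beyond the arc length.
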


\begin{proof}
The second formula follows immediately from Proposition~\ref{prop:L-affine-length}
and the general residue theorem for smooth convex domains with everywhere
nonvanishing curvature.\footnote{The theorem assumes a \(C^3\)-smooth boundary, whereas \(\partial L\) is only \(C^1\) at four points. This causes no difficulty because these are precisely the junctions of the \(C^3\) arcs determined by the minimal-model subdivision.} Since
\[
\Length_{\mathrm{equiaffine}}(\partial L)=4^{4/3},
\]
we obtain
\[
\Res_{s=2/3} Z_L(s)
=
\frac{9\sqrt3}{2\cdot 4^{1/3}\pi^3}
\Gamma\!\left(\frac13\right)^3
\cdot 4^{4/3}
=
\frac{18\sqrt3}{\pi^3}\Gamma\!\left(\frac13\right)^3.
\]
\end{proof}

\begin{figure}[htbp]
    \centering
    \includegraphics[width=0.8\linewidth]{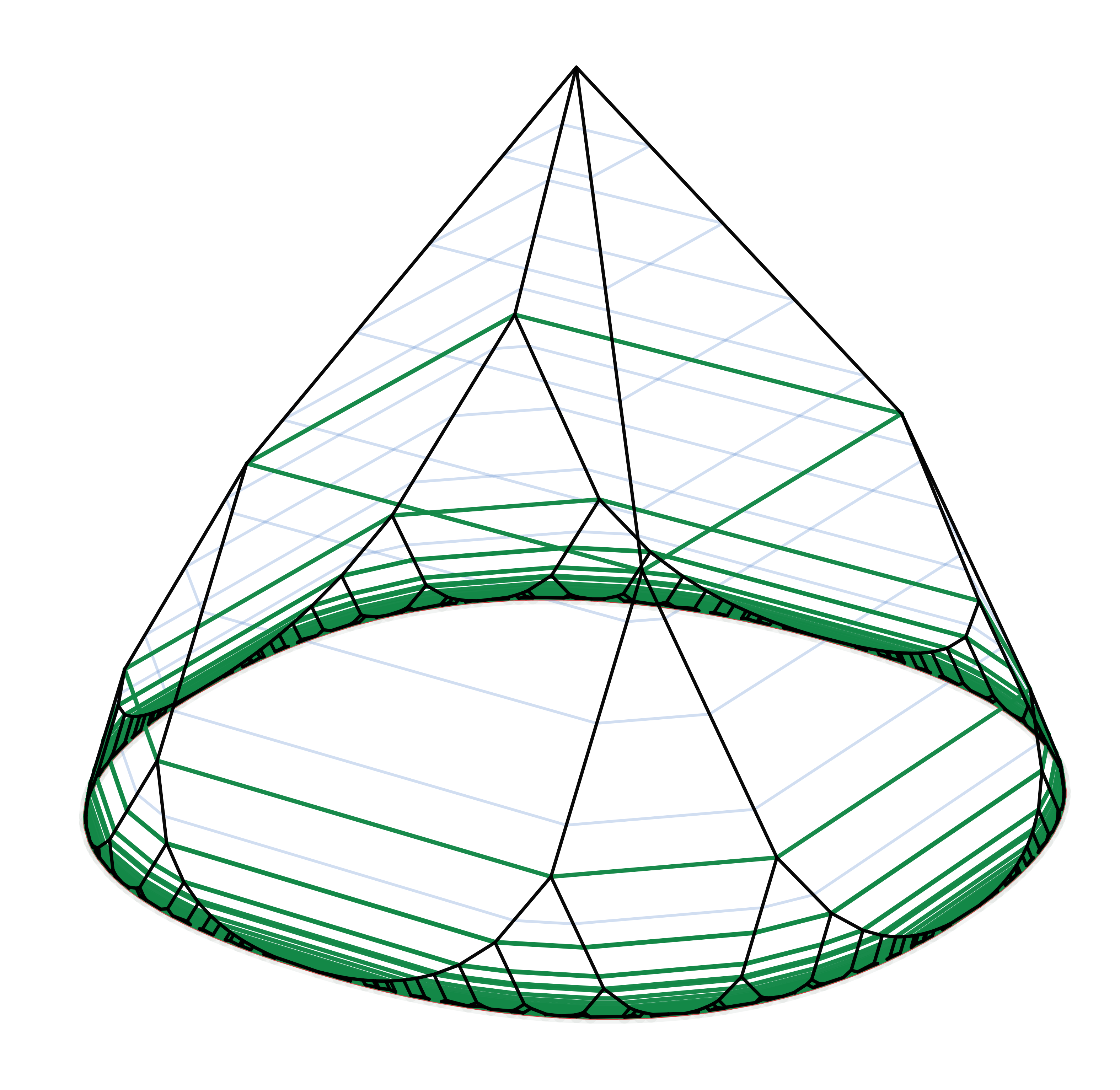}
    \caption{Three-dimensional lift of the parabolic model. Horizontal
    sections of the graph of \(\rho_L\) give the tropical wave fronts, while
    the ridge locus gives the caustic and its Farey branching. The visible
    polyhedral facets correspond to primitive supporting directions. In this
    model the support defects are read from elementary triangular pieces, and
    their zeta summation is the geometric source of the
    Mordell--Tornheim/\(\mathrm{SU}(3)\) structure.}
    \label{fig:parabolaseries}
\end{figure}

The explicit formula above is one reason the parabola should be regarded as
the basic model case. Quite remarkably, the same parabolic geometry appears
in several different contexts.

\subsection{\texorpdfstring{Witten's \(\mathrm{SU}(3)\) zeta function}{Witten's SU(3) zeta function}}\label{ss:wittensu3}

Following Witten and Zagier
\cite{Witten1991QuantumGauge2D,Zagier1994ValuesZeta}, for a compact
semisimple Lie group \(G\), one considers the representation zeta function
\[
\zeta_G(s):=\sum_{\rho\in \operatorname{Irrep}(G)} (\dim \rho)^{-s}.
\]
In the case \(G=\mathrm{SU}(2)\), the irreducible representations have
dimensions \(1,2,3,\dots\), so \(\zeta_{\mathrm{SU}(2)}(s)\) is simply the
Riemann zeta function. The next case,
\[
\zeta_{\mathrm{SU}(3)}(s),
\]
is exactly the one that appears in the parabolic model. Indeed, irreducible
representations of \(\mathrm{SU}(3)\) are indexed by pairs \((p,q)\in\N^2\),
and
\[
\dim V_{p,q}=\frac12\,pq(p+q).
\]
Therefore
\[
\zeta_{\mathrm{SU}(3)}(s)
=
2^s\sum_{p,q\ge1}\frac{1}{\bigl(pq(p+q)\bigr)^s}.
\]
Our computation shows that the boundary zeta series of the parabola is
precisely the primitive version of this double series, obtained by dividing by
\(2^s\zeta(3s)\).

The analytic structure of \(\zeta_{\mathrm{SU}(3)}\) was studied in
\cite{Romik2017}. In particular, \(\zeta_{\mathrm{SU}(3)}(s)\) admits a
meromorphic continuation to \(\C\), with simple poles at
\[
s=\frac23,
\qquad
s=\frac12-k
\quad
(k=0,1,2,\dots),
\]
and
\[
\Res_{s=2/3}\zeta_{\mathrm{SU}(3)}(s)
=
\frac{4^{1/3}}{2\pi\sqrt3}\,
\Gamma\!\left(\frac13\right)^3.
\]
This is the same transcendental factor that appears in our main residue
formula after the elementary normalization coming from the geometric setup.

\subsection{Moduli space volume}\label{sec_moduli}

The same triangle-cutting mechanism also appears in the algebro-geometric
story behind the universal elliptic curve over the moduli space \(A_1\). On
the one hand, for moduli spaces \(A_g\) the volume is interpreted as a
self-intersection number of the Hodge bundle; on the other hand, for the
universal elliptic curve \(B_1\to A_1\), Kramer and von Pippich explain that
one compares the self-intersection \(L_1\cdot L_1\) of the distinguished line
bundle \(L_1\) with the Chern--Weil integral
\[
\operatorname{Vol}(B_1)
=
\int_{B_1}
c_1(L_1,\|\cdot\|_\Theta)\wedge c_1(L_1,\|\cdot\|_\Theta),
\]
and the defect is measured by the special value
\(\zeta_{\mathrm{MT}}(2,2;2)\) \cite{KramerPippich2015Snapshot}. More
precisely, after compactifying \(B_1\) over the cusp and resolving the nodal
boundary fiber by an infinite chain of blow-ups, each exceptional divisor
contributes a correction term, so that the discrepancy is expressed by a
Mordell--Tornheim series
\cite{KramerPippich2015Snapshot,BurgosGilKramerKuehn2016}.

What is especially close to our picture is that, in the toric model extracted
in \cite{BurgosGilKramerKuehn2016}, self-intersection is literally computed
by areas of convex regions: if
\[
\Delta=\operatorname{conv}\{(0,0),(1,0),(0,1)\},
\]
then
\[
\operatorname{div}(x_0)^2=2\operatorname{Vol}(\Delta)=1,
\]
whereas for the singular metric one obtains a b-divisor with
\[
\mathrm{b}\text{-}\operatorname{div}
(x_0,\|\cdot\|_{\mathrm{sing}})^2
=
2\operatorname{Vol}(\Delta_{\mathrm{sing}}),
\]
where
\[
\Delta_{\mathrm{sing}}
=
\{(x,y)\in\R_{\ge 0}^2:x+y\le 1,\ \sqrt{x}+\sqrt{y}\ge 1\}.
\]
Hence
\begin{align*}
\operatorname{div}(x_0)^2
-
\mathrm{b}\text{-}\operatorname{div}
(x_0,\|\cdot\|_{\mathrm{sing}})^2
&=
2\operatorname{Vol}(\Delta)-2\operatorname{Vol}(\Delta_{\mathrm{sing}}) \\
&=
2\int_0^1 (1-\sqrt{x})^2\,dx \\
&=
\frac13.
\end{align*}
Equivalently, this is the statement
\[
\zeta(2,2;2)=\frac13\,\zeta(6),
\]
and geometrically the relevant boundary curve is exactly the parabola
\[
\sqrt{x}+\sqrt{y}=1.
\]
This is why, in our setting, the parabola should be regarded as the model
case: it is the same curve along which the infinite resolution creates the
triangular defect terms whose total contribution is governed by the
Mordell--Tornheim zeta value
\cite{KramerPippich2015Snapshot,BurgosGilKramerKuehn2016}.

\subsection{Concentration of measure}

There is also a probabilistic reason for singling out this domain. Bárány
proved \cite{Barany1995} that, for convex lattice polygons in the square,
asymptotically almost all of them approach a deterministic limit shape, and
this limit shape is exactly
\[
L
=
\Bigl\{
(x,y)\in\R^2:\sqrt{1-|x|}+\sqrt{1-|y|}\ge 1
\Bigr\}.
\]
More precisely, if
\[
\mathcal P_n
=
\Bigl\{
P\subset[-1,1]^2:
P \text{ is a convex } (1/n)\Z^2\text{-lattice polygon}
\Bigr\},
\]
then for every \(\varepsilon>0\),
\[
\frac{
\#\{P\in\mathcal P_n:\Area(P\triangle L)>\varepsilon\}
}{
\#\mathcal P_n
}
\longrightarrow 0
\qquad (n\to\infty),
\]
where
\[
P\triangle L=(P\setminus L)\cup(L\setminus P)
\]
is the symmetric difference.

Thus the same domain \(L\) that appears naturally in our explicit
tropical-zeta computation is also the limit shape in a classical lattice
polygon problem.

\section{\texorpdfstring{Lattice-point counting on \(L\)}{Lattice-point counting on L}}\label{sec_6}

We continue with the special domain from Subsection~\ref{ss:specialdomain}:
\begin{equation}\label{eq:Ldomain}
L=\{(x,y)\in \R^2:\sqrt{1-|x|}+\sqrt{1-|y|}\ge 1\}.
\end{equation}
Its area is
\begin{equation}\label{eq:areaL}
\Area(L)=\frac{10}{3}.
\end{equation}
For \(n\ge1\) put
\begin{equation}\label{eq:NLAL}
N_L(n)=|nL\cap\Z^2|,
\qquad
E_L(n)=N_L(n)-\frac{10}{3}n^2,
\qquad
A_L(N)=\frac1N\sum_{n\le N}E_L(n).
\end{equation}

The aim of this section is to evaluate the first nonvanishing term of
\(A_L(N)\). Conceptually, this gives an exactly solvable test case for a
tropical approach to the Gauss circle problem. The guiding geometric
observation is that the parabolic domain \(L\) and the Euclidean disk have
the same caustic combinatorics in the tropical/Farey cutting picture: both
have the minimal model \([-1,1]^2\), the same Farey tree of primitive
directions, the same mediant recursion, and the same caustic-edge directions. What changes from
one domain to the other is the size assigned to each combinatorial piece. For
\(L\), these sizes are the rational cubic weights
\[
m_{p,q}=pq(p+q),
\]
which lead to the \(\mathrm{SU}(3)\) zeta function. For the disk, the
corresponding sizes are expected to involve Euclidean norms, square roots,
and continued-fraction arithmetic. Thus Section~\ref{sec_6} analyzes the rational/parabolic member of this
caustic class, while the disk is its classical quadratic-irrational member.

This perspective places the theorem below between the exact computation of
\(Z_L(s)\) and the Gauss circle problem. The classical circle problem asks for
sharp control of the pointwise error
\[
E_D(R)=|RD\cap\Z^2|-\pi R^2
\]
for the disk \(D\). The statistic studied here is softer: it is an averaged
discrepancy for the special domain \(L\). Its value is nevertheless highly
structured. The tropical zeta function supplies the natural Farey scale
through the pole at \(s=\frac23\), the head--tail decomposition organizes the
raw cell counts, and the apparent bulk contribution is absorbed by the
threshold term. The first surviving coefficient comes from the second layer
of Farey-edge corrections.

The sign of the result is also part of the structure. Since
\[
\zeta\!\left(-\frac12\right)<0,
\]
the constant
\[
C_L=\frac{32}{3}\zeta\!\left(-\frac12\right)\frac{\zeta(2)}{\zeta(3)}
\]
is negative. Thus, after averaging, the dilates \(nL\) contain fewer lattice
points than the area term \(\frac{10}{3}n^2\) predicts at first order. The
computation therefore separates three features that are usually entangled in
lattice-point problems: the primitive-direction combinatorics, the
tropical-zeta scale, and the signed residual discrepancy. This makes \(L\) a
concrete model for the question of whether the disk admits an analogous
tropical bulk cancellation and whether its Gauss error is governed by a
second layer of caustic arithmetic.

The precise statement is the following.

\begin{theorem}[Averaged lattice count on \(L\)]\label{thm:mainL}
One has
\begin{equation}\label{eq:mainCL}
A_L(N)\sim C_L\sqrt N,
\qquad
C_L=\frac{32}{3}\zeta\!\left(-\frac12\right)\frac{\zeta(2)}{\zeta(3)}.
\end{equation}
Equivalently,
\begin{equation}\label{eq:mainCLraw}
N^{-3/2}\sum_{n\le N}
\left(|nL\cap\Z^2|-\frac{10}{3}n^2\right)
\longrightarrow
\frac{32}{3}\zeta\!\left(-\frac12\right)\frac{\zeta(2)}{\zeta(3)}.
\end{equation}
\end{theorem}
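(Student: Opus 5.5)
The plan is to convert the averaged count into an exact lattice sum over the four parabolic corners of the minimal model \(\widehat L=[-1,1]^2\). That sum splits into a smooth Euler--Maclaurin part and a sawtooth part, and the sawtooth part is evaluated through the Farey cells of Section~\ref{sec4}.

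\textbf{Step 1: exact reduction.} Write \(nL\) as the square \([-n,n]^2\) minus four congruent corner regions. In the corner at \((n,n)\) put \(u=n-x\), \(v=n-y\). The excluded lattice points are then the \((u,v)\in\Z_{\ge0}^2\) with \((\sqrt u+\sqrt v)^2<n\). Hence
\[
N_L(n)=(2n+1)^2-4\,\#\{(u,v)\in\Z_{\ge0}^2:(\sqrt u+\sqrt v)^2<n\}+(\text{axis corrections}).
\]
The axis corrections come from points with \(uv=0\) and are elementary. Next sum over \(n\le N\) and interchange: a fixed pair \((u,v)\) is counted for every \(n\) with \(s_{u,v}:=u+v+2\sqrt{uv}<n\le N\), that is, \(N-\lfloor s_{u,v}\rfloor\) times. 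Since \(u+v\in\Z\), when \(uv\) is not a square we have \(\lfloor s_{u,v}\rfloor=s_{u,v}-\tfrac12-\psi(2\sqrt{uv})\). The square case \(uv=\square\) is sparse and is treated separately; it produces a \(\sqrt N\)-order correction of its own, which must be retained. This gives
\[
\sum_{n\le N}E_L(n)
=
\text{(smooth sum)}
-4\sum_{\substack{u,v\ge1\\ s_{u,v}<N}}\psi\bigl(2\sqrt{uv}\bigr)+(\text{square and axis terms}),
\]
where the smooth sum is \(\sum_{s_{u,v}<N}(N-s_{u,v}+\tfrac12)\) together with the quadratic and axis terms. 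By the symmetry \(u\leftrightarrow v\), the \(\psi\)-sum reduces to \(2\sum_{i<j}\psi(2\sqrt{ij})\) plus the diagonal, with a cutoff that differs from \(i,j\le N\) only in a way controlled by Step~3.

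\textbf{Step 2: smooth part.} Apply two-dimensional Euler--Maclaurin to \(\sum(N-(\sqrt u+\sqrt v)^2)_+\). The \(N^3\) and \(N^2\) terms must reproduce \(\sum_{n\le N}\tfrac{10}{3}n^2\) and cancel the square's \((2n+1)^2\) contribution; this is a consistency check against \(\Area(L)=\tfrac{10}{3}\). The first nontrivial term arises from one-dimensional sums \(\sum_{u\le X}\sqrt u=\tfrac23X^{3/2}+\tfrac12X^{1/2}+\zeta(-\tfrac12)+O(X^{-1/2})\) along the boundary of the summation region. These yield an \(N^{3/2}\zeta(-\tfrac12)\) term with an explicit rational coefficient.

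\textbf{Step 3: the sawtooth sum (main obstacle).} The goal is to prove
\[
N^{-3/2}\sum_{1\le i<j\le N}\psi(2\sqrt{ij})\to\tfrac43\zeta(-\tfrac12)\bigl(1-2\tfrac{\zeta(2)}{\zeta(3)}\bigr),
\]
the identity announced in the introduction. The plan is to decompose the \((i,j)\)-region along the Farey cells of the parabola.

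\emph{Cells and heads.} Near the ray \(i/j=p^2/q^2\) with \(\gcd(p,q)=1\), the function \(2\sqrt{ij}\) is close to the linear form \((q i+p j)/\sqrt{\cdot}\)-type rational values. Lattice points there organize along lines with normal \((p,q)\), whose spacing is governed by the support defects \(1/(pq(p+q))\) of Proposition~\ref{prop:parabola-defect}. In each cell, \(\psi\) is summed exactly along these lines, giving a \emph{head} that is linear in the sawtooth of a rational linear form. Naively, summing heads over \(m_{p,q}=pq(p+q)\lesssim N^{\cdot}\) reproduces the \(N^{2/3}\)-scale contribution dictated by the pole of \(\sum(pq(p+q))^{-s}\) at \(s=\tfrac23\), i.e. by Proposition~\ref{prop:parabola-boundary-series} and Romik's residue.

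\emph{Cancellation and tails.} The key point is to show that this head contribution cancels exactly against the threshold terms of Step~1, namely the points where \(s_{u,v}\) crosses an integer on the cell edges. The remaining \emph{tail} terms, the second-layer Farey-edge effects, are sums of the form \(\sum_{(p,q)=1}F(pq)\) times Euler--Maclaurin remainders in \(\sqrt{\cdot}\). Evaluating these should produce \(\zeta(-\tfrac12)\) from the \(\sqrt{}\) sums, and \(\zeta(2)/\zeta(3)\) from summing a coprimality-weighted series of type \(\sum_{(p,q)=1}\) via Möbius inversion.

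What I would try first is a truncation \(m_{p,q}\le N^{\theta}\). Small cells would be handled by the exact head--tail identity. Large cells would be handled by van der Corput or Kloosterman-type bounds, in the spirit of the proof of Proposition~\ref{prop:spec}, to show they are \(o(N^{3/2})\). The obstacle is making the head/threshold cancellation exact rather than merely approximate, uniformly in the cell parameters.

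\textbf{Step 4: assembly.} Combine the \(\zeta(-\tfrac12)\) coefficients from Step~2, from the square terms, and from \(-8\) times the limit in Step~3 (the factor \(8\) being \(4\) corners times \(2\) from the \(i<j\) symmetry). Collecting them and dividing by \(N^{3/2}\) should give \(C_L=\tfrac{32}{3}\zeta(-\tfrac12)\zeta(2)/\zeta(3)\). In particular, the pure-\(\zeta(-\tfrac12)\) pieces should cancel, leaving only the \(\zeta(2)/\zeta(3)\) term.
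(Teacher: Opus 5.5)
Steps~1--2 are sound in substance. They are the column formula \eqref{eq:columnFormula} in pair-summed form, and they show that the theorem is \emph{equivalent} to the sawtooth limit. However, in the paper that limit is Corollary~\ref{cor:saw}, which is deduced \emph{from} Theorem~\ref{thm:mainL}. So the value you quote from the introduction cannot serve as input: Step~3 has to prove it independently, and all of the content lies there.

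As written, Step~3 is not an argument. No cells are actually defined in the \((i,j)\)-plane, no exact count is given in any cell, and the head/threshold cancellation is only asserted. The missing idea is to decompose the lattice points of \(nL\) itself rather than the sum \(\sum\psi(2\sqrt{ij})\), which has no exact cell dissection. The paper proceeds as follows.
\begin{enumerate}
\item Each corner of \([-n,n]^2\setminus nL\) is tiled by unimodular Farey triangles of size \(n/m_{p,q}\).
\item Each triangle is half-open and shifted by \((\{n\alpha_{\lambda_1}\},\{n\alpha_{\lambda_2}\})\), with \(\alpha_{(a,b)}=-ab/(a+b)\), and its lattice points are counted exactly by Lemma~\ref{lem:triCount}.
\item The phase identity \eqref{eq:phaseID} turns these counts into the exact ledger \eqref{eq:ledger}, with \(T_2=T_0^{\mathrm{dens}}\).
\item The bulk is removed by exact identities before any asymptotics, for example \eqref{eq:weightedTailId}, which rests on \(\sum_{\gcd(p,q)=1}f_{p,q}=\sum_{\gcd(p,q)=1}W_{p,q}=2\).
\item Only then are the three packages evaluated, by edge extraction, complete-period means, moving-prefix corrections and product-discrepancy bounds.
\end{enumerate}

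Two concrete parts of your plan would also fail.

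First, the truncation is backwards. Your ``large cells'' include all tail cells, and \(T_1(n)=2n\sum_{m_{p,q}>n}f_{p,q}\asymp n^{2/3}\). So the tail cells contribute order \(N^{5/3}\) to \(\sum_{n\le N}E_L(n)\). This can only be cancelled against the heads; no van der Corput or Kloosterman bound makes it \(o(N^{3/2})\). Even after the exact regrouping, the \(N^{3/2}\) coefficient comes from cells with \(\min(p,q)\) bounded and \(m_{p,q}\asymp N\). For instance, for fixed \(p\), \(\sum_{\gcd(q,p)=1}W_{p,q}\min(N,m_{p,q})^2\sim\frac43\varphi(p)p^{-5/2}N^{3/2}\), and this is carried by \(q\asymp\sqrt{N/p}\). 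Cells with \(m_{p,q}\le N^\theta\), \(\theta<1\), contribute only \(o(N^{3/2})\).

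Second, the factor \(8\) in Step~4 double-counts the arithmetic term. Your pair exchange and the column count are two evaluations of the same quantity \(\sum_{n\le N}\#\{(u,v)\in\Z_{\ge0}^2:\sqrt u+\sqrt v<\sqrt n\}\). Comparing them shows that your parabolic-region sum \(\sum_{u,v\ge1,\,(\sqrt u+\sqrt v)^2<N}\psi(2\sqrt{uv})\) equals the triangle sum \(\sum_{1\le i<j\le N}\psi(2\sqrt{ij})\) plus Euler--Maclaurin terms \(c\,\zeta(-\tfrac12)N^{3/2}+o(N^{3/2})\). It is not twice the triangle sum. Hence the change of cutoff you defer to Step~3 carries as much of the \(\zeta(2)/\zeta(3)\) term as the sum you keep. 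Taking \(-8\) times the triangle limit would give \(\tfrac{64}{3}\zeta(-\tfrac12)\zeta(2)/\zeta(3)\) rather than \(C_L\); the correct coefficient is \(-4\), as in \eqref{eq:columnFormula}.

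A minor point: no separate perfect-square case is needed in Step~1. The identity \(\lfloor x\rfloor=x-\tfrac12-\psi(x)\) holds for integers too, and such pairs number only \(O(N\log N)\) anyway.
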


Since \(\zeta(-1/2)<0\), the first averaged correction is negative: after
averaging, the dilates \(nL\) contain fewer lattice points than the area term
predicts at order \(\sqrt N\). The same computation gives the doubled
square-root sawtooth constant appearing in the column count.

\begin{corollary}[Doubled square-root sawtooth constant]\label{cor:saw}
Let
\[
\psi(x)=\{x\}-\frac12.
\]
Then
\begin{equation}\label{eq:triSaw}
\lim_{N\to\infty}N^{-3/2}
\sum_{1\le i<j\le N}\psi(2\sqrt{ij})
=
\frac43\zeta\!\left(-\frac12\right)
\left(1-2\frac{\zeta(2)}{\zeta(3)}\right).
\end{equation}
Consequently,
\begin{equation}\label{eq:squareSaw}
\lim_{N\to\infty}N^{-3/2}
\sum_{1<i,j\le N}\psi(2\sqrt{ij})
=
\frac83\zeta\!\left(-\frac12\right)
\left(1-2\frac{\zeta(2)}{\zeta(3)}\right).
\end{equation}
The omitted line \(i=1\) and the diagonal contribute only \(O(N)\), and
therefore do not affect the \(N^{3/2}\)-normalized limit.
\end{corollary}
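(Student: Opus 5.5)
The plan is to derive the corollary from Theorem~\ref{thm:mainL} by an exact column count for \(nL\). This turns \(\sum_{n\le N}E_L(n)\) into an elementary main part plus a multiple of the sawtooth sum, and then we solve for the sawtooth sum.

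\emph{Column count.} Put \(u=n-|x|\) and \(v=n-|y|\). The condition \(\sqrt{n-|x|}+\sqrt{n-|y|}\ge\sqrt n\) defining \(nL\) becomes \(v\ge(\sqrt n-\sqrt u)^2\), that is,
\[
|y|\le 2\sqrt{nu}-u .
\]
So for \(x\) with \(u=n-|x|\in\{0,1,\dots,n\}\), the column contains \(2\lfloor 2\sqrt{nu}\rfloor-2u+1\) lattice points. Writing \(\lfloor t\rfloor=t-\tfrac12-\psi(t)\) and summing over the columns \(x=\pm(n-u)\) gives an exact identity of the form
\[
N_L(n)=\sum_{u}\Bigl(8\sqrt{nu}-4u-4\psi(2\sqrt{nu})+O(1)\Bigr),
\]
with the boundary columns \(u=0\) and \(u=n\) handled separately. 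Every \(\psi\)-term is \(\psi(2\sqrt{iu})\) with \(1\le u\le i\le N\) after renaming \(n=i\), \(u=j\).

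\emph{Euler--Maclaurin on the smooth part.} Next I would apply Euler--Maclaurin with the zeta-regularized constant:
\[
\sum_{u\le n}\sqrt u=\tfrac23 n^{3/2}+\tfrac12 n^{1/2}+\zeta(-\tfrac12)+O(n^{-1/2}).
\]
In \(\sum_u 8\sqrt n\sqrt u-4u\) the terms \(\tfrac23n^{3/2}\) reproduce the area \(\tfrac{10}{3}n^2\) (this is a consistency check). The \(\tfrac12 n^{1/2}\) terms cancel against the \(-\tfrac12\)'s and the \(u\)-linear terms up to \(O(n)\), which contributes only \(O(N^2)\) in total; the constant in this expansion must be tracked by hand, since \(O(N^2)\) would swamp \(N^{3/2}\) if it did not cancel. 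The surviving constant \(\zeta(-\tfrac12)\), multiplied by \(\sqrt n\) and summed over \(n\le N\), contributes \(c\,\zeta(-\tfrac12)N^{3/2}\) with an explicit rational \(c\). The result is
\[
\sum_{n\le N}E_L(n)=c\,\zeta(-\tfrac12)N^{3/2}-4\sum_{1\le j\le i\le N}\psi(2\sqrt{ij})+o(N^{3/2}).
\]
The diagonal \(i=j\) (where \(\psi(2i)=-\tfrac12\)) and the line \(j=1\) or \(u=0\) contribute \(O(N)\), and are absorbed.

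\emph{Conclusion and the main obstacle.} Inserting the asymptotic \(\sum_{n\le N}E_L(n)\sim C_LN^{3/2}\) from Theorem~\ref{thm:mainL} and solving gives the limit of \(N^{-3/2}\sum_{i<j}\psi(2\sqrt{ij})\) as a combination of \(\zeta(-\tfrac12)\) and \(\zeta(-\tfrac12)\zeta(2)/\zeta(3)\). This should match \eqref{eq:triSaw}. Since \(\psi(2\sqrt{ij})\) is symmetric in \(i,j\), the square sum \eqref{eq:squareSaw} is twice the triangle sum, up to the \(O(N)\) diagonal and \(i=1\) line.

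The main obstacle is purely bookkeeping: getting the exact constant \(c\) and the exact multiplier of the \(\psi\)-sum, including the \(O(n)\) cancellation above. This means tracking the four quadrants, the shared axes, whether \(u\) runs over \(0..n\) or \(1..n\), and the precise Euler--Maclaurin constant. I would fix these by checking the identity numerically for small \(n\) before passing to the limit. Nothing analytic beyond Theorem~\ref{thm:mainL} is needed.
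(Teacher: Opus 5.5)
Your proposal is correct and follows essentially the same route as the paper's proof in Appendix~E.2: an exact column count, then Euler--Maclaurin with the constant $\zeta(-\tfrac12)$, giving $E_L(n)=-4S_{\mathrm{col}}(n)+8\zeta(-\tfrac12)\sqrt n+\tfrac{10}{3}+O(n^{-1/2})$ (so the linear terms cancel exactly and your $c=\tfrac{16}{3}$), and finally Theorem~\ref{thm:mainL} solved for the sawtooth sum. The only cosmetic difference is that the paper counts the removed corner points and subtracts four copies of that count from $(2n+1)^2$, whereas you count the points of $nL$ directly column by column.
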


\subsection{The tropical-zeta input and the bulk scale}

For this particular domain the tropical zeta function was computed in
Subsection~\ref{ss:specialdomain}. The boundary series on one parabolic arc is
\begin{equation}\label{eq:FGamma}
F_{\Gamma_{\mathrm{par}}}(s)
=
\sum_{\substack{p,q\ge1\\ \gcd(p,q)=1}}
\frac{1}{(pq(p+q))^s},
\end{equation}
and the four arcs contribute \(4F_{\Gamma_{\mathrm{par}}}(s)\). Equivalently,
using Witten's \(\mathrm{SU}(3)\) zeta function,
\begin{equation}\label{eq:ZL}
Z_L(s)
=
\left(
8-2^{2-s}\frac{\zeta_{\mathrm{SU}(3)}(s)}{\zeta(3s)}
\right)
\frac{1}{s(s-1)}.
\end{equation}
Thus the tropical zeta function organizes the Farey triangles by the cubic
weight
\begin{equation}\label{eq:mpq}
m_{p,q}=pq(p+q),
\qquad
f_{p,q}=m_{p,q}^{-1}.
\end{equation}
The corresponding counting function
\begin{equation}\label{eq:Bcount}
B(X)
=
\#\{(p,q)\in\N^2:\gcd(p,q)=1,\ pq(p+q)\le X\}
\end{equation}
has a natural \(X^{2/3}\)-bulk layer and a further \(X^{1/2}\)-layer. These
are the layers predicted by the pole structure of \eqref{eq:ZL}; if one
treats the head and tail terms separately, they appear throughout the
lattice-counting decomposition. The point of this section is that these bulk
layers enter the exact local bookkeeping and are then absorbed by the
head--tail cancellation. The first surviving averaged term comes from the
Farey edges \(p\) fixed or \(q\) fixed.

\subsection{Farey phases and the head--tail decomposition}

Let \(\lambda=(a,b)\) be a primitive direction in the first quadrant, and write
\[
|\lambda|=a+b.
\]
We use the phase convention
\begin{equation}\label{eq:alpha}
\alpha_\lambda\equiv -\frac{ab}{a+b}\pmod 1.
\end{equation}
Let \(\lambda_1,\lambda_2\) be Farey neighbors with
\[
|\lambda_1|=p,
\qquad
|\lambda_2|=q,
\qquad
|\lambda_1+\lambda_2|=p+q.
\]
The local head phase is
\begin{equation}\label{eq:phi}
\Phi_{p,q}(n)
=
\{n\alpha_{\lambda_1}\}
+
\{n\alpha_{\lambda_2}\}
-
\{n\alpha_{\lambda_1+\lambda_2}\}.
\end{equation}
The phase identity
\begin{equation}\label{eq:phaseID}
\alpha_{\lambda_1+\lambda_2}
\equiv
\alpha_{\lambda_1}+\alpha_{\lambda_2}
-\frac{1}{m_{p,q}}
\pmod 1
\end{equation}
will be used repeatedly. It is proved in Appendix~E.4.

A Farey triangle is in the head at time \(n\) if \(m_{p,q}\le n\), and in the
tail if \(m_{p,q}>n\). The local cell is a shifted unimodular triangle. Its
effective integer size is the continuous size \(n/m_{p,q}\) reduced by the
phase defect \(\Phi_{p,q}(n)\). Equivalently, if
\[
\delta_{p,q}(n)
=
\frac{n}{m_{p,q}}-\Phi_{p,q}(n)+2,
\]
then the number of lattice points assigned to the cell is the generalized Pick
count
\[
\frac{(\delta_{p,q}(n)-1)(\delta_{p,q}(n)-2)}2,
\]
with the half-open convention fixed globally. Expanding this single formula
in the head and tail regimes gives the following exact decomposition.

\begin{proposition}[Head--tail decomposition]\label{prop:ledger}
With the coherent half-open convention of Appendix~E.3, one has the exact
identity
\begin{equation}\label{eq:ledger}
E_L(n)=T_1(n)+T_2(n)-T_0(n)+H_0(n)+H_{12}(n)+1,
\end{equation}
where
\begin{align}
H_0(n)
&=
4\sum_{\substack{p,q\ge1\\ \gcd(p,q)=1\\ m_{p,q}\le n}}
n f_{p,q}\Phi_{p,q}(n),
\label{eq:H0}\displaybreak[0]\\
H_{12}(n)
&=
4\sum_{\substack{p,q\ge1\\ \gcd(p,q)=1\\ m_{p,q}\le n}}
\left(\frac12\Phi_{p,q}(n)-\frac12\Phi_{p,q}(n)^2\right),
\label{eq:H12}\\
T_1(n)
&=
4\sum_{\substack{p,q\ge1\\ \gcd(p,q)=1\\ m_{p,q}>n}}
\frac12 n f_{p,q},
\label{eq:T1}\\
T_2(n)
&=
4\sum_{\substack{p,q\ge1\\ \gcd(p,q)=1\\ m_{p,q}>n}}
\frac12 n^2f_{p,q}^2,
\label{eq:T2}\\
T_0(n)
&=
4\sum_{\substack{p,q\ge1\\ \gcd(p,q)=1\\ m_{p,q}>n}}
\ind_{\{n\alpha_{\lambda_1}\}+\{n\alpha_{\lambda_2}\}<n/m_{p,q}}.
\label{eq:T0}
\end{align}
The final \(+1\) is global: it is the Euler term already visible in the
ambient square count. It is not an \(O(1)\)-term attached to every Farey
triangle.
\end{proposition}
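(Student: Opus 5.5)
The plan is to count $|nL\cap\Z^2|$ by starting from the minimal model and removing the Farey corner cuts one triangle at a time. By Theorem~\ref{thm:L-zeta}, $\widehat L=[-1,1]^2$. The closed square $[-n,n]^2$ contains $(2n+1)^2=4n^2+4n+1$ lattice points.

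\textbf{Step 1: tiling the corner regions.} The region $n\widehat L\setminus nL$ splits into four congruent corner regions, one for each parabolic arc. Each corner region is the disjoint union, in Stern--Brocot order, of the unimodular triangles $n\Delta_{a,b,c,d}$ described in Subsection~\ref{sseq:geommeaning}. By Proposition~\ref{prop:parabola-defect} and Lemma~\ref{lem:parabola-bijection}, these triangles are indexed by coprime pairs $(p,q)$, and their lattice legs are $n f_{p,q}$.

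\textbf{Step 2: two global identities.} The first checks the quadratic term, via the area identity
\[
4\sum_{(p,q)=1}\tfrac12 f_{p,q}^2=\Area(\widehat L)-\Area(L)=4-\tfrac{10}{3}.
\]
The second checks the linear term, via $F_{\Gamma_{\mathrm{par}}}(1)=2$. This follows from Theorem~\ref{thm:integral-boundary}: $s(s-1)Z_L(s)$ vanishes at $s=1$, and $H_{\widehat L}=8$, so $4F_{\Gamma_{\mathrm{par}}}(1)=8$. It follows that $4\sum\tfrac12 nf_{p,q}=4n$. Combining both identities,
\[
E_L(n)=1+4\sum_{(p,q)=1}\Bigl(\tfrac12x_{p,q}^2+\tfrac12x_{p,q}\Bigr)-\sum_{\text{cells}}\#(\text{lattice points in the cell}),
\qquad x_{p,q}:=nf_{p,q}.
\]
Here both sums are absolutely convergent for fixed $n$, so the rearrangement is legitimate.

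\textbf{Step 3: exact count per cell.} Apply an $\mathrm{SL}(2,\Z)$ map sending $\lambda_1,\lambda_2$ to the standard basis. The three sides of the cell lie on lines $\langle\lambda,x\rangle=n\gamma_\lambda$, and by Lemma~\ref{lem:parabola-support} we have $n\gamma_\lambda\equiv -n\alpha_\lambda \pmod 1$. Hence the integer levels nearest to each side are shifted by the fractional parts $\{n\alpha_\lambda\}$. With the half-open convention, the lattice points of the cell are the points of a standard lattice triangle whose size is $x-\Phi_{p,q}(n)$; here the phase identity \eqref{eq:phaseID} ensures that the three shifts combine exactly into $\Phi_{p,q}(n)$.

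In the head regime $m_{p,q}\le n$, we have $x\ge1$. Since $0\le\{\cdot\}<1$, the phase $\Phi$ lies in $(-1,2)$, and the quantity $\delta=x-\Phi+2$ produces the Pick count $(\delta-1)(\delta-2)/2$. Expanding,
\[
\tfrac12x^2+\tfrac12x-\tfrac{(x-\Phi+1)(x-\Phi)}2=x\Phi+\tfrac12\Phi-\tfrac12\Phi^2 .
\]
Summing over head cells and the four arcs gives $H_0+H_{12}$.

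In the tail regime $x<1$, the cell contains at most one lattice point. It contains one exactly when the corner point cut off by the two shifted legs lies below the hypotenuse, which is the condition $\{n\alpha_{\lambda_1}\}+\{n\alpha_{\lambda_2}\}<n/m_{p,q}$. Summing $\tfrac12x^2+\tfrac12x-\ind_{\{\cdot\}}$ over tail cells gives $T_2+T_1-T_0$. Adding the head and tail sums to the global $+1$ yields \eqref{eq:ledger}.

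\textbf{Main obstacle.} The hard part is making the half-open convention coherent. Every lattice point of $[-n,n]^2\setminus nL$ must lie in exactly one cell. This includes points on shared edges of neighbouring Farey triangles, on the four axis-junction points of the arcs, and on the limiting curve $n\,\partial L$ itself, which is reached only as a limit of infinitely many cells. At the same time, the per-cell count must be the same shifted Pick formula uniformly in both regimes. I would first fix the convention that each cell is closed along its hypotenuse (the $\lambda_1+\lambda_2$ side) and open along its legs. I would then verify that this choice is inherited consistently under the mediant recursion. Finally, I would check that it accounts exactly for the four corner points of the square and the centre contribution, which is where the global Euler term $+1$ comes from.
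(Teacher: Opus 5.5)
Your plan follows the paper's own argument in Appendix~E.3: tile \([-n,n]^2\setminus nL\) by Farey cells, count each cell with the shifted Pick formula, expand in the head and tail regimes, and take the \(+1\) from \((2n+1)^2\). Your Step~2 usefully makes explicit the area and Tornheim identities that the paper uses silently.

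The genuine error is at the point you yourself call the crux: the convention you propose is backwards. In corner coordinates \((i,j)=(n-X,n-Y)\), the first cell has legs \(i=0\), \(j=0\) and hypotenuse \(i+j=n/2\).
\begin{itemize}
\item With open legs, the cell loses the square's vertex \((i,j)=(0,0)\). That point lies on no hypotenuse and in no other cell, so it would be counted in \(nL\), although \(\sqrt i+\sqrt j<\sqrt n\).
\item With a closed hypotenuse, for \(4\mid n\) the cell would remove the contact point \((n/4,n/4)\), which belongs to \(nL\).
\end{itemize}
Already for \(n=1\) your convention gives \(9\) lattice points instead of \(|L\cap\Z^2|=5\). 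Your own formulas also do not match that convention. Write, as in the paper, \(u=n/m_{p,q}\), \(x=\{n\alpha_{\lambda_1}\}\), \(y=\{n\alpha_{\lambda_2}\}\). With open legs the offsets from the legs lie in \((0,1]\) rather than \([0,1)\), and the tail indicator would read \(\ind_{x>0,\,y>0,\,x+y\le u}\). The Pick count \(K(K+1)/2\) with \(K=\max\{0,\ceil{u-x-y}\}\) and the indicator \(\ind_{x+y<u}\) in \(T_0\) are the counts for closed legs and an open hypotenuse.

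That convention does not need to be imposed and then checked through the mediant recursion; it is forced.
\begin{itemize}
\item Let \(P_0=[-n,n]^2\), let \(P_k\) be the closed polygon after \(k\) cuts in Stern--Brocot order, and define the \(k\)-th cell as \(P_{k-1}\setminus P_k\).
\item Its legs lie on the two sides of \(P_{k-1}\) meeting at the cut corner, and the new constraint \(\langle\lambda_1+\lambda_2,\cdot\rangle\ge n\gamma_{\lambda_1+\lambda_2}\) is violated strictly.
\item So with \(X'=\langle\lambda_1,\cdot\rangle-n\gamma_{\lambda_1}\) and \(Y'=\langle\lambda_2,\cdot\rangle-n\gamma_{\lambda_2}\), the cell is \(\{X'\ge0,\ Y'\ge0,\ X'+Y'<u\}\). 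This is exactly the paper's \(\Delta_u\).
\item Disjointness is then automatic. The cells exhaust \(P_0\setminus\bigcap_kP_k=[-n,n]^2\setminus nL\), because \(nL\) is the intersection of its supporting half-planes over the dense set of primitive directions.
\item Points of \(n\,\partial L\), including the junction points \((\pm n,0)\) and \((0,\pm n)\), lie in \(nL\) and hence in no cell. So nothing special happens on the limiting curve.
\item No separate accounting of square vertices or a centre term is needed: the \(+1\) is just the constant of \((2n+1)^2\), which your Step~2 already isolates.
\end{itemize}

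A smaller point concerns your derivation of \(F_{\Gamma_{\mathrm{par}}}(1)=2\). Theorem~\ref{thm:integral-boundary} is proved only for \(\Re(s)>2\). Saying that \(s(s-1)Z_L(s)\) vanishes at \(s=1\) presupposes that \(Z_L\) has no pole there. By Theorem~\ref{thm:L-zeta}, that is equivalent to \(F_{\Gamma_{\mathrm{par}}}(1)=2\) itself. Obtain it instead from the classical identity \(\sum_{p,q\ge1}1/(pq(p+q))=2\zeta(3)\) and Proposition~\ref{prop:parabola-boundary-series} at \(s=1\).
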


The density approximation to the threshold term is
\begin{equation}\label{eq:T0dens}
T^{\mathrm{dens}}_0(n)
=
4\sum_{\substack{p,q\ge1\\ \gcd(p,q)=1\\ m_{p,q}>n}}
\frac12\left(\frac{n}{m_{p,q}}\right)^2.
\end{equation}
In the present normalization \(T^{\mathrm{dens}}_0(n)=T_2(n)\). We therefore
define
\begin{equation}\label{eq:RT0}
R_{T_0}(n)=-T_0(n)+T^{\mathrm{dens}}_0(n).
\end{equation}
The sign-safe form of the decomposition is
\begin{equation}\label{eq:safeLedger}
E_L(n)=(H_0(n)+T_1(n))+H_{12}(n)+R_{T_0}(n)+1.
\end{equation}

\begin{figure}[p]
\centering
\thispagestyle{empty}

\begin{tikzpicture}[
  >=Latex,
  font=\scriptsize,
  box/.style={
    draw,
    rounded corners=2.5pt,
    align=center,
    inner sep=4.5pt,
    text width=0.82\textwidth
  },
  midbox/.style={
    draw,
    rounded corners=2.5pt,
    align=center,
    inner sep=3.8pt,
    text width=0.255\textwidth,
    font=\tiny
  },
  final/.style={
    draw,
    very thick,
    rounded corners=3pt,
    align=center,
    inner sep=5pt,
    text width=0.82\textwidth
  },
  arrow/.style={->, thick},
  dashedarrow/.style={->, thick, dashed},
  node distance=5mm
]

\node[box] (setup) {
\textbf{Farey scale and phase identity}\\[-1pt]
The parabolic domain \(L\) is organized by Farey cells with
\[
m_{p,q}=pq(p+q).
\]
For neighboring primitive directions,
\[
\alpha_{\lambda_1}+\alpha_{\lambda_2}-\alpha_{\lambda_{12}}
=
\frac{1}{m_{p,q}},
\qquad
\alpha_{(a,b)}=-\frac{ab}{a+b}.
\]
Thus the local lattice count is governed by fractional parts with period \(m_{p,q}\).
};

\node[box, below=of setup] (decomp) {
\textbf{Exact head--tail decomposition}\\[-1pt]
For
\[
E_L(n):=|nL\cap\mathbb Z^2|-\frac{10}{3}n^2,
\]
combining and rearranging the terms arising from the triangle cuts gives
\[
E_L(n)=
\bigl(H_0(n)+T_1(n)\bigr)+H_{12}(n)+R_{T_0}(n)+1.
\]
After the density cancellation \(T_2=T_0^{\mathrm{dens}}\), the problem reduces to
three second-layer packages.
};

\path (decomp.south west) -- (decomp.south east)
  coordinate[pos=0.10] (pkgL)
  coordinate[pos=0.50] (pkgC)
  coordinate[pos=0.90] (pkgR);

\node[midbox, anchor=north] (h0) at ($(pkgL)+(0,-8mm)$) {
\textbf{\(H_0+T_1\)}\\[-1pt]
Weighted-tail identity:
\[
H_0^{\mathrm{mean}}(n)+T_1(n)
=
2n\sum_{\substack{\gcd(p,q)=1\\m_{p,q}>n}}W_{p,q}.
\]
Head and tail are combined \emph{before} asymptotics.
};

\node[midbox, anchor=north] (h12) at ($(pkgC)+(0,-8mm)$) {
\textbf{\(H_{12}\)}\\[-1pt]
Decompose into:\\\ \\[-1pt]complete periods\\[-1pt]
+\\[-1pt]
moving prefixes\\[-1pt]
+\\[-1pt]
balanced complement.\\\ \\[-1pt]
The balanced part is controlled by the product-discrepancy lemma.
};

\node[midbox, anchor=north] (rt0) at ($(pkgR)+(0,-8mm)$) {
\textbf{\(R_{T_0}\)}\\[-1pt]
Threshold residual:
\[
R_{T_0}
=
-T_0+T_0^{\mathrm{dens}}.
\]
Finite strips give the edge term; the balanced complement is negligible.
};

\node[box, below=12mm of h12] (cancel) {
\textbf{Cancellation of the tropical-zeta bulk}\\[-1pt]
The cubic weights \(m_{p,q}=pq(p+q)\) produce an apparent \(N^{2/3}\)-scale bulk.
In the exact decomposition this bulk cancels between the reorganized packages.
After normalization by \(N^{3/2}\), only the second-layer Farey-edge contribution remains.
};

\node[box, below=of cancel] (constant) {
\textbf{Final algebra}\\[-1pt]
The surviving contribution reduces to
\[
\frac{32}{3}\zeta\!\left(-\frac12\right)
\sum_{p\ge1}\frac{\varphi(p)}{p^3}.
\]
Using
\[
\sum_{p\ge1}\frac{\varphi(p)}{p^3}
=
\frac{\zeta(2)}{\zeta(3)},
\]
one obtains the final constant.
};

\node[final, below=of constant] (theorem) {
\textbf{Averaged lattice count on \(L\)}\\[-1pt]
\[
N^{-3/2}\sum_{n\le N}
\left(
|nL\cap\mathbb Z^2|-\frac{10}{3}n^2
\right)
\longrightarrow
\frac{32}{3}\,
\zeta\!\left(-\frac12\right)
\frac{\zeta(2)}{\zeta(3)}.
\]
};

\draw[arrow] (setup) -- (decomp);

\draw[dashedarrow] (decomp.south) -- (h0.north);
\draw[dashedarrow] (decomp.south) -- (h12.north);
\draw[dashedarrow] (decomp.south) -- (rt0.north);

\draw[dashedarrow] (h0.south) -- ($(cancel.north west)+(1.6cm,0)$);
\draw[dashedarrow] (h12.south) -- (cancel.north);
\draw[dashedarrow] (rt0.south) -- ($(cancel.north east)+(-1.6cm,0)$);

\draw[arrow] (cancel) -- (constant);
\draw[arrow] (constant) -- (theorem);

\end{tikzpicture}

\caption{Summary of lattice-point counting on \(L\).}\label{fig_Lcounting}

\end{figure}

This is the first cancellation in the head--tail decomposition: the \(T_2\)-bulk is paired with
the triangular-density part of \(T_0\) before the averaging process begins.

This decomposition gives the structure of the proof. The tropical zeta
function first supplies the natural Farey scale \(m_{p,q}=pq(p+q)\), and
this scale is visible in the head and tail contributions of the Farey cells.
The exact head--tail identity then organizes the lattice count so that the
main bulk term is paired with its threshold-density counterpart before any
averaging is performed. After this pairing, the averaged discrepancy is
carried by three second-layer packages: the weighted tail \(H_0+T_1\), the
corrected complete-period contribution \(H_{12}\), and the finite-strip
threshold residual \(R_{T_0}\). Each package has its own limiting
coefficient. The final step is purely arithmetic: the strip constants
\(I_p\) and \(K_p\) fit together through the identity
\[
I_p=\frac{K_p}{p}-\frac{1}{3p}+\frac{1}{18p^2}
+\frac{4\zeta(-1/2)}{3p^{3/2}},
\]
and the Euler series
\[
\sum_{p\ge1}\frac{\varphi(p)}{p^s}
=
\frac{\zeta(s-1)}{\zeta(s)}
\]
collapses the combined coefficient to
\[
C_L=
\frac{32}{3}\zeta\!\left(-\frac12\right)
\frac{\zeta(2)}{\zeta(3)}.
\]
Thus the proof proceeds from the tropical Farey scale, to the exact
head--tail decomposition, to three second-layer limiting contributions, and
finally to the closed zeta-value coefficient.

\subsection{The three second-layer packages}

The detailed proofs are deferred to Appendix~E. The main text records the
three limiting contributions and the final cancellation among them.

\begin{proposition}[The weighted \(H_0+T_1\) contribution]\label{prop:H0T1}
One has
\begin{equation}\label{eq:H0T1limit}
N^{-3/2}\sum_{n\le N}(H_0(n)+T_1(n))
\longrightarrow
\frac83\frac{\zeta(3/2)}{\zeta(5/2)}.
\end{equation}
\end{proposition}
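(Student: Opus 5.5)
The plan is to treat \(H_0+T_1\) as one package and never analyze the head and tail pieces separately. The bulk of each piece is of order \(N^{2/3}\) per \(n\), which is larger than the \(\sqrt n\) scale we are after. Only their combination isolates the second-layer term.

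\textbf{Step 1: average the phase.} In \(H_0(n)\), replace \(\Phi_{p,q}(n)\) by its mean over a complete period \(m_{p,q}\). By the phase identity \eqref{eq:phaseID}, \(n\mapsto\Phi_{p,q}(n)\) is periodic modulo \(m_{p,q}\). Moreover, \(\Phi_{p,q}(n)\) equals \(\{n/m_{p,q}\}\) up to an integer correction given by the carry indicator. So its mean is an explicit quantity built from \(1/2\), \(1/(2m_{p,q})\) and the carry density. This gives \(H_0^{\mathrm{mean}}(n)\). Summing the head weights \(nf_{p,q}\) against these means, and using \(\sum_{\gcd=1} f_{p,q}\cdot(\text{const})\) convergent, I expect to reach the weighted-tail identity
\[
H_0^{\mathrm{mean}}(n)+T_1(n)=2n\sum_{\substack{\gcd(p,q)=1\\ m_{p,q}>n}}W_{p,q},
\]
where \(W_{p,q}\) is an explicit second-order weight of size \(f_{p,q}\) times a bounded function of \(p/q\). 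The key point is that the full-sum constant from the head exactly cancels the first-order tail \(\tfrac12 nf_{p,q}\), so only a tail sum remains.

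\textbf{Step 2: control the fluctuation.} The remainder \(H_0-H_0^{\mathrm{mean}}\) consists of incomplete-period prefixes. I will average them over \(n\le N\) and use equidistribution of \(n\alpha_{\lambda}\) along the progressions modulo \(m_{p,q}\). The aim is to show this remainder is \(o(N^{3/2})\). For \(p\) and \(q\) both large, the cells are few and the bound is easy. The dangerous range is near the Farey edges, with \(p\) fixed and \(q\) large, and there I expect to use Step 3's strip analysis.

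\textbf{Step 3: evaluate the tail sum.} Compute \(\sum_{n\le N}2n\sum_{m_{p,q}>n}W_{p,q}\) asymptotically. Interchanging the order of summation gives
\[
\sum_{(p,q)} W_{p,q}\,\min(N,m_{p,q})^2 .
\]
The bulk region \(p\asymp q\) gives contributions of order \(N^{2/3}\cdot N\), which I expect to vanish or cancel because \(W\) has zero mean in the bulk. The edge strips, with \(p\) fixed and \(q\approx\sqrt{N/p}\), contribute at order \(N^{3/2}\). For fixed \(p\), the condition \(m_{p,q}>n\) becomes \(q\gtrsim\sqrt{n/p}\). Summing the \(q\)-tail via Euler–Maclaurin produces \(\zeta(3/2)\)-type constants weighted by \(p^{-5/2}\). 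Summing over \(p\) under the coprimality constraint (Möbius inversion in \(q\)) should then yield
\[
\frac{8}{3}\,\frac{\zeta(3/2)}{\zeta(5/2)}.
\]

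The main obstacle is the exactness of Step 1 together with Step 3. The bulk terms at scales \(N^{5/3}\) and \(N^{3/2}\) must cancel exactly and not merely to leading order. The plan is to verify this by writing \(W_{p,q}\) explicitly and checking its vanishing moment against the \(X^{2/3}\)-layer of \(B(X)\).
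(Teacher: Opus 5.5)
Your skeleton (pair the phase mean of \(H_0\) with \(T_1\) into a weighted tail, discard the fluctuation, extract the Farey edges from \(\sum W_{p,q}\min(N,m_{p,q})^2\)) matches the paper's proof. However, two load-bearing steps are wrong as stated.

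First, the weighted-tail identity does not follow from convergence. The period mean of \(\Phi_{p,q}\) is \(\frac12\bigl(1-\frac1p-\frac1q+\frac1{p+q}\bigr)\). This is immediate because each \(\{n\alpha_\lambda\}\) has period \(|\lambda|\) and mean \(\frac12-\frac1{2|\lambda|}\), since \(\gcd(ab,a+b)=1\); no carry density is needed. So with \(W_{p,q}=f_{p,q}\bigl(\frac1p+\frac1q-\frac1{p+q}\bigr)\) one has \(H_0^{\mathrm{mean}}(n)=2n\sum_{m_{p,q}\le n}(f_{p,q}-W_{p,q})\) and
\[
H_0^{\mathrm{mean}}(n)+T_1(n)
=2n\Bigl(\sum_{\gcd(p,q)=1}f_{p,q}-\sum_{\gcd(p,q)=1}W_{p,q}\Bigr)
+2n\sum_{\substack{\gcd(p,q)=1\\ m_{p,q}>n}}W_{p,q}.
\]
It is the tail completion of the head sum, not its full-sum constant, that cancels \(T_1\). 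The full-sum constant must vanish on its own, and it does only because of the exact Tornheim evaluations \(\sum f_{p,q}=\sum W_{p,q}=2\). These come from M\"obius inversion of \(\sum_{a,b\ge1}\frac{1}{ab(a+b)}=2\zeta(3)\) and of its weighted analogue \(=2\zeta(4)\). Without them a term linear in \(n\) survives and contributes order \(N^2\).

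Second, \(W_{p,q}\) is not ``of size \(f_{p,q}\)'': \(W_{p,q}=\frac{p^2+pq+q^2}{p^2q^2(p+q)^2}\) is strictly positive and homogeneous of degree \(-4\). Positivity rules out your bulk mechanism: there is no zero mean and no vanishing moment against the \(X^{2/3}\)-layer of \(B(X)\). If \(W\) really were comparable to \(f\), the bulk would contribute a positive multiple of \(N^{5/3}\) and the proposition would be false. The extra degree is what actually makes the bulk harmless: the range \(p\asymp q\) contributes \(\ll n^{1/3}\) to \(2n\sum_{m_{p,q}>n}W_{p,q}\), hence \(O(N^{4/3})\) after summing over \(n\). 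Nothing at scale \(N^{3/2}\) is meant to cancel; that scale is the answer.

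The fluctuation step also fails as formulated. If you control \(\Phi_{p,q}-\overline{\Phi}_{p,q}\) only through its period \(m_{p,q}\), its partial sums are \(O(m_{p,q})\). Abel summation against the weight \(4n/m_{p,q}\) then gives \(O(N)\) per head cell and \(O(N\,B(N))=O(N^{5/3})\) in total. What is needed is the splitting \(\Phi_{p,q}-\overline{\Phi}_{p,q}=\theta_{\lambda_1}+\theta_{\lambda_2}-\theta_{\lambda_1+\lambda_2}\) into mean-zero functions of the short periods \(p\), \(q\), \(p+q\). Their partial sums are \(O(p+q)\), so each cell contributes \(O(N(p+q)/m_{p,q})=O(N/(pq))\), and \(\sum_{pq(p+q)\le N}N/(pq)=O(N\log^2N)\). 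With this the Farey edges are not a dangerous range, and no strip analysis is needed there.

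Finally, Step 3 is asserted rather than carried out. For fixed \(p\) one uses \(W_{p,q}=p^{-2}q^{-2}+O_p(q^{-3})\), \(m_{p,q}=pq^2+p^2q\) and the coprime density \(\varphi(p)/p\). Splitting at \(Q_N=\sqrt{N/p}\) gives \(\bigl(\frac13+1\bigr)\varphi(p)p^{-5/2}N^{3/2}\) per edge, and one doubles this for the two edges. The constant \(\zeta(3/2)/\zeta(5/2)\) then comes from \(\sum_p\varphi(p)p^{-5/2}\), not from Euler--Maclaurin in \(q\). To sum the fixed-\(p\) limits you also need a uniform bound such as \(O(N^{3/2}P^{-1/2})\) for the region \(\min(p,q)>P\).
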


\begin{proposition}[The corrected \(H_{12}\) contribution]\label{prop:H12corr}
One has
\begin{equation}\label{eq:H12limit}
N^{-3/2}\sum_{n\le N}H_{12}(n)
\longrightarrow
-\frac49\frac{\zeta(5/2)}{\zeta(7/2)}
+8\sum_{p\ge1}\frac{\varphi(p)}{p^{3/2}}I_p,
\end{equation}
where
\begin{equation}\label{eq:Ipdef}
I_p
=
\frac89-\frac{2}{3p}+\frac{1}{18p^2}
+\frac{4}{3p^{3/2}}
\left(
\zeta\!\left(-\frac12\right)-\sum_{i=1}^{p-1}\sqrt i
\right).
\end{equation}
The sum is empty for \(p=1\).
\end{proposition}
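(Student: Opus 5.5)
The plan is to average the definition \eqref{eq:H12} over \(n\le N\), exchange the order of summation, and evaluate the inner average over \(n\) for each coprime pair \((p,q)\). Using the phase identity \eqref{eq:phaseID}, \(\Phi_{p,q}(n)\) depends on \(n\) only through the residues of \(n\alpha_{\lambda_1}\) and \(n\alpha_{\lambda_2}\) modulo \(1\). Their denominators divide \(p\) and \(q\) respectively, and the mediant phase has denominator dividing \(p+q\). Hence \(\Phi_{p,q}(n)\) is periodic in \(n\) with period dividing \(m_{p,q}\), and by the phase identity it takes values in \(\{0,1\}+\) an explicit small linear drift in \(n/m_{p,q}\). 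The function \(\tfrac12\Phi-\tfrac12\Phi^2\) vanishes on the integer part, so only the fractional drift, and the indicator of the wrap-around event, contribute.

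I would split the sum over \(n\in[m_{p,q},N]\) into complete periods of length \(m_{p,q}\), plus a moving prefix, plus a balanced complement.

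\textbf{Complete periods.} Over one full period the joint distribution of \((\{n\alpha_{\lambda_1}\},\{n\alpha_{\lambda_2}\})\) is uniform on the product grid \(\tfrac1p\Z/\Z\times\tfrac1q\Z/\Z\) (coprimality makes the CRT map bijective). The period average is therefore a rational function of \(p,q\), of size \(\asymp 1/m_{p,q}\) plus grid corrections in \(1/p\) and \(1/q\). Summing \(\sum_{m_{p,q}\le n}\) of the leading part against the count \(B(n)\) produces the bulk layers. My expectation is that the \(X^{1/2}\) layer of \(B\) yields, after averaging in \(n\), the term \(-\tfrac49\zeta(5/2)/\zeta(7/2)\). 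I would derive this by writing the relevant Dirichlet series as a Mordell--Tornheim-type sum and reading off its pole at \(s=\tfrac12\), in the spirit of \eqref{eq:Hsdecomp} and the \(\varphi\)-Dirichlet series \(\zeta(w-1)/\zeta(w)\).

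\textbf{Edge strips.} The remaining terms come from the Farey edges where one of \(p,q\) is fixed and the other is large. There \(m_{p,q}\approx pq^2\), so the condition \(m_{p,q}\le n\) reads \(q\le\sqrt{n/p}\). Summing the grid correction over \(q\) in this range with \(p\) fixed gives a sum of the shape \(\sum_{q\le\sqrt{n/p}}(\text{polynomial in }1/q)\). I would evaluate it by Euler--Maclaurin in the variable \(i\) with \(\sqrt i\) weights. Averaging over \(n\le N\) and normalizing by \(N^{3/2}\) then yields exactly the constant \(\tfrac89-\tfrac2{3p}+\tfrac1{18p^2}+\tfrac{4}{3p^{3/2}}(\zeta(-\tfrac12)-\sum_{i<p}\sqrt i)\). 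The \(\zeta(-1/2)\) arises as the constant term in \(\sum_{i\le x}\sqrt i=\tfrac23x^{3/2}+\tfrac12x^{1/2}+\zeta(-\tfrac12)+O(x^{-1/2})\). The coprimality restriction in \(q\) modulo \(p\) produces the factor \(\varphi(p)\), and the two symmetric edges together with the factor \(4\) give the \(8\).

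\textbf{Main obstacle.} The hardest step is controlling the moving prefixes and the balanced complement uniformly: showing that the incomplete-period contributions summed over all \((p,q)\) with \(m_{p,q}\le N\) are \(o(N^{3/2})\) after averaging. I would first try a product-discrepancy bound. The incomplete sums of \(\{n a/p\}\{n b/q\}\)-type functions over an interval of length less than the period are bounded by Erd\H{o}s--Tur\'an with Kloosterman/Dedekind-sum estimates, giving \(O((pq)^{1/2+\varepsilon})\) per cell. I would then check that \(\sum_{m_{p,q}\le N}(pq)^{1/2+\varepsilon}\) is \(o(N^{3/2})\) away from the edges, treating the edges separately as above.
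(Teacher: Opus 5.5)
\noindent There is a genuine gap. You have attributed the two terms of \eqref{eq:H12limit} to the wrong mechanisms, and the contribution you plan to show is negligible is exactly where $I_p$ comes from.

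\textbf{The complete periods give only the first term.} Over one period of length $m_{p,q}$ the three phases are independent and uniform on grids of sizes $p$, $q$, $p+q$. The continuous average of $\tfrac12\Phi-\tfrac12\Phi^2$ over $(x,y)\in[0,1]^2$ vanishes for every threshold. Hence the period mean is a pure grid effect,
$Q(p,q)=-\tfrac1{12}\bigl(\tfrac1p+\tfrac1q-\tfrac1{p+q}\bigr)^2$,
of size $\min(p,q)^{-2}$, not $1/m_{p,q}$.

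For fixed $p$ one has $Q(p,q)=-\tfrac1{12p^2}+O_p(q^{-2})$. Summing over $q\le\sqrt{n/p}$ coprime to $p$, over both edges and the four arcs, and averaging in $n$ gives $-\tfrac49\sum_p\varphi(p)p^{-7/2}$. This is the $s=\tfrac12$ pole of $\sum Q(p,q)m_{p,q}^{-s}$, so your Dirichlet-series idea works here once $Q$ is computed and used as the weight instead of $B$. The rest of $Q$ contributes only $O(\log n)$ for each $n$.

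Your ``edge strip'' step sums these same complete-period grid corrections over $q\le\sqrt{n/p}$, so nothing is left to produce $I_p$. The $\zeta(-\tfrac12)$ that Euler--Maclaurin gives for $\sum_{n\le N}\sqrt n$ is an $O(1)$ term, invisible after dividing by $N^{3/2}$. And no sum of a polynomial in $1/q$ generates the $p$-dependent finite sum $\sum_{i<p}\sqrt i$.

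\textbf{The missing idea: the slow drift in the incomplete period.} The summand depends not only on the fast phases $(x,y)$ but also on the slow variable $\{u\}$, $u=n/m_{p,q}$. By the phase identity, $\Phi=\{u\}+\lfloor x+y-\{u\}\rfloor$, and $\{u\}$ sweeps all of $[0,1)$ during one period. So the drift is not small, and the integer part can equal $-1$, where $\tfrac12\Phi-\tfrac12\Phi^2\neq0$.

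With the threshold frozen, the grid mean over a block of $pq$ consecutive $n$ is $f_p(u)+f_q(u)+e^{12}_{p,q}(u)$. Over the last incomplete period, the deviation from $Q(p,q)$ integrates to $m_{p,q}G_p(\{N/m_{p,q}\})$ plus smaller terms, with $G_p$ as in \eqref{eq:Gpdef}.

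For $p=1$, $G_1(v)=v(1-v)(1-2v)/12$. So the prefix of the pair $(1,q)$ is $q(q+1)G_1(\{N/q(q+1)\})+O(q)$, which is of order $N$ for typical $q\asymp\sqrt N$, not $O((pq)^{1/2+\varepsilon})$. Erd\H{o}s--Tur\'an and Kloosterman bounds in $(x,y)$ cannot detect this non-oscillatory drift.

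\textbf{Where $I_p$ actually comes from.}
\begin{enumerate}
\item Substituting $q=\sqrt{N/p}\,t$ gives $N^{3/2}\varphi(p)p^{-3/2}\int_0^1t^2G_p(\{t^{-2}\})\,dt$ per edge and arc, hence the factor $8$.
\item Unfolding the branches of $\{t^{-2}\}$ turns the integral into $\tfrac12\int_0^1G_p(v)\zeta(\tfrac52,1+v)\,dv$.
\item Integrating by parts down to $\zeta(-\tfrac12,1+i/p)$ at the breakpoints $i/p$ of $G_p$, and then applying the multiplication formula $\sum_{i=0}^{p-1}\zeta(s,1+i/p)=p^s\zeta(s,p)$, produces $\zeta(-\tfrac12)-\sum_{i=1}^{p-1}\sqrt i$.
\end{enumerate}

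Only after these fixed strips are removed is the remaining prefix small. The balanced part satisfies $|\mathcal E^{12}_{p,q}|\ll1/(pq)$ by the product-discrepancy lemma. This gives $O(p+q)$ per pair and $O(N\log N)$ in total. The strip tail is controlled by $I_p\sim1/(1440p^4)$.
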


\begin{proposition}[The tail-threshold contribution]\label{prop:RT0}
One has
\begin{equation}\label{eq:RT0limit}
N^{-3/2}\sum_{n\le N}R_{T_0}(n)
\longrightarrow
-8\sum_{p\ge1}\frac{\varphi(p)}{p^{5/2}}K_p,
\end{equation}
where
\begin{equation}\label{eq:Kpdef}
K_p
=
\frac13
\left(
\frac{8p}{3}-1-\frac4{\sqrt p}\sum_{i=0}^{p-1}\sqrt i
\right).
\end{equation}
\end{proposition}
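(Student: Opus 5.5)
The plan is to treat \(R_{T_0}(n)\) as a sum, over tail cells, of local discrepancies between a lattice indicator and its density. For a tail cell put \(x=n/m_{p,q}\in(0,1)\). The summand of \(R_{T_0}(n)\) is then
\[
\tfrac12x^2-\ind_{\{n\alpha_{\lambda_1}\}+\{n\alpha_{\lambda_2}\}<x}.
\]
If the pair of phases \((\{n\alpha_{\lambda_1}\},\{n\alpha_{\lambda_2}\})\) were equidistributed in the unit square, the indicator would have mean exactly \(\tfrac12x^2\), the area of \(\{u+v<x\}\). So the averaged residual can only come from cells where this fails. I would split the tail cells, with a cutoff \(P\), into \emph{finite strips} where \(\min(p,q)\le P\) and a \emph{balanced complement} where \(p,q>P\). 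The cells with \(p\) fixed and those with \(q\) fixed give equal contributions by the symmetry \(p\leftrightarrow q\). Together with the factor \(4\) from the four arcs, this produces the overall coefficient \(8\).

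\textbf{Finite strip, \(p\) fixed.} Here \(\alpha_{\lambda_1}\equiv -ab/p\) with \(a+b=p\). Since \(\gcd(a,b)=1\), the product \(ab\) is invertible mod \(p\). Hence \(\{n\alpha_{\lambda_1}\}\) runs uniformly through \(\{i/p:0\le i<p\}\) as \(n\) runs through residue classes mod \(p\). On the other hand \(q\asymp\sqrt{n/p}\to\infty\) on the relevant cells. By the phase identity \eqref{eq:phaseID}, \(\{n\alpha_{\lambda_2}\}\) should then be equidistributed independently of the residue of \(n\) mod \(p\), after averaging over \(n\le N\). I would justify this with an Erd\H{o}s--Tur\'an/Fej\'er argument: bound the exponential sums in \(n\) over ranges of length \(\asymp m_{p,q}\), using that \(\alpha_{\lambda_2}\) has denominator \(q\). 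The conditional mean of the indicator given \(u=i/p\) is \((x-i/p)_+\), so the strip summand averages to
\[
G_p(x)=\tfrac12x^2-\tfrac1p\sum_{i=0}^{p-1}\bigl(x-\tfrac ip\bigr)_+ .
\]

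\textbf{Summing the strip.} Replace \(m_{p,q}\) by \(pq^2\), at a cost that is lower order. Use density \(\varphi(p)/p\) for the \(q\) coprime to \(p\). Then
\[
\sum_q G_p\bigl(n/(pq^2)\bigr)\approx\frac{\varphi(p)}{p}\cdot\frac12\sqrt{\frac np}\int_0^1G_p(x)\,x^{-3/2}\,dx ,
\]
and summing over \(n\le N\) gives a factor \(\tfrac23N^{3/2}\). The integral is elementary: \(\int_0^1 \tfrac12 x^{1/2}\,dx=\tfrac13\), and each \(i\) contributes \(\int_{i/p}^1(x-i/p)x^{-3/2}\,dx\), which produces the terms \(\sqrt{i/p}\). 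Matching \(\tfrac13\bigl(\tfrac{8p}3-1-\tfrac4{\sqrt p}\sum_{i<p}\sqrt i\bigr)\) is then a routine calculation. It should give exactly \(-\varphi(p)p^{-5/2}K_p\) per strip, and hence the limit \eqref{eq:RT0limit} once the strips are summed.

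\textbf{Balanced complement and uniformity (main obstacle).} The hard step is to show that the cells with \(p,q>P\) contribute \(o(N^{3/2})\) uniformly, and then let \(P\to\infty\). The strip series converges: Euler--Maclaurin gives \(\sum_{i<p}\sqrt i=\tfrac23p^{3/2}-\tfrac12p^{1/2}+\zeta(-\tfrac12)+O(p^{-1/2})\), so \(K_p=O(1)\), and the tail of \(\sum_p\varphi(p)p^{-5/2}K_p\) vanishes as \(P\to\infty\). For the complement, I would first try to bound the joint discrepancy of \((n\alpha_{\lambda_1},n\alpha_{\lambda_2})\) over \(n\le N\) using the product structure of the phases. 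This is the product-discrepancy lemma mentioned for \(H_{12}\), fed by incomplete Kloosterman-type sums as in Appendix~\ref{app3}. The aim is a saving of the form \((\min(p,q))^{-\delta}\), which summed over the \(\asymp N^{2/3}\) relevant cells still gives \(O(P^{-\delta}N^{3/2})\). If a direct saving is unavailable, the fallback is a cruder estimate for very large \(m_{p,q}\): there \(x\) is small and both terms are \(O(x^2)\), or the indicator is rarely nonzero. That reduces the difficulty to the intermediate range.
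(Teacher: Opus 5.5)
Your strip extraction matches the paper's. You get \(G_p=-\Delta_p/p\) with \(\Delta_p(u)=A_p(u)-\frac12pu^2\) and \(A_p(u)=\sum_{i=0}^{p-1}(u-i/p)_+\). The identity \(\frac13\int_0^1\Delta_p(u)u^{-3/2}\,du=K_p\), together with the factor \(4\cdot 2\), gives the right coefficient.

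\textbf{The gap is in the complement step.} You correctly flag it as the crux, but you misdiagnose it. The phase pair is never close to uniform on the continuous square. Since \(ab\) is invertible mod \(p\) and \(cd\) is invertible mod \(q\), the Chinese remainder theorem shows that every block of \(pq\) consecutive \(n\) visits each point of the \(p\times q\) grid exactly once. Counting directly, \(I_{p,q}(u)=\#\{(i,j):i/p+j/q<u\}=\sum_{0\le i<pu}\ceil{q(u-i/p)}\), one obtains the one-sided bound
\[
\frac{u}{2\min(p,q)}\;\le\;\frac{I_{p,q}(u)}{pq}-\frac{u^2}{2}\;\le\;\frac{2u}{\min(p,q)}+\frac1{pq},
\qquad 0<u<1 .
\]
This has three consequences.
\begin{itemize}
\item Every cell, balanced or not, has a negative block mean. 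There is no cancellation for Weil or Kloosterman-type bounds to detect, and none are needed, because the distribution is exactly periodic.
\item The cells with \(p,q>P\) contribute a quantity of exact order \(P^{-1/2}N^{3/2}\), essentially the tail \(-8N^{3/2}\sum_{p>P}\varphi(p)p^{-5/2}K_p\). This is not \(o(N^{3/2})\) for fixed \(P\).
\item Your accounting does not close as written. Among cells with \(m_{p,q}\asymp n\), at most about \(\sqrt{n/p}\) have \(\min(p,q)=p\). A saving of \(\min(p,q)^{-\delta}\) over these yields \(O(P^{1/2-\delta}N^{3/2})\) only when \(\delta>\frac12\).
\end{itemize}
Two further points are missing. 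You never handle the infinitely many tail cells with \(m_{p,q}\gg n\); there it is the factor \(u\) that makes the sum converge. And your fallback claim that ``both terms are \(O(x^2)\)'' is false: over a full period the indicator averages at least \(1/(pq)\).

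\textbf{How to close it.} Sum the displayed upper bound. The first term gives \(\sum_{n\le N}\sum_{p,q>P,\ m_{p,q}>n}u/\min(p,q)\ll P^{-1/2}N^{3/2}+N^{4/3}\). The \(1/(pq)\) term is only needed when \(pq\le N\), and it gives \(O(N\log^2N)\). Two bookkeeping facts make the block averages legitimate.
\begin{itemize}
\item Freezing \(u=n/m_{p,q}\) over a block costs at most the number of grid points in a diagonal strip of width \(1/(p+q)\) below height \(u\). That number is \(\ll\min(p,q)u+1\). Summed over blocks and cells, including partial blocks, this is \(O(N^{4/3}+N\log^2N)\).
\item For \(pq>N\) the indicator vanishes for all \(n\le N\). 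Here \(u<1/(p+q)\), which forces both phases to be \(0\), i.e.\ \(pq\mid n\).
\end{itemize}
This gives \(\limsup_N N^{-3/2}|\cdot|\ll P^{-1/2}\) for the complement. Combined with your strips and \(P\to\infty\), the proposition follows. The same periodicity also makes your strip averaging exact, so Erd\H{o}s--Tur\'an is unnecessary there.

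\textbf{Comparison with the paper.} The paper does not split by region. It uses the exact per-cell identity \(\frac12u^2-\frac{I_{p,q}(u)}{pq}=-\frac{\Delta_p(u)}p-\frac{\Delta_q(u)}q-\frac{E_{p,q}(u)}{pq}\). It sums both strip pieces over all cells, so the large-\(p\) tail is just the convergent \(K_p\)-series. It then proves the balanced piece is \(o(N^{3/2})\) globally, using the bounded primitive \(\mathcal E_{p,q}(v)=\iint(v-x-y)_+\,d\nu_p\,d\nu_q\) and a moving-threshold replacement in three ranges. Your region split, once repaired, needs only the crude bound above, because it can afford an \(O(P^{-1/2})\) error before letting \(P\to\infty\).
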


The two strip constants are related by
\begin{equation}\label{eq:IpKp}
I_p
=
\frac{K_p}{p}
-\frac{1}{3p}
+\frac{1}{18p^2}
+\frac{4\zeta(-1/2)}{3p^{3/2}}.
\end{equation}
Therefore, using
\begin{equation}\label{eq:phis}
\sum_{p\ge1}\frac{\varphi(p)}{p^s}
=
\frac{\zeta(s-1)}{\zeta(s)},
\qquad
\Re(s)>2,
\end{equation}
we obtain
\begin{align*}
&\frac83\frac{\zeta(3/2)}{\zeta(5/2)}
-\frac49\frac{\zeta(5/2)}{\zeta(7/2)}
+8\sum_{p\ge1}\frac{\varphi(p)}{p^{3/2}}I_p
-8\sum_{p\ge1}\frac{\varphi(p)}{p^{5/2}}K_p \\
&\hspace{2cm}
=
\frac{32}{3}\zeta\!\left(-\frac12\right)
\sum_{p\ge1}\frac{\varphi(p)}{p^3}
=
\frac{32}{3}\zeta\!\left(-\frac12\right)
\frac{\zeta(2)}{\zeta(3)}.
\end{align*}
Together with \eqref{eq:safeLedger}, this proves Theorem~\ref{thm:mainL}.

\subsection{The column comparison}

The same lattice error can be computed by columns. With
\begin{equation}\label{eq:Scoldef}
S_{\mathrm{col}}(n)
=
\sum_{1\le i<n}\psi(2\sqrt{ni}),
\end{equation}
the column formula is
\begin{equation}\label{eq:columnFormula}
E_L(n)
=
-4S_{\mathrm{col}}(n)
+
8\zeta\!\left(-\frac12\right)\sqrt n
+
\frac{10}{3}
+
O(n^{-1/2}).
\end{equation}
Averaging \eqref{eq:columnFormula} and comparing with
Theorem~\ref{thm:mainL} gives \eqref{eq:triSaw}; the passage from triangular
to square sums gives \eqref{eq:squareSaw}. The proof of
\eqref{eq:columnFormula}, including the boundary strips \(i=0\) and \(j=0\),
is included in Appendix~E.2.

This completes the lattice-counting part of the parabolic model. The result
shows that \(L\) provides an exactly computable tropical analogue of the
Gauss circle problem for averaged integer dilates. The same
primitive-direction architecture appears: Farey neighbors, mediants,
caustic edges, and continued-fraction arithmetic. For \(L\), the edge sizes
are rational cubic weights \(pq(p+q)\), and this makes the complete
head--tail calculation possible. For the Euclidean disk, the same caustic
skeleton should carry quadratic-irrational size data, governed by square roots
and Diophantine approximation. The open problem is to carry out the analogous
program for the disk: start from the same structural decomposition, prove the
corresponding bulk cancellation, and compute the second-layer caustic terms
that control the circle discrepancy. The indexing of the sums should be the
same; only the tropical coefficients change.

In this sense, Theorem~\ref{thm:mainL} is more than an exact formula for a
special convex domain. It gives a model for a program: use tropical caustics
to decompose lattice-point error into Farey packages, locate the canonical
head--tail cancellations, and then read the signed discrepancy from the
remaining edge terms. The negative constant
\[
\frac{32}{3}\zeta\!\left(-\frac12\right)\frac{\zeta(2)}{\zeta(3)}
\]
is the arithmetic trace of this mechanism in the parabolic case. The Gauss
circle problem asks for the corresponding trace in the circular case.

\subsection{Concluding remarks}

The domain \(L\) is an example of what we call a
\textit{tropically rational} convex domain, meaning one whose tropical series
has rational coefficients. Such domains are preserved by
\(\mathrm{GL}(2,\Q)\) and by translations in \(\Q^2\). If the space of convex
domains is viewed as a polyhedral complex, with strata indexed by caustic
singularity types and coordinates given by tropical caustic-edge lengths, then
the tropically rational domains form a set of rational points. A broad source
of examples comes from univariate rational functions with rational
coefficients. Start with such a function \(h\), monotone on an interval
\([a,b]\) with \(a>0\). Choose an antiderivative \(f=\int h^{-1}\), take the
arc in \(\R^2\) given by its graph, and apply a
\(\mathrm{GL}(2,\Q)\)-transformation followed by a \(\Q^2\)-translation.
Convex domains whose boundaries decompose into such arcs are tropically
rational, and \(L\) belongs to this subfamily.

In Corollary~\ref{cor:saw} one sees the factor \(2\) under the fractional
part. It appears that this factor can be moved outside the limit. Although we
do not have a complete proof, this has been confirmed by extensive computer
experiments of Stanislav Shkolnikov, which also led to the following
empirical statements, recorded here as conjectures. First, for every real
\(\mu\), the limit\footnote{The convergence of such sequences is very slow
and non-monotone, with an oscillatory profile of Voronoi type.}
\[
\operatorname{saw}(\mu)
:=
\lim_{N\to\infty}
N^{-3/2}\sum_{1<k,l<N}\left(\{\mu\sqrt{kl}\}-\frac12\right)
\]
exists. Second, there is a universal constant \(S\) such that
\[
|\operatorname{saw}(\mu)|\le S
\qquad
(\mu\in\R).
\]
Third, for every odd integer \(d\),
\[
\operatorname{saw}(2d)=2\operatorname{saw}(d).
\]

The decomposition and regrouping scheme presented in this section applies,
in principle, to any domain whose minimal model is a lattice polygon. The
special feature of \(L\) is that all its tropical monomials are rational,
although non-integral except for four monomials. For the Gauss circle problem
this latter property fails: there are infinitely many monomials
\(\alpha_{(p,q)}=\sqrt{p^2+q^2}\) which are integral, and the corresponding
oscillations disappear. This may produce a further correction of a different
kind. Numerically, one observes that the errors for dilates of the disk tend
to stay negative, while positive errors have much smaller magnitude. A precise
form of this observation is the conjecture that the average of the positive
errors alone, with negative errors counted as zero, is \(o(\sqrt N)\). If this
holds, and if the scheme illustrated here for \(L\) can be carried out for the
disk with total averaged error of order \(\sqrt N\), it would give a very
strong averaged form of the Gauss circle estimate.\footnote{A nonaveraged version would also require bounds on the negative spikes; this would require horizontal cancellation of the oscillations across Farey cells.}

We have already mentioned that \(L\) and the disk are ``twins'' from the
combinatorial perspective. The previous paragraph describes one of their
number-theoretic deviations; another is the non-rationality of
\(\sqrt{p^2+q^2}\) when \(p^2+q^2\) is not a square. Geometrically, however,
the two domains are very similar. Both have \(C^1\)-smooth boundaries and are strictly convex; the boundary of
\(L\) fails to be \(C^2\) at four points and is \(C^\infty\) elsewhere. From the metric-asymptotic perspective used at the
decisive Abel summation step for the centered part of the head term \(H_0\),
the side length in the corner-cut process, before dilation, decays quadratically
in the cutting threshold for both domains. After dilation, this produces slow
side growth, which is asymptotically negligible on average after multiplication
by a zero-mean oscillatory term. This quadratic decay is a consequence of the
quadratic decay of the triangle size in terms of its Farey generation when,
starting from any pair, one always goes left or right. The phenomenon is quite
general for domains of bounded curvature: zero curvature at a boundary point
with rational tangent gives faster decay of the side parallel to the tangent,
whereas a corner gives slower decay and can therefore produce a higher-order
term in lattice-point counting.

Finally, we speculate that the scheme above is similar in spirit to
renormalization in quantum field theory.  The analogy suggests
looking for a geometric counterpart of quantum field theory in the passage
from polyhedral moment spaces to arbitrary convex domains.  Rational-slope
polygons are moment domains of symplectic toric surfaces, which, in physical
language, correspond to classical integrable systems.  Quantum integrable
systems lead naturally to quantum toric varieties: noncommutative-geometric
objects which, more concretely, may be viewed as fans and dual polyhedra
after the rationality condition on slopes has been dropped
\cite{KatzarkovLupercioMeerssemanVerjovsky2014NC,katzarkov2021quantum,KatzarkovLeeLupercioMeersseman2025Hodge}.
The next step is then to drop not only rationality but also polyhedrality,
and to pass from polyhedra to arbitrary convex domains.

What is missing is the geometric counterpart of such a generalized toric
symplectic space, with arbitrary convex bodies as moment domains.  The
tropical caustic should be part of this structure: it plays the role of a
generalized fan, records the elementary toric transitions, and organizes the
scale-by-scale cancellations in the exact head--tail decomposition.  Constructing this
framework would be desirable because lattice-point counting for convex
domains would then become a literal extension of Ehrhart theory, and the
structural results proved here might become consequences of a suitable
Hirzebruch--Riemann--Roch theorem. 

\appendix

\section{Basic geometry in dimension two}\label{app1}

This appendix records the elementary two-dimensional computations used in the
main text. We first compute the tropical zeta function of the simplest
minimal models, then prove the one-cut identity underlying the global
integral--boundary formula, and finally record the exact mean-value identity
used in the Farey/Hata analysis.

\subsection{A minimal model}

We begin with the rectangular support box determined by the horizontal and
vertical supporting lines. The global integral identity will be obtained by
starting from this box and inserting the remaining supporting lines one by
one. Each new supporting line cuts off a single triangular region, and the
next lemma computes the contribution of the initial rectangle.

Let
\[
R=[0,P]\times[0,Q],
\qquad P\ge Q>0,
\]
and define
\[
\rho_R(x,y):=\min(x,P-x,y,Q-y).
\]

\begin{lemma}[Rectangle identity]\label{lem:rectangle}
For \(\Re(s)>2\),
\[
s(s-1)\int_R \rho_R(x,y)^{\,s-2}\,dx\,dy
=
8\Bigl(\frac Q2\Bigr)^s
+
2s(P-Q)\Bigl(\frac Q2\Bigr)^{s-1}.
\]
\end{lemma}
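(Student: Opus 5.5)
The plan is to compute the integral directly with the layer-cake (Mellin) formula from Section~\ref{sec1}. For \(0<t<Q/2\) the superlevel set is
\[
R_t=\{\rho_R\ge t\}=[t,P-t]\times[t,Q-t],
\]
a rectangle with sides \(P-2t\) and \(Q-2t\). The maximum is \(m_R=Q/2\), and \(R_t\) degenerates to a segment of length \(P-Q\) at \(t=Q/2\). The lattice perimeter of \(R_t\) is
\[
P_R(t)=2(P-2t)+2(Q-2t)=2(P+Q)-8t,
\]
since the sides have primitive normals \((\pm1,0),(0,\pm1)\) and each normal vector has length \(1\). One can verify \(P_R(t)=-\frac{d}{dt}\operatorname{Vol}(R_t)\) directly from \(\operatorname{Vol}(R_t)=(P-2t)(Q-2t)\), without appealing to any later proposition.

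Next I apply Proposition~\ref{prop:perimeter-mellin} with \(n=2\):
\[
Z_R(s)=\int_0^{Q/2}t^{s-2}\bigl(2(P+Q)-8t\bigr)\,dt
=\frac{2(P+Q)}{s-1}\Bigl(\frac Q2\Bigr)^{s-1}-\frac{8}{s}\Bigl(\frac Q2\Bigr)^{s},
\]
which is valid for \(\Re(s)>1\) and hence in particular for \(\Re(s)>2\). Alternatively, one can use the unintegrated layer-cake form \((s-2)\int t^{s-3}\operatorname{Vol}(R_t)\,dt\) to avoid relying on the perimeter proposition. Multiplying by \(s(s-1)\) gives
\[
s(s-1)Z_R(s)=2s(P+Q)\Bigl(\frac Q2\Bigr)^{s-1}-8(s-1)\Bigl(\frac Q2\Bigr)^s .
\]

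The final step is algebraic. Write \(2s(P+Q)(Q/2)^{s-1}=2s(P-Q)(Q/2)^{s-1}+4sQ\,(Q/2)^{s-1}\), and note that \(4sQ\,(Q/2)^{s-1}=8s(Q/2)^s\). The combination then becomes
\[
8s(Q/2)^s-8(s-1)(Q/2)^s=8(Q/2)^s,
\]
which together with the remaining term \(2s(P-Q)(Q/2)^{s-1}\) gives the claimed identity.

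There is no real obstacle here. The only point needing care is the identification \(\rho_R=\min(x,P-x,y,Q-y)\) as the level-set description. Since the lemma defines \(\rho_R\) this way explicitly, the level sets are immediate, and the boundary term \(t^{s-2}\operatorname{Vol}(R_t)\to0\) as \(t\to0^+\) holds trivially for \(\Re(s)>2\).
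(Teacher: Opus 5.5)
Your proof is correct and follows the paper's argument. The paper also uses the superlevel rectangles $[t,P-t]\times[t,Q-t]$, but integrates the volume form $(s-2)\int_0^{Q/2}t^{s-3}(P-2t)(Q-2t)\,dt$ directly, whereas you first pass (by one integration by parts) to the lattice-perimeter form $\int_0^{Q/2}t^{s-2}\bigl(2(P+Q)-8t\bigr)\,dt$, which is the same computation, and your final algebra checks out.
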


\begin{proof}
For \(0\le t\le Q/2\), the superlevel set of \(\rho_R\) is
\[
\{\rho_R\ge t\}=[t,P-t]\times[t,Q-t],
\]
and therefore
\[
\Area\{\rho_R\ge t\}=(P-2t)(Q-2t).
\]
By the layer-cake formula,
\[
\int_R \rho_R(x,y)^{\,s-2}\,dx\,dy
=
(s-2)\int_0^{Q/2}t^{s-3}(P-2t)(Q-2t)\,dt.
\]
Expanding the integrand gives
\[
(s-2)\int_0^{Q/2}
t^{s-3}\bigl(PQ-2(P+Q)t+4t^2\bigr)\,dt.
\]
Integrating term by term and simplifying yields
\[
s(s-1)\int_R \rho_R(x,y)^{\,s-2}\,dx\,dy
=
8\Bigl(\frac Q2\Bigr)^s
+
2s(P-Q)\Bigl(\frac Q2\Bigr)^{s-1}.
\]
\end{proof}

We next isolate the local effect of inserting a single additional supporting
line. After a unimodular affine normalization, every such local move is
reduced to cutting the first quadrant by a line of the form \(x+y=\lambda\).
The following identity computes exactly the change of the integral under this
elementary operation; see Figure~\ref{fig:rectangularonecut3d}. Since the
statement is local, the same identity applies inside arbitrary minimal models.

\begin{figure}
  \hspace*{-15pt}
\begin{tikzpicture}[line join=bevel,x=-1,y=1.3,z=2]

\begin{scope}[scale=28]
    \begin{scope}

\coordinate (R1) at (-2,-3,0);
\coordinate (R2) at (2,-3,0);
\coordinate (R3) at (2,3,0);
\coordinate (R4) at (-2,3,0);

\coordinate (V1) at (0,1,2);
\coordinate (V2) at (0,-1,2);

\coordinate (P1) at (1,-3,0);
\coordinate (P2) at (2,-2,0);

\draw (V1)--(V2);

\draw (R1)--(V2)--(R2);
\draw (R3)--(V1)--(R4);

\draw[fill opacity=0.9,fill=black](P1) -- (P2)--(R2)--cycle;

\draw [fill opacity=0.7,fill=red!60](R3)--(V1)--(R4) -- cycle;
\draw [fill opacity=0.7,fill=red!60](R1)--(V2)--(V1)--(R4) -- cycle;
\draw [fill opacity=0.7,fill=red!60](R2)--(V2)--(V1)--(R3) -- cycle;
\draw [fill opacity=0.7,fill=red!60](R1)--(V2)--(R2) -- cycle;

\draw[dashed,red!60] (R1)--(R4);
\draw[dashed,red!60] (R3)--(R4);
\draw[dashed,red!60] (V1)--(R4);

\draw (R2)--(V2);

\end{scope}

\begin{scope}[xshift=9]

\coordinate (NR1) at (-2,-3,0);
\coordinate (NR2) at (2,-3,0);
\coordinate (NR3) at (2,3,0);
\coordinate (NR4) at (-2,3,0);

\coordinate (NV1) at (0,1,2);
\coordinate (NV2) at (0,-1,2);

\coordinate (NV3) at (1,-2,1);

\coordinate (NP1) at (1,-3,0);
\coordinate (NP2) at (2,-2,0);

\draw[thick] (NP1) -- (NP2);

\draw (NR1)--(NP1)--(NP2)--(NR3)--(NR4)--cycle;

\draw (NV1)--(NV2);

\draw (NR1)--(NV2)--(NV3);
\draw (NR3)--(NV1)--(NR4);

\draw [fill opacity=0.7,fill=red!60](NP1)--(NP2)--(NV3) -- cycle;
\draw [fill opacity=0.7,fill=red!60](NR3)--(NV1)--(NR4) -- cycle;
\draw [fill opacity=0.7,fill=red!60](NR1)--(NV2)--(NV1)--(NR4) -- cycle;
\draw [fill opacity=0.7,fill=red!60] (NR1)--(NV2)--(NV3)--(NP1)--cycle;
\draw [fill opacity=0.7,fill=red!60] (NR3)--(NV1)--(NV2)--(NV3)--(NP2)--cycle;

\draw[dashed,red!60] (NR1)--(NR4);
\draw[dashed,red!60] (NR3)--(NR4);
\draw[dashed,red!60] (NV1)--(NR4);

\draw(NV3)--(NV2);

\end{scope}

\begin{scope}[xshift=5]

    \coordinate (SP1) at (1,-3,0.3);
    \coordinate (SP2) at (2,-2,0.3);

    \coordinate (SV3) at (1,-2,1.3);

    \coordinate (SR2) at (2,-3,0.3);

    \draw (SP1)--(SP2);
    \draw (SP1)--(SV3);
    \draw (SP1)--(SR2);

    \draw (SP2)--(SV3);
    \draw (SP2)--(SR2);

    \draw (SV3)--(SR2);

\end{scope}

\draw[thick, dotted] (SV3)--(NV3);

\draw[thick,dotted] (SP1)--(NP1);
\draw[thick, dotted] (SP2)--(NP2);

\draw[thick, dotted] (SP2)--(P2);
\draw[thick, dotted] (SP1)--(P1);

\draw[thick, dotted] (SR2)--(R2);

\end{scope}

\end{tikzpicture}

\caption{The one-cut identity. Left: the graph of the tropical distance
function before the cut, with the triangle to be removed marked in dark.
Right: the graph after inserting the new supporting line. Center: the
unimodular tetrahedron whose base is the removed triangle and whose volume
computes the difference between the two undergraphs.}
\label{fig:rectangularonecut3d}
\end{figure}

\begin{lemma}[One-cut identity]\label{lem:onecut}
Let \(\lambda>0\), and define
\[
\phi_-(x,y):=\min(x,y)
\qquad\text{on } \{x\ge0,\ y\ge0\},
\]
and
\[
\phi_+(x,y):=\min(x,y,x+y-\lambda)
\qquad\text{on } \{x\ge0,\ y\ge0,\ x+y\ge\lambda\}.
\]
Then, for \(\Re(s)>2\),
\begin{multline*}
\int_{\{x+y<\lambda\}}\phi_-(x,y)^{\,s-2}\,dx\,dy \\
+
\int_{\{x\ge0,\ y\ge0,\ x+y\ge\lambda\}}
\Bigl(\phi_-(x,y)^{\,s-2}-\phi_+(x,y)^{\,s-2}\Bigr)\,dx\,dy
=
\frac{\lambda^s}{s(s-1)}.
\end{multline*}
\end{lemma}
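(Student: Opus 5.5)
We will prove the identity with the layer-cake (Fubini) argument already used in Lemma~\ref{lem:rectangle}, applied to the \emph{difference} of the two functions rather than to each one separately. The key point is that each integral over the unbounded quadrant diverges, but their difference is an integral over a bounded set. Extend \(\phi_+\) by \(0\) on the triangle \(\{x,y\ge0,\ x+y<\lambda\}\). Then the left-hand side is
\[
\int_{\R_{\ge0}^2}\bigl(\phi_-^{\,s-2}-\phi_+^{\,s-2}\bigr)\,dx\,dy .
\]
The integrand vanishes unless \(x+y-\lambda<\min(x,y)\), that is, unless \(\max(x,y)<\lambda\). So it is supported in \([0,\lambda]^2\), and \(0\le\phi_+\le\phi_-\le\lambda\) there.

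First, we identify the superlevel sets. For \(t>0\),
\[
\{\phi_-\ge t\}=\{x\ge t,\ y\ge t\},\qquad
\{\phi_+\ge t\}=\{x\ge t,\ y\ge t,\ x+y\ge \lambda+t\}.
\]
Their difference is the unimodular triangle \(\{x\ge t,\ y\ge t,\ x+y<\lambda+t\}\). It is nonempty exactly when \(t<\lambda\), has legs of length \(\lambda-t\), and so has area \(\tfrac12(\lambda-t)^2\) for \(0<t<\lambda\) and area \(0\) for \(t\ge\lambda\).

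Second, for \(\Re(s)>2\) we write \(\phi^{\,s-2}=(s-2)\int_0^{\phi}t^{s-3}\,dt\) for \(\phi\in\{\phi_-,\phi_+\}\). The difference of the two integrands is then \((s-2)\int_0^\infty t^{s-3}\bigl(\ind_{\phi_-\ge t}-\ind_{\phi_+\ge t}\bigr)\,dt\). The integrand here is nonzero only for \(t<\lambda\) and \((x,y)\in[0,\lambda]^2\), and it is bounded in absolute value by \(t^{\Re(s)-3}\), which is integrable near \(0\) because \(\Re(s)>2\). This justifies Fubini, and we obtain
\[
\mathrm{LHS}=(s-2)\int_0^\lambda t^{s-3}\,\frac{(\lambda-t)^2}{2}\,dt
=\frac{s-2}{2}\,\lambda^s\,B(s-2,3).
\]

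Third, we evaluate the Beta function: \(B(s-2,3)=\dfrac{2\,\Gamma(s-2)}{\Gamma(s+1)}=\dfrac{2}{s(s-1)(s-2)}\). Substituting gives \(\lambda^s/(s(s-1))\), which is the claimed identity.

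The only delicate point is the first step: reorganizing the two formally divergent integrals into a single integral of a compactly supported difference, so that Fubini applies. Once this is done, the rest is the same elementary computation as in Lemma~\ref{lem:rectangle}.
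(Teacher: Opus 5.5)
Your proof is correct and follows the paper's argument essentially verbatim: the layer-cake formula applied to the difference of superlevel sets, the shifted triangle \(\{x\ge t,\ y\ge t,\ x+y<\lambda+t\}\) of area \(\tfrac12(\lambda-t)^2\) for \(t<\lambda\), and the Beta integral \(B(s-2,3)=2/(s(s-1)(s-2))\). You go slightly further than the paper by extending \(\phi_+\) by zero, checking that the difference is supported in \([0,\lambda]^2\), and justifying Fubini, which the paper leaves implicit.
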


\begin{proof}
For \(t\ge0\), the difference of the superlevel sets appearing on the
left-hand side is
\[
E_t:=\{x\ge t,\ y\ge t,\ x+y\le \lambda+t\}.
\]
After the change of variables \(x'=x-t\), \(y'=y-t\), this becomes
\[
\{x'\ge0,\ y'\ge0,\ x'+y'\le \lambda-t\}.
\]
Hence, for \(0\le t\le\lambda\),
\[
\Area(E_t)=\frac{(\lambda-t)^2}{2},
\]
while \(E_t=\varnothing\) for \(t>\lambda\). By the layer-cake formula, the
left-hand side equals
\[
(s-2)\int_0^\lambda t^{s-3}\frac{(\lambda-t)^2}{2}\,dt.
\]
Substituting \(t=\lambda u\) gives
\[
\frac{s-2}{2}\lambda^s\int_0^1u^{s-3}(1-u)^2\,du.
\]
Since
\[
\int_0^1u^{s-3}(1-u)^2\,du
=
B(s-2,3)
=
\frac{2}{s(s-1)(s-2)},
\]
we obtain \(\lambda^s/(s(s-1))\).
\end{proof}

\subsection{Tropical zeta functions of minimal models}\label{ss:tropmmzeta}

For a compact convex planar domain, the tropical wave front can terminate
either at a point or along a segment. The point case is the usual minimal
model from Definition~\ref{def:minimalmodel}: after translation and
rescaling, it is one of the sixteen reflexive polygons. For the computation
below, only the terminal time and the normalized lattice perimeter are needed.

\begin{lemma}[Point minimal model]\label{lem:point-minimal-model-zeta}
Assume that the maximal locus \(M_{\widehat\Omega}\) of the tropical distance
function consists of a single point. Let \(m_{\widehat\Omega}\) be the maximal
value of the tropical distance function. Then
\[
s(s-1)Z_{\widehat\Omega}(s)
=
\Length_{\Z}(\partial\widehat\Omega)\,
m_{\widehat\Omega}^{s-1}.
\]
\end{lemma}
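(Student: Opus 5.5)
The plan is to use the Mellin form of Proposition~\ref{prop:perimeter-mellin} together with the structure of the tropical wave fronts of a polygon whose maximal locus is a single point. Since \(\widehat\Omega\) is a rational-slope polygon and \(\rho_{\widehat\Omega}(x)=\min_i(\langle u_i,x\rangle-h_i)\) over its finitely many facet normals \(u_i\), the function is a minimum of finitely many affine functions. The first step is to show that when \(M_{\widehat\Omega}=\{x_0\}\) is a point, every facet normal \(u_i\) is active at \(x_0\), that is, \(\langle u_i,x_0\rangle-h_i=m_{\widehat\Omega}\) for all \(i\). The argument goes as follows.
\begin{itemize}
\item The minimal-model definition keeps only the directions in \(\mathcal E_\Omega\), and these are active at \(M_\Omega\) by construction.
\item We have \(\widehat\Omega=\bigcap_{u\in\mathcal E_\Omega}\{\langle u,x\rangle\ge h(u)\}\), and \(\widehat{\widehat\Omega}=\widehat\Omega\). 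So every facet normal of \(\widehat\Omega\) lies in \(\mathcal E\).
\item Any other primitive direction gives a support function that is never the minimum before time \(m\). This is the reflexive-polygon picture mentioned just above the lemma.
\end{itemize}
Consequently, for \(0\le t\le m:=m_{\widehat\Omega}\),
\[
\widehat\Omega_t=\{x:\langle u_i,x\rangle-h_i\ge t\ \forall i\}=x_0+\Bigl(1-\frac tm\Bigr)(\widehat\Omega-x_0),
\]
because each facet inequality written at \(x_0\) reads \(\langle u_i,x-x_0\rangle\ge t-m\), which is homogeneous in \(m-t\). This is the key geometric step.

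It follows that \(\widehat\Omega_t\) is a homothetic copy of \(\widehat\Omega\) with ratio \(1-t/m\). The lattice perimeter scales linearly, so
\[
P_{\widehat\Omega}(t)=\Bigl(1-\frac tm\Bigr)\Length_{\Z}(\partial\widehat\Omega).
\]
(Alternatively, one can use \(\operatorname{Vol}(\widehat\Omega_t)=(1-t/m)^2\operatorname{Vol}(\widehat\Omega)\) and differentiate. As a consistency check, \(2\operatorname{Vol}=m\Length_\Z\), since each facet triangle with apex \(x_0\) has lattice height \(m\).) Substituting into Proposition~\ref{prop:perimeter-mellin} with \(n=2\) gives
\[
Z_{\widehat\Omega}(s)=\Length_\Z(\partial\widehat\Omega)\int_0^m t^{s-2}\Bigl(1-\frac tm\Bigr)dt
=\Length_\Z(\partial\widehat\Omega)\,m^{s-1}\Bigl(\frac1{s-1}-\frac1s\Bigr),
\]
and \(\frac1{s-1}-\frac1s=\frac1{s(s-1)}\), which is exactly the claimed identity for \(\Re(s)>2\).

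The main obstacle is the first step: justifying that all facets of \(\widehat\Omega\) pass at lattice distance exactly \(m\) from the single point \(x_0\), and that no non-facet primitive direction cuts the wave fronts. For the latter, note that for a polygon, \(\langle u,x\rangle-h(u)\) with \(u=\sum c_iu_i\) a nonnegative combination of the normals of the adjacent facets (at the vertex achieving \(h(u)\)) satisfies \(\langle u,x\rangle-h(u)=\sum c_i(\langle u_i,x\rangle-h_i)\ge(\sum c_i)\min_i(\cdot)\). Since \(\sum c_i\ge1\) for integer primitive combinations, except when \(u\) is itself a normal, extra directions never lower the minimum. For the former, if some facet were inactive at \(x_0\), then \(\rho\) near \(x_0\) would be determined by the remaining normals, and a standard argument would contradict either \(x_0\) being the whole maximal locus or the facet belonging to \(\mathcal E\).
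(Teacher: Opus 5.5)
Your proof is correct, but it takes a different route from the paper.

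\textbf{Comparison of routes.} The paper argues locally. It reverses the wave-front evolution from the terminal point, views \(\widehat\Omega\) as built from unimodular one-cuts, and uses Lemma~\ref{lem:onecut} to assign each boundary edge the contribution (lattice length)\(\cdot m_{\widehat\Omega}^{s-1}\). You argue globally. Every facet normal of \(\widehat\Omega\) lies in \(\mathcal E_\Omega\) and is therefore active at \(x_0\). Hence the wave fronts are the homothets \(x_0+(1-t/m)(\widehat\Omega-x_0)\), so \(P_{\widehat\Omega}(t)=(1-t/m)\Length_\Z(\partial\widehat\Omega)\), and a single integral in Proposition~\ref{prop:perimeter-mellin} finishes the proof.

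Your version isolates the one geometric fact that matters: every facet lies at lattice distance \(m\) from \(x_0\). Through \(2\operatorname{Vol}(\widehat\Omega)=m\,\Length_\Z(\partial\widehat\Omega)\), it can also be run through the elementary layer-cake formula without the toric Proposition~\ref{prop:twf_surfacevol}. The paper's version instead reuses the local identity that drives the integral--boundary theorem and the final-segment models.

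\textbf{A step to repair.} To show that non-facet directions never lower the minimum, you say \(c_1+c_2\ge1\) because the coefficients are integers. That holds only at unimodular corners, and point minimal models can have non-unimodular ones. For example, take the reflexive triangle with inward normals \((-1,2),(2,-1),(-1,-1)\); there \((0,1)=\tfrac23(-1,2)+\tfrac13(2,-1)\).

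The conclusion still holds. Put \(\ell_u(x)=\langle u,x\rangle-h_{\widehat\Omega}(u)\). Evaluating \(\ell_u=c_1\ell_{u_i}+c_2\ell_{u_{i+1}}\) at \(x_0\) gives \((c_1+c_2)m=\ell_u(x_0)\ge\rho_{\widehat\Omega}(x_0)=m\). Alternatively, use concavity of \(\rho_{\widehat\Omega}\) together with \(\rho_{\widehat\Omega}(x_0)=m\) and \(\rho_{\widehat\Omega}\ge0\). For \(x=(1-\lambda)x_0+\lambda y\) with \(y\in\partial\widehat\Omega\), this gives \(\rho_{\widehat\Omega}(x)\ge(1-\lambda)m=\min_i\ell_{u_i}(x)\).

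Your closing ``standard argument'' for the activity of all facets is also unnecessary. Activity follows directly from the definition of \(\mathcal E_\Omega\) together with \(h_{\widehat\Omega}(u)=h_\Omega(u)\) for \(u\in\mathcal E_\Omega\). That equality holds because \(\Omega\subset\widehat\Omega\subset\{\langle u,x\rangle\ge h_\Omega(u)\}\).
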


\begin{proof}
Starting from the terminal point and reversing the tropical wave-front
evolution, the minimal model is obtained by a finite collection of unimodular
one-cuts. Lemma~\ref{lem:onecut} shows that each boundary edge contributes
its normalized lattice length times \(m_{\widehat\Omega}^{s-1}\) to
\(s(s-1)Z_{\widehat\Omega}(s)\). Summing over the boundary gives the formula.
\end{proof}

The final-segment case has one additional continuous parameter: the lattice
length of the terminal segment. Up to unimodular transformations,
translations, and rescaling, the final-segment minimal models fall into three
types. Their geometry and caustics are displayed in
Figure~\ref{fig:segmentwf}; the zeta functions are summarized immediately
afterwards.

\begin{figure}[htbp]
   \noindent\hspace{-37pt}
    \begin{tikzpicture}

    \draw (0,2.4) node {\bf Degenerate minimal-model types};
    \draw (0,1.9) node {
    For \(\ell,m>0\), \(k_1,k_2\in\Z\) such that
    \(\ell\geq m(|k_1-k_2|-1)\):
    };

    \begin{scope}[xshift=-20]

        \draw (-1.2,1.3) node {\((-\ell/2+mk_2,m)\)};
        \draw (-2,-1.3) node {\((-\ell/2-m(k_2+1),-m)\)};
        \draw (3.2,-1.3) node {\((\ell/2-mk_1,-m)\)};
        \draw (2.2,1.3) node {\((\ell/2+m(k_1+1),m)\)};

        \draw(-1,1)--(-1,0);
        \draw(-2,-1)--(-1,0);
        \draw(-1,0)--(2,0);
        \draw(2,0)--(2,1);
        \draw(2,0)--(3,-1);

        \draw[thick] (-1,1)--(2,1)--(3,-1)--(-2,-1)--(-1,1);

    \end{scope}

    \draw (0,-1.8) node {
    All caustic edges have weight \(2\), so no branching may occur.
    };

    \begin{scope}[yshift=-150]
    \draw(-6.2,3.2)--(6.2,3.2);

        \draw(-2,0)--(2,0);

        \draw(-2,0)--(-3,0);
        \draw(-2,0)--(-1,1);
        \draw(-2,0)--(-1,-1);

        \draw(2,0)--(3,0);
        \draw(2,0)--(1,1);
        \draw(2,0)--(1,-1);

        \draw[thick](-3,0)--(-1,-1)--(1,-1)--(3,0)--(1,1)--(-1,1)--(-3,0);

        \draw (-4.4,0) node {\((-\ell/2-m,0)\)};
        \draw (4.2,0) node {\((\ell/2+m,0)\)};
        \draw (-1.7,1.3) node {\((-\ell/2+mn_4,m)\)};
        \draw (-2,-1.3) node {\((-\ell/2+mn_3,-m)\)};
        \draw (1.7,1.3) node {\((\ell/2+mn_1,m)\)};
        \draw (2,-1.3) node {\((\ell/2+mn_2,-m)\)};

        \draw (0,2.9) node {\bf Branching minimal-model types};

        \draw (0,2.4) node {
        For \(\ell,m>0\), \(n_1,n_2,n_3,n_4\in\Z\) such that
        };

        \draw (0,1.9) node {
        \(n_3+n_4,-n_1-n_2\geq -2\) and
        \(\ell\geq m(n_1-n_4),\,m(n_2-n_3)\):
        };

        \draw(0,-1.8) node {
        The weight of the left horizontal edge is \(2+n_3+n_4\),
        };

        \draw(0,-2.2) node {
        the weight of the right horizontal edge is \(2-n_1-n_2\), and the central one has weight \(2\);
        };

        \draw(0,-2.6) node {
        the remaining four edges have weight \(1\), and so they can branch.
        };

    \end{scope}

    \begin{scope}[yshift=-325]
    \draw(-6.2,3.2)--(6.2,3.2);

    \draw(0,2.9) node {\bf Mixed minimal-model types};
    \draw(0,2.4) node {
    For \(\ell,m>0\) and \(k,n_1,n_2\in\Z\) such that:
    };
    \draw(0,2) node {
    \(n_1+n_2\leq 2\) and
    \(\ell\geq m(n_1-k),\,m(n_2+k+1)\):
    };

    \begin{scope}[xshift=-20]

        \draw(-1,0)--(-2,-1);
        \draw(-1,0)--(-1,1);
        \draw(-1,0)--(2,0);

        \draw(2,0)--(3,0);
        \draw(2,0)--(1,1);
        \draw(2,0)--(1,-1);

        \draw[thick](-2,-1)--(-1,-1)--(1,-1)--(3,0)--(1,1)--(-1,1)--(-2,-1);

        \draw (-1.2,1.3) node {\((-\ell/2+mk,m)\)};
        \draw (-2,-1.3) node {\((-\ell/2-m(k+1),-m)\)};

        \draw (1.7,1.3) node {\((\ell/2+mn_1,m)\)};
        \draw (2,-1.3) node {\((\ell/2+mn_2,-m)\)};

        \draw (4.2,0) node {\((\ell/2+m,0)\)};

    \end{scope}

    \draw(0,-1.8) node {
    The two edges on the left, and the central edge have weight \(2\),
    };

    \draw(0,-2.2) node {
    \(2-n_2-n_1\) is the weight of the right horizontal edge,
    };

    \draw(0,-2.6) node {
    the remaining edges have weight \(1\), and so they can branch.
    };

    \draw(-6.2,-2.85)--(6.2,-2.85);
    \end{scope}

    \end{tikzpicture}

    \caption{\small Schematic representation of final-segment minimal models
    and their caustics. Degenerate minimal models are equivalent if and only
    if they have the same value of \(|k_1-k_2|\). Branching minimal models
    are equivalent if and only if their triples
    \((n_3+n_4,n_1+n_2,n_1-n_4)\) agree; if one of the caustic weights
    \(2+n_3+n_4\) or \(2-n_1-n_2\) vanishes, the corresponding edge and
    vertex disappear from the picture. In the mixed type, the complete
    invariant is \((n_1-k,n_1+n_2)\), and if \(2-n_1-n_2\) vanishes, the
    right horizontal edge disappears.}
    \label{fig:segmentwf}
\end{figure}

\begin{proposition}[Final-segment minimal models]
\label{prop:segment-minimal-models}
Let \(\widehat\Omega\) be a minimal model whose tropical wave front collapses
to a segment of lattice length \(\ell>0\) at time \(m>0\). Then
\[
Z_{\widehat\Omega}(s)
=
\frac{m^{s-1}}{s(s-1)}
\bigl(2\ell s+k m\bigr),
\]
where
\[
k=
\begin{cases}
4, & \text{degenerate type},\\
4+n_1+n_2-n_3-n_4, & \text{branching type},\\
4+n_1+n_2, & \text{mixed type}.
\end{cases}
\]
\end{proposition}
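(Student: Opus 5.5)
We compute via the layer-cake / Mellin form. For \(0\le t\le m\) the superlevel set \(\widehat\Omega_t=\{\rho\ge t\}\) is obtained from \(\widehat\Omega\) by moving every edge inward at unit lattice speed: each edge line \(\langle u,x\rangle=h(u)\) becomes \(\langle u,x\rangle=h(u)+t\). This description holds because, by the definition of the minimal model, only the edges of \(\widehat\Omega\) are active, and no edge disappears before time \(m\); the caustic pictures in Figure~\ref{fig:segmentwf} encode exactly this. Consequently \(\operatorname{Vol}(\widehat\Omega_t)\) is a polynomial of degree at most \(2\) in \(t\) on \([0,m]\). Its derivative is minus the lattice perimeter, \(P(t)=-\frac{d}{dt}\operatorname{Vol}(\widehat\Omega_t)\), and \(P\) is affine in \(t\). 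The terminal data fix this affine function. At \(t=m\) the front is a segment of lattice length \(\ell\), traversed twice, so \(P(m)=2\ell\). Writing \(P(t)=2\ell+\kappa\,(m-t)\), the slope \(\kappa\) is the rate at which perimeter shrinks, i.e. the sum over vertices of the standard corner-loss contributions of the moving polygon.

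Given this, Proposition~\ref{prop:perimeter-mellin} with \(n=2\) gives
\[
Z_{\widehat\Omega}(s)=\int_0^m t^{s-2}\bigl(2\ell+\kappa(m-t)\bigr)\,dt
=\frac{2\ell m^{s-1}}{s-1}+\kappa m^{s}\Bigl(\frac1{s-1}-\frac1s\Bigr)
=\frac{m^{s-1}}{s(s-1)}\bigl(2\ell s+\kappa m\bigr).
\]
So the claim reduces to \(\kappa=k\). This is also consistent with Lemma~\ref{lem:point-minimal-model-zeta} in the limit \(\ell=0\), where \(\kappa m\) must equal the full lattice perimeter \(\Length_\Z(\partial\widehat\Omega)\). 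That gives a cleaner route to \(\kappa\): evaluate at \(t=0\), so that
\[
\kappa m=\Length_\Z(\partial\widehat\Omega)-2\ell .
\]
This is the elementary description quoted after Theorem~\ref{thm:integral-boundary}.

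The remaining, and main, step is the case-by-case computation of \(\Length_\Z(\partial\widehat\Omega)\) from the vertex coordinates in Figure~\ref{fig:segmentwf}.

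\textbf{Degenerate hexagon-free type.} The vertices are \((-\ell/2+mk_2,m)\), \((\ell/2+m(k_1+1),m)\), \((\ell/2-mk_1,-m)\), \((-\ell/2-m(k_2+1),-m)\).
\begin{itemize}
\item The top edge has lattice length \(\ell+m(k_1+1-k_2)\) and the bottom edge \(\ell+m(k_2+1-k_1)\); these sum to \(2\ell+2m\).
\item The two slanted sides have primitive direction vectors with second coordinate \(\pm1\) after scaling, since their horizontal displacement over height \(2m\) is \(m(2k_1+1)\), an odd multiple of \(m\). Each therefore has lattice length \(m\).
\end{itemize}
The total is \(2\ell+4m\), giving \(k=4\).

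\textbf{Branching and mixed types.} Here each edge is read off the same way:
\begin{itemize}
\item The horizontal top and bottom edges contribute \(\ell\) plus \(m\) times differences of the \(n_i\) (or \(k\)).
\item The four slanted edges from \((\pm(\ell/2+m),0)\) have vertical extent \(m\), and are primitive-direction edges of lattice length \(m\) (for the mixed type, the left side has vertical extent \(2m\) with odd horizontal offset, again of lattice length \(m\)).
\end{itemize}
Summing and subtracting \(2\ell\) should give \(m(4+n_1+n_2-n_3-n_4)\) and \(m(4+n_1+n_2)\) respectively.

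The obstacle I expect is exactly this bookkeeping. One must verify that each slanted side has the claimed lattice length, which requires checking primitivity of its direction. One must also check that the stated inequalities on \(\ell,m,n_i\) guarantee that no edge vanishes before time \(m\), so that \(P\) is genuinely affine on all of \([0,m]\). If some edge had length that changes non-linearly, the affine ansatz would fail, so I would justify affineness by checking that every vertex of \(\widehat\Omega\) lies on a caustic edge reaching the final segment at exactly time \(m\), as in Figure~\ref{fig:segmentwf}.
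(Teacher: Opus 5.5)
Your argument is correct, but it is organized differently from the paper's.

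The paper splits the integral into a ``rectangular'' piece over the terminal segment, which gives \(\frac{m^{s-1}}{s(s-1)}2\ell s\) via Lemma~\ref{lem:rectangle}. It adds transverse pieces, each evaluated by the one-cut identity (Lemma~\ref{lem:onecut}) as \(m^s/(s(s-1))\), and simply asserts that their normalized total is \(k\).

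You instead use the perimeter Mellin form (Proposition~\ref{prop:perimeter-mellin}). An unbranched caustic means the fan of \(\widehat\Omega_t\) is constant on \((0,m)\), so \(P\) is affine and is pinned down by \(P(0^+)=\Length_\Z(\partial\widehat\Omega)\) and \(P(m^-)=2\ell\). This gives you directly the uniform identity \(km=\Length_\Z(\partial\widehat\Omega)-2\ell\) of Proposition~\ref{prop:minmodzeta}. It also reduces the type-by-type values of \(k\) to reading lattice lengths off Figure~\ref{fig:segmentwf}, a step the paper leaves implicit. The paper's local decomposition, in turn, is the one that matches the cut-by-cut mechanism in the proof of Theorem~\ref{thm:integral-boundary}.

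The bookkeeping you left as ``should give'' does close:
\begin{itemize}
\item In the branching hexagon, the horizontal edges have lattice lengths \(\ell+m(n_1-n_4)\) and \(\ell+m(n_2-n_3)\). The slanted edges are \(m(1+n_4,1)\), \(m(1+n_3,-1)\), \(m(1-n_1,-1)\), \(m(n_2-1,-1)\), each of lattice length \(m\).
\item In the mixed pentagon, the horizontal edges give \(\ell+m(n_1-k)\) and \(\ell+m(n_2+k+1)\), and the left side is \(m(2k+1,2)\).
\item When a weight \(2+n_3+n_4\) or \(2-n_1-n_2\) vanishes, two slanted edges merge into one of lattice length \(2m\), and nothing changes.
\end{itemize}

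Two small corrections:
\begin{itemize}
\item In the degenerate case, the primitive directions of the slanted sides are \((2k_1+1,2)\) and \((2k_2+1,2)\). Their second coordinate is \(2\), not \(\pm1\), and the left side involves \(k_2\) rather than \(k_1\). The lattice length is nevertheless \(m\), because \(\gcd(2k_i+1,2)=1\).
\item ``Only the edges of \(\widehat\Omega\) are active'' is not literally the definition of the minimal model. It follows because every corner here is unimodular or of \(A_n\)-type. Hence any primitive \(u\) in the normal cone at a vertex equals \(au_1+bu_2\) with \(a,b\ge0\) and \(a+b\ge1\). Since \(h(u)=ah(u_1)+bh(u_2)\) there, such a \(u\) never undercuts the two adjacent edge normals.
\end{itemize}
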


\begin{proof}
In the final-segment case, the rectangular contribution gives
\[
\frac{m^{s-1}}{s(s-1)}\,2\ell s,
\]
as in Lemma~\ref{lem:rectangle}. The transverse part is a finite sum of
one-cut contributions, computed by Lemma~\ref{lem:onecut}. The normalized
total of these transverse contributions is precisely the integer \(k\) listed
above.
\end{proof}

The point and final-segment cases combine into the following uniform formula.

\begin{proposition}\label{prop:minmodzeta}
For a planar minimal model \(\widehat\Omega\),
\[
s(s-1)Z_{\widehat\Omega}(s)
=
m_{\widehat\Omega}^{s-1}
\bigl(
2\ell_{\widehat\Omega}s
+
k_{\widehat\Omega}m_{\widehat\Omega}
\bigr),
\]
where \(m_{\widehat\Omega}\) is the maximal value of the tropical distance
function, \(\ell_{\widehat\Omega}\) is the lattice length of the maximal locus
\(M_{\widehat\Omega}\), with \(\ell_{\widehat\Omega}=0\) in the point case,
and
\[
k_{\widehat\Omega}
=
m_{\widehat\Omega}^{-1}
\bigl(
\Length_{\Z}(\partial\widehat\Omega)
-
2\ell_{\widehat\Omega}
\bigr).
\]
\end{proposition}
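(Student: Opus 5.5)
The plan is to combine Lemma~\ref{lem:point-minimal-model-zeta} and Proposition~\ref{prop:segment-minimal-models}. The main point is to verify that the integer \(k\) listed in the segment case agrees with the intrinsic expression \(m^{-1}(\Length_{\Z}(\partial\widehat\Omega)-2\ell)\).

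\emph{Point case.} Here \(\ell_{\widehat\Omega}=0\), and the right-hand side reduces to \(m_{\widehat\Omega}^{s-1}\cdot k_{\widehat\Omega}m_{\widehat\Omega}=\Length_{\Z}(\partial\widehat\Omega)\,m_{\widehat\Omega}^{s-1}\). This is exactly Lemma~\ref{lem:point-minimal-model-zeta}, so nothing further is needed.

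\emph{Final-segment case.} Proposition~\ref{prop:segment-minimal-models} already gives the displayed form, with \(k\) equal to \(4\), \(4+n_1+n_2-n_3-n_4\), or \(4+n_1+n_2\) according to type. It remains to show
\[
\Length_{\Z}(\partial\widehat\Omega)=2\ell+km
\]
for each type. I would compute the lattice lengths of the edges directly from the vertex coordinates in Figure~\ref{fig:segmentwf}.

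For the degenerate type, the two horizontal edges have lattice lengths \(\ell+m(k_1+k_2+1)\) and \(\ell+m(k_1+k_2+1)\), up to the sign conventions in the figure. The slanted edges have primitive direction vectors of the form \((\pm(1+\cdot),2)\) or \((\cdot,1)\) scaled by \(m\). Summing, the \(k_i\)-dependence cancels, leaving \(2\ell+4m\).

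For the branching type, the top edge has lattice length \(\ell+m(n_1-n_4)\) and the bottom edge has lattice length \(\ell+m(n_2-n_3)\). The four slanted edges, from \((\pm(\ell/2+m),0)\) to the vertices at height \(\pm m\), each have lattice length \(m\), because the height difference is \(m\) and the direction is primitive with second coordinate \(1\). The total is therefore \(2\ell+m(4+n_1+n_2-n_3-n_4)\). The mixed type is handled the same way, combining one side of each of the two previous types, and gives \(2\ell+m(4+n_1+n_2)\).

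\emph{Expected obstacle.} The main difficulty is the bookkeeping in the figure: orientation and sign conventions for the \(n_i\), and checking that each slanted edge's direction vector has lattice height \(1\), so that its lattice length is \(m\). Once the perimeter identity is confirmed type by type, substituting \(k_{\widehat\Omega}=m^{-1}(\Length_{\Z}(\partial\widehat\Omega)-2\ell)\) into Proposition~\ref{prop:segment-minimal-models} gives the uniform formula.

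For a conceptual cross-check, I would use the layer-cake formula \(s(s-1)Z=\int t^{s-2}P(t)\,dt\) together with \(P(t)=\Length_{\Z}(\partial\widehat\Omega)-(\text{const})\,t\). The lattice perimeter decreases linearly and vanishes at \(t=m\) with residual \(2\ell\). This forces the coefficient of \(m^{s}\) to equal \(\Length_{\Z}(\partial\widehat\Omega)-2\ell\), and so pins down \(k\) independently of the case analysis.
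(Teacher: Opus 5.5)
Your plan is the paper's own. The paper obtains this proposition just by merging Lemma~\ref{lem:point-minimal-model-zeta} with Proposition~\ref{prop:segment-minimal-models}, asserting (without checking) that \(km=\Length_{\Z}(\partial\widehat\Omega)-2\ell\) after Theorem~\ref{thm:integral-boundary}. Your branching and mixed perimeter counts are correct.

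One slip to fix in the degenerate type. The horizontal edges have lattice lengths \(\ell+m(k_1-k_2+1)\) and \(\ell+m(k_2-k_1+1)\); two copies of \(\ell+m(k_1+k_2+1)\), as you wrote, would not cancel. The slanted edges are \(m(2k_i+1,2)\), which have lattice length \(m\) because \(2k_i+1\) is odd.

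In your cross-check, it is the coefficient of \(m^{s-1}\), not of \(m^{s}\), that equals \(\Length_{\Z}(\partial\widehat\Omega)-2\ell\). With that correction, the cross-check is itself a complete, case-free proof. The caustic of \(\widehat\Omega\) is unbranched, so \(P(t)\) is affine on \((0,m)\), running from \(\Length_{\Z}(\partial\widehat\Omega)\) down to \(2\ell\). Integrating against \(t^{s-2}\) then gives the formula directly, without the figure's parametrization.
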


When the relevant edge-length inequalities are sharp, the toric surface
defined by the dual fan of \(\widehat\Omega\) has at worst \(A\)-type
singularities. In that case one can speak about the canonical class, and
\(k_{\widehat\Omega}\) corresponds to its self-intersection contribution.

As a corollary of Proposition~\ref{prop:minmodzeta}, together with the
triangle-cutting procedure of Subsection~\ref{subsec:integral-boundary}, we
obtain the residues at \(s=0\) and \(s=1\) for rational-slope polygons.

\begin{theorem}[Residues at \(0\) and \(1\)]\label{thm:res0and1}
For a rational-slope polygon \(\Omega\),
\[
\Res_{s=1} Z_\Omega(s)=\Length_{\Z}(\partial\Omega).
\]
In addition, if the dual fan of \(\Omega\) defines a compact toric surface
\(X\) which is either smooth or has only \(A_n\)-singularities, then
\[
\Res_{s=0} Z_\Omega(s)=-K^2,
\]
where \(K\) denotes the canonical class of \(X\).
\end{theorem}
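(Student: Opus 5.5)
For a rational-slope polygon \(\Omega\), every primitive direction is eventually irrelevant, so I would use the exact identity of Theorem~\ref{thm:integral-boundary}:
\[
s(s-1)Z_\Omega(s)=-F_{\partial\Omega}(s)+m_{\widehat\Omega}^{s-1}\bigl(2\ell_{\widehat\Omega}s+k_{\widehat\Omega}m_{\widehat\Omega}\bigr).
\]
The first step is to show that \(F_{\partial\Omega}\) is a \emph{finite} Dirichlet polynomial \(\sum_j \lambda_j^s\). Since \(\partial\Omega\) has finitely many edges, each \(\Gamma\)-arc is a polygonal chain with finitely many primitive edge normals. For a unimodular pair \((a,b),(c,d)\), the defect \(\gamma_{a,b}+\gamma_{c,d}-\gamma_{a+c,b+d}\) vanishes unless some edge normal lies strictly inside the cone spanned by \((a,b)\) and \((c,d)\). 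This is the same affine-on-cones argument used in Corollary~\ref{cor:Fgamma-is-Zg}. In the Stern--Brocot tree, only finitely many Farey intervals contain a given rational point in their interior, namely its ancestors. Hence only finitely many terms survive, and each \(\lambda_j>0\) is the size of a unimodular corner cut. Geometrically, \(\Omega\) is obtained from \(\widehat\Omega\) by finitely many one-cuts, and Lemma~\ref{lem:onecut} gives each cut the contribution \(-\lambda_j^s/(s(s-1))\).

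This yields the meromorphic continuation
\[
Z_\Omega(s)=\frac{1}{s(s-1)}\Bigl(m^{s-1}(2\ell s+km)-\sum_j\lambda_j^s\Bigr),
\]
with at most simple poles at \(s=1\) and \(s=0\). The residues can then be read off directly.

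\textbf{Residue at \(s=1\).} It equals \(2\ell+km-\sum_j\lambda_j\). By Proposition~\ref{prop:minmodzeta}, \(2\ell+km=\Length_{\Z}(\partial\widehat\Omega)\). The key fact is that a unimodular corner cut of size \(\lambda\) changes the lattice perimeter by exactly \(-\lambda\). In normalized coordinates, the two legs of lattice length \(\lambda\) are replaced by a hypotenuse in direction \((1,-1)\) of lattice length \(\lambda\), for a net change of \(-2\lambda+\lambda=-\lambda\). Summing over the cuts gives \(\Length_{\Z}(\partial\Omega)\).

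\textbf{Residue at \(s=0\).} It equals \(-\bigl(m^{-1}km-\#\{j\}\bigr)=-(k-N)\), where \(N\) is the number of cuts. I would then match this with \(-K^2\) as follows. Each unimodular corner cut is a toric blow-up of a torus-fixed point, which lowers \(K^2\) by \(1\). It remains to identify \(k_{\widehat\Omega}\) with \(K^2\) of the minimal model's toric surface, which the footnote to Theorem~\ref{thm:integral-boundary} asserts under the \(A_n\) hypothesis. Then \(K_X^2=k-N\), and the residue is \(-K_X^2\).

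\textbf{Main obstacle.} I expect the identification \(k_{\widehat\Omega}=K^2\) for every minimal model type to be the hard step. For the point case, a reflexive polygon, I would use \(K^2=\Length_{\Z}(\partial P)\) for reflexive \(P\), together with the normalization \(m=1\), giving \(k=\Length_{\Z}/m\). For the segment types, I would use the toric formula \(K^2=12-\#\text{rays}\) for smooth surfaces, adjusted for \(A_n\) points, and check it against the table in Proposition~\ref{prop:segment-minimal-models} case by case. A second subtlety is that a cut may be non-unimodular relative to a singular vertex. Here I would use the \(A_n\) hypothesis to ensure the cut edges come from successive Stern--Brocot mediants, so that each is still a unimodular blow-up.
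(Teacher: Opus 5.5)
Your treatment of the pole at \(s=1\) is sound and follows the paper's route (Theorem~\ref{thm:integral-boundary} together with Proposition~\ref{prop:minmodzeta}). The finiteness of \(F_{\partial\Omega}\) and the bookkeeping \(-2\lambda+\lambda=-\lambda\) per cut are both correct.

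The gap is the step \(K_X^2=k-N\) at \(s=0\). Counting the \(N\) cuts as blow-ups computes \(K^2\) of the surface \(Y\) whose fan contains every minimal-model ray and every inserted mediant, not \(K^2\) of \(X\). Later cuts can shrink earlier edges, including minimal-model edges, to lattice length zero. The fan of \(\Omega\) can then be strictly coarser than that of \(Y\), and \(Y\to X\) contracts the corresponding boundary curves. On the actual polygons such a cut is a blow-up followed by a contraction, and it need not lower \(K^2\) by one. What you must prove is that \(Y\to X\) is crepant, and this is where the \(A_n\) hypothesis is really used.

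Your proposed use of the hypothesis, to keep cuts unimodular, does not address this: every cut is made between Farey neighbours, so it is always a blow-up at a smooth fixed point of the formal surface. The check that is actually needed has two inputs. A mediant receives a positive cut only if an edge normal of \(\Omega\) lies strictly inside its Farey interval. A minimal-model ray must be active at \(M_\Omega\). From these one shows:
\begin{itemize}
\item no ray of \(Y\) lies strictly inside a smooth cone of \(X\);
\item the rays of \(Y\) strictly inside an \(A_n\) cone \(\mathrm{cone}(e_1,e_1+(n+1)w)\) are exactly \(e_1+jw\), \(1\le j\le n\).
\end{itemize}
Hence \(Y\) is the minimal resolution of \(X\), its exceptional curves are \((-2)\)-curves, and \(K_Y^2=K_X^2\).

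As written, your argument never uses the hypothesis in a way that excludes anything, so it would prove \(\Res_{s=0}Z_\Omega=-K_X^2\) for every rational polygon. That is false. Take \(\Omega=[-1,1]^2\cap\{x+2y\le2\}\cap\{-x+2y\le2\}\), whose minimal model is \([-1,1]^2\), so \(H=8\). There are exactly four cuts, all of size \(\tfrac12\): first at the two top corners, then at the mediants \((\mp1,-2)\). The latter shrink the edges with normals \((\mp1,-1)\) and \((0,-1)\) to points. Thus \(s(s-1)Z_\Omega(s)=8-4\cdot2^{-s}\), so \(\Res_{s=1}Z_\Omega=6=\Length_{\Z}(\partial\Omega)\) but \(\Res_{s=0}Z_\Omega=-4\). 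Meanwhile \(X\) has \(A_1\) points at \((\pm1,\tfrac12)\) and a \(\tfrac14(1,1)\) point at \((0,1)\), and \(K_X^2=5\).

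An equivalent, perhaps cleaner, repair uses the Mellin form. On the first interval \((0,t_1)\) of constant combinatorics one has \(P_\Omega(t)=\Length_{\Z}(\partial\Omega)-K_t^2\,t\), by the corollary to Proposition~\ref{prop:twf_surfacevol}. This gives both residues at once. At an \(A_n\) corner the intermediate directions \(e_1+jw\) only tie along the bisecting caustic edge, so \(\Omega_t\) keeps the fan of \(\Omega\) and \(K_t^2=K_X^2\). At a non-\(A_n\) corner, such as \((0,1)\) above, new edges appear immediately, which is exactly why the hypothesis is needed.
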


\begin{proof}
The residue at \(s=1\) follows from the boundary term in the
integral--boundary identity and the explicit minimal-model contribution in
Proposition~\ref{prop:minmodzeta}. In the smooth or \(A_n\)-singular toric
case, the normalized constant term at \(s=0\) agrees with the toric
self-intersection of the canonical class. The sign convention is the one in
which the residue gives \(-K^2\).
\end{proof}

Thus, in the rational-slope polygonal case, the residue at \(s=1\) is the
symplectic area of the anticanonical class, while the residue at \(s=0\) is
the negative self-intersection of the canonical class. This motivates, but
does not prove, a possible extension of these interpretations beyond the
toric setting.

\begin{remark}
The type of the minimal model gives a \(\mathrm{GL}(2,\Z)\)-invariant
stratification of the space of compact convex domains, up to translation and
rescaling. Degenerate final-segment models form finite-dimensional strata,
whereas models with weight-one caustic edges admit further unimodular corner
cuts and hence carry infinitely many cutting parameters. The finite-codimension
strata are precisely those in which only finitely many such cuts are allowed.
This viewpoint suggests a natural wall-crossing picture for families of
convex domains, but we do not use it in the sequel.
\end{remark}

\begin{remark}
Theorem~\ref{thm:res0and1} suggests a generalization of the canonical
self-intersection: for a convex domain for which the tropical zeta function
admits a meromorphic continuation to \(s=0\), one may regard
\[
-\Res_{s=0}Z_\Omega(s)
\]
as an analogue of \(K^2\), even when no classical toric surface is available.
For the special domain \(L\) of Subsection~\ref{ss:specialdomain}, the formula
for \(Z_L(s)\) gives
\[
\Res_{s=0}Z_L(s)
=
-\left(
8-4\frac{\zeta_{\mathrm{SU}(3)}(0)}{\zeta(0)}
\right)
=
-\left(8+\frac83\right)
=
-\frac{32}{3}.
\]
This raises the natural question of whether the value \(-32/3\) is universal
for domains with smooth boundary. The same fraction also appears in \(C_L\)
in Theorem~\ref{thm:mainL}.
\end{remark}

\subsection{The integral--boundary identity}\label{subsec:integral-boundary}

We now assemble the global identity. Starting from the minimal model
\(\widehat\Omega\), we insert the remaining supporting lines in
Stern--Brocot order. The minimal-model formula gives the initial
contribution, while the one-cut identity shows that each inserted support
line subtracts exactly one term of the boundary Dirichlet series.

\begin{figure}
    \hspace*{-1.8cm}
    \begin{tikzpicture}

        \begin{scope}[xshift=-70,yshift=-40,scale=0.9]

        \fill[gray!65](1,1)--(-1,1)--(-3,-1)--(-1,-3)--(1,-1)--(1,1);
        \draw [red,line width=0.4mm,domain=0:90] plot ({2*cos(\x)-1}, {2*sin(\x)-1});
        \draw[line width=0.4mm, red](-1,1)--(-3,-1)--(-1,-3)--(1,-1);

        \end{scope}

        \begin{scope}[xshift=-260,yshift=-80, scale=1.8]

        \fill[gray!65](1,-0.1715)--(-0.1715,1)--(-1,1)--(-3,-1)--(-1,-3)--(1,-1)--(1,-0.1715);

        \begin{scope}[xshift=2,yshift=2]
        \fill[gray!15](1,-0.1715)--(-0.1715,1)--(1,1)--(1,-0.1715);
        \end{scope}

        \draw [red,line width=0.4mm,domain=0:90] plot ({2*cos(\x)-1}, {2*sin(\x)-1});
        \draw[line width=0.4mm, red](-1,1)--(-3,-1)--(-1,-3)--(1,-1);

        \end{scope}

        \begin{scope}[yshift=-240,xshift=-60, scale=3.6]

          \fill[gray!65](1,-0.5279)--(0.6437,0.1847)--(0.1847,0.6437)--(-0.5279,1)--(-1,1)--(-3,-1)--(-1,-3)--(1,-1)--(1,-0.5279);

          \begin{scope}[xshift=0.8,yshift=1.6]
          \fill[gray!15](1,-0.5279)--(0.6437,0.1847)--(1,-0.1715)--(1,-0.5279);
          \end{scope}

          \begin{scope}[xshift=1.6,yshift=0.8]
          \fill[gray!15](-0.5279,1)--(0.1847,0.6437)--(-0.1715,1)--(-0.5279,1);
          \end{scope}

          \draw [red,line width=0.4mm,domain=0:90] plot ({2*cos(\x)-1}, {2*sin(\x)-1});
          \draw[line width=0.4mm, red](-1,1)--(-3,-1)--(-1,-3)--(1,-1);

        \end{scope}

        \draw (-2.65,-2) node {\(\Omega^{(0)}\)};

        \draw (-10.8,-3.3) node {\(\Omega^{(1)}\)};

        \draw (-5.5,-11.9) node {\(\Omega^{(2)}\)};

        \draw [->,thick] (-3.2,-2.1) -- (-10,-3.3);

        \draw [->,thick] (-10.8,-3.9)--(-6,-11.5);

        \begin{scope}[very thick,decoration={
    markings,
    mark=at position 0.5 with {\arrow{>}}}
    ]

        \draw [thick,path fading=south,postaction={decorate}] (-6.3,-12)--(-15,-18);

    \end{scope}

    \end{tikzpicture}
    \caption{Successive unimodular corner cuts starting from a pentagonal
    minimal model \(\widehat\Omega=\Omega^{(0)}\). The red curve is
    \(\partial\Omega\), and the gray polygons \(\Omega^{(N)}\) decrease to
    \(\Omega\). Each cut inserts a mediant supporting line and contributes one
    term to the boundary Dirichlet series.}
    \label{fig:cuttingtrianglespentagon}
\end{figure}

\begin{proof}[Proof of Theorem~\ref{thm:integral-boundary}]
\label{proof:thm:integral-boundary}
Let \(\widehat\Omega\) be the minimal model of \(\Omega\). By construction,
\(\Omega\) is obtained from \(\widehat\Omega\) by a sequence of unimodular
corner cuts in Stern--Brocot order. Let \(\Omega^{(N)}\) be the polygon
obtained after the first \(N\) generations of cuts, with
\[
\widehat\Omega=\Omega^{(0)}\supset \Omega^{(1)}\supset \cdots \supset \Omega,
\qquad
\Omega^{(N)}\downarrow \Omega.
\]

At each step, one inserts the mediant supporting line between two neighboring
supporting lines with inward primitive normals \((a,b)\) and \((c,d)\), where
\(ad-bc=1\). After an affine change of coordinates in \(\mathrm{SL}(2,\Z)\),
the two old supporting lines become
\[
x=0,\qquad y=0,
\]
and the new supporting line becomes
\[
x+y=\lambda,
\qquad
\lambda=
\bigl|
\gamma_{a,b}+\gamma_{c,d}-\gamma_{a+c,b+d}
\bigr|.
\]
Because the change of coordinates lies in \(\mathrm{SL}(2,\Z)\), it preserves
area. The defining property of the minimal model guarantees that this planar
corner cut corresponds, on the graph of the tropical distance function, to the
local model of Lemma~\ref{lem:onecut}. Hence each cut decreases the integral
by
\[
\frac{1}{s(s-1)}
\bigl|
\gamma_{a,b}+\gamma_{c,d}-\gamma_{a+c,b+d}
\bigr|^s.
\]

Therefore, after \(N\) steps,
\[
s(s-1)Z_{\Omega^{(N)}}(s)
=
s(s-1)Z_{\widehat\Omega}(s)
-
\sum_{\Delta\in\mathcal T_N}
\bigl(\sqrt{2\,\Area(\Delta)}\bigr)^s,
\]
where \(\mathcal T_N\) is the finite set of support triangles removed up to
stage \(N\).

As \(N\to\infty\), the domains \(\Omega^{(N)}\) decrease to \(\Omega\), and
the corresponding tropical distance functions decrease pointwise to
\(\rho_\Omega\). Extending all integrands by zero outside \(\Omega^{(N)}\),
the dominated convergence theorem applies for \(\Re(s)>2\), since
\[
0\le
\rho_{\Omega^{(N)}}^{\,\Re(s)-2}
\le
\rho_{\widehat\Omega}^{\,\Re(s)-2}
\in L^1(\widehat\Omega).
\]
Thus
\[
Z_{\Omega^{(N)}}(s)\longrightarrow Z_\Omega(s).
\]
On the other hand, for \(\Re(s)>2\), the finite sums over \(\mathcal T_N\)
converge absolutely to \(F_{\partial\Omega}(s)\). Passing to the limit gives
\[
s(s-1)Z_\Omega(s)
=
s(s-1)Z_{\widehat\Omega}(s)-F_{\partial\Omega}(s).
\]
Equivalently,
\[
s(s-1)Z_\Omega(s)
=
-
F_{\partial\Omega}(s)+H_{\widehat\Omega}(s),
\]
where
\[
H_{\widehat\Omega}(s):=s(s-1)Z_{\widehat\Omega}(s).
\]
\end{proof}

\begin{figure}
    \centering
    \includegraphics[width=\linewidth]{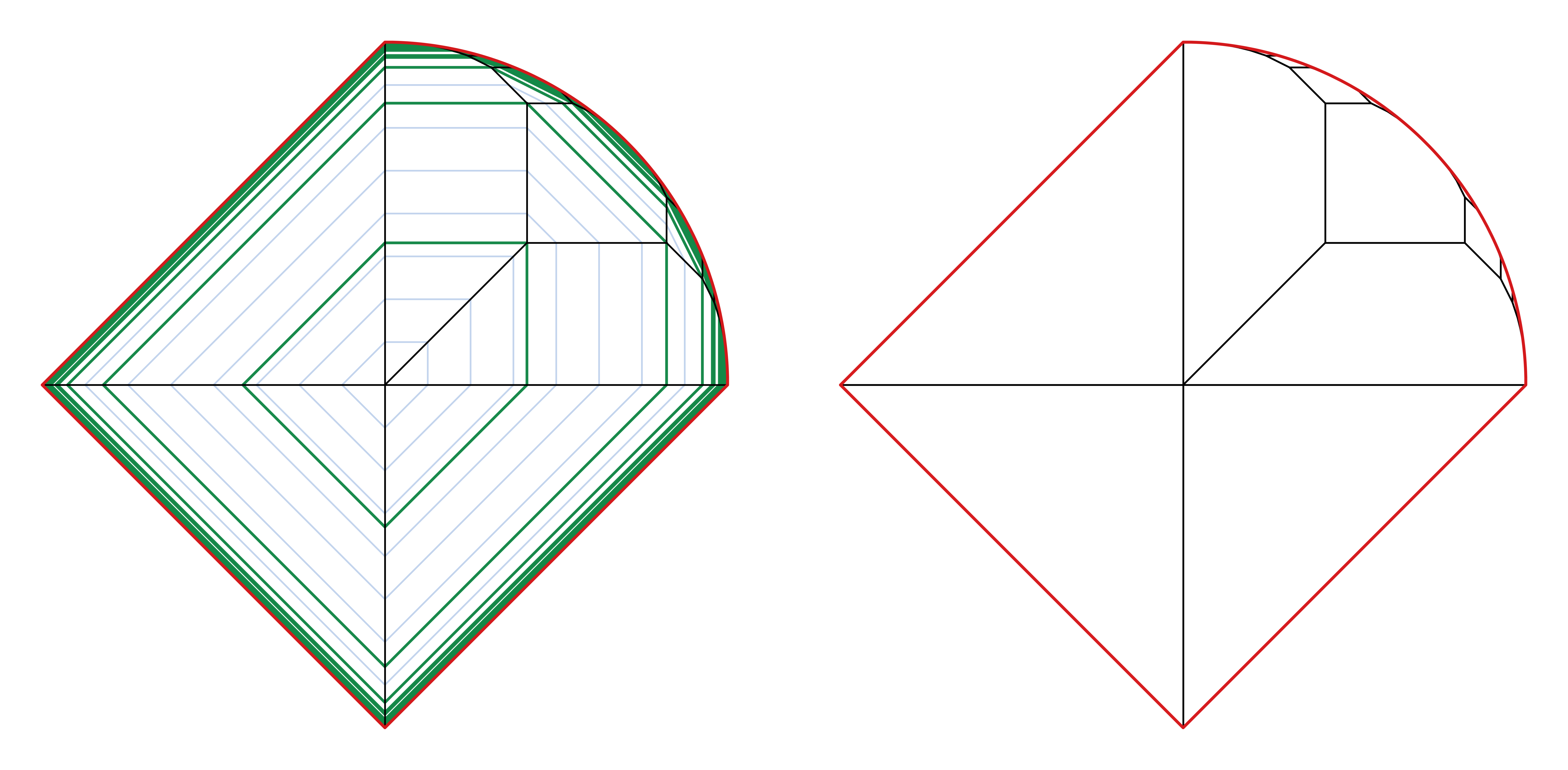}
    \caption{Tropical wave fronts and caustic for a convex planar domain with
    pentagonal minimal model of Figure~\ref{fig:cuttingtrianglespentagon}. The
    red curve is \(\partial\Omega\). The nested blue and green polygons are the
    tropical wave fronts \(\Omega_t=\{\rho_\Omega\ge t\}\). On the right, the
    black graph is the tropical caustic \(\mathcal K_\Omega\), the corner locus
    of \(\rho_\Omega\), whose vertices mark the critical times of the flow.}
    \label{fig:pentagonalwavefront}
\end{figure}

In particular, if the minimal model is the rectangle
\(\widehat\Omega=[0,P]\times[0,Q]\), with \(P\ge Q>0\), then
\[
H_{\widehat\Omega}(s)
=
8\Bigl(\frac Q2\Bigr)^s
+
2s(P-Q)\Bigl(\frac Q2\Bigr)^{s-1},
\]
and therefore
\[
s(s-1)Z_\Omega(s)
=
-F_{\partial\Omega}(s)+H_{\widehat\Omega}(s).
\]
Thus, in dimension two, the interior zeta function is determined by the
boundary Dirichlet series up to an explicit holomorphic correction term
coming from the minimal model. All nontrivial singular behavior of
\(Z_\Omega(s)\) is already encoded by the boundary series.

\begin{figure}
    \centering
    \includegraphics[width=0.8\linewidth]{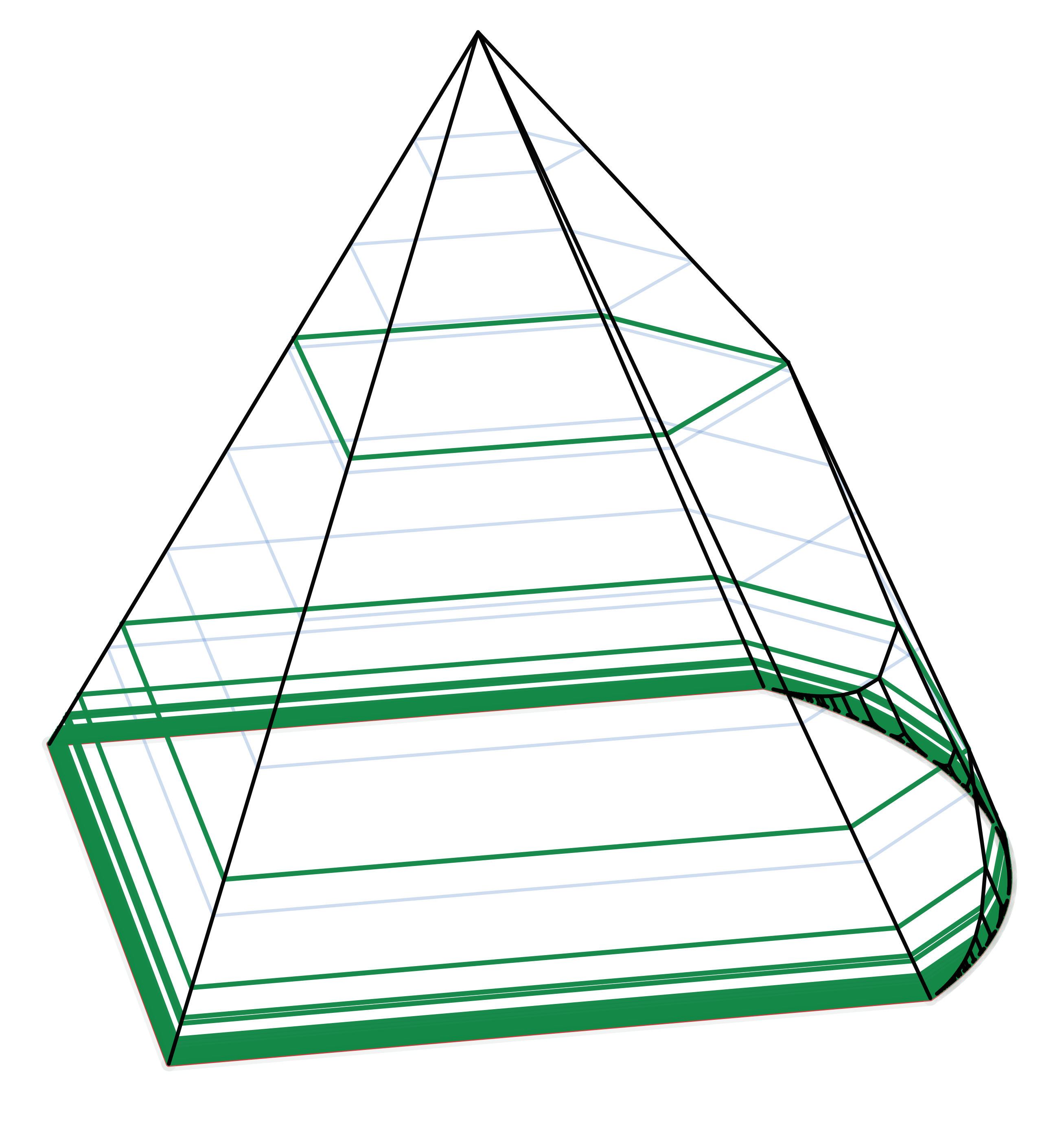}
    \caption{Three-dimensional lift of the tropical flow for the same domain
    as in Figures~\ref{fig:cuttingtrianglespentagon} and
    \ref{fig:pentagonalwavefront}. This surface is the graph
    \(z=\rho_\Omega(x,y)\). Its horizontal sections are the tropical wave
    fronts \(\Omega_t\), and its nonsmooth ridge set projects to the tropical
    caustic.}
    \label{fig:pentagonalseries}
\end{figure}

\begin{remark}
The same triangle-cutting procedure also underlies the lattice-counting
calculation of Section~\ref{sec_6}. There the individual Farey triangles are
counted with a coherent half-open convention and then reorganized into
head--tail packages. The outcome is that the tropical residue supplies the
natural Farey scale, while the averaged discrepancy is determined by the
second-layer terms left after the head--tail cancellation.
\end{remark}

\begin{remark}[Domains with prescribed rightmost pole]
Dropping the smoothness assumption on the boundary, one can construct a
domain whose boundary series has rightmost pole at any prescribed
\(\alpha\in(0,1)\). Start with the square
\[
[-2\zeta(\alpha^{-1}),\,2\zeta(\alpha^{-1})]^2
\]
and cut off successive unimodular triangles of sizes
\[
1,\quad 2^{-\alpha^{-1}},\quad 3^{-\alpha^{-1}},\quad\ldots
\]
using primitive normals \((1,1),(1,2),(1,3),\ldots\). The \(n\)-th cut is
possible because
\[
(n+1)^{-\alpha^{-1}}<n^{-\alpha^{-1}}
\qquad\text{and}\qquad
\zeta(\alpha^{-1})>\sum_{k=1}^{n}k^{-\alpha^{-1}}.
\]
The resulting convex domain \(D_\alpha\) satisfies
\[
F_{\partial D_\alpha}(s)=\zeta(\alpha^{-1}s),
\]
and hence its boundary series is meromorphic on \(\C\) with a unique pole at
\(s=\alpha\).
\end{remark}

\begin{figure}[t]
    \centering
    \includegraphics[width=\textwidth]{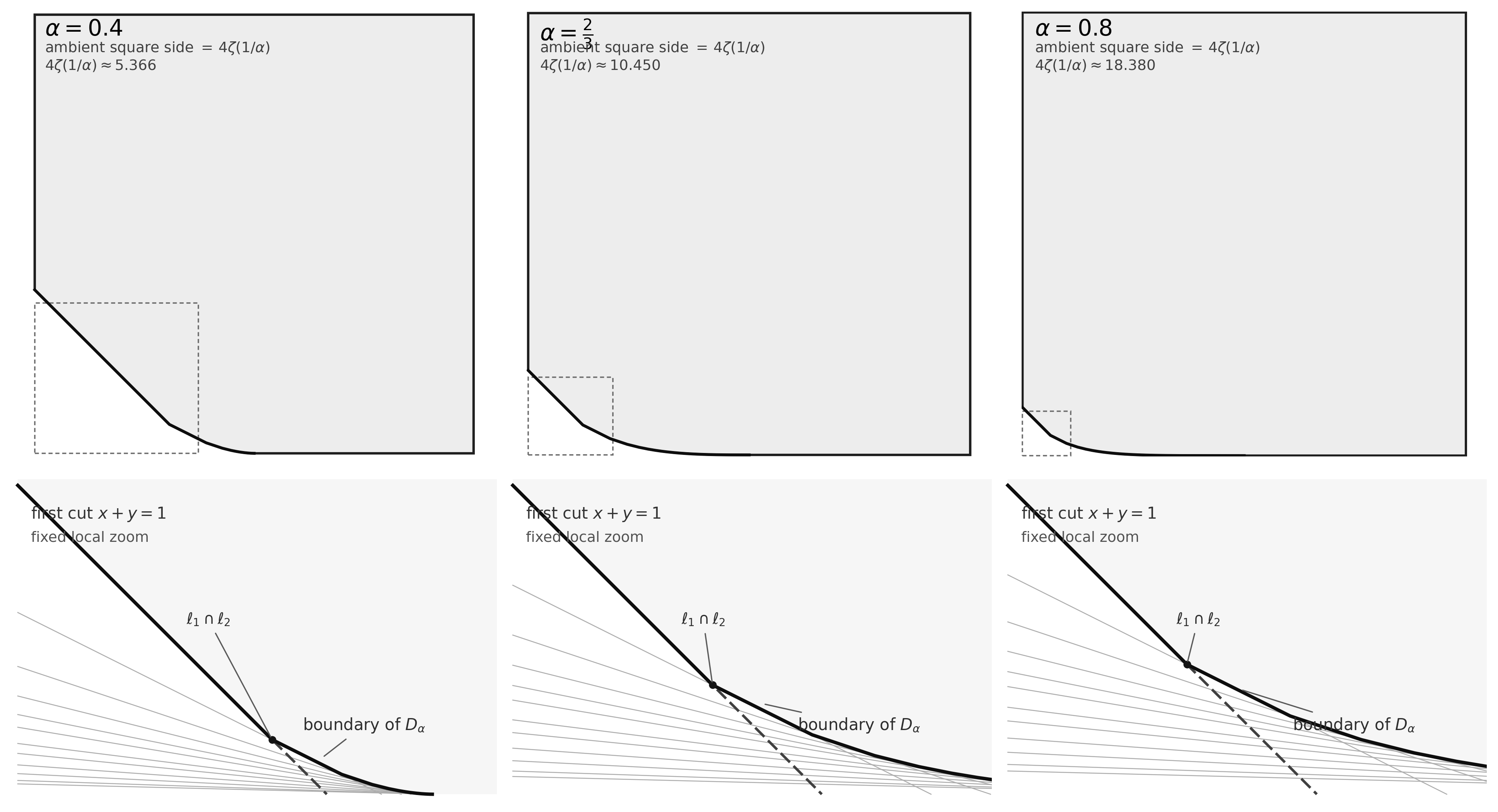}
    \caption{\textbf{Convex domains with prescribed rightmost pole.}
    For \(\alpha\in(0,1)\), the domain \(D_\alpha\) is obtained by successive
    corner cuts with primitive outward normals \((1,n)\), where the \(n\)-th
    cut has size \(n^{-1/\alpha}\). Equivalently, near the distinguished
    corner the boundary is the lower envelope of the support lines
    \(x+ny=\sum_{k=1}^{n}k^{-1/\alpha}\).
    The associated boundary series is
    \(F_{\partial D_\alpha}(s)=\zeta(\alpha^{-1}s)\), whose unique pole occurs
    at \(s=\alpha\).}
    \label{fig:Dalpha-prescribed-pole}
\end{figure}

\begin{remark}[Continuity and residues in families]
It is useful to distinguish the tropical zeta function from the local
coordinates used to compute it. The function
\[
Z_\Omega(s)=\int_\Omega \rho_\Omega(x)^{s-2}\,dx\,dy
\]
is defined intrinsically in terms of the tropical distance function. In its
half-plane of absolute convergence, this integral varies continuously under
standard convergence of convex domains, for instance under Hausdorff
convergence inside bounded nondegenerate families.

This continuity concerns only the original half-plane of convergence. It does
not imply continuity, in families, of the residues of meromorphic
continuations. For example,
\[
f_\varepsilon(s)=\frac{1}{s-1+\varepsilon}
\]
is holomorphic at \(s=1\) for every \(\varepsilon>0\), so
\[
\Res_{s=1}f_\varepsilon=0.
\]
However, as \(\varepsilon\to0^+\), the functions \(f_\varepsilon\) converge
locally uniformly on compact subsets of \(\Re(s)>1\) to \(1/(s-1)\), which
has residue \(1\) at \(s=1\). Thus convergence in the original half-plane
does not by itself imply continuity of polar data after meromorphic
continuation.
\end{remark}

\begin{remark}[The inverse problem]
It is natural to ask to what extent the tropical zeta function recovers the
domain. The map
\[
\Omega\longmapsto Z_\Omega
\]
is not expected to be globally injective on the full space of convex domains:
the cutting description suggests possible rearrangements of boundary pieces
which preserve the scalar Dirichlet series while changing the domain.

The analytic case may be more rigid. Suppose that \(\Omega_1\) and
\(\Omega_2\) have real-analytic strictly convex boundaries with nonvanishing
curvature, and that
\[
Z_{\Omega_1}(s)=Z_{\Omega_2}(s).
\]
Must \(\Omega_1\) and \(\Omega_2\) agree up to translation and the natural
\(\mathrm{GL}(2,\Z)\)-action?

We leave this question open. The residue at \(s=\frac23\) alone cannot
determine the domain, since it is only proportional to the equiaffine length:
\[
\Res_{s=2/3}Z_\Omega(s)
=
C_{\mathrm{aff}}\,
\Length_{\mathrm{equiaffine}}(\partial\Omega).
\]
Any possible rigidity must come from the full zeta function, not only from
its leading smooth residue.
\end{remark}

\subsection{Exact mean-value identity}

The following lemma is a standard two-point second-order mean-value formula.
We include the proof to keep the normalization of the Hata coefficient
explicit.

\begin{lemma}\label{lem:mv}
Let \(f\in C^2([0,1])\), let \(x\neq y\) in \([0,1]\), and let
\(\lambda\in(0,1)\). Put
\[
m=\lambda x+(1-\lambda)y.
\]
Then there exists \(\xi\) between \(x\) and \(y\) such that
\[
f(m)-\lambda f(x)-(1-\lambda)f(y)
=
-\frac12\lambda(1-\lambda)f''(\xi)(x-y)^2.
\]
\end{lemma}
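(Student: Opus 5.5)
The plan is to reduce the statement to the one-dimensional Taylor formula with Lagrange remainder, applied separately at the two endpoints and then combined. Without loss of generality \(x<y\); the case \(x>y\) follows by swapping the roles of \(x,y\) and replacing \(\lambda\) by \(1-\lambda\), which leaves both sides invariant. Consider the auxiliary function
\[
\Phi(t):=f(t)-\lambda f(x)-(1-\lambda)f(y)+\tfrac12\lambda(1-\lambda)K(x-y)^2,
\]
or more directly, the standard route: choose the constant \(K\) so that the linear combination
\[
D:=f(m)-\lambda f(x)-(1-\lambda)f(y)
\]
equals \(-\tfrac12\lambda(1-\lambda)K(x-y)^2\), and show \(K=f''(\xi)\) for some \(\xi\in(x,y)\).

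Concretely, I would first expand \(f(x)\) and \(f(y)\) around \(m\) by Taylor's theorem with Lagrange remainder: \(f(x)=f(m)+f'(m)(x-m)+\tfrac12 f''(\xi_1)(x-m)^2\) with \(\xi_1\in(x,m)\), and similarly \(f(y)=f(m)+f'(m)(y-m)+\tfrac12 f''(\xi_2)(y-m)^2\) with \(\xi_2\in(m,y)\). Since \(x-m=(1-\lambda)(x-y)\) and \(y-m=\lambda(y-x)\), the first-order terms cancel in the combination \(\lambda(x-m)+(1-\lambda)(y-m)=0\). This yields
\[
D=-\tfrac12\lambda(1-\lambda)(x-y)^2\bigl((1-\lambda)f''(\xi_1)+\lambda f''(\xi_2)\bigr).
\]
The bracket is a convex combination of values of the continuous function \(f''\) on \([x,y]\), so by the intermediate value theorem it equals \(f''(\xi)\) for some \(\xi\) between \(\xi_1\) and \(\xi_2\), hence in \((x,y)\) (or in \([\xi_1,\xi_2]\subset(x,y)\)).

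There is no real obstacle; the only point needing care is the bookkeeping of the weights \(\lambda\) versus \(1-\lambda\) so that the squared distances \((1-\lambda)^2(x-y)^2\) and \(\lambda^2(x-y)^2\), multiplied by \(\lambda\) and \(1-\lambda\) respectively, factor as \(\lambda(1-\lambda)(x-y)^2\) times a genuine convex combination. With this lemma in hand, Lemma~\ref{lemma:exactT} then follows by taking \(x=a/b\), \(y=c/d\), \(\lambda=b/(b+d)\), noting \(m=\mu_I\) and \((a/b-c/d)^2=1/(bd)^2\), and multiplying by \(b+d\).
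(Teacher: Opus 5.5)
Your proof is correct and complete, but it takes a different route from the paper.

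The paper sets \(\varphi(t)=f(y+t(x-y))\) and subtracts the chord \(L(t)\) together with a multiple of \(t(t-1)\). The multiple is chosen so that the auxiliary function \(\psi\) vanishes at \(t=0,\lambda,1\). Two applications of Rolle's theorem then give a single point \(\eta\) with \(\psi''(\eta)=0\). This is the classical divided-difference proof; it produces \(\xi\) in one stroke and uses nothing beyond the existence of \(f''\).

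You instead expand \(f(x)\) and \(f(y)\) about the interior point \(m\) with Lagrange remainders. The first-order terms cancel because \(\lambda(x-m)+(1-\lambda)(y-m)=0\). This gives the exact formula
\[
D=-\tfrac12\lambda(1-\lambda)(x-y)^2\bigl((1-\lambda)f''(\xi_1)+\lambda f''(\xi_2)\bigr),
\]
with \(\xi_1\in(x,m)\) and \(\xi_2\in(m,y)\). A final intermediate-value step then collapses the convex combination to a single \(f''(\xi)\). That step is justified here by continuity of \(f''\), or in general by Darboux's theorem.

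The Rolle argument avoids that last step. Your route is more transparent about where the weight \(\lambda(1-\lambda)\) comes from. It also yields the slightly more informative two-point representation, which is exactly what one uses for two-sided bounds on \(T_I(f)\) when \(|f''|\) is bounded above and below.

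The WLOG reduction and the abandoned auxiliary function \(\Phi\) in your first paragraph are unnecessary. The Taylor computation is already symmetric in \(x\) and \(y\), so both can simply be dropped.
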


\begin{proof}
Define
\[
\varphi(t):=f\bigl(y+t(x-y)\bigr),
\qquad t\in[0,1].
\]
Then \(\varphi\in C^2([0,1])\), and
\[
\varphi(\lambda)=f(m),
\qquad
\varphi(1)=f(x),
\qquad
\varphi(0)=f(y).
\]
Thus it is enough to prove that there exists \(\eta\in(0,1)\) such that
\[
\varphi(\lambda)-\lambda\varphi(1)-(1-\lambda)\varphi(0)
=
-\frac12\lambda(1-\lambda)\varphi''(\eta).
\]

Let
\[
L(t):=\varphi(0)+t\bigl(\varphi(1)-\varphi(0)\bigr),
\]
and set
\[
A:=\varphi(\lambda)-L(\lambda).
\]
Consider
\[
\psi(t)
:=
\varphi(t)-L(t)
-
A\,\frac{t(t-1)}{\lambda(\lambda-1)}.
\]
Then
\[
\psi(0)=\psi(\lambda)=\psi(1)=0.
\]
By Rolle's theorem, there exist \(u_1\in(0,\lambda)\) and
\(u_2\in(\lambda,1)\) such that
\[
\psi'(u_1)=\psi'(u_2)=0.
\]
Applying Rolle's theorem once more to \(\psi'\), we obtain
\(\eta\in(u_1,u_2)\subset(0,1)\) such that
\[
\psi''(\eta)=0.
\]
Since \(L''=0\) and \(\bigl(t(t-1)\bigr)''=2\), this gives
\[
0
=
\psi''(\eta)
=
\varphi''(\eta)-A\,\frac{2}{\lambda(\lambda-1)}.
\]
Hence
\[
A
=
\frac12\lambda(\lambda-1)\varphi''(\eta)
=
-\frac12\lambda(1-\lambda)\varphi''(\eta).
\]
Finally,
\[
\varphi''(t)
=
f''\bigl(y+t(x-y)\bigr)(x-y)^2.
\]
With
\[
\xi:=y+\eta(x-y),
\]
we obtain the desired identity.
\end{proof}

\begin{proof}[Proof of Lemma~\ref{lemma:exactT}]\label{proof:lemma:exactT}
Apply Lemma~\ref{lem:mv} with
\[
x=\frac ab,
\qquad
y=\frac cd,
\qquad
\lambda=\frac{b}{b+d}.
\]
Since \(ad-bc=1\), we have
\[
x-y=\frac{ad-bc}{bd}=\frac1{bd}\neq0.
\]
Moreover,
\[
m
=
\lambda x+(1-\lambda)y
=
\frac{b}{b+d}\frac ab
+
\frac{d}{b+d}\frac cd
=
\frac{a+c}{b+d}
=
\mu_I.
\]
Hence
\[
c_I(f)
=
f(\mu_I)
-
\frac{b}{b+d}f\!\left(\frac ab\right)
-
\frac{d}{b+d}f\!\left(\frac cd\right)
=
-\frac12\lambda(1-\lambda)f''(\xi_I)(x-y)^2
\]
for some \(\xi_I\) between \(a/b\) and \(c/d\). Multiplying by \(b+d\), we
obtain
\[
(b+d)c_I(f)
=
-\frac12(b+d)\lambda(1-\lambda)f''(\xi_I)(x-y)^2.
\]
Now
\[
\lambda(1-\lambda)=\frac{bd}{(b+d)^2},
\qquad
(x-y)^2=\frac{(ad-bc)^2}{b^2d^2}=\frac1{b^2d^2}.
\]
Substituting gives
\[
(b+d)c_I(f)
=
-\frac{f''(\xi_I)}{2\,bd(b+d)},
\]
as claimed.
\end{proof}

\section{Complex, symplectic, and tropical geometry}\label{app2}

This appendix recalls the geometric background used in the main text.  We
review the elementary topology of compact smooth toric surfaces, the
symplectic interpretation of moment polygons and corner cuts, and the
tropical-optical description of wave fronts and caustics.  The final two
subsections apply this background to the short-time behavior of the lattice
perimeter of tropical wave fronts.

\subsection{Topology of complex toric surfaces}\label{ssec:topologycomplex}

Compact smooth toric surfaces provide a tame and well-understood class of
closed complex manifolds.\footnote{A more general term used in algebraic
geometry is ``toric variety'', which can have arbitrary dimension and
singularities.  A detailed exposition, including topological aspects, can be
found in \cite{cox2024toric}.}
The term ``toric'' comes from their defining property of being equivariant
compactifications of the complex algebraic torus \((\C^*)^2\), where
\(\C^*\) denotes the multiplicative group of nonzero complex numbers.  A
toric surface is characterized by its fan, represented by a collection of
rational-slope rays.  In the compact case the fan is complete, meaning that
the rays cover the plane by their cones.  In the nonsingular case it is
unimodular, meaning that the primitive vectors spanning two consecutive rays
form a basis of the square lattice.

\begin{example}\label{ex_toric}
The standard examples are:
\begin{enumerate}
\item The fan generated by
\[
(-1,0),\qquad (0,-1),\qquad (1,1),
\]
which gives the projective plane \(\C P^2\).
\item The fans generated by
\[
(-1,0),\qquad (0,-1),\qquad (0,1),\qquad (1,d),
\]
which give the Hirzebruch surfaces, that is, \(\C P^1\)-bundles over
\(\C P^1\).  They are nontrivial when \(d\neq0\).
\end{enumerate}
\end{example}

Let \(X\) denote a compact smooth toric surface.  Then \((\C^*)^2\) is a
dense open subset of \(X\), and its complement is a union of boundary
divisors
\[
\{D_j\}.
\]
Each \(D_j\) is a copy of the Riemann sphere \(\C P^1\), and the boundary
divisors are in one-to-one correspondence with the rays of the fan of \(X\).

The fan can be reconstructed dynamically.  Take a nonzero real vector
\((a,b)\), and consider the path in \((\C^*)^2\) parametrized by
\[
t\in(1,+\infty)\longmapsto (t^a,t^b).
\]
As \(t\to+\infty\), this path has a unique limit point in \(X\).  This point
belongs to a unique boundary divisor precisely when \((a,b)\) lies on the
ray of the fan corresponding to that divisor.  More generally, if \((a,b)\)
lies in the cone between two adjacent rays, the limit point is the unique
intersection point of the two boundary divisors corresponding to those rays.

Two consecutive boundary divisors intersect transversely at the corresponding
torus-fixed point, with intersection multiplicity \(+1\).  These torus-fixed
points are exactly the zero-dimensional orbits of the torus action.  All
other orbits, apart from the dense torus itself, are copies of \(\C^*\); each
is obtained from a boundary divisor by removing its two fixed points.

The second cohomology of \(X\) is generated by the classes \([D_j]\), with
the two linear relations encoded by the fan:
\[
\sum_j v_j\otimes [D_j]=0,
\]
where \(v_j\) is the primitive vector spanning the ray corresponding to
\(D_j\).  The odd cohomology groups vanish.  Thus the intersection form is
determined by the self-intersections \([D_j]^2\).  These are read directly
from the fan: if \(v_{j-1}\) and \(v_{j+1}\) are the primitive generators of
the two rays adjacent to \(v_j\), with the rays indexed cyclically, then
\[
[D_j]^2=\det(v_{j-1},v_{j+1}).
\]

Blowing up a point is the basic birational transformation of complex
surfaces.  Abstractly, one replaces a point by the projectivization of the
corresponding tangent space.  Locally, this is the blow-up of \(\C^2\) at the
origin, realized as the total space of the tautological line bundle over
\(\C P^1\).  In other words, the point is replaced by a Riemann sphere whose
self-intersection is \(-1\).

For toric surfaces, blowing up a torus-fixed point has a simple fan
description.  Suppose the fixed point is the intersection of two boundary
divisors \(D_j\) and \(D_{j+1}\), corresponding to adjacent rays spanned by
primitive vectors \(v_j\) and \(v_{j+1}\).  Then the blow-up is obtained by
adding the new ray spanned by
\[
v_j+v_{j+1}.
\]
The corresponding boundary divisor has self-intersection \(-1\).  Conversely,
contracting this divisor is the toric blow-down.  For example,
Example~\ref{ex_toric}(1) is obtained from Example~\ref{ex_toric}(2) with
\(d=1\) by deleting the ray spanned by \((0,1)\).

The second cohomology of a compact smooth complex surface contains the
canonical class.\footnote{It is often useful to work with singular surfaces as
well.  For isolated surface singularities, the canonical class is well defined
when the singular points are of ADE type.  Only \(A\)-type singularities
appear for toric surfaces, and in this case the canonical class can again be
represented by the negative of the sum of all boundary divisors.}
Topologically, this is the first Chern class of the cotangent bundle.  In the
toric case,
\[
K_X=-\sum_j [D_j].
\]
Equivalently, the logarithmic two-form
\[
d\log z_1\wedge d\log z_2
\]
on \((\C^*)^2\) extends to \(X\) as a meromorphic form with simple poles
along all boundary divisors.  For us, the self-intersection \(K_X^2\) appears
in two related ways: it is the negative of the residue at \(s=0\) of the
tropical zeta function of a polygon dual to the fan, and it is also the
time-derivative of the lattice perimeter of the tropical wave front between
critical times.

\subsection{Symplectic toric surfaces}\label{ssec:sympltor}

We now pass from fans to their dual polygons.  A choice of such a polygon,
up to translation, corresponds to a numerical class in the real second
cohomology of the toric surface which evaluates positively on each boundary
divisor.  These positive numbers are precisely the lattice lengths of the sides
of the polygon.  Each side is orthogonal to a ray of the fan and is the
moment-map image of the corresponding boundary divisor.

For example, the dual polygons of Example~\ref{ex_toric}(1) are the standard
unimodular triangles with vertices
\[
(0,0),\qquad (a,0),\qquad (0,a),
\qquad a>0.
\]
They form a ray in the one-dimensional second cohomology of \(\C P^2\).
For Example~\ref{ex_toric}(2), one obtains a fixed combinatorial type of
trapezoid for each \(d\), with two independent length parameters.  The
corresponding cone of cohomology classes is the K\"ahler cone.

Each class in the interior of the K\"ahler cone is represented by a symplectic
form \(\omega\), that is, a closed nondegenerate \(2\)-form.  In the toric
setting this form can be chosen invariant under the compact torus \(T^2\).
The resulting \(T^2\)-action is Hamiltonian and gives a momentum map
\[
\mu:X\longrightarrow(\mathfrak t^2)^*,
\]
where \((\mathfrak t^2)^*\cong\R^2\) is the dual of the Lie algebra of
\(T^2\).  The defining equation is
\[
d\langle\mu,\xi\rangle=-\iota_{V_\xi}\omega,
\]
where \(V_\xi\) is the vector field generated by
\(\xi\in\mathfrak t^2\).  For an introduction to symplectic toric manifolds,
including their algebro-geometric aspects, see \cite{da2003symplectic}.

\begin{figure}[t]
\centering
\begin{tikzpicture}[>=Latex, line join=round, line cap=round]

\definecolor{accentred}{RGB}{210,55,45}

\newcommand{\spherecomponent}[2]{%
  \pgfmathsetmacro{\midang}{(#1+#2)/2}
  \pgfmathsetmacro{\span}{#1-#2}
  \pgfmathsetmacro{\bulge}{0.36 + 0.0052*\span}
  \pgfmathsetmacro{\inset}{0.48 + 0.0028*\span}
  \pgfmathsetmacro{\hlx}{0.085 + 0.0007*\span}
  \pgfmathsetmacro{\hly}{0.13 + 0.0010*\span}
  \pgfmathsetmacro{\shx}{0.16 + 0.0011*\span}
  \pgfmathsetmacro{\shy}{0.24 + 0.0016*\span}
  \coordinate (P) at ({#1}:\R);
  \coordinate (Q) at ({#2}:\R);
  \coordinate (O) at ({\midang}:{\R+\bulge});
  \coordinate (I) at ({\midang}:{\R-\inset});

  \path[shade, inner color=gray!8, outer color=gray!52, shading angle={\midang+138}]
    (P) .. controls ($(P)!0.56!(O)$) and ($(Q)!0.56!(O)$) .. (Q)
        .. controls ($(Q)!0.58!(I)$) and ($(P)!0.58!(I)$) .. cycle;
  \draw[gray!58, line width=0.42pt]
    (P) .. controls ($(P)!0.56!(O)$) and ($(Q)!0.56!(O)$) .. (Q)
        .. controls ($(Q)!0.58!(I)$) and ($(P)!0.58!(I)$) .. cycle;

  \begin{scope}
    \clip
      (P) .. controls ($(P)!0.56!(O)$) and ($(Q)!0.56!(O)$) .. (Q)
          .. controls ($(Q)!0.58!(I)$) and ($(P)!0.58!(I)$) .. cycle;
    \path[fill=white, opacity=0.22]
      ({\midang+22}:{\R+0.16}) ellipse [x radius=\hlx, y radius=\hly, rotate={\midang-18}];
    \path[fill=black, opacity=0.08]
      ({\midang-150}:{\R+0.07}) ellipse [x radius=\shx, y radius=\shy, rotate={\midang-14}];
    \path[fill=white, opacity=0.07]
      ({\midang+2}:{\R+0.02}) ellipse
      [x radius={0.10 + 0.0008*\span}, y radius={0.30 + 0.0015*\span}, rotate={\midang-90}];
  \end{scope}
}

\begin{scope}[shift={(0,0)}]
  \def\R{1.72}
  \spherecomponent{90.0}{48.50}
  \spherecomponent{48.50}{-5.88}
  \spherecomponent{-5.88}{-61.26}
  \spherecomponent{-61.26}{-123.43}
  \spherecomponent{-123.43}{-181.27}
  \spherecomponent{-181.27}{-270.0}

  \fill[accentred, even odd rule, opacity=0.08]
    (0,0.01) ellipse (0.86 and 0.33)
    (0,0.01) ellipse (0.38 and 0.14);
  \draw[accentred, line width=1.35pt] (0,0.01) ellipse (0.86 and 0.33);
  \draw[accentred, line width=1.10pt] (0,0.01) ellipse (0.38 and 0.14);
\end{scope}

\begin{scope}[shift={(8.3,-0.18)}]
  \coordinate (A) at (-1.55,0.55);
  \coordinate (B) at (-0.90,1.55);
  \coordinate (C) at (0.65,1.75);
  \coordinate (D) at (1.75,0.60);
  \coordinate (E) at (1.20,-1.10);
  \coordinate (F) at (-0.35,-1.70);

  \path[shade, left color=gray!56, right color=gray!18, middle color=gray!34]
    (A)--(B)--(C)--(D)--(E)--(F)--cycle;
  \draw[draw=gray!58, line width=0.45pt] (A)--(B)--(C)--(D)--(E)--(F)--cycle;

  \draw[->, line width=0.9pt] ($ (A)!0.50!(B) $) -- ++(-0.84,0.55);
  \draw[->, line width=0.9pt] ($ (B)!0.50!(C) $) -- ++(-0.18,1.00);
  \draw[->, line width=0.9pt] ($ (C)!0.50!(D) $) -- ++(0.82,0.78);
  \draw[->, line width=0.9pt] ($ (D)!0.50!(E) $) -- ++(0.95,-0.30);
  \draw[->, line width=0.9pt] ($ (E)!0.50!(F) $) -- ++(0.35,-0.92);
  \draw[->, line width=0.9pt] ($ (F)!0.50!(A) $) -- ++(-0.92,-0.38);

  \fill[accentred] (0.10,0.16) circle (2.6pt);
  \node[above right] at (1.55,2.25) {$\R^{2}$};
\end{scope}

\draw[->, draw=accentred, line width=0.95pt]
  (0.92,-0.01) to[out=-16,in=172] node[midway, below, sloped, accentred] {$\mu$} (8.01,-0.03);

\end{tikzpicture}
\caption{\textbf{Momentum map geometry for a compact symplectic toric
four-manifold.}  Over an interior point of the Delzant polygon the fiber is a
Lagrangian torus \(T^2\).  Over the boundary the torus degenerates, and the
inverse image of the polygonal boundary is a cyclic chain of \(2\)-spheres,
one for each facet.  The outward arrows indicate the primitive facet normals.}
\label{fig:moment-map-toric}
\end{figure}

By Delzant's theorem \cite{delzant1988hamiltoniens}, compact symplectic
toric four-manifolds are classified by Delzant polygons.  Thus the dual
polygons of the fans above are precisely the images of the momentum maps
associated with invariant symplectic forms.  The lattice length of a side of
the polygon is the integral of \(\omega\) over the corresponding boundary
divisor; it is the symplectic area of that divisor.  The Euclidean area of the
moment polygon is the symplectic four-volume of the surface, namely the total
integral
\[
\int_X \frac{\omega^2}{2}.
\]

In symplectic geometry, blow-ups have sizes.  If one removes a standard open
ball of radius \(\sqrt R\) in a Darboux chart and collapses its boundary
along the Hopf fibration, the exceptional divisor has symplectic area \(R\).
For toric manifolds this operation is visible in the moment polygon as a
unimodular corner cut of size \(R\).

More concretely, a unimodular corner cut removes a triangle from a vertex of
the polygon by a line orthogonal to
\[
\nu_1+\nu_2,
\]
where \(\nu_1\) and \(\nu_2\) are the inward primitive normals to the two
sides adjacent to the corner.  Iterating such cuts is central in this paper:
the tropical zeta function is expressed as a Dirichlet generating series of
the sizes of these cuts.

The local model is the standard momentum map
\[
(z_1,z_2)\in\C^2
\longmapsto
\frac12(|z_1|^2,|z_2|^2)\in\R^2,
\]
corresponding to the invariant symplectic form
\[
\frac{i}{2}\,dz_1\wedge d\bar z_1+\frac{i}{2}\,dz_2\wedge d\bar z_2.
\]
Although \(\C^2\) is noncompact, this model explains the polygonal operation:
the ball
\[
|z_1|^2+|z_2|^2<R
\]
projects to the triangle with vertices
\[
(0,0),\qquad (R,0),\qquad (0,R).
\]
Thus a symplectic blow-up of size \(R\) corresponds exactly to cutting off a
unimodular triangle of size \(R\) in the moment polygon.

\subsection{Essentials of tropical optics}\label{ssec:tropop}

We now recall the basic facts about planar tropical wave fronts and caustics
used in the paper. For general background on tropical geometry, see
\cite{BriefIntroTropGeom}; for a systematic treatment of planar tropical wave
fronts and caustics, see
\cite{MikhalkinShkolnikov2023WaveFrontsCaustics}.

The tropical wave front of a compact convex domain \(\Omega\) at time
\(t\ge0\) is
\[
\Omega_t=\rho_\Omega^{-1}[t,m_\Omega],
\]
where
\[
m_\Omega=\max_\Omega \rho_\Omega.
\]
For \(t\in(0,m_\Omega)\), the set \(\Omega_t\) is a polygon with
rational-slope sides.  Hence it has a dual fan, and therefore an associated
toric surface \(X_t\).  If \(\Omega_t\) has non-unimodular corners, the
surface \(X_t\) may have singularities; in the stable case these are of
\(A_n\)-type.  Such a singularity is resolved by a chain of \(n\) boundary
divisors, each of self-intersection \(-2\).  The canonical class \(K_t\) and
the symplectic form \(\omega_t\) still make sense in this setting.

For non-critical values of \(t\), the canonical evolution equation is
\begin{equation}\label{eq:canevol}
\frac{d}{dt}[\omega_t]=K_t.
\end{equation}
This identity is valid between critical times, and it has higher-dimensional
analogues.\footnote{In some conventions an additional factor of \(2\pi\)
appears.  This depends only on the normalization of symplectic area.  The
formula first appears in \cite{Shkolnikov17Theis}, and the singular case is
treated in detail in \cite{MikhalkinShkolnikov2023WaveFrontsCaustics}.}

A time is called critical if the combinatorics of \(\Omega_t\), equivalently
the dual fan, changes at that time.  The set of critical times has at most one
accumulation point, namely \(0\).  At a critical time the event is a
blow-down: a ray is erased from the fan, and the resulting point of the toric
surface is smooth.  The canonical evolution equation gives a quick derivation
of the basic surface-volume identity for tropical wave fronts.

\begin{proposition}\label{prop:twf_surfacevol}
Let \(P_\Omega(t)\) denote the lattice surface volume of \(\Omega_t\), as in
Definitions~\ref{def:surfacevolume} and~\ref{def:wfsurfacevolume}.  For every
non-critical value of \(t\),
\[
\frac{d}{dt}\operatorname{Vol}(\Omega_t)=-P_\Omega(t).
\]
Equivalently, the identity holds piecewise in \(t\), with the natural
one-sided interpretation at critical times.
\end{proposition}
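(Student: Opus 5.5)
Between two consecutive critical times the combinatorics of \(\Omega_t\) is frozen: there is a fixed finite set of primitive inward normals \(u_1,\dots,u_k\) (cyclically ordered), and
\[
\Omega_t=\bigcap_{j}\{x:\langle u_j,x\rangle\ge h_\Omega(u_j)+t\}.
\]
This holds because near a non-critical \(t\) the function \(\rho_\Omega\) is the minimum of the same finitely many affine functions \(\langle u_j,x\rangle-h_\Omega(u_j)\) on a neighborhood of the level set \(\{\rho_\Omega=t\}\). I will use this to differentiate the area directly, rather than relying on the symplectic evolution equation \eqref{eq:canevol}, although that route is also available.

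Step one: for a polygon with fixed normals, the support numbers \(\lambda_j(t)=h_\Omega(u_j)+t\) all move with unit speed. The first variation of area is
\[
\frac{d}{dt}\operatorname{Vol}(\Omega_t)=-\sum_j \frac{\partial\lambda_j}{\partial t}\,\frac{\operatorname{Vol}_{1}(F_j(t))}{\|u_j\|}.
\]
Here \(F_j(t)\) is the side with normal \(u_j\). The formula holds because increasing the constant in \(\langle u_j,x\rangle\ge\lambda_j\) by \(\delta\) moves that side inward by Euclidean distance \(\delta/\|u_j\|\), sweeping area \(\operatorname{Vol}_1(F_j)\delta/\|u_j\|+O(\delta^2)\). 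This is the classical Minkowski formula for polytopes with fixed normal fan; the \(O(\delta^2)\) corner corrections come from the bounded angles at the vertices.

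Step two: since \(\partial\lambda_j/\partial t=1\) and \(\operatorname{covol}(u_j^\perp\cap\Z^2)=\|u_j\|\) for primitive \(u_j\), as already noted before Proposition~\ref{prop:perimeter-mellin}, each summand is exactly \(\operatorname{Vol}_\Z(F_j(t))\). Summing gives \(P_\Omega(t)\) with a minus sign. The same computation goes through verbatim in dimension \(n\), with facets in place of sides.

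Step three: at a critical time, sides appear or disappear. Only finitely many critical times occur in any \([\varepsilon,m_\Omega]\), and sides degenerate to length zero continuously. Hence \(\operatorname{Vol}(\Omega_t)\) is continuous and piecewise polynomial, and \(P_\Omega(t)\) has one-sided limits. This yields the one-sided interpretation at critical times.

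The main obstacle is justifying that the normal fan is locally constant away from critical times. In particular, one must check that no new primitive direction becomes active at a non-critical \(t\), which follows from the finiteness statement made just before Definition~\ref{def:surfacevolume}. One must also check that the sides present have positive length on open \(t\)-intervals. I would argue this via local finiteness of the corner locus of the concave piecewise-linear function \(\rho_\Omega\) on \(\{\rho_\Omega\ge\varepsilon\}\). That local finiteness holds because only directions \(u\) with \(c_x\|u\|\le m_\Omega\) can be active there, and this is a finite set.
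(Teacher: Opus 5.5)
Your argument is correct, but it follows a different route from the paper's.

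The paper argues cohomologically. Between critical times it writes \(\operatorname{Vol}(\Omega_t)=\int_X\frac{1}{n!}[\omega_t]^n\) on the toric variety dual to \(\Omega_t\). It differentiates using the canonical evolution equation \(\frac{d}{dt}[\omega_t]=K_t\). It then uses \(-K_t=\sum_j[D_j]\), together with the fact that \(\frac{1}{(n-1)!}\int_{D_j}[\omega_t]^{n-1}\) is the lattice volume of the facet dual to \(D_j\).

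You instead differentiate the volume directly as a function of the support numbers. Each facet hyperplane \(\langle u_j,x\rangle=h_\Omega(u_j)+t\) moves inward at Euclidean speed \(1/\|u_j\|\), and \(\operatorname{covol}(u_j^\perp\cap\Z^n)=\|u_j\|\) turns Euclidean facet volume into lattice facet volume.

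Both proofs rest on the same input. The statement that every facet moves with unit lattice speed is exactly what \(\frac{d}{dt}[\omega_t]=K_t\) encodes. Pairing \([\omega_t]^{n-1}/(n-1)!\) with \([D_j]\) computes \(-\partial\operatorname{Vol}/\partial\lambda_j\) in your convention \(\langle u_j,x\rangle\ge\lambda_j\). So your proof is the convex-geometric unpacking of the paper's Duistermaat--Heckman computation.

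\textbf{What your version buys.} It is self-contained and dimension-free. It needs no intersection theory on the possibly singular toric variety \(X_t\) and no appeal to the evolution equation. It also justifies two points the paper takes for granted: the local constancy of the normal fan, and the finiteness of critical times in \([\varepsilon,m_\Omega]\). You do this via the uniform positive Euclidean distance of \(\{\rho_\Omega\ge\varepsilon\}\) from \(\partial\Omega\).

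\textbf{What the paper's version buys.} It is shorter within its framework. It also expresses the rate of change through \(K_t\), which the paper immediately reuses for \(\frac{d}{dt}P_\Omega(t)=-K_t^2\) and in the partial-cut comparisons of Appendix~\ref{app2}.
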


\begin{proof}\label{proof:prop:twf_surfacevol}
On a symplectic manifold \(X\) with symplectic form \(\omega\), the
symplectic volume of a \(2k\)-dimensional homology class \(\beta\), with
Poincar\'e dual cohomology class \(\alpha\), is obtained by evaluating
\[
\frac{1}{k!}[\omega]^k\alpha
\]
on the fundamental class of \(X\).  This is the same as integrating
\(\frac{1}{k!}\omega^k\) over a representative cycle of \(\beta\).

The volume of the moment polytope \(\Omega_t\subset\R^n\) can be written as
\[
\int_X \frac{1}{n!}[\omega_t]^n.
\]
Taking the derivative and using \eqref{eq:canevol}, we get
\[
\frac{d}{dt}\operatorname{Vol}(\Omega_t)
=
\int_X \frac{1}{(n-1)!}[\omega_t]^{n-1}\cdot K_t.
\]
Since \(-K_t\) is represented by the sum of boundary divisors, the last
integral is minus the total lattice surface volume of the facets of
\(\Omega_t\).  Hence
\[
\frac{d}{dt}\operatorname{Vol}(\Omega_t)=-P_\Omega(t).
\]
\end{proof}

\begin{corollary}
In dimension two, differentiating the lattice perimeter \(P_\Omega(t)\)
between critical times gives
\[
\frac{d}{dt}P_\Omega(t)=-K_t^2.
\]
Thus negative self-intersection of the canonical class corresponds to
increasing lattice perimeter, zero self-intersection to constant lattice
perimeter, and positive self-intersection to decreasing lattice perimeter.
This interpretation is valid on intervals where the dual fan, and hence the
cohomology group containing \(K_t\), is fixed.
\end{corollary}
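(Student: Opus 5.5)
The plan is to differentiate the identity of Proposition~\ref{prop:twf_surfacevol} once more, working entirely in the cohomology of the fixed toric surface attached to an interval of non-critical times. Fix an open interval \(J\subset(0,m_\Omega)\) containing no critical time. On \(J\) the combinatorics of \(\Omega_t\) does not change. Hence the dual fan is constant, and so are the surface \(X=X_t\), the group \(H^2(X;\R)\), and the canonical class \(K=K_t=-\sum_j[D_j]\). Only the symplectic class \([\omega_t]\) moves. By the proof of Proposition~\ref{prop:twf_surfacevol} with \(n=2\),
\[
P_\Omega(t)=-\frac{d}{dt}\operatorname{Vol}(\Omega_t)=-\int_X[\omega_t]\cdot K_t,
\]
which is the statement that the lattice perimeter equals \([\omega_t]\cdot(-K_t)=\sum_j[\omega_t]\cdot[D_j]\), the sum of the lattice lengths of the sides.

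The key step is to differentiate this expression on \(J\). Since \(K_t\) is a constant class there, only \([\omega_t]\) is differentiated. The canonical evolution equation \eqref{eq:canevol} gives \(\frac{d}{dt}[\omega_t]=K_t\), and therefore
\[
\frac{d}{dt}P_\Omega(t)=-\int_X \frac{d}{dt}[\omega_t]\cdot K_t=-K_t^2 .
\]
In particular \(P_\Omega\) is affine in \(t\) on each such interval. This is consistent with \(\operatorname{Vol}(\Omega_t)\) being quadratic in \(t\) there, since all sides move inward at unit lattice speed.

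As an independent elementary check, which avoids symplectic language in the smooth (Delzant) case, I would verify the formula side by side. Each side \(D_j\) translates inward at unit lattice speed, so its lattice length changes as
\[
\frac{d}{dt}\,\ell_j=\frac{d}{dt}[\omega_t]\cdot[D_j]=K\cdot D_j=-2-D_j^2 ,
\]
the last equality by adjunction for a rational curve. Using \(D_j^2=\det(v_{j-1},v_{j+1})\), this can also be read off directly from the corner geometry. Summing over \(j\) gives \(-2r-\sum_jD_j^2\), where \(r\) is the number of rays. For a smooth toric surface, \(K^2=12-r\) and \(\sum_j D_j^2=12-3r\), so the sum equals \(-K^2\), as required.

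The main obstacle is the singular case, when \(\Omega_t\) has non-unimodular corners and \(X_t\) has \(A_n\)-singularities. There I would pass to the minimal resolution: insert the chain of \((-2)\)-curves, which corresponds to lattice-length-zero sides. Because these curves are \(K\)-trivial, both \(K^2\) and the intersection pairing with \([\omega_t]\) are unchanged, so the smooth computation applies verbatim. The remaining point is that \eqref{eq:canevol} holds with \(K_t\) constant on all of \(J\). This is exactly the hypothesis that no ray is created or erased inside \(J\), and it is the reason the formula is asserted only between critical times.
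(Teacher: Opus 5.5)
Your argument is correct and follows the paper's route. In dimension two the proof of Proposition~\ref{prop:twf_surfacevol} gives \(P_\Omega(t)=-[\omega_t]\cdot K_t\), and differentiating once more with the canonical evolution equation \eqref{eq:canevol}, with \(K_t\) fixed in the constant cohomology group, yields \(-K_t^2\). Your side-by-side check and the resolution of the \(A_n\) corners are consistent but optional. Summing \(K\cdot D_j\) over \(j\) already gives \(K\cdot(-K)=-K^2\) without Noether's formula, and note that \(D_j^2=\det(v_{j-1},v_{j+1})\) holds only when the rays are indexed clockwise.
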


Critical events occur at non-maximal vertices of the tropical caustic
\[
\mathcal K_\Omega\subset\Omega^\circ,
\]
that is, at vertices outside
\[
M_\Omega=\rho_\Omega^{-1}(m_\Omega).
\]
The caustic \(\mathcal K_\Omega\) is the corner locus of the tropical distance
function \(\rho_\Omega\): it is the set where \(\rho_\Omega\) is not locally
linear.  All non-maximal vertices of the caustic are trivalent, and the value
of \(\rho_\Omega\) at such a vertex is the corresponding critical time.

Starting from a vertex on \(M_\Omega\) and following a shortest path along
the caustic toward the boundary, one travels precisely \(m_\Omega\) units of
lattice distance.  Conversely, if one starts from \(M_\Omega\) and moves with
primitive velocity along a non-maximal edge of the caustic for time
\(m_\Omega\), the endpoint is a vertex of the minimal model
\(\widehat\Omega\).  In this way the caustic gives a canonical recipe for
carving \(\Omega\) out of \(\widehat\Omega\) by a sequence of corner cuts, or
equivalently by symplectic blow-ups.  Each cut has size equal to the
corresponding critical time.  This is the same procedure used in
Subsection~\ref{subsec:integral-boundary}, now viewed directly through the
evolution of the caustic.

We now spell out the stable non-unimodular case.  Suppose that \(\Omega\)
has only \(A_n\)-type corners.  The caustic \(\mathcal K_\Omega\) is traced
by the vertices of the wave fronts \(\Omega_t\) as \(t\) varies.  In dimension
two these vertices move with primitive velocity.  If the final locus
\(M_\Omega\) is a segment, that segment is also included in the caustic and
is assigned weight \(2\), because the two gradients of \(\rho_\Omega\) on
opposite sides differ by twice a primitive vector.

Similarly, an edge of \(\mathcal K_\Omega\) traced by an \(A_n\)-type vertex
of \(\Omega_t\) has weight \(n+1\).  These weights make the balancing
condition hold at every vertex of the caustic.  An \(A_0\)-vertex is a
unimodular vertex; it corresponds to a smooth point of the toric surface and
traces a weight-one edge of the caustic.

At a critical time, a disjoint collection of edges of \(\Omega_{t-\varepsilon}\)
may be contracted when passing to \(\Omega_{t+\varepsilon}\).  Only one of
the two endpoints of a contracting edge can be non-unimodular.  If the edge
connects an \(A_0\)-vertex to an \(A_n\)-vertex, there are two equivalent
geometric descriptions.  One may view the event as a single weighted
symplectic blow-up, replacing a round Darboux ball by an ellipsoid with ratio
\(1:(n+1)\).  For the tropical zeta function, however, the relevant
description is the unimodular one: the event is represented by \(n+1\)
successive unimodular cuts of the same size \(c>0\), always proceeding in
one cyclic direction.  Hence the corresponding contribution to the boundary
Dirichlet series is
\[
(n+1)c^s.
\]

\subsection{Asymptotics of lattice perimeter of tropical wave fronts}
\label{ssec:pertwf}

Let \(\Omega\subset\R^2\) be a compact convex domain with nonempty interior.
For \(t\ge0\), recall that
\[
\Omega_t:=\{x\in\Omega:\rho_\Omega(x)\ge t\}
\]
is the tropical wave front at time \(t\).  In this subsection we prepare the
proof of the asymptotic behavior of its lattice perimeter as \(t\to0^+\).  We
use the same smoothness assumptions as in the residue theorem at
\(s=\frac23\): the boundary is \(C^1\)-smooth, and every arc of
\(\partial\Omega\setminus\partial\widehat\Omega\) is \(C^3\) with
nonvanishing curvature.

The tropical caustic of \(\Omega\) is
\[
\mathcal K_\Omega\subset \Omega^\circ.
\]
It is the corner locus of the tropical distance function \(\rho_\Omega\).
Equivalently, it is the support of the distributional Laplacian of
\(\rho_\Omega\).  This distribution is the leading term of the push-forward
of symplectic area inside the algebraic torus under the tropicalization of a
holomorphic curve \cite{KalininShkolnikov2018IntroductionTropicalSeries}.
Each edge of the caustic carries a natural weight, namely the lattice length
between the two gradients of \(\rho_\Omega\) on the two sides of that edge.
The balancing condition at vertices follows from this gradient interpretation.

Let
\[
m_\Omega:=\max_{\Omega}\rho_\Omega
\]
and
\[
M_\Omega:=\{x\in \Omega^\circ:\rho_\Omega(x)=m_\Omega\}.
\]
Then \(M_\Omega\subset \mathcal K_\Omega\).  The set \(M_\Omega\) is either a
point or a segment; in the segment case it has weight \(2\).  We say that
\(\mathcal K_\Omega\) is \emph{unbranched} if it has no vertices outside
\(M_\Omega\).

Equivalently, the minimal model \(\widehat\Omega\supset\Omega\) can be
characterized by the following three conditions:
\begin{itemize}
\item the tropical caustic of \(\widehat\Omega\) is unbranched;
\item \(m_{\widehat\Omega}=m_\Omega\) and
\(M_{\widehat\Omega}=M_\Omega\);
\item \(\rho_{\widehat\Omega}\) and \(\rho_\Omega\) coincide in a neighborhood
of \(M_\Omega\).
\end{itemize}
This agrees with Definition~\ref{def:minimalmodel}.  Indeed, the second and
third conditions are immediate from that definition.  The first follows
because any branching outside \(M_\Omega\) would come from a supporting term
which is irrelevant near \(M_\Omega\), and hence removable from the minimal
description.

A planar tropical caustic of a compact convex domain has no cycles.\footnote{One
may nevertheless have fake cycles if the boundary of \(\Omega\) is excluded
from the caustic and the caustic passes through non-\(A_n\)-type
non-unimodular corners.  This is precisely the situation in which more than
one caustic edge may end at the same boundary corner.}
Thus it is a tree.  Its branching profile is governed by Farey pairs along
each weight-one edge of the caustic of the minimal model.  For example, the
minimal models of the unit disk and the special domain \(L\) from
Subsection~\ref{ss:specialdomain} are both the square \([-1,1]^2\), while the
minimal model of the triangle with vertices
\[
(0,0),\qquad (1,0),\qquad (0,1)
\]
is the triangle itself.

The minimal model \(\widehat\Omega\) is uniquely determined by \(\Omega\).
Conversely, \(\Omega\) may be obtained from \(\widehat\Omega\) by an infinite
sequence of unimodular corner cuts, equivalently by symplectic blow-ups.  These
cut-off triangles are in one-to-one correspondence with the vertices of
\[
\mathcal K_\Omega\setminus M_\Omega.
\]
If \(v\) is the caustic vertex corresponding to such a triangle \(\Delta\),
then
\[
\Area(\Delta)=\frac12\,\rho_\Omega(v)^2.
\]

The complement
\[
\widehat\Omega\setminus\Omega^\circ
\]
is a finite union of \(\Gamma\)-triangles.  Each such region is bounded by two
straight segments and one convex arc
\[
\Gamma\subset\partial\Omega.
\]
Each \(\Gamma\)-triangle is decomposed into a family \(T(\Gamma)\) of
unimodular triangles, each corresponding to one corner cut along the arc
\(\Gamma\).  The associated boundary series is
\[
F_\Gamma(s):=
\sum_{\Delta\in T(\Gamma)}
\bigl(2\,\Area(\Delta)\bigr)^{s/2}.
\]

\begin{proposition}[Blow-up formula]
\label{prop:blowup}
One has
\[
Z_\Omega(s)
=
Z_{\widehat\Omega}(s)
-
\frac{1}{s(s-1)}
\sum_{\Gamma}
F_\Gamma(s),
\]
where the finite summation is taken over the connected components
\(\Gamma\) of \(\partial\Omega\setminus\partial\widehat\Omega\).
\end{proposition}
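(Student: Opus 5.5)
The blow-up formula is a repackaging of Theorem~\ref{thm:integral-boundary}, which we reprove in the caustic language of this subsection. We first work in the half-plane \(\Re(s)>2\), where every integral and series converges absolutely, and argue by successive corner cuts.

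Enumerate the vertices of \(\mathcal K_\Omega\setminus M_\Omega\) in order of decreasing critical time, i.e.\ generation by generation of the Stern--Brocot tree along each weight-one edge of the caustic of \(\widehat\Omega\). This yields polygons
\[
\widehat\Omega=\Omega^{(0)}\supset\Omega^{(1)}\supset\cdots\downarrow\Omega,
\]
where \(\Omega^{(N)}\) is obtained from \(\Omega^{(N-1)}\) by cutting off the unimodular triangles \(\Delta\) of generation \(N\). Because each such \(\Delta\) lies in a single \(\Gamma\)-triangle and the cut triangles of one \(\Gamma\)-triangle are exactly \(T(\Gamma)\), this indexing matches the double sum \(\sum_\Gamma\sum_{\Delta\in T(\Gamma)}\).

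For a single cut, apply an \(\mathrm{SL}(2,\Z)\) change of coordinates plus a translation sending the two adjacent supporting lines to \(x=0\), \(y=0\) and the new line to \(x+y=\lambda\), with \(\lambda=\rho_\Omega(v)\) for the corresponding caustic vertex \(v\). Then \(\lambda=\sqrt{2\Area(\Delta)}\) by the relation \(\Area(\Delta)=\tfrac12\rho_\Omega(v)^2\). The key point is that, near the cut, \(\rho_{\Omega^{(N-1)}}\) and \(\rho_{\Omega^{(N)}}\) agree with \(\phi_-\) and \(\phi_+\) of Lemma~\ref{lem:onecut}, and agree with each other elsewhere. I expect this localization to be the main obstacle: one must check that the new supporting function \(x+y-\lambda\) becomes active only inside the region \(E_t\)-sweep of the lemma, that is, inside the wedge bounded by the two adjacent caustic edges issuing from the corner, and never interacts with other supporting lines.

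To establish the localization, I would use the unbranched structure of the caustic of \(\Omega^{(N-1)}\) near that corner. The two adjacent sides produce a single weight-one caustic edge from the vertex, and the new vertex \(v\) lies on it at height \(\lambda\). The difference of superlevel sets is then exactly the translated triangle \(E_t\) for \(t\le\lambda\), as in Lemma~\ref{lem:onecut}. Given this, the lemma shows each cut changes \(Z\) by exactly \(-\lambda^s/(s(s-1))\). This is unchanged by the area-preserving coordinate change, and contributions of different triangles in the same generation are disjoint.

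Summing over the first \(N\) generations gives
\[
Z_{\Omega^{(N)}}(s)=Z_{\widehat\Omega}(s)-\frac1{s(s-1)}\sum_{\Delta\in\mathcal T_N}(2\Area\Delta)^{s/2}.
\]
To let \(N\to\infty\), note that \(\rho_{\Omega^{(N)}}\downarrow\rho_\Omega\) pointwise, since the support data converge. Domination by \(\rho_{\widehat\Omega}^{\Re s-2}\) on the compact set \(\widehat\Omega\) then gives convergence of the integrals by dominated convergence. The partial sums converge absolutely to \(\sum_\Gamma F_\Gamma(s)\), because the cut triangles are disjoint subsets of \(\widehat\Omega\) with sizes \(\lambda\le m_\Omega\), so \(\sum\lambda^2<\infty\) and hence \(\sum\lambda^{\Re s}<\infty\) for \(\Re s\ge2\). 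This proves the formula for \(\Re(s)>2\). Wherever the series \(\sum_\Gamma F_\Gamma\) converges, or continues meromorphically, the identity persists by uniqueness of analytic continuation.
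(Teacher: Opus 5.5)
Your proposal is correct and follows the paper's own argument. The blow-up formula is Theorem~\ref{thm:integral-boundary} divided by \(s(s-1)\), and the paper proves that theorem as you do: Stern--Brocot corner cuts starting from \(\widehat\Omega\), Lemma~\ref{lem:onecut} applied to each cut after an \(\mathrm{SL}(2,\Z)\) normalization, and dominated convergence with majorant \(\rho_{\widehat\Omega}^{\Re(s)-2}\) for \(\Re(s)>2\). The paper only asserts the localization step you flag, and it closes by concavity: \(\rho_{\Omega^{(N-1)}}\le\min(x,y)\), with equality at the four corners of \([0,\lambda]^2\) (at \((\lambda,\lambda)\) because \(\rho_{\Omega^{(N-1)}}\ge\rho_\Omega\) and \(\rho_\Omega(v)=\lambda\)); your bound \(\sum\lambda^2\le 2\Area(\widehat\Omega)\) also makes the absolute convergence more explicit than in the paper.
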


It is useful to examine the summation protocol defining \(F_\Gamma(s)\) when
\(\Gamma\) is smooth and convex.  After an affine unimodular change of
coordinates, the two straight sides of the corresponding \(\Gamma\)-triangle
may be placed on the positive coordinate axes, meeting at the origin.  In
this normalization, the unimodular triangles in \(T(\Gamma)\) are indexed by
the free monoid
\[
\mathrm{SL}_2^+(\Z)
:=
\left\{
A\in\mathrm{SL}(2,\Z): A_{ij}\ge0
\right\}.
\]
The rule is the Stern--Brocot rule: whenever an edge with neighboring
gradients \(v_1\) and \(v_2\) of \(\rho_\Omega\) branches, the new gradient
visible from it is
\[
v_1+v_2.
\]
Thus the two new edges adjacent to the new vertex have neighboring gradients
\[
v_1,\ v_1+v_2
\qquad\text{and}\qquad
v_1+v_2,\ v_2.
\]
Equivalently, the corresponding \(2\times2\) matrix \((v_1,v_2)\) is
multiplied by
\[
\begin{pmatrix}
1&0\\1&1
\end{pmatrix}
\qquad\text{or}\qquad
\begin{pmatrix}
1&1\\0&1
\end{pmatrix},
\]
the free generators of \(\mathrm{SL}_2^+(\Z)\).

Recall that \(\Omega\) is obtained from its minimal model \(\widehat\Omega\)
by a sequence of unimodular corner cuts; see
Subsection~\ref{subsec:integral-boundary}.  At each step one removes a
unimodular triangle of size \(r>0\), meaning a triangle congruent, under an
\(\mathrm{SL}(2,\Z)\) transformation and a translation, to the triangle with
vertices
\[
(0,0),\qquad (r,0),\qquad (0,r).
\]
Invariantly, for a triangle \(\Delta\) we define
\[
\operatorname{size}(\Delta):=\sqrt{2\,\Area(\Delta)}.
\]
This is consistent with the geometric interpretation of the boundary
summands in Subsection~\ref{sseq:geommeaning}.

We use superscripts for partial-cut models and subscripts for wave fronts:
\(\Omega^t\) is obtained by performing all cuts of size at least \(t\), while
\[
\Omega_t=\{\rho_\Omega\ge t\}
\]
is the actual tropical wave front.  The multiset of sizes of triangles in the
cutting process
\[
\widehat\Omega\rightsquigarrow\Omega
\]
has at most one accumulation point, namely \(0\), and all sizes are at most
\(m_\Omega\).  Thus
\[
\Omega^t=\widehat\Omega
\qquad\text{for } t>m_\Omega.
\]

\begin{lemma}
For \(0\le u<t\le m_\Omega\), the dual fan of \((\Omega^{\,t})_u\) is
constant.
\end{lemma}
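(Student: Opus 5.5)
The plan is to identify \((\Omega^{t})_u\) with a partial-cut model at a uniform time shift, and then read off its fan from the cutting combinatorics. Recall that \(\Omega^{t}\) is obtained from \(\widehat\Omega\) by performing exactly the corner cuts of size \(\ge t\). Since all sizes \(\ge t\) are bounded away from the accumulation point \(0\), there are only finitely many such cuts. Hence \(\Omega^{t}\) is a rational-slope polygon. Its caustic is the finite subtree of \(\mathcal K_\Omega\) consisting of \(M_\Omega\) together with the caustic vertices \(v\) with \(\rho_\Omega(v)\ge t\). The remaining weight-one edges are extended all the way to the vertices of \(\Omega^{t}\), and no further branching occurs below time \(t\).

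The first step is to show that \(\rho_{\Omega^{t}}\) and \(\rho_\Omega\) agree on \(\Omega_t=\{\rho_\Omega\ge t\}\). For this I would use the one-cut picture of Lemma~\ref{lem:onecut}. A cut of size \(r\) modifies the tropical distance function only on the set where \(\rho<r\), namely the tetrahedral region over the removed triangle. So the cuts of size \(<t\), which distinguish \(\Omega\) from \(\Omega^{t}\), do not change \(\rho\) on \(\{\rho\ge t\}\). This also locates the critical times of \(\Omega^{t}\): they are exactly the values \(\rho_\Omega(v)\ge t\) at the retained vertices. In particular \(\Omega^{t}\) has no critical time in \((0,t)\).

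The second step uses this absence of critical times. For \(0\le u<t\), the wave front \((\Omega^{t})_u\) is obtained from \(\Omega^{t}=(\Omega^{t})_0\) by translating every side inward by lattice distance \(u\). No edge shrinks to zero length before time \(t\), because an edge collapses precisely at a caustic vertex of \(\Omega^{t}\), and all such vertices have height \(\ge t\). Therefore the set of side normals, which is the set of rays of the dual fan, is the same for all \(u\in[0,t)\). This set is the set of primitive normals of the sides of \(\Omega^{t}\), so the fan is constant.

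The main obstacle is the claim that each vertex of \(\Omega^{t}\) is the endpoint of a caustic edge along which no collapse happens below height \(t\). This requires checking that the Stern--Brocot order of the cuts is compatible with their sizes. Concretely, a cut of size \(r\) inserted between neighbors with cuts of sizes \(r_1,r_2\) must satisfy \(r<\min(r_1,r_2)\); otherwise \(\Omega^{t}\) would not be a union of consistent cuts. I would derive this monotonicity from the tree structure of the caustic. Along any path from \(M_\Omega\) toward \(\partial\Omega\), \(\rho_\Omega\) strictly decreases, and the size of each cut equals \(\rho_\Omega\) at the corresponding vertex, by the identity \(\Area(\Delta)=\frac12\rho_\Omega(v)^2\). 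The vertices with \(\rho_\Omega(v)\ge t\) therefore form a subtree containing \(M_\Omega\), which is what is needed.
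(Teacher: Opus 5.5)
The lemma's whole content is that \(\Omega^{t}\) has no critical time in \((0,t)\), and your proposal does not prove this.

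You derive it from your first step. The identity \(\rho_{\Omega^{t}}=\rho_\Omega\) on \(\{\rho_\Omega\ge t\}\) is fine, and it is really the content of the companion lemma \((\Omega^{u})_u=\Omega_u\). But it only describes the caustic of \(\Omega^{t}\) at heights \(\ge t\). The lemma concerns \((\Omega^{t})_u\) for \(u<t\), that is, \(\rho_{\Omega^{t}}\) on \(\{\rho_{\Omega^{t}}<t\}\), where it differs from \(\rho_\Omega\). Agreement above height \(t\) cannot rule out caustic vertices of \(\Omega^{t}\) there. So \emph{they are exactly the values \(\rho_\Omega(v)\ge t\) at the retained vertices} is a non sequitur, and \emph{no further branching occurs below time \(t\)} is only asserted.

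Your last paragraph does not close this. Ancestor-closedness of the retained cuts makes \(\Omega^{t}\) well defined, but it says nothing about the caustic of the polygon it produces. The inequality there should also be \(r\le\min(r_1,r_2)\). Equal sizes do occur, for instance the \(n+1\) equal cuts representing an \(A_n\)-event, and non-strict monotonicity is all you need.

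What is missing is Lemma~\ref{lem:onecut} run \emph{forwards} from \(\widehat\Omega\), rather than backwards from \(\Omega^{t}\) to \(\Omega\). Perform the retained cuts in any order in which parents precede children, and induct on two statements. First, every caustic vertex off \(M_\Omega\) lies at the height of a cut already made. Second, the caustic edge from each corner is unbranched up to the vertex created by the cut of the younger of its two sides, or up to \(M_\Omega\) if both are sides of \(\widehat\Omega\). The base case is that the caustic of \(\widehat\Omega\) is unbranched.

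For the inductive step, take the next cut, of size \(r\), at a unimodular corner with local coordinates \(\ell_1,\ell_2\), and let \(Q=[0,r]^2\).
\begin{enumerate}
\item \(\rho=\min(\ell_1,\ell_2)\) on \(Q\). Check the affine inequalities \(\ell_v\ge\ell_2\) on \(Q\cap\{\ell_2\le\ell_1\}\), and symmetrically, at the vertices \((0,0)\), \((r,0)\), \((r,r)\).
\item The vertex \((r,r)\) is covered by your monotonicity: the edge above the corner reaches height \(r_{\mathrm{parent}}\ge r\).
\item The vertex \((r,0)\) is covered because the adjacent side has length \(\ge r\). Its line is a supporting line of \(\Omega\), and \(\Omega\) lies in the cut polygon, so the line still meets that polygon.
\item Outside \(Q\) one has \(\ell_1+\ell_2-r\ge\min(\ell_1,\ell_2)\ge\rho\). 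So the cut changes \(\rho\) only on \(Q\), where it becomes \(\min(\ell_1,\ell_2,\ell_1+\ell_2-r)\).
\item The corner locus of this function on \(Q\) is a single vertex at height \(r\) and two unbranched segments to the new corners.
\end{enumerate}
Hence every caustic vertex of \(\Omega^{t}\) off \(M_\Omega\) has height \(\ge t\), and your second step then goes through.

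Equivalently, you can apply the correspondence of Appendix~\ref{ssec:tropop} (each cut has size equal to its critical time) directly to \(\Omega^{t}\). Its minimal model is \(\widehat\Omega\), and its nonzero cuts are exactly the retained ones. The paper states this lemma without proof, presumably relying on that correspondence.
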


Here \((\Omega^{\,t})_u\) denotes the tropical wave front at time \(u\) of the
partial-cut domain \(\Omega^t\).  Let \(K_t\) be the canonical class and
\(\omega_t\) the symplectic form of the toric surface with moment domain
\(\Omega^t\).  By the canonical evolution equation \eqref{eq:canevol}, the
class of the symplectic form corresponding to \((\Omega^{\,t})_u\), for
\(0\le u<t\), is
\[
[\omega_t]+uK_t.
\]
Therefore
\[
\Length_{\Z}\bigl(\partial(\Omega^{\,t})_u\bigr)
=
\Length_{\Z}(\partial\Omega^{\,t})-uK_t^2.
\]

We also need the compatibility relation between partial-cut models and
actual wave fronts.

\begin{lemma}
For every \(u>0\),
\[
(\Omega^{\,u})_u=\Omega_u.
\]
\end{lemma}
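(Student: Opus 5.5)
We need to prove \((\Omega^{u})_u=\Omega_u\) for every \(u>0\), where \(\Omega^u\) is obtained from \(\widehat\Omega\) by performing all corner cuts of size at least \(u\). The plan is to compare the two tropical distance functions directly on the superlevel set at height \(u\).

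Recall that \(\rho_{\Omega^u}\) is the minimum of the affine functions \(\langle v,x\rangle-h(v)\) over the finitely many supporting lines kept in \(\Omega^u\): those of \(\widehat\Omega\) together with the mediant lines inserted by cuts of size \(\ge u\). Likewise \(\rho_\Omega\) is the minimum over all primitive directions. Every supporting line of \(\Omega^u\) is a supporting line of \(\Omega\) with the same support value, since cuts insert exactly the support lines \(\langle v,x\rangle=\gamma_v\) of \(\Omega\). Hence \(\rho_\Omega\le\rho_{\Omega^u}\) on \(\Omega\), and \(\Omega_u\subset(\Omega^u)_u\).

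\textbf{Reverse inclusion.} Fix a primitive direction \(w\) whose supporting line was \emph{not} yet inserted in \(\Omega^u\). We must show that \(\langle w,x\rangle-\gamma_w\ge u\) whenever \(\rho_{\Omega^u}(x)\ge u\).

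By the Stern--Brocot structure, \(w\) lies in the Farey subtree below some adjacent pair \(v_1,v_2\). These are consecutive normals of \(\Omega^u\), and the cut at their mediant \(v_1+v_2\) has size \(\lambda<u\). This cut is either absent or is an ancestor of \(w\)'s cut; in the latter case we descend until we reach an adjacent pair whose mediant cut has size \(<u\).

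Normalize by \(\mathrm{SL}(2,\Z)\) and translation so that \(v_1=(1,0)\), \(v_2=(0,1)\), and \(\gamma_{v_1}=\gamma_{v_2}=0\). Then \(w=(p,q)\) with \(p,q\ge1\). Since the corresponding \(\Gamma\)-subregion lies in the unimodular triangle cut by \(x+y=\lambda\), convexity together with the Farey recursion gives \(\gamma_w\le \min\bigl(p,q\bigr)\lambda\).

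\textbf{The estimate.} On \((\Omega^u)_u\) we have \(x\ge u\) and \(y\ge u\), so
\[
\langle w,x\rangle-\gamma_w\ \ge\ (p+q)u-\min(p,q)\lambda\ \ge\ u .
\]
This is exactly the local one-cut picture of Lemma~\ref{lem:onecut}, where \(E_t=\varnothing\) for \(t>\lambda\). Concretely, a cut of size \(\lambda\) alters \(\rho\) only on \(\{\rho<\lambda\}\).

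The main obstacle is justifying the bound on \(\gamma_w\) for deep descendants \(w\). I would handle this by induction along the Stern--Brocot path, showing the stronger claim that every cut below a cut of size \(\lambda\) affects only the region \(\{\rho<\lambda\}\). The induction uses the monotonicity of cut sizes down the tree, which follows from the caustic description: sizes equal critical times, and these decrease toward the boundary along caustic edges. With this claim, \(\rho_\Omega\) and \(\rho_{\Omega^u}\) agree on \(\{\rho_{\Omega^u}\ge u\}\), which gives equality of the superlevel sets.
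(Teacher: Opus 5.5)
Your two-inclusion plan is the right one (the paper states this lemma without proof, so there is no argument of its to compare against), but the inequality your reverse inclusion rests on is false. In the normalization $v_1=(1,0)$, $v_2=(0,1)$, $\gamma_{v_1}=\gamma_{v_2}=0$, the domain $\Omega$ lies in $\{x\ge0,\ y\ge0,\ x+y\ge\lambda\}$. It sits on the far side of the line $x+y=\lambda$, not inside the cut triangle. So $px+qy\ge\min(p,q)(x+y)$ gives $\gamma_{(p,q)}\ge\min(p,q)\lambda$, the reverse of what you assert. Equality at $w=(2,1)$ would mean the child cut with mediant $(2,1)$ has size $\gamma_{2,1}-\lambda=0$. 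For the parabolic arc $\Gamma_{\mathrm{par}}$, for instance, $\lambda=\gamma_{1,1}=\frac12$ but $\gamma_{2,1}=\frac23$. Hence the lower bound $(p+q)u-\min(p,q)\lambda$ in your estimate is unjustified, and the induction you propose for ``deep descendants'' would be trying to prove a false statement.

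The repair is short and needs no induction. Let $z=(z_1,z_2)\in\Omega$ be a point where the line $x+y=\lambda$ touches $\Omega$. Then $z_1,z_2\ge0$ and $z_1+z_2=\lambda$, so $\gamma_{(p,q)}\le pz_1+qz_2\le\max(p,q)\lambda$ for every $w=(p,q)$ with $p,q\ge1$, whatever its depth. For $x\in(\Omega^u)_u$ you correctly have $x_1,x_2\ge u>\lambda\ge z_1,z_2$, hence
\[
px_1+qx_2-\gamma_{(p,q)}\ \ge\ p(x_1-z_1)+q(x_2-z_2)\ \ge\ x_1+x_2-\lambda\ \ge\ 2u-\lambda\ >\ u .
\]
The same touching point gives the monotonicity of cut sizes you need for $\Omega^u$ to be well defined: the child cut with mediant $(2,1)$ has size $\gamma_{2,1}-\lambda\le z_1\le\lambda$, so no appeal to caustics is required. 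The easy inclusion is best justified directly by $h_{\Omega^u}\le h_\Omega$, which holds because $\Omega\subset\Omega^u$.

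Alternatively, the idea in your last paragraph gives an independent proof. A unimodular cut of size $\mu$ leaves $\rho$ unchanged on $\{\rho\ge\mu\}$, and every cut missing from $\Omega^u$ has size $<u$ by definition. So inserting those cuts one generation at a time never alters $\rho$ on $\{\rho_{\Omega^u}\ge u\}$. You then conclude using that, at each fixed interior point, $\rho_{\Omega^{(N)}}$ equals $\rho_\Omega$ once the minimizing direction has been inserted.
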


Now define the triangle-size counting function by
\[
A_\Omega(t):=
\#\{\Delta:\operatorname{size}(\Delta)\ge t\},
\]
where the count ranges over the triangles in the cutting process
\(\widehat\Omega\rightsquigarrow\Omega\).  In words, \(\Omega^t\) is obtained
from \(\widehat\Omega\) by performing \(A_\Omega(t)\) symplectic blow-ups.
Since each blow-up decreases \(K^2\) by \(1\), we have
\[
K_t^2=K_{\widehat\Omega}^2-A_\Omega(t).
\]

In the lattice-perimeter convention used here, only rational-slope linear
facets contribute.  Thus a smooth strictly convex boundary contributes no
lattice perimeter at time \(0\).  Consequently, the lattice perimeter of the
partial-cut model \(\Omega^t\) is the tail of the sum of triangle sizes:
\[
\Length_{\Z}(\partial\Omega^{\,t})
=
-\int_0^t u\,dA_\Omega(u).
\]
Using the canonical evolution formula \eqref{eq:canevol} at \(u=t\), we get
\[
\Length_{\Z}(\partial\Omega_t)
=
\Length_{\Z}(\partial\Omega^{\,t})
+
tA_\Omega(t)
+
o(t^{1/3}).
\]
The error term accounts for the finitely many critical events and for the
fact that the comparison is made at the same height \(t\) while the partial
model \(\Omega^t\) changes with \(t\).

For notational clarity, in the analytic completion below we write
\[
N^{\mathrm{cut}}_\Omega(t):=A_\Omega(t).
\]

\subsection{From the boundary Dirichlet series to the wave-front asymptotic}
\label{ssec:analyticcompletion}

We now complete the proof of the asymptotic lattice perimeter of tropical
wave fronts.  The first step is not applied directly to the interior Mellin
integral, but to the boundary Dirichlet series
\[
F_{\partial\Omega}(s),
\]
whose terms are indexed by the support triangles appearing in the cutting
process
\[
\widehat\Omega\rightsquigarrow\Omega.
\]
The analytic continuation of this boundary series is converted, by a
Tauberian argument, into an asymptotic for the counting function of
support-triangle sizes.  We then return to the tropical wave fronts and use
the geometry of the partial-cut models \(\Omega^t\) developed above.

It is important to distinguish the two Mellin-type objects in the paper.  On
the one hand, the interior tropical zeta function is
\[
Z_\Omega(s)=\int_0^{m_\Omega} t^{s-2}P_\Omega(t)\,dt,
\]
where
\[
P_\Omega(t)=\Length_{\Z}(\partial\Omega_t)
\]
is the lattice perimeter of the tropical wave front
\[
\Omega_t=\{x\in\Omega:\rho_\Omega(x)\ge t\}.
\]
On the other hand, the boundary Dirichlet series is
\[
F_{\partial\Omega}(s)=\sum_{\Delta}\operatorname{size}(\Delta)^s,
\]
where the sum runs over the support triangles \(\Delta\) in the cutting
process \(\widehat\Omega\rightsquigarrow\Omega\).  The Tauberian step below
is applied first to \(F_{\partial\Omega}(s)\), not to \(Z_\Omega(s)\).

Thus the logic is as follows.  We rewrite \(F_{\partial\Omega}(s)\) as a
Mellin--Laplace transform of the support-triangle counting function.  The
meromorphic continuation of \(F_{\partial\Omega}(s)\) near \(s=\frac23\) then
gives the asymptotic distribution of triangle sizes.  Finally, the comparison
between \(\Omega^t\) and \(\Omega_t\) converts that counting asymptotic into
the asymptotic law for \(P_\Omega(t)\), and hence into the corresponding
residue formula for \(Z_\Omega(s)\).

Let
\[
N^{\mathrm{cut}}_\Omega(t)
:=
\#\{\Delta:\operatorname{size}(\Delta)\ge t\},
\qquad t>0,
\]
where the count runs over all support triangles in
\(\widehat\Omega\rightsquigarrow\Omega\).  Since these sizes lie in
\((0,m_\Omega]\), define
\[
M_\Omega(x):=N^{\mathrm{cut}}_\Omega(e^{-x}),
\qquad
x\ge -\log m_\Omega.
\]

\begin{lemma}[Mellin--Laplace identity]
\label{lem:AOmega-mellin}
In the half-plane of absolute convergence,
\[\mkern-12mu
F_{\partial\Omega}(s)
=
s\int_0^\infty N^{\mathrm{cut}}_\Omega(t)t^{s-1}\,dt
=
s\int_0^{m_\Omega} N^{\mathrm{cut}}_\Omega(t)t^{s-1}\,dt
=
s\int_{-\log m_\Omega}^{\infty} M_\Omega(x)e^{-sx}\,dx.
\]
Equivalently,
\[
\frac1s\,F_{\partial\Omega}(s)
=
\int_{-\log m_\Omega}^{\infty} M_\Omega(x)e^{-sx}\,dx.
\]
\end{lemma}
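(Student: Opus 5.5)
The identity is a Stieltjes-integration-by-parts statement, and I would prove it in three short moves, working in a real half-plane \(\Re(s)>\sigma_0\) where \(F_{\partial\Omega}(s)=\sum_\Delta \operatorname{size}(\Delta)^s\) converges absolutely. For instance \(\sigma_0=2\) suffices by Theorem~\ref{thm:integral-boundary}; for smooth arcs one may take any \(\sigma_0>\frac23\).

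\textbf{Step 1: layer-cake representation of each summand.} For each support triangle \(\Delta\) with size \(r=\operatorname{size}(\Delta)\in(0,m_\Omega]\), I would write
\[
r^s=s\int_0^r t^{s-1}\,dt=s\int_0^\infty \ind_{\{t\le r\}}\,t^{s-1}\,dt,
\]
valid for \(\Re(s)>0\).

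\textbf{Step 2: summation and Tonelli.} Summing over all \(\Delta\) and exchanging sum and integral gives
\[
\sum_\Delta\ind_{\{t\le \operatorname{size}(\Delta)\}}=N^{\mathrm{cut}}_\Omega(t).
\]
The exchange is justified by Tonelli applied to the absolute values, since
\[
\sum_\Delta \int_0^{\operatorname{size}(\Delta)} t^{\sigma-1}\,dt=\sigma^{-1}\sum_\Delta\operatorname{size}(\Delta)^\sigma<\infty
\]
in the half-plane of absolute convergence. This shows in particular that \(N^{\mathrm{cut}}_\Omega(t)\,t^{\sigma-1}\) is integrable on \((0,\infty)\), with the singularity at \(t=0\) controlled by absolute convergence; Fubini then yields the first equality for complex \(s\).

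\textbf{Step 3: reduction to the stated forms.} All sizes are at most \(m_\Omega\), because each cut size equals a critical time \(\rho_\Omega(v)\le m_\Omega\). Hence \(N^{\mathrm{cut}}_\Omega(t)=0\) for \(t>m_\Omega\), and the upper limit can be truncated to \(m_\Omega\). Finally, substituting \(t=e^{-x}\), \(dt=-e^{-x}\,dx\), turns \(t^{s-1}\,dt\) into \(e^{-sx}\,dx\), and the range \(t\in(0,m_\Omega]\) becomes \(x\in[-\log m_\Omega,\infty)\). This produces the Laplace form with \(M_\Omega(x)=N^{\mathrm{cut}}_\Omega(e^{-x})\); dividing by \(s\ne0\) gives the equivalent formulation.

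\textbf{Main obstacle.} Essentially the only point requiring care is the justification of the interchange in Step~2, which I would handle via nonnegativity and Tonelli as indicated. There is also a minor convention issue: \(\ge\) versus \(>\) in the definition of \(N^{\mathrm{cut}}_\Omega\), which differ on a countable, hence null, set of \(t\). Multiplicities, such as the \((n+1)\)-fold cuts arising at \(A_n\) corners, are handled automatically, since both sides count triangles with multiplicity.
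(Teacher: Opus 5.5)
Your proposal is correct and follows essentially the same route as the paper: write each $\operatorname{size}(\Delta)^s$ as $s\int_0^{\operatorname{size}(\Delta)}t^{s-1}\,dt$, sum termwise to produce $N^{\mathrm{cut}}_\Omega(t)$, truncate at $m_\Omega$ because all sizes are at most $m_\Omega$, and substitute $t=e^{-x}$. Your Tonelli/Fubini justification of the interchange spells out what the paper compresses into ``applying this identity termwise to the absolutely convergent series.''
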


\begin{proof}
For every \(\lambda>0\) and \(\Re(s)>0\),
\[
\lambda^s=s\int_0^\lambda t^{s-1}\,dt.
\]
Applying this identity termwise to the absolutely convergent series
\[
F_{\partial\Omega}(s)
=
\sum_\Delta \operatorname{size}(\Delta)^s
\]
gives
\[
F_{\partial\Omega}(s)
=
s\int_0^\infty N^{\mathrm{cut}}_\Omega(t)t^{s-1}\,dt.
\]
The counting function vanishes for \(t>m_\Omega\), so
\[
F_{\partial\Omega}(s)
=
s\int_0^{m_\Omega} N^{\mathrm{cut}}_\Omega(t)t^{s-1}\,dt.
\]
Now substitute \(t=e^{-x}\).  Since \(dt=-e^{-x}\,dx\), and
\[
t\in(0,m_\Omega]
\quad\Longleftrightarrow\quad
x\in[-\log m_\Omega,\infty),
\]
we obtain
\[
F_{\partial\Omega}(s)
=
s\int_{-\log m_\Omega}^{\infty}M_\Omega(x)e^{-sx}\,dx.
\]
Dividing by \(s\) gives the equivalent form.
\end{proof}

We use the following standard Laplace--Stieltjes form of the
Wiener--Ikehara theorem.

\begin{theorem}[Laplace--Tauberian theorem]
\label{thm:laplace-tauberian}
Let \(x_0\in\R\), and let
\[
B:[x_0,\infty)\to[0,\infty)
\]
be nondecreasing.  Suppose that for some \(c>\rho>0\),
\[
G(s):=\int_{x_0}^{\infty} B(x)e^{-sx}\,dx
\]
converges for \(\Re(s)>c\).  Assume that \(G(s)\) admits a meromorphic
continuation to the half-plane
\[
\Re(s)>\rho-\delta
\]
for some \(\delta>0\), that it is holomorphic there except for a simple pole
at \(s=\rho\), and that
\[
sG(s)-\frac{\rho A}{s-\rho}
\]
extends holomorphically to a neighborhood of the closed half-plane
\[
\Re(s)\ge \rho.
\]
Then
\[
B(x)\sim A e^{\rho x}
\qquad (x\to+\infty).
\]
\end{theorem}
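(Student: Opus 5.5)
We plan to deduce the statement from the classical Wiener--Ikehara theorem, in the Laplace--Stieltjes form due to Ikehara and Korevaar. The work consists of three preliminary reductions followed by a standard Tauberian core.

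\textbf{Step 1: extend the region of absolute convergence down to \(\Re(s)>\rho\).} Since \(B\ge0\), the integral \(G(s)\) is a Laplace transform of a nonnegative function. By Landau's theorem, its real abscissa of convergence \(\sigma_c\) is a singular point of \(G\). By hypothesis \(G\) is holomorphic on the real segment \((\rho,c]\). Hence \(\sigma_c\le\rho\), and \(G(s)=\int_{x_0}^\infty B(x)e^{-sx}\,dx\) converges absolutely for \(\Re(s)>\rho\). As a byproduct this gives \(B(x)e^{-\sigma x}\to0\) for every \(\sigma>\rho\), because \(B\) is monotone.

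\textbf{Step 2: pass to the Stieltjes transform and normalize.} Integration by parts, justified by the decay obtained in Step 1, gives for \(\Re(s)>\rho\)
\[
sG(s)=B(x_0)e^{-sx_0}+\int_{[x_0,\infty)}e^{-sx}\,dB(x)=:B(x_0)e^{-sx_0}+F(s).
\]
The term \(B(x_0)e^{-sx_0}\) is entire. Therefore \(F(s)-\rho A/(s-\rho)\) extends holomorphically, and in particular continuously, to a neighborhood of the closed half-plane \(\Re(s)\ge\rho\). Rescaling \(x\mapsto x/\rho\) and translating \(x_0\) to \(0\) reduces everything to the case \(\rho=1\). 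In this normalization the measure \(d\beta(y)=dB(y/\rho)\) has transform with polar part \(A/(s-1)\) after the substitution \(s=\rho w\).

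\textbf{Step 3: the Tauberian core.} This is the step I expect to be the main obstacle, though it is standard. I would follow Korevaar's Fejér-kernel proof. Put \(f(x)=e^{-x}B(x)\). The function
\[
\int_0^\infty \bigl(f(x)-A\bigr)e^{-(s-1)x}\,dx=G(s)-\frac{A}{s-1}
\]
extends continuously to \(\Re(s)=1\); the identification with \(G\) uses \(sG\) versus \(G\) and the factor \(\rho A/s\), which is holomorphic near \(s=\rho\). The argument then runs as follows.
\begin{enumerate}
\item Convolve with the Fejér kernel \(K_\lambda(t)=\lambda K(\lambda t)\), where \(\widehat K\) is supported in \([-1,1]\). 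Letting \(\Re(s)\downarrow1\) and using dominated convergence in the Fourier variable, obtain \(\int f(x-t/\lambda)K(t)\,dt\to A\int K\) as \(x\to\infty\).
\item Monotonicity of \(B\) gives the almost-monotonicity \(f(x+h)\ge e^{-h}f(x)\). Combined with positivity of \(K\), this first yields boundedness of \(f\).
\item A two-sided squeeze, letting \(\lambda\to\infty\), then gives \(\limsup f\le A\) and \(\liminf f\ge A\).
\end{enumerate}
Hence \(B(x)\sim Ae^{\rho x}\).

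Only continuity of \(G(s)-A/(s-\rho)\) up to the line \(\Re(s)=\rho\) is used in Step 3. The meromorphic continuation to \(\Re(s)>\rho-\delta\) serves only to supply this continuity together with the real-axis regularity needed in Step 1.
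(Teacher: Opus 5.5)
Your proposal is correct and follows the same route as the paper, which integrates by parts to get the Laplace--Stieltjes transform \(H(s)=sG(s)-B(x_0)e^{-sx_0}\) and then simply cites the Wiener--Ikehara theorem (Korevaar) for the Fej\'er-kernel argument you sketch in Step~3. Your Step~1 is a genuine improvement: the paper assumes convergence only for \(\Re(s)>c\) and applies Wiener--Ikehara without noting that the transform must exist on \(\Re(s)>\rho\), which is exactly what Landau's theorem for the nonnegative measure \(dB\) supplies.
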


\begin{proof}
Since \(B\) is nondecreasing, it has bounded variation on every compact
interval.  Consider its Laplace--Stieltjes transform
\[
H(s):=\int_{x_0}^{\infty}e^{-sx}\,dB(x),
\qquad \Re(s)>c.
\]
Integration by parts gives
\[
G(s)
=
\left[-\frac{B(x)e^{-sx}}{s}\right]_{x=x_0}^{\infty}
+
\frac1s\int_{x_0}^{\infty}e^{-sx}\,dB(x)
=
\frac{B(x_0)e^{-sx_0}}{s}+\frac{H(s)}{s}.
\]
Thus
\[
H(s)=sG(s)-B(x_0)e^{-sx_0}.
\]
Therefore
\[
H(s)-\frac{\rho A}{s-\rho}
=
s\left(G(s)-\frac{A}{s-\rho}\right)
+
A-B(x_0)e^{-sx_0}.
\]
By hypothesis, the right-hand side is holomorphic in a neighborhood of the
closed half-plane \(\Re(s)\ge\rho\).  The Wiener--Ikehara theorem in Laplace--Stieltjes form
then gives
\[
B(x)\sim \frac{\rho A}{\rho}e^{\rho x}=A e^{\rho x}
\qquad (x\to+\infty).
\]
See, for example, \cite[Chapter~III]{Korevaar2004}, building on the classical
works \cite{Ikehara1931,Wiener1932}.
\end{proof}

\begin{proposition}[Half-plane continuation of the full boundary series]
\label{prop:Fboundary-halfplane-app}
Under the standing smoothness and curvature assumptions on the arcs of
\(\partial\Omega\setminus\partial\widehat\Omega\), the full boundary series
\[
F_{\partial\Omega}(s)
\]
admits a meromorphic continuation to the half-plane
\[
\Re(s)>\frac35.
\]
It is holomorphic there except for a simple pole at
\[
s=\frac23.
\]
Moreover,
\[
\Res_{s=2/3}F_{\partial\Omega}(s)
=
\frac{\sqrt3\,\Gamma(1/3)^3}{2^{2/3}\pi^3}
\Length_{\mathrm{equiaffine}}(\partial\Omega).
\]
\end{proposition}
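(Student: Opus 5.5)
The plan is to reduce to a single arc, and then rerun the proof of Theorem~\ref{thm:res} on a vertical strip instead of a small disk. The exponent \(\frac35\) is exactly the threshold at which the error term of Proposition~\ref{prop:spec} stays summable.

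\emph{Reduction to arcs.} By definition \(F_{\partial\Omega}=\sum_\Gamma F_\Gamma\), a finite sum over the \(3\) to \(6\) arcs. By Corollary~\ref{cor:Fgamma-is-Zg}, \(F_\Gamma(s)=Z_{\widetilde g}(s)+Q_\Gamma(s)\) with \(Q_\Gamma\) entire. By Lemma~\ref{lem:Legendre-ea}, \(f=\widetilde g\) satisfies \eqref{eq:ass}. It therefore suffices to show that \(Z_f\) continues meromorphically to \(\Re(s)>\frac35\), with only a simple pole at \(\frac23\). For \(\Re(s)>\frac23\) the series converges absolutely, so only the strip \(\frac35<\Re(s)\le\frac23+\eta\) needs work. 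Once continuation is established, the residue is the one computed in Theorem~\ref{thm:res}. Summing it over the arcs with Lemma~\ref{lem:Legendre-ea} gives the stated constant times \(\Length_{\mathrm{equiaffine}}(\partial\Omega)\).

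\emph{Endpoint replacement on the strip.} I will strengthen Lemma~\ref{lem:replace} so that \(Z_f-Z_f^{\mathrm{end}}\) is holomorphic on \(\Re(s)>\frac13\). By Lemma~\ref{lemma:exactT}, each term of the difference is
\[
2^{-s}(bd(b+d))^{-s}\bigl(|f''(\xi_I)|^s-|f''(a/b)|^s\bigr).
\]
Since \(f\in C^3\) and \(|f''|\ge m\), the function \(v\mapsto|f''(v)|^s\) is Lipschitz, with constant locally uniform in \(s\). Moreover \(|\xi_I-a/b|\le 1/(bd)\). Each term is therefore \(\ll (bd)^{-1}(bd(b+d))^{-\sigma}\). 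Summed over coprime pairs, this converges locally uniformly for \(\sigma>\frac13\): the diagonal block has size \(R^{2-2-3\sigma}\), and the off-diagonal blocks are comparable. This step is routine.

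\emph{Continuation of the endpoint model.} For a compact \(K\subset\{\frac35<\Re(s)<1\}\), I use \eqref{eq:Hsdecomp} together with Proposition~\ref{prop:spec}, exactly as in the proof of Theorem~\ref{thm:res}. This gives
\[
Z_f^{\mathrm{end}}(s)=A(s)\frac{\zeta(3s-1)}{\zeta(3s)}+E(s),
\qquad
E(s)=2^{-s}\sum_b b^{-3s}R_b(s).
\]
The bound \(|b^{-3s}R_b(s)|\ll b^{-3\sigma+\frac{1+\sigma}{2}+\varepsilon}\) is summable precisely when \(-\frac52\sigma+\frac12+\varepsilon<-1\). With \(\varepsilon\) small, this holds on all of \(\Re(s)>\frac35\). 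The M-test then makes \(E\) holomorphic there. The remaining pieces behave as follows on this strip:
\begin{itemize}
\item \(A(s)\) is holomorphic for \(\frac12<\Re(s)<1\), by Lemma~\ref{lem:intHs}.
\item \(\zeta(3s)\neq0\), because \(\Re(3s)>\frac95>1\).
\item \(\zeta(3s-1)\) has its only pole at \(s=\frac23\).
\end{itemize}
This continuation agrees with the convergent series on \(\frac23<\Re(s)<1\), so it glues with the region of absolute convergence. The result is holomorphic on \(\Re(s)>\frac35\) apart from the simple pole at \(\frac23\). The pole \(\Gamma(1-s)\) of \(A\) at \(s=1\) lies inside the region of absolute convergence, where it must cancel; we never use the decomposition there.

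\emph{Main obstacle.} The one point needing care is uniformity. Proposition~\ref{prop:spec} is stated for compact sets in \(\frac12<\Re(s)<1\), with the constant depending on \(K\). I expect its Fejér and Kloosterman argument to give the exponent \(\frac{1+\Re(s)}{2}+\varepsilon\) uniformly on every compact subset of the strip \(\frac35<\Re(s)<1\), including large \(|\Im s|\). I will verify this by checking that the Fejér smoothing of \(|f''|^s\) only costs powers of \(|s|\). Such powers are harmless for local holomorphy, since holomorphy is a local property and only compact sets are needed.
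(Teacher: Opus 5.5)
Your proposal is correct and follows essentially the same route as the paper: you reduce to single arcs via Corollary~\ref{cor:Fgamma-is-Zg} and Lemma~\ref{lem:Legendre-ea}, then continue \(Z_{\widetilde g}\) by endpoint replacement plus the decomposition \(A(s)\zeta(3s-1)/\zeta(3s)+E(s)\), where \(E\) is summable precisely when \(\Re(s)>\frac35\) (this is Theorem~\ref{thm:res-halfplane}). The only difference is that the paper proves a strip-uniform version of Proposition~\ref{prop:spec}, with a \((1+|\tau|)^2\) factor, in order to record vertical growth of \(E\); you correctly note that the compact-set version already gives holomorphy, so your ``main obstacle'' is not actually an obstacle.
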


\begin{proof}
Let \(\widetilde g\) be the Legendre dual associated with a boundary arc
\(\Gamma\). By Theorem~\ref{thm:res-halfplane} from
Appendix~\ref{app3}, Subsection~\ref{ssec:striprefinement},
the series \(Z_{\widetilde g}(s)\) admits a meromorphic continuation to
\[
\Re(s)>\frac35,
\]
holomorphic there except for a simple pole at \(s=\frac23\). By
Corollary~\ref{cor:Fgamma-is-Zg},
\[
F_\Gamma(s)-Z_{\widetilde g}(s)
\]
is entire. Hence \(F_\Gamma(s)\) has the same half-plane continuation, the
same pole, and the same residue. Using
Lemma~\ref{lem:Legendre-ea}, we obtain
\[
\Res_{s=2/3}F_\Gamma(s)
=
\frac{\sqrt3\,\Gamma(1/3)^3}{2^{2/3}\pi^3}
\Length_{\mathrm{equiaffine}}(\Gamma).
\]
Since
\[
F_{\partial\Omega}(s)=\sum_\Gamma F_\Gamma(s)
\]
is a finite sum over the arcs in
\(\partial\Omega\setminus\partial\widehat\Omega\), the same continuation
statement holds for \(F_{\partial\Omega}(s)\).  The residue is the sum of the
arc residues.  By additivity of equiaffine length,
\[
\sum_\Gamma \Length_{\mathrm{equiaffine}}(\Gamma)
=
\Length_{\mathrm{equiaffine}}(\partial\Omega).
\]
\end{proof}

\begin{proposition}[Asymptotic for the support-triangle counting function]
\label{prop:Aomega-tauberian}
Let
\[
r_\Omega:=\Res_{s=2/3}F_{\partial\Omega}(s).
\]
Then, as \(x\to+\infty\),
\[
M_\Omega(x)\sim \frac32\,r_\Omega\,e^{2x/3}.
\]
Equivalently, as \(t\to0^+\),
\[
N^{\mathrm{cut}}_\Omega(t)\sim \frac32\,r_\Omega\,t^{-2/3}.
\]
\end{proposition}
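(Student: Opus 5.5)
The plan is to apply Theorem~\ref{thm:laplace-tauberian} to \(B=M_\Omega\), with \(x_0=-\log m_\Omega\), \(\rho=\tfrac23\) and \(G(s)=F_{\partial\Omega}(s)/s\). By Lemma~\ref{lem:AOmega-mellin}, \(G\) is exactly the Laplace transform of \(M_\Omega\). This transform converges for \(\Re(s)>c\) for any \(c>\tfrac23\), because \(F_{\partial\Omega}\) converges absolutely there; that follows from comparison with the primitive Mordell--Tornheim series, or directly from the continuation statement. The function \(M_\Omega(x)=N^{\mathrm{cut}}_\Omega(e^{-x})\) is nonnegative, and it is nondecreasing since \(t=e^{-x}\) decreases as \(x\) grows. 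So the monotonicity hypothesis holds automatically.

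Next comes the analytic input. Proposition~\ref{prop:Fboundary-halfplane-app} continues \(F_{\partial\Omega}\) meromorphically to \(\Re(s)>\tfrac35\), with only a simple pole at \(s=\tfrac23\) of residue \(r_\Omega\). Hence \(G=F_{\partial\Omega}/s\) continues to the same half-plane, and we may take \(\delta=\tfrac1{15}\). Its only pole there is at \(\tfrac23\), because \(s=0\) lies outside the half-plane. The residue of \(G\) at \(\tfrac23\) is \(A:=\tfrac32 r_\Omega\). Then
\[
sG(s)-\frac{\rho A}{s-\rho}=F_{\partial\Omega}(s)-\frac{r_\Omega}{s-2/3}
\]
is holomorphic on \(\Re(s)>\tfrac35\), and this is a neighborhood of the closed half-plane \(\Re(s)\ge\tfrac23\).

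The main obstacle is the Wiener--Ikehara hypothesis on the line \(\Re(s)=\tfrac23\) itself. We need holomorphy of the continued function on the whole vertical line, not just near the real point. Proposition~\ref{prop:Fboundary-halfplane-app} supplies exactly this, since it asserts holomorphy on all of \(\Re(s)>\tfrac35\) away from \(\tfrac23\). No growth bound in \(\Im s\) is needed for the Wiener--Ikehara form quoted in Theorem~\ref{thm:laplace-tauberian}. I would check that nothing in Appendix~\ref{app3} restricts this continuation to a bounded strip. If it were only local, one could still use the form of Ikehara's theorem that requires only a continuous boundary extension of \(G(s)-A/(s-\rho)\) on \(\Re(s)=\rho\).

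Theorem~\ref{thm:laplace-tauberian} then gives \(M_\Omega(x)\sim \tfrac32 r_\Omega e^{2x/3}\). Substituting \(x=-\log t\) gives \(N^{\mathrm{cut}}_\Omega(t)\sim\tfrac32 r_\Omega t^{-2/3}\) as \(t\to0^+\).
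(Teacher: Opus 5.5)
Your proposal is correct and matches the paper's proof: you use Lemma~\ref{lem:AOmega-mellin} to identify \(G=F_{\partial\Omega}/s\) as the Laplace transform of \(M_\Omega\), take the continuation and residue from Proposition~\ref{prop:Fboundary-halfplane-app}, and apply Theorem~\ref{thm:laplace-tauberian} with \(\rho=\tfrac23\) and \(A=\tfrac32 r_\Omega\). You check \(sG(s)-\rho A/(s-\rho)=F_{\partial\Omega}(s)-r_\Omega/(s-\tfrac23)\) directly, while the paper checks \(G(s)-A/(s-\tfrac23)\); the two are equivalent. The one flaw is your fallback remark: a continuation that was only local near \(s=\tfrac23\) would not give a continuous extension of \(G(s)-A/(s-\rho)\) along the whole line \(\Re(s)=\tfrac23\), so that hedge would not work. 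It is also not needed, because the proposition already gives holomorphy on all of \(\Re(s)>\tfrac35\) except at \(\tfrac23\).
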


\begin{proof}
By Lemma~\ref{lem:AOmega-mellin},
\[
G_\Omega(s):=\frac1sF_{\partial\Omega}(s)
=
\int_{-\log m_\Omega}^{\infty}M_\Omega(x)e^{-sx}\,dx
\]
in the half-plane of absolute convergence.  By
Proposition~\ref{prop:Fboundary-halfplane-app}, the function
\(F_{\partial\Omega}(s)\) is meromorphic on
\[
\Re(s)>\frac35,
\]
with a single simple pole at \(s=\frac23\).  Since \(1/s\) is holomorphic
there, the same is true for \(G_\Omega(s)\).  Moreover,
\[
\Res_{s=2/3}G_\Omega(s)
=
\Res_{s=2/3}\frac1sF_{\partial\Omega}(s)
=
\frac{1}{2/3}\Res_{s=2/3}F_{\partial\Omega}(s)
=
\frac32\,r_\Omega.
\]
Therefore
\[
G_\Omega(s)-\frac{\frac32\,r_\Omega}{s-\frac23}
\]
is holomorphic in a neighborhood of the closed half-plane
\(\Re(s)\ge\frac23\).

Applying Theorem~\ref{thm:laplace-tauberian} with
\[
B=M_\Omega,\qquad
\rho=\frac23,\qquad
A=\frac32\,r_\Omega,
\]
we obtain
\[
M_\Omega(x)\sim \frac32\,r_\Omega e^{2x/3}.
\]
Substituting \(x=-\log t\) gives
\[
N^{\mathrm{cut}}_\Omega(t)
=
M_\Omega(-\log t)
\sim
\frac32\,r_\Omega t^{-2/3}.
\]
\end{proof}

The asymptotic just obtained concerns the tail distribution of the
support-triangle sizes in the cutting process
\[
\widehat\Omega\rightsquigarrow\Omega.
\]
To convert it into an asymptotic for the lattice perimeter of the tropical
wave fronts, we return to the partial-cut models \(\Omega^t\).  The passage
from boundary triangles to wave fronts uses the compatibility between the
truncated domains \(\Omega^t\) and the actual wave fronts \(\Omega_t\), the
canonical evolution of the intermediate toric models through the classes
\(K_t\), and the comparison between the lattice perimeters of
\(\partial\Omega^t\) and \(\partial\Omega_t\).  Thus the Tauberian asymptotic
for \(N^{\mathrm{cut}}_\Omega(t)\) is transported from the boundary
Dirichlet series to the interior wave-front profile governed by \(Z_\Omega(s)\).

\begin{theorem}[Asymptotic lattice perimeter of tropical wave fronts]
\label{thm:twfper-rigorous}
Let \(\Omega\subset\R^2\) be a compact convex domain with \(C^3\)-smooth
boundary, and assume that each arc of
\(\partial\Omega\setminus\partial\widehat\Omega\) has everywhere nonvanishing
curvature.  Then, as \(t\to0^+\),
\[
\Length_{\Z}(\partial\Omega_t)
=
\frac92\,r_\Omega\,t^{1/3}+o(t^{1/3}).
\]
Equivalently,
\[
\Length_{\Z}(\partial\Omega_t)
=
\Res_{s=2/3}Z_\Omega(s)\,t^{1/3}+o(t^{1/3}).
\]
\end{theorem}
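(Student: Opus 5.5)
The proof combines three ingredients already prepared above: the Tauberian asymptotic for the cut-counting function, the perimeter identity for the partial-cut models \(\Omega^t\), and the comparison between \(\Omega^t\) and \(\Omega_t\). Set \(r_\Omega=\Res_{s=2/3}F_{\partial\Omega}(s)\).

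By Proposition~\ref{prop:Aomega-tauberian},
\[
N^{\mathrm{cut}}_\Omega(t)=A_\Omega(t)\sim\tfrac32 r_\Omega t^{-2/3}
\qquad (t\to0^+).
\]
The first step is to compute \(\Length_{\Z}(\partial\Omega^{\,t})=-\int_0^t u\,dA_\Omega(u)\) asymptotically. Integrating the Stieltjes integral by parts gives
\[
-\int_0^t u\,dA_\Omega(u)=-tA_\Omega(t)+\int_0^t A_\Omega(u)\,du .
\]
The boundary term at \(u=0\) vanishes, because \(uA_\Omega(u)=O(u^{1/3})\). Since \(A_\Omega(u)\sim\frac32 r_\Omega u^{-2/3}\) is integrable at \(0\), the integral is \(\frac92 r_\Omega t^{1/3}+o(t^{1/3})\). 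The passage from \(\sim\) to \(o\) inside the integral is by the standard comparison \(|A_\Omega(u)-\frac32 r_\Omega u^{-2/3}|\le\varepsilon u^{-2/3}\) for \(u<u_\varepsilon\).

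Next I insert this into the comparison identity
\[
\Length_{\Z}(\partial\Omega_t)=\Length_{\Z}(\partial\Omega^{\,t})+tA_\Omega(t)+o(t^{1/3}).
\]
This identity comes from the lemma \((\Omega^{\,t})_t=\Omega_t\), together with the constancy of the dual fan of \((\Omega^{\,t})_u\) for \(u<t\) and the canonical evolution \eqref{eq:canevol}. The two \(tA_\Omega(t)\) terms cancel exactly, which leaves
\[
\Length_{\Z}(\partial\Omega_t)=\tfrac92 r_\Omega t^{1/3}+o(t^{1/3}).
\]
For the second form of the statement I use Theorem~\ref{thm:integral-boundary}. Since \(H_{\widehat\Omega}\) is holomorphic, it gives
\[
\Res_{s=2/3}Z_\Omega=-\frac{r_\Omega}{s(s-1)}\Big|_{s=2/3}=\frac{r_\Omega}{2/9}=\tfrac92 r_\Omega .
\]
As a cross-check, this agrees with the Mellin form \(Z_\Omega(s)=\int_0^{m_\Omega}t^{s-2}P_\Omega(t)\,dt\): a profile \(P_\Omega(t)\approx Ct^{1/3}\) produces a pole at \(s=\tfrac23\) with residue \(C\).

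The main obstacle is justifying the comparison identity, in particular the \(o(t^{1/3})\) error. By the constancy lemma, \(\Length_{\Z}(\partial(\Omega^{\,t})_u)\) is linear in \(u\in[0,t)\), with slope \(-K_t^2=A_\Omega(t)-K^2_{\widehat\Omega}\). Evaluating at \(u=t\) via \((\Omega^{\,t})_t=\Omega_t\) should therefore give an exact error equal to \(-tK^2_{\widehat\Omega}\), plus the contribution of cuts of size exactly \(t\), which is handled by one-sided limits at critical times. I would first check that linearity survives up to \(u=t\) by left continuity. Then I would bound the error by \(O(t)+t\cdot\#\{\Delta:\operatorname{size}(\Delta)=t\}\), which is \(o(t^{1/3})\) provided ties of sizes are finitely many at each level. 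That holds because the sizes accumulate only at \(0\), although it needs a uniform count of ties; failing that, I would average over \(t\) and use monotonicity of \(A_\Omega\).
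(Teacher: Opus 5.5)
Your proposal is correct and follows the paper's proof step for step: the Tauberian asymptotic \(N^{\mathrm{cut}}_\Omega(t)\sim\frac32 r_\Omega t^{-2/3}\), Stieltjes integration by parts for \(\Length_{\Z}(\partial\Omega^{\,t})\), the comparison identity that adds \(tN^{\mathrm{cut}}_\Omega(t)\), and the integral--boundary identity giving \(\Res_{s=2/3}Z_\Omega(s)=\frac92 r_\Omega\). The only cosmetic difference is that you cancel the two \(tA_\Omega(t)\) terms first, whereas the paper adds \(3r_\Omega t^{1/3}\) and \(\frac32 r_\Omega t^{1/3}\). Your treatment of the comparison identity is more explicit than the paper's: left continuity of the lattice perimeter at \(u=t\) makes the error exactly \(-tK^2_{\widehat\Omega}=O(t)\), so your discussion of ties among cut sizes is unnecessary.
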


\begin{proof}
By Proposition~\ref{prop:Aomega-tauberian},
\[
N^{\mathrm{cut}}_\Omega(t)
=
\frac32\,r_\Omega\,t^{-2/3}+o(t^{-2/3})
\qquad (t\to0^+).
\]
Using the Stieltjes identity established above,
\[
\Length_{\Z}(\partial\Omega^{\,t})
=
-\int_0^t u\,dN^{\mathrm{cut}}_\Omega(u)
=
-tN^{\mathrm{cut}}_\Omega(t)
+
\int_0^t N^{\mathrm{cut}}_\Omega(u)\,du,
\]
we get
\[
-tN^{\mathrm{cut}}_\Omega(t)
=
-\frac32\,r_\Omega\,t^{1/3}+o(t^{1/3}),
\]
and
\[
\int_0^t N^{\mathrm{cut}}_\Omega(u)\,du
=
\frac32\,r_\Omega\int_0^t u^{-2/3}\,du+o(t^{1/3})
=
\frac92\,r_\Omega\,t^{1/3}+o(t^{1/3}).
\]
Hence
\[
\Length_{\Z}(\partial\Omega^{\,t})
=
3\,r_\Omega\,t^{1/3}+o(t^{1/3}).
\]

\clearpage

\begin{figure}[p]
\centering
\vspace*{-4mm}

\begin{tikzpicture}[
  >=Latex,
  font=\scriptsize,
  box/.style={
    draw,
    rounded corners=2.5pt,
    align=center,
    inner xsep=4pt,
    inner ysep=3pt,
    text width=0.73\textwidth
  },
  final/.style={
    draw,
    very thick,
    rounded corners=3pt,
    align=center,
    inner xsep=5pt,
    inner ysep=4pt,
    text width=0.75\textwidth
  },
  arrow/.style={
    ->,
    thick
  },
  node distance=2.8mm
]

\node[box] (F) {
\textbf{Boundary Dirichlet series}\\[-1pt]
The support triangles in the cutting process
\(\widehat\Omega\rightsquigarrow\Omega\) define
\[
F_{\partial\Omega}(s)
=
\sum_{\Delta}\operatorname{size}(\Delta)^s,
\qquad
r_\Omega
:=
\Res_{s=2/3}F_{\partial\Omega}(s).
\]
};

\node[box, below=of F] (cont) {
\textbf{Analytic continuation}\\[-1pt]
The boundary series is meromorphic for
\[
\Re(s)>\frac35
\]
and holomorphic there except for a simple pole at \(s=\frac23\).
};

\node[box, below=of cont] (laplace) {
\textbf{Mellin--Laplace transform}\\[-1pt]
Let
\[
N^{\mathrm{cut}}_\Omega(t)
:=
\#\{\Delta:\operatorname{size}(\Delta)\ge t\},
\qquad
M_\Omega(x)
:=
N^{\mathrm{cut}}_\Omega(e^{-x}).
\]
Then
\[
G_\Omega(s)
:=
\frac1sF_{\partial\Omega}(s)
=
\int_{-\log m_\Omega}^{\infty}
M_\Omega(x)e^{-sx}\,dx.
\]
};

\node[box, below=of laplace] (taub) {
\textbf{Laplace--Tauberian step}\\[-1pt]
Since
\[
\Res_{s=2/3}G_\Omega(s)
=
\frac{1}{2/3}\Res_{s=2/3}F_{\partial\Omega}(s)
=
\frac32r_\Omega,
\]
the Laplace--Tauberian theorem gives
\[
M_\Omega(x)
\sim
\frac32r_\Omega e^{2x/3},
\qquad
N^{\mathrm{cut}}_\Omega(t)
\sim
\frac32r_\Omega t^{-2/3}.
\]
};

\node[box, below=of taub] (stieltjes) {
\textbf{Partial-cut perimeter}\\[-1pt]
For the partial-cut model \(\Omega^{\,t}\),
\[
\Length_{\Z}(\partial\Omega^{\,t})
=
-\int_0^t u\,dN^{\mathrm{cut}}_\Omega(u)
=
-tN^{\mathrm{cut}}_\Omega(t)
+
\int_0^tN^{\mathrm{cut}}_\Omega(u)\,du.
\]
Therefore
\[
\Length_{\Z}(\partial\Omega^{\,t})
=
3r_\Omega t^{1/3}
+
o(t^{1/3}).
\]
};

\node[box, below=of stieltjes] (transport) {
\textbf{Transport to the actual wave front}\\[-1pt]
The comparison with the actual tropical wave front gives
\[
\Length_{\Z}(\partial\Omega_t)
=
\Length_{\Z}(\partial\Omega^{\,t})
+
tN^{\mathrm{cut}}_\Omega(t)
+
o(t^{1/3}).
\]
Since
\[
tN^{\mathrm{cut}}_\Omega(t)
=
\frac32r_\Omega t^{1/3}
+
o(t^{1/3}),
\]
we obtain
\[
\Length_{\Z}(\partial\Omega_t)
=
\frac92r_\Omega t^{1/3}
+
o(t^{1/3}).
\]
};

\node[final, below=of transport] (final) {
\textbf{Tropical wave-front asymptotic}\\[-1pt]
The integral--boundary identity gives
\[
\Res_{s=2/3}Z_\Omega(s)
=
\frac92r_\Omega.
\]
Hence, as \(t\to0^+\),
\[
\boxed{
\Length_{\Z}(\partial\Omega_t)
=
\Res_{s=2/3}Z_\Omega(s)\,t^{1/3}
+
o(t^{1/3})
}.
\]
};

\draw[arrow] (F) -- (cont);
\draw[arrow] (cont) -- (laplace);
\draw[arrow] (laplace) -- (taub);
\draw[arrow] (taub) -- (stieltjes);
\draw[arrow] (stieltjes) -- (transport);
\draw[arrow] (transport) -- (final);

\end{tikzpicture}

\vspace{2mm}

\caption{
Dependency structure of the tropical wave-front lattice-perimeter
asymptotic, under the hypotheses of the wave-front theorem.
The boundary Dirichlet series is rewritten as a Mellin--Laplace transform
of the support-triangle counting function. Its pole at \(s=\frac23\)
gives the triangle-size tail asymptotic by a Laplace--Tauberian theorem.
Stieltjes integration yields the lattice perimeter of the partial-cut model,
and comparison with the actual tropical wave front gives
\(\frac92r_\Omega=\Res_{s=2/3}Z_\Omega(s)\).
}
\label{fig:twf-tauberian-dependency}

\end{figure}

\clearpage

Now use the comparison between the partial-cut models and the actual wave
fronts:
\[
\Length_{\Z}(\partial\Omega_t)
=
\Length_{\Z}(\partial\Omega^{\,t})
+
tN^{\mathrm{cut}}_\Omega(t)
+
o(t^{1/3}).
\]
Since
\[
tN^{\mathrm{cut}}_\Omega(t)
=
\frac32\,r_\Omega\,t^{1/3}+o(t^{1/3}),
\]
we obtain
\[
\Length_{\Z}(\partial\Omega_t)
=
\left(3+\frac32\right)r_\Omega\,t^{1/3}+o(t^{1/3})
=
\frac92\,r_\Omega\,t^{1/3}+o(t^{1/3}).
\]

Finally, the integral--boundary identity gives
\[
\Res_{s=2/3}Z_\Omega(s)=\frac92\,r_\Omega,
\]
and the equivalent form follows.
\end{proof}

\begin{corollary}[Area deficit]
\label{cor:area-deficit-rigorous}
Under the hypotheses of Theorem~\ref{thm:twfper-rigorous},
\[
\Area(\Omega)-\Area(\Omega_t)
=
\frac34\,\Res_{s=2/3}Z_\Omega(s)\,t^{4/3}+o(t^{4/3})
\qquad (t\to0^+).
\]
\end{corollary}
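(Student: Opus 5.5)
Since \(\Omega_t\subset\Omega\) and \(\operatorname{Vol}(\Omega_t)\) is absolutely continuous in \(t\) with derivative \(-P_\Omega(t)\) off the discrete set of critical times (Proposition~\ref{prop:twf_surfacevol}), the plan is to integrate the perimeter asymptotic of Theorem~\ref{thm:twfper-rigorous}. First, I will note that \(\Omega_0=\Omega\) up to its boundary, which has measure zero. Second, I need that \(t\mapsto\operatorname{Vol}(\Omega_t)\) is continuous at \(t=0^+\), with
\[
\operatorname{Vol}(\Omega)-\operatorname{Vol}(\Omega_t)=\int_0^t P_\Omega(u)\,du .
\]
This follows because \(\operatorname{Vol}(\Omega_t)\) is monotone and locally Lipschitz on \((0,m_\Omega)\), with derivative \(-P_\Omega\) there. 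Indeed, on each interval between consecutive critical times \(\Omega_t\) is a polygon with fixed combinatorics whose sides move linearly, so the volume is polynomial in \(t\). Critical times accumulate only at \(0\), and at each of them the volume is continuous, since a shrinking edge contributes nothing. Hence, for \(0<\varepsilon<t\),
\[
\operatorname{Vol}(\Omega_\varepsilon)-\operatorname{Vol}(\Omega_t)=\int_\varepsilon^t P_\Omega(u)\,du,
\]
and I will let \(\varepsilon\to0^+\), using monotone convergence of \(\Omega_\varepsilon\uparrow\Omega^\circ\) (because \(\rho_\Omega>0\) on \(\Omega^\circ\)).

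Third, I will insert \(P_\Omega(u)=R\,u^{1/3}+o(u^{1/3})\), where \(R=\Res_{s=2/3}Z_\Omega(s)\) (Theorem~\ref{thm:twfper-rigorous}). Since \(P_\Omega\) is nonnegative and \(O(u^{1/3})\) near \(0\), it is integrable there. Integrating gives
\[
\int_0^t R\,u^{1/3}\,du=\tfrac34 R\,t^{4/3}.
\]
For the error term, given \(\delta>0\) choose \(t_0\) with \(|P_\Omega(u)-Ru^{1/3}|\le\delta u^{1/3}\) for \(u<t_0\). Then for \(t<t_0\) the error is at most \(\tfrac34\delta t^{4/3}\), which is \(o(t^{4/3})\).

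The only delicate point is the second step, namely the justification that the area deficit equals the integral of the perimeter all the way down to \(0\), despite the infinitely many critical times accumulating at \(0\). I would handle it as above, by working on \([\varepsilon,t]\), where only finitely many critical times occur and absolute continuity is clear, and then passing to the limit by monotone convergence. An alternative consistency check is the Mellin form: Proposition~\ref{prop:perimeter-mellin} gives \(Z_\Omega(s)=(s-2)\int t^{s-3}\operatorname{Vol}(\Omega_t)\,dt\), and the pole at \(s=\tfrac23\) corresponds to a \(t^{4/3}\) term in the volume profile with coefficient \(\tfrac34 R\), matching the result.
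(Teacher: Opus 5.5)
Your proposal is correct and follows the paper's proof. Both integrate the identity \(\frac{d}{dt}\Area(\Omega_t)=-P_\Omega(t)\) of Proposition~\ref{prop:twf_surfacevol} from \(0\) to \(t\) and then insert the \(t^{1/3}\) perimeter asymptotic of Theorem~\ref{thm:twfper-rigorous}. Your additional step justifies integrating all the way down to \(0\), which the paper asserts without comment: you work on \([\varepsilon,t]\), where there are only finitely many critical times, and then let \(\varepsilon\to0^+\) by monotone convergence.
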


\begin{proof}
By Proposition~\ref{prop:twf_surfacevol}, in dimension two,
\[
\frac{d}{dt}\Area(\Omega_t)
=
-\Length_{\Z}(\partial\Omega_t).
\]
Therefore
\[
\Area(\Omega)-\Area(\Omega_t)
=
\int_0^t \Length_{\Z}(\partial\Omega_u)\,du.
\]
Applying Theorem~\ref{thm:twfper-rigorous}, we get
\[
\Length_{\Z}(\partial\Omega_u)
=
\Res_{s=2/3}Z_\Omega(s)\,u^{1/3}+o(u^{1/3}).
\]
Hence
\[
\Area(\Omega)-\Area(\Omega_t)
=
\Res_{s=2/3}Z_\Omega(s)
\int_0^t u^{1/3}\,du
+
o(t^{4/3})
=
\frac34\,\Res_{s=2/3}Z_\Omega(s)\,t^{4/3}
+
o(t^{4/3}).
\]
\end{proof}

\section{Zeta functions for convex domains: a survey}\label{sec_survey}

Several classical zeta functions may be attached to a convex domain, depending on which geometric profile one chooses to encode analytically. The main examples relevant here are:

\begin{enumerate}
\item gauge or lattice zeta functions, attached to lattice-point counting in dilates of the body;
\item distance and tube zeta functions, attached to the small-scale growth of Euclidean neighborhoods;
\item spectral zeta functions, attached to eigenvalue asymptotics of natural operators on the domain or on its boundary.
\end{enumerate}

In each case, the zeta function is a Dirichlet series or Mellin transform whose first singularity is determined by the leading asymptotic term of the underlying counting or scale problem; see, for instance, \cite{Montoro2018,LapidusRadunovicZubrinic2017,Seeley1967}. The goal of this section is to recall these constructions in a form suited to comparison with the zeta function studied in the main text.

\subsection{Gauge zetas, Hlawka zeta, and lattice-point asymptotics}

Let \(K\subset\mathbb R^n\) be a convex body with \(0\in\operatorname{int}K\), and let
\[
\|x\|_K:=\inf\{t>0:x\in tK\}
\]
be its Minkowski functional. The associated gauge zeta function is
\[
Z_K(s):=\sum_{m\in\mathbb Z^n\setminus\{0\}} \|m\|_K^{-s},
\qquad \Re(s)>n.
\]
Its natural counting function is
\[
N_K(X):=\#\{m\in\mathbb Z^n:\|m\|_K\le X\}
       =\#(XK\cap\mathbb Z^n).
\]
Thus \(Z_K(s)\) is the Mellin--Dirichlet transform naturally associated with lattice-point counting in dilates of \(K\). In particular, its first singularity is expected at \(s=n\), in accordance with the leading term \(\operatorname{vol}(K)X^n\) in lattice-point asymptotics \cite{KratzelNowak1991,Montoro2018}.

Restricting to primitive lattice points gives
\[
Z_K^{\mathrm{prim}}(s):=
\sum_{\substack{m\in\mathbb Z^n\setminus\{0\}\\ \gcd(m_1,\dots,m_n)=1}}
\|m\|_K^{-s},
\qquad \Re(s)>n,
\]
and homogeneity implies
\[
Z_K(s)=\zeta(s)\,Z_K^{\mathrm{prim}}(s)
\]
in the common half-plane of absolute convergence.

In dimension \(2\), for a star-shaped domain \(D\subset\mathbb R^2\) about the origin, one often writes \(t(m,n)\) for the least \(t>0\) such that \((m,n)\in tD\), and defines the Hlawka zeta by
\[
Z_D(s):=\sum_{(m,n)\neq(0,0)} t(m,n)^{-2s},
\qquad \Re(s)>1.
\]
With this normalization, the natural pole is at \(s=1\), corresponding to the planar lattice-counting problem
\[
A_D(x):=\#(xD\cap\mathbb Z^2),
\]
and this is the form in which the construction is usually discussed in the modern literature \cite{Montoro2018}.

For the Euclidean disk one recovers
\[
Z_{\mathrm{disk}}(s)=\sum_{(m,n)\neq(0,0)} (m^2+n^2)^{-s},
\]
the Epstein zeta function of the quadratic form \(x^2+y^2\). More generally, positive-definite binary quadratic forms lead to Epstein zeta functions, for which analytic continuation and functional equations are classical \cite{Epstein1903}. Primitive restrictions and boundary effects in convex lattice counting lead to finer arithmetic refinements \cite{KratzelNowak2001}.

\subsection{Distance zetas, tube zetas, and Euclidean boundary geometry}

Let \(A\subset\mathbb R^N\) be bounded and fix \(\delta>0\). The distance zeta function of \(A\) is
\[
\zeta_A(s):=\int_{A_\delta} \operatorname{dist}(x,A)^{\,s-N}\,dx,
\]
initially defined for \(\Re(s)\) sufficiently large. A basic theorem states that the abscissa of absolute convergence of \(\zeta_A\) is the upper box dimension \(\overline{\dim}_B A\) \cite{LapidusRadunovicZubrinic2017JMAA,LapidusRadunovicZubrinic2017}.

A closely related object is the tube zeta function
\[
\widetilde\zeta_A(s):=\int_0^\delta t^{\,s-N-1}|A_t|\,dt,
\]
where \(A_t=\{x:\operatorname{dist}(x,A)<t\}\). The two zeta functions are linked by an explicit identity, and under suitable meromorphic continuation hypotheses their poles encode the small-\(t\) asymptotics of \(|A_t|\) \cite{LapidusRadunovicZubrinic2017JMAA,LapidusRadunovicZubrinic2017}.

For convex geometry the most natural choice is \(A=\partial\Omega\), where \(\Omega\subset\mathbb R^N\) is a bounded convex domain. If \(\partial\Omega\) is sufficiently regular (for example \(C^{1,1}\), or more generally of positive reach), then the tube expansion is classical, and the first singularity occurs at \(s=N-1\), the dimension of the boundary. Under the usual Minkowski measurability hypotheses, the residue at this first pole is proportional to \(\mathcal H^{N-1}(\partial\Omega)\). For smooth convex bodies, the higher coefficients are the classical curvature coefficients appearing in the Steiner--Federer theory \cite{Federer1959,LapidusRadunovicZubrinic2017}.

\subsection{Spectral zetas of the interior and of the boundary}

Let \(\Omega\subset\mathbb R^N\) be a bounded domain with smooth boundary, and let
\[
0<\lambda_1\le \lambda_2\le\cdots\to\infty
\]
be the Dirichlet eigenvalues of the Laplacian. The spectral zeta function is
\[
\zeta_\Delta(s):=\sum_{j=1}^\infty \lambda_j^{-s},
\qquad \Re(s)>\frac N2.
\]
Its meromorphic continuation is governed by the heat trace expansion
\[
\operatorname{Tr}(e^{-t\Delta})\sim \sum_{j\ge0} c_j\, t^{(j-N)/2},
\qquad t\to0^+,
\]
and therefore \(\zeta_\Delta(s)\) has at most simple poles at
\[
s=\frac{N-j}{2},\qquad j=0,1,2,\dots
\]
\cite{MinakshisundaramPleijel1949,Seeley1967}. The leading pole at \(s=N/2\) is proportional to \(|\Omega|\), while the next coefficient involves \(\mathcal H^{N-1}(\partial\Omega)\) \cite{MinakshisundaramPleijel1949,Seeley1967}.

A boundary spectral analogue is provided by the Dirichlet-to-Neumann operator \(\Lambda_\Omega\). In dimension \(2\), if \(\Omega\) is a smooth simply connected planar domain and
\[
0=\sigma_0<\sigma_1\le\sigma_2\le\cdots
\]
are the Steklov eigenvalues, one defines
\[
\zeta_{\mathrm{St}}(s):=\sum_{j=1}^\infty \sigma_j^{-s}.
\]
Here the leading asymptotic is already one-dimensional: the Weyl law is governed by the boundary length \(L(\partial\Omega)\), and correspondingly the principal singular behavior of \(\zeta_{\mathrm{St}}(s)\) is that of
\[
2\Bigl(\frac{L(\partial\Omega)}{2\pi}\Bigr)^s \zeta_R(s)
\]
\cite{GirouardParnovskiPolterovichSher2014,JollivetSharafutdinov2018,JollivetSharafutdinov2021}. Thus the Dirichlet Laplacian and the Steklov problem produce different zeta functions on the same domain, reflecting, respectively, an interior asymptotic law and a boundary asymptotic law.

\subsection{Comparison with the tropical zeta function of the present paper}

We now place the tropical zeta function studied in this paper alongside the preceding constructions.

Let \(\Omega\subset\RR^2\) be a compact convex domain. The zeta function introduced in the main text is
\[
Z_\Omega(s)=\int_\Omega \rho_\Omega(x)^{\,s-2}\,dx,
\]
where
\[
\rho_\Omega(x)=\min_{u\in\ZZ^2_{\mathrm{prim}}}\bigl(\langle u,x\rangle-h_\Omega(u)\bigr)
\]
is the tropical distance-to-the-boundary function defined by primitive lattice supporting lines. By Proposition~\ref{prop:perimeter-mellin}, this admits the Mellin representation
\[
Z_\Omega(s)=\int_0^{m_\Omega} t^{s-2}P_\Omega(t)\,dt,
\]
where \(P_\Omega(t)\) is the lattice perimeter of the tropical wave front
\[
\Omega_t=\{x\in\Omega:\rho_\Omega(x)\ge t\}.
\]
Thus, like distance, tube, and spectral zeta functions, the tropical zeta function is a Mellin transform of a geometric profile. The essential difference lies in the profile to which the Mellin transform is applied: here it is not Euclidean neighborhood growth, nor the spectrum of a differential operator, but the lattice perimeter of the wave fronts generated by primitive lattice support data.

This places the tropical zeta function closest in spirit to the gauge and Hlawka zeta functions, since all of them are built from the interaction of the convex body with the ambient lattice. However, the role of the lattice is different. In the gauge zeta
\[
Z_K(s)=\sum_{m\in\ZZ^n\setminus\{0\}}\|m\|_K^{-s},
\]
the lattice enters through the distribution of lattice points in dilates of \(K\), so the first pole is governed by the main term in lattice-point counting, namely volume \cite{KratzelNowak1991,Montoro2018}. By contrast, in the tropical zeta function the lattice enters through the \emph{primitive supporting directions} of the boundary. The resulting zeta is still \(\operatorname{SL}(2,\ZZ)\)-invariant, but its singularities are governed by the boundary evolution of the tropical wave fronts rather than by the counting of lattice points inside dilates\footnote{We will elaborate on the relation between these two problems in future work. Note also that the Hlawka zeta function is sensitive to translations of the domain, even by integer vectors, whereas the tropical zeta function is invariant under all real translations. This difference is in fact responsible for an obstruction to expressing the lattice-point error term solely in terms of the residues of the tropical zeta function.}.

This difference is already visible in the polygonal case. For rational polygons, the first pole of the tropical zeta function occurs at
\[
s=1,
\]
and its residue is the lattice perimeter of \(\partial\Omega\); see Theorem~\ref{thm:integral-boundary} together with the computations of minimal models in Appendix~\ref{app1}. Thus the leading singularity records a boundary quantity, not the area term coming from the interior. In this respect, the tropical zeta function is more boundary-sensitive than the gauge zeta and more arithmetic than the Euclidean distance and tube zetas.

Compared with distance and tube zeta functions, the contrast is even sharper. For
\[
\zeta_A(s)=\int_{A_\delta}\operatorname{dist}(x,A)^{\,s-N}\,dx
\]
and
\[
\widetilde\zeta_A(s)=\int_0^\delta t^{s-N-1}|A_t|\,dt,
\]
the relevant small-scale profile is Euclidean, and for smooth convex boundaries the first pole is at the boundary dimension \(N-1\), with residue proportional to Euclidean boundary measure \cite{LapidusRadunovicZubrinic2017JMAA,LapidusRadunovicZubrinic2017,Federer1959}. In the tropical case, one again has a Mellin transform of a boundary profile, but the profile is measured in the geometry of primitive lattice support planes. As a consequence, the first nontrivial pole in the smooth convex planar case with everywhere nonvanishing curvature is not at \(s=1\) but at
\[
s=\frac23,
\]
and the residue is proportional to equiaffine arc length:
\[
\operatorname*{Res}_{s=2/3}Z_\Omega(s)
=
\frac{3^{5/2}}{2^{5/3}\pi^3}\Gamma\!\left(\frac13\right)^3
\operatorname{Length}_{\mathrm{equiaffine}}(\partial\Omega).
\]
Thus, whereas Euclidean tube zeta functions recover Euclidean curvature data, the tropical zeta function recovers an affine boundary invariant.

The comparison with spectral zeta functions is also instructive. For the Dirichlet Laplacian, the poles of
\[
\zeta_\Delta(s)=\sum_{j\ge1}\lambda_j^{-s}
\]
are governed by the heat trace, and the leading pole reflects the interior Weyl law, hence volume \cite{MinakshisundaramPleijel1949,Seeley1967}. For the Steklov zeta, the leading asymptotic is already boundary-based and determined by the boundary length \cite{GirouardParnovskiPolterovichSher2014,JollivetSharafutdinov2018}. The tropical zeta function resembles the Steklov zeta in that its leading nontrivial asymptotics live on the boundary rather than in the interior. But it differs from both spectral examples in that it does not arise from the spectrum of an operator. Its analytic continuation is controlled instead by a boundary Dirichlet series over Farey neighbors and, ultimately, by incomplete Kloosterman-sum estimates and Legendre duality.

A second distinguishing feature of the tropical zeta function is its exact reduction to a boundary series. In dimension \(2\), Theorem~\ref{thm:integral-boundary} shows that
\[
s(s-1)Z_\Omega(s)=-F_{\partial\Omega}(s)+H_{\widehat\Omega}(s),
\]
where \(H_{\widehat\Omega}(s)\) is an explicit holomorphic correction term coming from the minimal model (see Proposition~\ref{prop:minmodzeta}). Thus all nontrivial singular behavior is already encoded in the boundary Dirichlet series \(F_{\partial\Omega}(s)\). This kind of exact interior-to-boundary reduction does not occur in the same form for the classical gauge, tube, or spectral zeta functions recalled above.

Finally, from the arithmetic point of view, the tropical zeta function appears to be the construction among these examples that is most directly adapted to the study of rational and lattice points near convex curves. In the smooth convex case with everywhere nonvanishing curvature, its leading residue is equiaffine arc length, which is precisely the affine invariant that governs several quantitative problems on rational points and lattice points on convex arcs \cite{Petrov2006,HowardTrifonov2022,Howard2023}. In this sense, the tropical zeta function is neither just Euclidean nor spectral: it is an \(\operatorname{SL}(2,\ZZ)\)-arithmetic zeta whose first genuinely smooth residue is \(\operatorname{SL}(2,\RR)\)-affine.

To summarize, the tropical zeta function shares with the classical constructions the general Dirichlet--Mellin principle that poles encode asymptotic geometry, but it differs from them in three essential respects:
\begin{enumerate}
\item its defining geometric input is the tropical distance function built from primitive lattice support directions;
\item in dimension \(2\), its singularities are governed by an exact boundary Dirichlet series over Farey neighbors;
\item for smooth convex domains with everywhere nonvanishing curvature, its first nontrivial pole detects equiaffine arc length rather than Euclidean boundary measure, volume, or spectral Weyl data.
\end{enumerate}

This combination of lattice symmetry, boundary reduction, and affine residue appears to be specific to the tropical zeta function introduced in this paper.

The tropical zeta function thus places the geometry of convex domains into a single analytic framework in which discrete lattice data and smooth affine geometry appear as different singular regimes of the same object. In the polygonal case, the first pole detects the lattice-visible boundary through the tropical perimeter; in the smooth strictly convex case, the leading nontrivial residue is the equiaffine perimeter. The mechanism behind this transition is the exact reduction of the interior Mellin integral to a boundary Dirichlet series, whose arithmetic structure is governed by Farey neighbors and whose local model is the primitive Mordell--Tornheim series, equivalently Witten's \(\mathrm{SU}(3)\) zeta function. From this perspective, the pole at \(s=\tfrac23\) is not an isolated accident but the first smooth affine manifestation of a tropical-lattice analytic theory of convexity. The broader continuation problem, the higher-dimensional case, and the relation to lattice-point error terms remain open, but the results obtained here already show that tropical optics provides a natural zeta-theoretic passage from arithmetic boundary data to equiaffine geometry.

\section{Analytic proof of Proposition~\ref{prop:spec}}\label{app3}

\subsubsection{Regularity hypotheses and analytic dependence on \(s\)}

For \(s\in\C\) and \(x\in[0,1]\), define
\[
B_s(x):=|f''(x)|^s=\exp\bigl(s\log|f''(x)|\bigr).
\]
For each fixed \(x\), the map \(s\mapsto B_s(x)\) is entire. Moreover, for \(s\) in any compact set \(K\subset\C\), we have the uniform bounds
\begin{equation}
\label{eq:Bs_bounds}
\sup_{s\in K}\|B_s\|_{\infty}\le M^{\sup_{s\in K}\Re(s)},\qquad
\sup_{s\in K}\|(B_s)'\|_{\infty}\le C_K<\infty,
\end{equation}
since
\((B_s)'(x)=s\,|f''(x)|^s\,\frac{f'''(x)}{f''(x)}\)
and $f'''/f''$ is bounded by \eqref{eq:ass}.

\subsubsection{A model endpoint-sampled series}

\begin{proof}[Proof of Lemma~\ref{lem:replace}]\label{proof:lem:replace}
By Lemma~\ref{lemma:exactT},
\(Z_f(s)=2^{-s}\sum_I |f''(\xi_I)|^s/(bd(b+d))^s\).
Fix a small disk $D:=\{s:|s-2/3|\le \eta\}$ with $0<\eta<1/30$.
Let $\sigma_-:=\min_{s\in D}\Re(s)=2/3-\eta$.

For \(s\in D\) and \(u,v\in[0,1]\), the mean-value theorem and \eqref{eq:Bs_bounds} give
\[
\bigl||f''(u)|^s-|f''(v)|^s\bigr|\le \sup_{s\in D}\|(B_s)'\|_{\infty}\,|u-v|\le C_D\,|u-v|.
\]
For a Farey interval $I=[c/d,a/b]$, we have $|\xi_I-a/b|\le|a/b-c/d|=1/(bd)$.
Hence for $s\in D$,
\[\!\!\!
\left|\frac{|f''(\xi_I)|^s-|f''(a/b)|^s}{(bd(b+d))^s}\right|
\le
\frac{C_D}{(bd)\,(bd(b+d))^{\sigma_-}}
=
C_D\, b^{-(\sigma_-+1)}d^{-(\sigma_-+1)}(b+d)^{-\sigma_-}.
\]
The right-hand side is independent of $s$ and is summable over all $b,d\ge1$ because, for \(\sigma_-=2/3-\eta\), the exponent of each of \(b\) and \(d\) is \(\sigma_-+1>1\).
Therefore the series defining $Z_f(s)-Z_f^{\mathrm{end}}(s)$ converges uniformly on $D$ by the Weierstrass $M$-test. Since each term is holomorphic in $s$, the sum is holomorphic on the interior of $D$.
\end{proof}


\subsection{Auxiliary bounds for $H_s$ and reduced-residue sampling}

\begin{lemma}[\texorpdfstring{Pointwise bound for \(H_s\)}{Pointwise bound for Hs}]
\label{lem:Hsbound}
For \(1/2<\sigma:=\Re(s)<1\) and \(u\in(0,1]\), one has
\[
|H_s(u)|\le C_{s}\,u^{-\sigma},
\]
with $C_s$ depending continuously on $s$ on compact subsets of $\{1/2<\Re(s)<1\}$.
\end{lemma}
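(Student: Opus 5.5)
The natural approach is to split the series defining \(H_s(u)\) into the \(k=0\) term and the tail \(k\ge1\), and bound each separately using only \(|x^{-s}|=x^{-\sigma}\) for real \(x>0\).

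\emph{The \(k=0\) term.} For \(u\in(0,1]\) we have \(1+u\ge1\), so
\[
|u^{-s}(1+u)^{-s}|=u^{-\sigma}(1+u)^{-\sigma}\le u^{-\sigma}.
\]

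\emph{The tail.} For \(k\ge1\) and \(u\in(0,1]\), we have \(k+u\ge k\) and \(k+1+u\ge k+1\). Hence
\[
\sum_{k\ge1}|(k+u)^{-s}(k+1+u)^{-s}|\le\sum_{k\ge1}k^{-\sigma}(k+1)^{-\sigma}\le\sum_{k\ge1}k^{-2\sigma}=\zeta(2\sigma),
\]
which is finite because \(2\sigma>1\). Since \(u\le1\) and \(\sigma>0\), we have \(u^{-\sigma}\ge1\), so the tail is at most \(\zeta(2\sigma)\,u^{-\sigma}\).

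\emph{Conclusion.} Combining the two pieces gives
\[
|H_s(u)|\le\bigl(1+\zeta(2\sigma)\bigr)u^{-\sigma},
\]
so we may take \(C_s:=1+\zeta(2\Re(s))\). This depends continuously on \(s\), because \(\zeta\) is continuous on \((1,2)\). On a compact subset \(K\) of \(\{1/2<\Re(s)<1\}\), the real part satisfies \(\Re(s)\ge\sigma_0>1/2\), and \(\zeta\) is decreasing on \((1,\infty)\). So \(C_s\le1+\zeta(2\sigma_0)\) uniformly on \(K\), which gives both the continuity and the uniformity claimed.

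This is elementary, and I do not expect any genuine obstacle. The only care needed is to justify pulling the factor \(u^{-\sigma}\) out of the tail by using \(u\le1\), and to observe that absolute convergence of the series for \(\sigma>1/2\) makes the termwise bound legitimate.
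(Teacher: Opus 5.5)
Your proposal is correct and follows essentially the same route as the paper: split off the \(k=0\) term, bound it by \(u^{-\sigma}\), bound the tail by \(\sum_{k\ge1}k^{-2\sigma}\), and absorb the tail into \(u^{-\sigma}\) using \(u^{-\sigma}\ge1\) on \((0,1]\). Your explicit choice \(C_s=1+\zeta(2\Re(s))\) also makes the continuity and compact-uniformity claims transparent.
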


\begin{proof}
Write
\(H_s(u)=u^{-s}(1+u)^{-s}+\sum_{k\ge1}(k+u)^{-s}(k+1+u)^{-s}\).
The first term has modulus at most $u^{-\sigma}$. For the tail,
\(\sum_{k\ge1}(k+u)^{-\sigma}(k+1+u)^{-\sigma}\le \sum_{k\ge1}k^{-2\sigma}<\infty\).
Since $u^{-\sigma}\ge1$ on $(0,1]$, the tail is $\ll_s u^{-\sigma}$.
\end{proof}

\begin{lemma}[Derivative bound for $H_s$]
\label{lem:Hsderiv}
Fix a compact set $K\Subset\{\tfrac12<\Re(s)<1\}$. Then there exists $C_K>0$ such that for all $s\in K$ and all $u\in(0,1]$,
\[
\bigl|\partial_u H_s(u)\bigr|\le C_K\,u^{-\Re(s)-1}.
\]
\end{lemma}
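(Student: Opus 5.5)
We differentiate the defining series term by term and bound each term separately. For \(u\in(0,1]\),
\[
\partial_u H_s(u)
=
-s\sum_{k\ge0}(k+u)^{-s}(k+1+u)^{-s}
\left(\frac{1}{k+u}+\frac{1}{k+1+u}\right).
\]
To justify the differentiation we check that the differentiated series converges locally uniformly in \(u\in(0,1]\), uniformly for \(s\in K\). Put \(\sigma=\Re(s)\) and \(\sigma_K:=\min_{s\in K}\Re(s)>\frac12\). For \(k\ge1\), the \(k\)-th term has modulus at most
\[
|s|\cdot 2k^{-2\sigma-1}\le 2|s|\,k^{-2\sigma_K-1}.
\]
This is summable uniformly on \(K\). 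By the Weierstrass \(M\)-test, the derivative of the tail \(\sum_{k\ge1}\) is holomorphic in \(s\) and continuous in \(u\) on \([0,1]\). It is bounded by \(C'_K:=2\sup_K|s|\,\zeta(2\sigma_K+1)\).

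The \(k=0\) term carries the singularity at \(u=0\). Its modulus is
\[
|s|\,u^{-\sigma}(1+u)^{-\sigma}\left(\frac1u+\frac1{1+u}\right)
\le |s|\,u^{-\sigma}\cdot\frac{2}{u}
=2|s|\,u^{-\sigma-1},
\]
using \((1+u)^{-\sigma}\le1\) and \(\frac1{1+u}\le\frac1u\) on \((0,1]\). Since \(u^{-\sigma-1}\ge1\) on \((0,1]\), the tail bound \(C'_K\) is also \(\le C'_K u^{-\sigma-1}\).

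Adding the two pieces gives the claim with \(C_K:=2\sup_K|s|+C'_K\). This constant is finite because \(K\) is compact and \(\sigma_K>\frac12\).

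The only delicate point is the uniformity of the tail bound as \(\Re(s)\) approaches \(\frac12\). Compactness of \(K\) inside \(\{\frac12<\Re(s)<1\}\) gives a uniform gap \(\sigma_K>\frac12\), so the comparison series \(\sum_k k^{-2\sigma_K-1}\) converges. In fact it converges for any \(\sigma_K>0\), so the term-by-term differentiation step poses no obstacle.
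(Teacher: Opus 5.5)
Your proof is correct and follows essentially the same route as the paper. You differentiate termwise, bound the $k=0$ term by $2|s|\,u^{-\Re(s)-1}$, bound the $k\ge1$ tail uniformly on $K$ by a multiple of $\sum_{k\ge1}k^{-2\sigma_K-1}$, and absorb that tail using $u^{-\Re(s)-1}\ge1$ on $(0,1]$; the only difference is that you write out the constant $C_K$ explicitly.
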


\begin{proof}
Differentiate termwise (absolute convergence holds uniformly for $u\in[\delta,1]$, and the $k=0$ term controls the singularity as $u\downarrow0$):
\[
\partial_u H_s(u)
=
-s\sum_{k\ge0}(k+u)^{-s-1}(k+1+u)^{-s}
-s\sum_{k\ge0}(k+u)^{-s}(k+1+u)^{-s-1}.
\]
For $u\in(0,1]$ the $k=0$ contributions satisfy
\(
|(u)^{-s-1}(1+u)^{-s}|+|u^{-s}(1+u)^{-s-1}|\ll_K u^{-\Re(s)-1}.
\)
For the tails $k\ge1$ we bound each summand by
\(
|s|\,(k+u)^{-\sigma-1}(k+1+u)^{-\sigma}+|s|\,(k+u)^{-\sigma}(k+1+u)^{-\sigma-1}
\ll_K k^{-2\sigma-1},
\)
where \(\sigma=\Re(s)>1/2\), and \(\sum_{k\ge1}k^{-2\sigma-1}<\infty$ uniformly on $K$.
Since $u^{-\sigma-1}\ge1$ on $(0,1]$, the tail is also $\ll_K u^{-\sigma-1}$.
\end{proof}

\begin{lemma}[Reduced residues: $H_s$ averages to its integral]
\label{lem:unitavg}
Fix a compact set $K\Subset\{\tfrac12<\Re(s)<1\}$. Then for $s\in K$ and $b\ge1$,
\[
\sum_{\substack{1\le r\le b\\(r,b)=1}} H_s(r/b)
=
\varphi(b)\int_0^1 H_s(u)\,du
+O_K\bigl(b^{\Re(s)}\tau(b)\bigr),
\]
where $\tau(b)$ is the divisor function and the implied constant is uniform for $s\in K$.
\end{lemma}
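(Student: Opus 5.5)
The approach is Möbius inversion over the divisors of \(b\), which reduces the reduced-residue sum to complete Riemann sums of \(H_s\) over the grids \(\{m/e:1\le m\le e\}\), \(e\mid b\). Concretely, I will write
\[
\sum_{\substack{1\le r\le b\\(r,b)=1}} H_s(r/b)
=\sum_{e\mid b}\mu(b/e)\sum_{m=1}^{e}H_s(m/e),
\]
using \(r=(b/e)m\) and \(\sum_{\delta\mid\gcd(r,b)}\mu(\delta)\). Since \(\sum_{e\mid b}\mu(b/e)\,e=\varphi(b)\), it suffices to prove the \emph{complete-sum estimate}
\[
\sum_{m=1}^{e}H_s(m/e)=e\int_0^1 H_s(u)\,du+O_K\bigl(e^{\Re(s)}\bigr),
\]
uniformly for \(s\in K\) and \(e\ge1\). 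Summing the error terms over \(e\mid b\) then gives at most \(\tau(b)\,b^{\Re(s)}\) times a constant, which is the stated error.

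For the complete-sum estimate, I compare each sample with the integral over the cell to its left: \(H_s(m/e)-e\int_{(m-1)/e}^{m/e}H_s(u)\,du\).

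1. For \(m\ge2\), the mean-value theorem and Lemma~\ref{lem:Hsderiv} bound this difference by \(C_K e^{-1}\bigl((m-1)/e\bigr)^{-\sigma-1}\), where \(\sigma=\Re(s)\). This equals \(C_K e^{\sigma}(m-1)^{-\sigma-1}\).
2. Summing over \(m\ge2\) gives \(O_K(e^{\sigma})\), because \(\sum_{j\ge1}j^{-\sigma-1}\) converges uniformly for \(\sigma\) bounded below by some \(\sigma_0>\tfrac12\) on \(K\).
3. The cell \(m=1\) is handled separately. Lemma~\ref{lem:Hsbound} gives \(|H_s(1/e)|\le C_K e^{\sigma}\). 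It also gives \(e\int_0^{1/e}|H_s|\le C_K e\cdot e^{\sigma-1}/(1-\sigma)\), which is again \(O_K(e^{\sigma})\) since \(\sigma<1\) stays bounded away from \(1\) on \(K\).

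Adding the cells reconstructs \(e\int_0^1H_s\), which completes the complete-sum estimate.

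The main point needing care is uniformity in \(s\) over \(K\). The constants in Lemmas~\ref{lem:Hsbound} and~\ref{lem:Hsderiv} are uniform on compacts, and both \(1/(1-\sigma)\) and \(\sum_j j^{-\sigma-1}\) are bounded on \(K\), so this goes through. The only other subtlety is the singular first cell, and it is exactly what produces the \(e^{\sigma}\) size of the error. The \(b=1\) case is trivial.
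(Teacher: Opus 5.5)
Your proof is correct. The M\"obius step is identical to the paper's; the difference is in how the complete-sum estimate \(\sum_{m\le e}H_s(m/e)=e\int_0^1H_s(u)\,du+O_K(e^{\Re(s)})\) is proved.

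The paper does not invoke Lemmas~\ref{lem:Hsbound} and~\ref{lem:Hsderiv} at this point. It splits \(H_s(u)=u^{-s}+u^{1-s}Q_s(u)+R_s(u)\), where \(Q_s(u)=((1+u)^{-s}-1)/u\) and \(R_s\) is the \(k\ge1\) tail. It then shows that the \(C^1\) function \(R_s\) and the absolutely continuous piece \(u^{1-s}Q_s(u)\) have Riemann-sum errors \(O_K(1)\). The pure power \(u^{-s}\) is handled by comparing \(\sum_{j\le N}j^{-s}\) with \(\int_1^N x^{-s}\,dx\).

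You instead make a single cell-by-cell comparison. You use the derivative bound on the cells \(m\ge2\), and the pointwise bound together with the integrability of \(u^{-\sigma}\) on the singular first cell. Your bookkeeping, \(C_Ke^{\sigma}(m-1)^{-\sigma-1}\) and \(C_Ke^{\sigma}/(1-\sigma)\), is right, and the uniformity on \(K\) holds as you say.

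The paper's decomposition is slightly more informative, since it shows that everything except the pure power \(u^{-s}\) contributes only \(O_K(1)\). Your route is shorter and reuses lemmas already proved. It also makes the later strip-uniform claim \eqref{eq:unitavg-strip} in Proposition~\ref{prop:spec-strip} a literal consequence of Lemma~\ref{lem:Hs-strip}: the factor \((1+|\tau|)\) enters only through the derivative bound, exactly as the paper asserts there. In the paper's own decomposition, the \(|s|\)-dependence also enters through the bounds on \(R_s'\), \(Q_s\) and \(Q_s'\), and through the \(x^{-s}\) comparison, so it would have to be tracked separately.
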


\begin{proof}
By inclusion--exclusion,
\[
\sum_{\substack{1\le r\le b\\(r,b)=1}} H_s(r/b)
=
\sum_{d\mid b}\mu(d)\sum_{1\le k\le b/d} H_s\!\left(\frac{k}{b/d}\right).
\]
Write $N=b/d$ and denote $S_s(N):=\sum_{k=1}^N H_s(k/N)$.
We prove
\begin{equation}
\label{eq:Rs}
S_s(N)=N\int_0^1 H_s(u)\,du+O_K(N^{\Re(s)}),
\end{equation}
uniformly for $s\in K$.

Decompose $H_s(u)=u^{-s}(1+u)^{-s}+R_s(u)$, where
\(R_s(u):=\sum_{k\ge1}(k+u)^{-s}(k+1+u)^{-s}\).

Let \(\sigma_-:=\inf_{s\in K}\Re(s)>\tfrac12\). Then for \(u\in[0,1]\),
\[
|(k+u)^{-s}(k+1+u)^{-s}|\le k^{-2\Re(s)}\le k^{-2\sigma_-},
\]
and
\[
\bigl|\partial_u\bigl((k+u)^{-s}(k+1+u)^{-s}\bigr)\bigr|
\ll_K k^{-2\sigma_--1}.
\]
Since \(\sum_{k\ge1}k^{-2\sigma_-}<\infty\) and
\(\sum_{k\ge1}k^{-2\sigma_--1}<\infty\), both the series for \(R_s\) and for
\(\partial_u R_s\) converge uniformly on \([0,1]\), uniformly in \(s\in K\).
Hence \(R_s\in C^1([0,1])\) and \(\sup_{s\in K}\|R_s'\|_\infty<\infty\).

A standard Riemann-sum estimate for $C^1$ functions yields
\[
\sum_{j=1}^N R_s(j/N)=N\int_0^1 R_s(u)\,du+O_K(1).
\]

It remains to treat the singular part \(u^{-s}(1+u)^{-s}=u^{-s}+u^{1-s}Q_s(u)\), where
\(Q_s(u):=\frac{(1+u)^{-s}-1}{u}\)
extends continuously to $u=0$ with $Q_s(0)=-s$.
For $s\in K$, $Q_s$ is $C^1$ on $[0,1]$ with norms bounded uniformly in $s$.

For
\[
f_s(u):=u^{1-s}Q_s(u),
\]
we have
\[
f_s'(u)=(1-s)u^{-s}Q_s(u)+u^{1-s}Q_s'(u).
\]
Since \(K\Subset\{\tfrac12<\Re(s)<1\}\), there exists \(\sigma_+<1\) such that
\(\Re(s)\le \sigma_+\) for all \(s\in K\). As \(Q_s,Q_s'\) are uniformly bounded on
\([0,1]\) for \(s\in K\), it follows that
\[
|f_s'(u)|\ll_K u^{-\sigma_+}+1,
\]
hence
\[
\int_0^1 |f_s'(u)|\,du \ll_K 1.
\]
Therefore the standard Riemann-sum estimate for absolutely continuous functions gives
\[
\sum_{j=1}^N f_s(j/N)
=
N\int_0^1 f_s(u)\,du+O_K(1).
\]

So the contribution of $u^{1-s}Q_s(u)$ to \eqref{eq:Rs} is $O_K(1)$.

Finally, for $u^{-s}$ we use a direct Riemann-sum estimate.
For $s\in K$ (hence $\Re(s)>\tfrac12$ and $s\neq1$), the function $x\mapsto x^{-s}$ is $C^1$ on $[1,\infty)$ and
\[
\sum_{j=1}^N j^{-s}=\int_1^N x^{-s}\,dx+O_K(1).
\]
Indeed,
\[
\sum_{j=1}^N j^{-s}-\int_1^N x^{-s}dx
=\sum_{j=1}^N\int_j^{j+1}\bigl(j^{-s}-x^{-s}\bigr)\,dx+O_K(1),
\]
and the sum is bounded in modulus by
\[\sum_{j\ge1}\int_j^{j+1}|(x^{-s})'|\,dx\ll_K\int_1^{\infty} x^{-\Re(s)-1}dx<\infty.\]
Multiplying by $N^{s}$ yields
\[
\sum_{j=1}^N (j/N)^{-s}=N^{s}\int_1^N x^{-s}dx+O_K(N^{\Re(s)}).
\]
But
\(N^{s}\int_1^N x^{-s}dx=\frac{N-N^{s}}{1-s}\), so
\[
\sum_{j=1}^N (j/N)^{-s}-N\int_0^1 u^{-s}du
=-\frac{N^{s}}{1-s}+O_K(N^{\Re(s)})
=O_K(N^{\Re(s)}),
\]
since $|N^{s}|=N^{\Re(s)}$.
Combining the three pieces proves \eqref{eq:Rs}.

Returning to inclusion--exclusion,
\[
\sum_{d\mid b}\mu(d)S_s(b/d)
=
\left(\sum_{d\mid b}\mu(d)\frac{b}{d}\right)\int_0^1 H_s
+O_K\left(\sum_{d\mid b}(b/d)^{\Re(s)}\right).
\]
The main coefficient equals $\varphi(b)$. The error satisfies
\(\sum_{d\mid b}(b/d)^{\Re(s)}\le b^{\Re(s)}\tau(b)\).
\end{proof}

\subsection{Incomplete Kloosterman sums (Weil bound)}

For integers $n$, $b\ge1$, and $1\le R\le b$, define the incomplete Kloosterman sum
\[
K_b(n;R):=\sum_{\substack{1\le r\le R\\(r,b)=1}} e\!\left(\frac{n\overline r}{b}\right),
\qquad e(x):=e^{2\pi i x}.
\]
\begin{lemma}[Completion bound for \(K_b(n;R)\)]
\label{lem:inckloo}
For all integers \(n\), all \(b\ge 1\), and all \(1\le R\le b\),
\[
K_b(n;R)
\ll
(n,b)^{1/2}\,b^{1/2}\,\tau(b)^2\log(2b),
\]
with an absolute implied constant.

In particular, if \((n,b)=1\), then for every \(\varepsilon>0\),
\[
\max_{1\le R\le b}|K_b(n;R)|\ll_{\varepsilon} b^{1/2+\varepsilon}.
\]
\end{lemma}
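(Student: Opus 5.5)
The plan is to prove the bound by the standard completion method, reducing the incomplete sum to complete Kloosterman sums and then applying the Weil bound. First write the cutoff as a finite Fourier expansion on $\Z/b\Z$. For $1\le R\le b$ put
\[
\widehat{\ind}_R(h):=\frac1b\sum_{x=1}^{R}e\!\left(-\frac{hx}{b}\right),
\]
so that for $1\le r\le b$ one has $\ind_{[1,R]}(r)=\sum_{h\bmod b}\widehat{\ind}_R(h)\,e(hr/b)$. Inserting this expansion into $K_b(n;R)$ and exchanging the two finite sums gives
\[
K_b(n;R)=\sum_{h\bmod b}\widehat{\ind}_R(h)\,S(n,h;b),
\qquad
S(n,h;b):=\sum_{\substack{r\bmod b\\(r,b)=1}}e\!\left(\frac{n\overline r+hr}{b}\right),
\]
where $S(n,h;b)$ is the complete Kloosterman sum.

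Next bound the Fourier coefficients. The geometric series gives $|\widehat{\ind}_R(h)|\le \min\bigl(R/b,\ 1/(2b\|h/b\|)\bigr)$, where $\|\cdot\|$ is distance to the nearest integer. Taking representatives $|h|\le b/2$ and summing, one gets
\[
\sum_{h\bmod b}|\widehat{\ind}_R(h)|\ll 1+\sum_{1\le |h|\le b/2}\frac1{|h|}\ll\log(2b).
\]

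The main input is the Weil--Estermann bound for complete Kloosterman sums, valid for all moduli, not only primes:
\[
|S(n,h;b)|\le \tau(b)\,(n,h,b)^{1/2}\,b^{1/2}\le \tau(b)\,(n,b)^{1/2}b^{1/2}.
\]
For prime $b$ this is Weil's theorem. It extends to general moduli via the twisted multiplicativity of $S$ under the Chinese remainder theorem, together with elementary evaluation of sums at prime powers $p^k$, $k\ge2$ (by stationary phase, with Ramanujan sums handling the case $h\equiv0$). This extension is the step I expect to require the most care. I would quote it from the literature (Estermann; Iwaniec--Kowalski, Cor.~11.12) rather than reprove it. The extra factor $\tau(b)$ in the claimed bound leaves room for a crude version.

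Combining the two estimates yields $|K_b(n;R)|\ll (n,b)^{1/2}b^{1/2}\tau(b)\log(2b)$, which is stronger than the claimed bound with $\tau(b)^2$. For the "in particular" statement, set $(n,b)=1$ and use $\tau(b)\ll_\varepsilon b^{\varepsilon/3}$ and $\log(2b)\ll_\varepsilon b^{\varepsilon/3}$. Since all bounds are uniform in $R$, the maximum over $1\le R\le b$ costs nothing.
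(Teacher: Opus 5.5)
Your proof is correct and follows the same completion-plus-Weil argument as the paper: expand the cutoff on $\mathbb{Z}/b\mathbb{Z}$, reduce to complete Kloosterman sums $S(n,h;b)$, bound the Fourier coefficients by $\min(R,b/|h|)$ (up to normalization), and apply the Weil--Estermann bound. The paper additionally splits off $h=0$, using $|c_b(n)|\le (n,b)$, and groups the remaining frequencies by $d=(h,b)$, which costs an extra factor $\tau(b)$; your uniform use of $(n,h,b)\le (n,b)$ instead gives the slightly sharper bound $\tau(b)\log(2b)$.
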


\begin{proof}
For \(x\in \Z/b\Z\), define
\[
w_R(x):=\sum_{m=1}^R \ind_{x\equiv m\;(\mathrm{mod}\;b)}.
\]

Its discrete Fourier expansion is
\[
w_R(r)=\frac1b\sum_{h\bmod b}\widehat w_R(h)\,e\!\left(\frac{hr}{b}\right),
\qquad
\widehat w_R(h):=\sum_{m=1}^R e\!\left(-\frac{hm}{b}\right).
\]
Hence
\[
K_b(n;R)
=
\sum_{\substack{r\bmod b\\(r,b)=1}} w_R(r)e\!\left(\frac{n\overline r}{b}\right)
=
\frac1b\sum_{h\bmod b}\widehat w_R(h)\,S(n,h;b),
\]
where
\[
S(n,h;b):=\sum_{\substack{r\bmod b\\(r,b)=1}}
e\!\left(\frac{n\overline r+hr}{b}\right)
\]
is the classical Kloosterman sum.

We separate the zero frequency \(h=0\). Since \(\widehat w_R(0)=R\), we get
\[
\frac1b\,\widehat w_R(0)\,S(n,0;b)
=
\frac{R}{b}\,c_b(n),
\]
where \(c_b(n)=S(n,0;b)\) is the Ramanujan sum. 

Let \(g=(n,b)\). Since
\[
c_b(n)=\mu\!\left(\frac{b}{g}\right)\frac{\varphi(b)}{\varphi(b/g)},
\]
we have
\[
|c_b(n)|\leq \frac{\varphi(b)}{\varphi(b/g)}\le g=(n,b).
\]
Hence
\[
\left|\frac1b\,\widehat w_R(0)\,S(n,0;b)\right|
=
\left|\frac{R}{b}c_b(n)\right|
\le (n,b).
\]
Since \((n,b)\le (n,b)^{1/2}b^{1/2}\), this term is acceptable.

Now consider the nonzero frequencies \(h\neq 0\). For representatives
\[
-\frac b2\le h\le \frac b2,\qquad h\neq 0,
\]
the geometric-series estimate gives
\[
|\widehat w_R(h)|
\ll
\min\!\left(R,\frac{b}{|h|}\right).
\]
Also, by the Weil bound for Kloosterman sums,
\[
|S(n,h;b)|\ll \tau(b)\,(n,h,b)^{1/2}\,b^{1/2}.
\]

Group the nonzero frequencies according to \(d=(h,b)\). Then
\[
(n,h,b)=(n,d).
\]
Therefore
\[
\sum_{\substack{h\bmod b\\ h\neq 0}}
|\widehat w_R(h)|\,|S(n,h;b)|
\ll
\tau(b)b^{1/2}
\sum_{d\mid b}(n,d)^{1/2}
\sum_{\substack{h\bmod b\\ h\neq 0\\ (h,b)=d}}
|\widehat w_R(h)|.
\]
So it remains to bound the inner sum. Write \(h=d h_1\). Then \(h_1\) runs through a subset of nonzero residue classes modulo \(b/d\), and hence
\[
\sum_{\substack{h\bmod b\\ h\neq 0\\ (h,b)=d}}
|\widehat w_R(h)|
\le
\sum_{1\le |m|\le b/(2d)}
\min\!\left(R,\frac{b}{d|m|}\right)
\ll
\frac{b}{d}\log(2b).
\]
Thus
\[
\sum_{\substack{h\bmod b\\ h\neq 0}}
|\widehat w_R(h)|\,|S(n,h;b)|
\ll
\tau(b)b^{1/2}\log(2b)
\sum_{d\mid b}(n,d)^{1/2}\frac{b}{d}.
\]
Dividing by \(b\), we obtain
\[
\frac1b\sum_{\substack{h\bmod b\\ h\neq 0}}
|\widehat w_R(h)|\,|S(n,h;b)|
\ll
\tau(b)b^{1/2}\log(2b)
\sum_{d\mid b}\frac{(n,d)^{1/2}}{d}.
\]
Now
\[
(n,d)^{1/2}\le (n,b)^{1/2},
\]
so
\[
\sum_{d\mid b}\frac{(n,d)^{1/2}}{d}
\le
(n,b)^{1/2}\sum_{d\mid b}\frac1d
\le
(n,b)^{1/2}\tau(b).
\]
Hence the contribution of the nonzero frequencies is
\[
\ll
(n,b)^{1/2}b^{1/2}\tau(b)^2\log(2b).
\]

Combining the zero and nonzero frequencies gives
\[
K_b(n;R)\ll (n,b)^{1/2}b^{1/2}\tau(b)^2\log(2b),
\]
as claimed.

Finally, if \((n,b)=1\), then
\[
K_b(n;R)\ll b^{1/2}\tau(b)^2\log(2b)\ll_\varepsilon b^{1/2+\varepsilon},
\]
since \(\tau(b)^2\log(2b)\ll_\varepsilon b^\varepsilon\).
\end{proof}

\subsection{A Fej\'er approximation bound for periodic Lipschitz functions}

Let \(F_N\) denote the Fej\'er kernel on the circle,
\[
F_N(t):=\frac{1}{N}\left(\frac{\sin(\pi N t)}{\sin(\pi t)}\right)^2,\qquad t\in[-1/2,1/2],
\]
extended \(1\)-periodically.

\begin{lemma}[First moment of the Fej\'er kernel]
\label{lem:fejer_moment}
There exists an absolute constant \(C>0\) such that for all \(N\ge2\),
\[
\int_{-1/2}^{1/2} |t|\,F_N(t)\,dt\le C\,\frac{\log N}{N}.
\]
Consequently, if \(G\) is \(1\)-periodic and Lipschitz, then its Fej\'er mean \(G*F_N\) satisfies
\[
\|G-G*F_N\|_{\infty}\le C\,\frac{\log N}{N}\,\operatorname{Lip}(G).
\]
\end{lemma}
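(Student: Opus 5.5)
The plan is to bound the first moment directly by splitting the integral at the scale \(1/N\), using the two standard pointwise bounds for the Fej\'er kernel. Then we deduce the approximation estimate from the facts that \(F_N\) is a nonnegative kernel of unit mass and that \(G\) is Lipschitz.

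\textbf{Pointwise bounds on \(F_N\).} For \(|t|\le 1/2\) we have \(|\sin(\pi N t)|\le 1\) and \(|\sin(\pi t)|\ge 2|t|\), since \(\sin\) is concave on \([0,\pi/2]\). Together with the trivial bound \(|\sin(\pi N t)/\sin(\pi t)|\le N\), this gives
\[
0\le F_N(t)\le \min\!\left(N,\ \frac{1}{4N t^2}\right).
\]
The trivial bound holds because \(\sin(\pi N t)/\sin(\pi t)\) is a sum of \(N\) unimodular exponentials.

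\textbf{The first moment.} Split \([-1/2,1/2]\) into \(|t|\le 1/N\) and \(1/N<|t|\le 1/2\). On the first region, \(F_N\le N\) gives
\[
\int_{|t|\le 1/N}|t|F_N(t)\,dt\le N\cdot 2\int_0^{1/N}t\,dt=\frac1N.
\]
On the second region,
\[
\int_{1/N<|t|\le1/2}|t|\,\frac{1}{4Nt^2}\,dt=\frac{1}{2N}\log\frac N2\le\frac{\log N}{2N}.
\]
Summing the two pieces, and using \(\log N\ge\log 2\) for \(N\ge2\) to absorb the \(1/N\) term, gives the first claim with an absolute constant \(C\).

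\textbf{The approximation estimate.} We use that \(F_N\ge0\) and \(\int_{-1/2}^{1/2}F_N=1\); the latter holds because \(F_N\) is the Ces\`aro mean of Dirichlet kernels, each of which has integral \(1\), or equivalently because the constant Fourier coefficient of \(F_N\) equals \(1\). Then for every \(x\),
\[
G(x)-(G*F_N)(x)=\int_{-1/2}^{1/2}\bigl(G(x)-G(x-t)\bigr)F_N(t)\,dt.
\]
Periodicity lets us take \(t\in[-1/2,1/2]\), and the Lipschitz property gives \(|G(x)-G(x-t)|\le\operatorname{Lip}(G)|t|\). Therefore
\[
|G(x)-(G*F_N)(x)|\le \operatorname{Lip}(G)\int|t|F_N(t)\,dt\le C\,\frac{\log N}{N}\,\operatorname{Lip}(G).
\]
Taking the supremum over \(x\) finishes the argument. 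Nothing here is a real obstacle; the only point requiring care is the lower bound \(|\sin\pi t|\ge2|t|\), and it is precisely the tail of the first moment, \(\int_{1/N}^{1/2}\frac{dt}{Nt}\), that produces the \(\log N\) factor.
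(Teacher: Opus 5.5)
Your proposal is correct and follows essentially the same route as the paper: split at \(|t|=1/N\), use \(F_N\le N\) near the origin and \(F_N\ll 1/(Nt^2)\) away from it, then deduce the approximation bound from \(F_N\ge0\), \(\int F_N=1\) and the Lipschitz estimate. The only difference is that you supply explicit constants (via \(|\sin\pi t|\ge 2|t|\)) where the paper writes \(\ll\).
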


\begin{proof}
Since \(F_N\ge0\) and \(\int_{-1/2}^{1/2}F_N=1\),
\[
G(x)-(G*F_N)(x)=\int_{-1/2}^{1/2}\bigl(G(x)-G(x-t)\bigr)F_N(t)\,dt.
\]
By Lipschitz continuity,
\[
|G(x)-G(x-t)|\le \operatorname{Lip}(G)\,|t|,
\]
which gives the second inequality once the moment bound is proved.

For the moment bound, split the integral into \(|t|\le 1/N\) and \(1/N<|t|\le 1/2\).
For \(|t|\le 1/N\), use \(F_N(t)\le N\) to get
\[
\int_{|t|\le 1/N}|t|F_N(t)\,dt
\ll
N\int_0^{1/N} t\,dt
\ll
\frac1N.
\]
For \(1/N<|t|\le 1/2\), use \(|\sin(\pi t)|\gg |t|\) and \(|\sin(\pi Nt)|\le1\) to obtain
\[
F_N(t)\ll \frac{1}{Nt^2}.
\]
Hence
\[
\int_{1/N<|t|\le1/2}|t|F_N(t)\,dt
\ll
\frac1N\int_{1/N}^{1/2}\frac{dt}{t}
\ll
\frac{\log N}{N}.
\]
\end{proof}

\subsection{Proof of Proposition~\ref{prop:spec}}

\begin{proof}[Proof of Proposition~\ref{prop:spec}]\label{proof:prop:spec}
Fix \(\varepsilon>0\), and write \(\sigma:=\Re(s)\). Let
\[
B_s(x):=|f''(x)|^s,
\qquad
A_r:=H_s(r/b).
\]
We may assume \(b\ge4\), because the finitely many smaller values of \(b\) can be absorbed into the implied constant. Set
\[
N:=R:=\lfloor b^{1/2}\rfloor,
\qquad
\eta:=\varepsilon/4.
\]

Define the affine endpoint interpolant
\[
\ell_s(x):=(1-x)B_s(0)+xB_s(1),
\]
and the remainder
\[
G_s(x):=B_s(x)-\ell_s(x).
\]
Then
\[
G_s(0)=G_s(1)=0.
\]
By \eqref{eq:Bs_bounds}, the family \(B_s\) is \(C^1\) with norms bounded uniformly for \(s\in K\). Hence
\[
|B_s(1)-B_s(0)|\ll_{f,K}1,
\qquad
\operatorname{Lip}(G_s)\ll_{f,K}1,
\]
uniformly for \(s\in K\). In particular, \(G_s\) admits a \(1\)-periodic Lipschitz extension to the circle.

\emph{Step 1: replace the periodic remainder \(G_s\) by its Fej\'er mean.}
By Lemma~\ref{lem:fejer_moment},
\[
\|G_s-(G_s)_N\|_\infty \ll_{f,K} \frac{\log(2N)}{N},
\]
uniformly for \(s\in K\). Therefore
\[\!
\Sigma_b(s)
=
\sum_{(r,b)=1} A_r\,(G_s)_N(\overline r/b)
+
\sum_{(r,b)=1} A_r\,\ell_s(\overline r/b)
+
O_{f,K}\!\left(\frac{\log(2N)}{N}\sum_{(r,b)=1}|A_r|\right).
\]
Now Lemma~\ref{lem:Hsbound} yields
\[
\sum_{(r,b)=1}|A_r|
\le
\sum_{r=1}^b |H_s(r/b)|
\ll_K
\sum_{r=1}^b (r/b)^{-\sigma}
=
b^\sigma\sum_{r=1}^b r^{-\sigma}
\ll_K b,
\]
uniformly for \(s\in K\), since \(K\Subset\{\tfrac12<\Re(s)<1\}\). Thus
\begin{equation}
\label{eq:step1}
\Sigma_b(s)
=
\sum_{(r,b)=1} A_r\,(G_s)_N(\overline r/b)
+
\sum_{(r,b)=1} A_r\,\ell_s(\overline r/b)
+
O_{f,K}\bigl(b^{1/2}\log(2b)\bigr).
\end{equation}

\emph{Step 2: expand the periodic part in Fourier modes.}
Write
\[
(G_s)_N(x)=\widehat G_s(0)+\sum_{0<|n|<N}\widehat{(G_s)_N}(n)e(nx),
\qquad
\widehat G_s(0)=\int_0^1 G_s(x)\,dx.
\]
Since \(G_s\) is Lipschitz with \(\operatorname{Lip}(G_s)\ll_{f,K}1\), its Fourier coefficients satisfy
\[
|\widehat G_s(n)|\ll_{f,K}\frac1{|n|}\qquad (n\neq0),
\]
uniformly for \(s\in K\), and hence
\[
|\widehat{(G_s)_N}(n)|\le |\widehat G_s(n)|\ll_{f,K}\frac1{|n|}
\qquad (0<|n|<N).
\]
Therefore
\[
\sum_{(r,b)=1} A_r\,(G_s)_N(\overline r/b)
=
\widehat G_s(0)\sum_{(r,b)=1}A_r
+
\sum_{0<|n|<N}\widehat{(G_s)_N}(n)\,S_b(n),
\]
where
\[
S_b(n):=\sum_{(r,b)=1}A_r\,e\!\left(\frac{n\overline r}{b}\right).
\]

\emph{Step 3: expand the affine part exactly on the residue grid.}
Let
\[
\ell_s(x)=\alpha_s+\beta_s x,
\qquad
\beta_s=B_s(1)-B_s(0).
\]
For \(m=0,\dots,b-1\), the values \(\ell_s(m/b)\) admit the discrete Fourier expansion
\[
\ell_s(m/b)=c_{b,s}(0)+\sum_{0<|n|\le b/2} c_{b,s}(n)e(nm/b),
\]
where the sum is taken over any fixed symmetric system of representatives of the nonzero residue classes modulo \(b\). The coefficients satisfy
\[
c_{b,s}(0)
=
\frac1b\sum_{m=0}^{b-1}\ell_s(m/b)
=
\int_0^1 \ell_s(x)\,dx+O_{f,K}(b^{-1}),
\]
and, for \(0<|n|\le b/2\),
\[
c_{b,s}(n)
=
-\frac{\beta_s}{b\bigl(1-e(-n/b)\bigr)},
\qquad
|c_{b,s}(n)|\ll_{f,K}\frac1{|n|},
\]
uniformly for \(s\in K\). Hence
\[
\sum_{(r,b)=1} A_r\,\ell_s(\overline r/b)
=
c_{b,s}(0)\sum_{(r,b)=1}A_r
+
\sum_{0<|n|\le b/2} c_{b,s}(n)\,S_b(n).
\]

\emph{Step 4: the main term.}
By Lemma~\ref{lem:unitavg},
\[
\sum_{(r,b)=1}A_r
=
\varphi(b)\int_0^1 H_s(u)\,du
+
O_K\bigl(b^\sigma\tau(b)\bigr),
\]
uniformly for \(s\in K\). Also,
\[\mkern-26mu
\widehat G_s(0)+c_{b,s}(0)
=
\int_0^1 G_s(x)\,dx+\int_0^1 \ell_s(x)\,dx+O_{f,K}(b^{-1})
=
\int_0^1 B_s(x)\,dx+O_{f,K}(b^{-1}).
\]
Therefore the contribution of the zero Fourier mode is
\[
\varphi(b)\left(\int_0^1 H_s(u)\,du\right)\left(\int_0^1 B_s(x)\,dx\right)
+
O_{f,K}\bigl(b^\sigma\tau(b)\bigr)
+
O_{f,K}(1).
\]
Since \(\sigma<1\), the error \(O_{f,K}(b^\sigma\tau(b))\) is dominated by the final error term.

\emph{Step 5: bound the oscillatory sums \(S_b(n)\).}
Let \(n\neq0\). Split
\[
S_b(n)=S_b^{\le R}(n)+S_b^{>R}(n),
\]
where
\[
S_b^{\le R}(n):=\sum_{\substack{1\le r\le R\\(r,b)=1}}A_r\,e\!\left(\frac{n\overline r}{b}\right),
\qquad
S_b^{>R}(n):=\sum_{\substack{R<r\le b\\(r,b)=1}}A_r\,e\!\left(\frac{n\overline r}{b}\right).
\]

For the initial segment, Lemma~\ref{lem:Hsbound} gives
\[
|S_b^{\le R}(n)|
\le \sum_{r\le R}|A_r|
\ll_K b^\sigma\sum_{r\le R}r^{-\sigma}
\ll_K b^\sigma R^{1-\sigma}
\ll_K b^{\frac{1+\sigma}{2}}.
\]

For the tail, define
\[
a_r:=\ind_{(r,b)=1}\,e\!\left(\frac{n\overline r}{b}\right),
\qquad
T_n(t):=\sum_{1\le r\le t} a_r.
\]
Then Lemma~\ref{lem:inckloo} gives
\[
\max_{1\le t\le b}|T_n(t)|
\ll_\eta (n,b)^{1/2}b^{1/2+\eta}.
\]
By Abel summation,
\[\mkern-10mu
S_b^{>R}(n)
=
H_s(1)\,T_n(b)-H_s(R/b)\,T_n(R)
-\sum_{t=R}^{b-1}T_n(t)\Bigl(H_s\!\left(\frac{t+1}{b}\right)-H_s\!\left(\frac{t}{b}\right)\Bigr).
\]
Using Lemma~\ref{lem:Hsbound},
\[
|H_s(1)|\ll_K1,
\qquad
|H_s(R/b)|\ll_K (R/b)^{-\sigma}.
\]
Also, by the mean value theorem and Lemma~\ref{lem:Hsderiv},
\[\mkern-22mu
\left|H_s\!\left(\frac{t+1}{b}\right)-H_s\!\left(\frac{t}{b}\right)\right|
\le
\frac1b\sup_{u\in[t/b,(t+1)/b]}|\partial_u H_s(u)|
\ll_K
\frac1b\left(\frac{t}{b}\right)^{-\sigma-1}
\mkern-12mu\ll_K b^\sigma t^{-\sigma-1}.
\]
Hence
\[
\sum_{t=R}^{b-1}
\left|H_s\!\left(\frac{t+1}{b}\right)-H_s\!\left(\frac{t}{b}\right)\right|
\ll_K
b^\sigma\sum_{t\ge R}t^{-\sigma-1}
\ll_K b^\sigma R^{-\sigma}.
\]
Therefore
\[
|S_b^{>R}(n)|
\ll_{K,\eta}
(n,b)^{1/2}b^{1/2+\eta}\Bigl(1+(R/b)^{-\sigma}+b^\sigma R^{-\sigma}\Bigr)
\ll_{K,\eta}
(n,b)^{1/2}b^{\frac{1+\sigma}{2}+\eta},
\]
since \(R\asymp b^{1/2}\). Together with the initial-segment bound, this gives
\begin{equation}
\label{eq:Sbn-corrected}
S_b(n)\ll_{K,\eta}(n,b)^{1/2}b^{\frac{1+\sigma}{2}+\eta}
\qquad (n\neq0).
\end{equation}

\emph{Step 6: sum the nonzero Fourier modes.}
For the periodic part, using \eqref{eq:Sbn-corrected},
\[
\sum_{0<|n|<N}|\widehat{(G_s)_N}(n)\,S_b(n)|
\ll_{f,K,\eta}
b^{\frac{1+\sigma}{2}+\eta}
\sum_{1\le n<N}\frac{(n,b)^{1/2}}{n}.
\]
Grouping by \(d=(n,b)\), we obtain
\[
\sum_{1\le n<N}\frac{(n,b)^{1/2}}{n}
\le
\sum_{d\mid b}d^{-1/2}\sum_{m<N/d}\frac1m
\ll
\tau(b)\log(2N).
\]
Hence
\[
\sum_{0<|n|<N}\widehat{(G_s)_N}(n)\,S_b(n)
=
O_{f,K,\eta}\!\left(
b^{\frac{1+\sigma}{2}+\eta}\tau(b)\log(2N)
\right).
\]

Similarly, for the affine part,
\[
\sum_{0<|n|\le b/2}|c_{b,s}(n)\,S_b(n)|
\ll_{f,K,\eta}
b^{\frac{1+\sigma}{2}+\eta}
\sum_{1\le n\le b/2}\frac{(n,b)^{1/2}}{n}
\ll
b^{\frac{1+\sigma}{2}+\eta}\tau(b)\log(2b).
\]

Since \(\tau(b)\log(2b)\ll_\eta b^{2\eta}\), both nonzero-mode contributions are
\[
O_{f,K,\varepsilon}\!\left(b^{\frac{1+\sigma}{2}+\varepsilon}\right).
\]
The truncation error in \eqref{eq:step1} also satisfies
\[
b^{1/2}\log(2b)\ll_{f,K,\varepsilon} b^{\frac{1+\sigma}{2}+\varepsilon},
\]
because \(\sigma>\tfrac12\).

Putting everything together, and recalling that
\[
\int_0^1 B_s(x)\,dx=\int_0^1 |f''(x)|^s\,dx,
\]
we conclude that
\[
\Sigma_b(s)
=
\varphi(b)\left(\int_0^1 H_s(u)\,du\right)\left(\int_0^1 |f''(x)|^s\,dx\right)
+
O_{f,K,\varepsilon}\!\left(b^{\frac{1+\Re(s)}{2}+\varepsilon}\right),
\]
uniformly for \(s\in K\). This proves the proposition.
\end{proof}


\subsection{Strip-uniform refinement and half-plane continuation}
\label{ssec:striprefinement}

In this subsection we strengthen the local continuation statement near \(s=\tfrac23\)
to meromorphic continuation in the half-plane \(\Re(s)>\tfrac35\), and we make the
dependence on the imaginary part quantitative on vertical strips. This is the analytic
input needed in Appendix~\ref{app2}, subsection~\ref{ssec:analyticcompletion}.

\begin{lemma}[Endpoint replacement on \(\Re(s)>\tfrac12\)]
\label{lem:replace-halfplane}
Assume \(f\in C^3([0,1])\) and \(0<m\le |f''(x)|\) on \([0,1]\).
For every compact set
\[
K\Subset \{\,s\in\C:\Re(s)>\tfrac12\,\},
\]
the function
\[
Z_f(s)-Z_f^{\mathrm{end}}(s)
\]
is holomorphic on a neighborhood of \(K\). In particular, \(Z_f-Z_f^{\mathrm{end}}\)
is holomorphic on the half-plane \(\Re(s)>\tfrac12\).
\end{lemma}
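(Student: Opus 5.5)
The plan is to rerun the proof of Lemma~\ref{lem:replace} with a compact set $K$ in place of the disk $D$, and to check that the only place the disk was used, namely the summability of the comparison majorant, still works throughout $\Re(s)>\tfrac12$.

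First, by Lemma~\ref{lemma:exactT}, for $\Re(s)$ large we have
\[
Z_f(s)-Z_f^{\mathrm{end}}(s)=2^{-s}\sum_{I=[c/d,a/b]}\frac{|f''(\xi_I)|^s-|f''(a/b)|^s}{(bd(b+d))^s}.
\]
I will show that this series converges absolutely and uniformly on a neighborhood of $K$. Fix a slightly larger compact $K'\Subset\{\Re(s)>\tfrac12\}$ whose interior contains $K$, and put $\sigma_-:=\min_{K'}\Re(s)>\tfrac12$.

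Second, exactly as in \eqref{eq:Bs_bounds}, the family $B_s=|f''|^s$ has $\sup_{s\in K'}\|(B_s)'\|_\infty\le C_{K'}$. This uses that $f'''/f''$ is bounded by \eqref{eq:ass}, and that $|f''|^{\Re(s)}$ is bounded above for $s$ in the compact set. The mean value theorem then gives
\[
\bigl||f''(\xi_I)|^s-|f''(a/b)|^s\bigr|\le C_{K'}|\xi_I-a/b|\le C_{K'}/(bd).
\]
Each term of the series is therefore dominated, uniformly on $K'$, by
\[
C_{K'}\,2^{-\sigma_-}\,b^{-(1+\sigma_-)}d^{-(1+\sigma_-)}(b+d)^{-\sigma_-}\le C'\,b^{-(1+\sigma_-)}d^{-(1+\sigma_-)}.
\]
By Lemma~\ref{lem:param}, Farey intervals inject into pairs $(b,d)$. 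Hence the majorant is bounded by $\bigl(\sum_b b^{-1-\sigma_-}\bigr)^2$, which is finite because $1+\sigma_->1$. In fact any $\sigma_->0$ would do here, but we only need $\sigma_->\tfrac12$.

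Third, the Weierstrass $M$-test gives uniform convergence on $K'$ of a series of entire functions. The sum is therefore holomorphic on the interior of $K'$, which is a neighborhood of $K$, and it agrees with $Z_f-Z_f^{\mathrm{end}}$ wherever both original series converge. Exhausting $\{\Re(s)>\tfrac12\}$ by such compacts gives holomorphy on the whole half-plane.

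The only point needing care, and it is mild, is that the bound on $\|(B_s)'\|_\infty$ must be uniform on $K'$ even when $\Im(s)$ is large. This holds because the bound depends only on $|s|$ and $\Re(s)$, both of which are bounded on a compact set. There is also a formal subtlety: the identity with the difference of the two series is only known where each series converges separately. I will therefore state the conclusion as saying that the difference series defines the holomorphic continuation of $Z_f-Z_f^{\mathrm{end}}$.
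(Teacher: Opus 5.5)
Your proposal is correct and follows the paper's proof essentially step for step. Like the paper, you enlarge \(K\) to \(K'\), use the uniform \(C^1\) bound on \(B_s=|f''|^s\) together with \(|\xi_I-a/b|\le 1/(bd)\), dominate by a product of convergent series in \(b\) and \(d\), and apply the Weierstrass \(M\)-test; the only cosmetic difference is that you discard \((b+d)^{-\sigma_-}\le 1\), whereas the paper uses \(b+d\ge b\) to absorb it into the \(b\)-sum, and both choices give a summable majorant.
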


\begin{proof}
Choose a compact set
\[
K'\Subset \{\,s\in\C:\Re(s)>\tfrac12\,\}
\]
such that \(K\subset \operatorname{int}(K')\). Set
\[
\sigma_-:=\inf_{s\in K'}\Re(s)>\tfrac12.
\]

Recall that for \(s\in\C\) and \(x\in[0,1]\) we write
\[
B_s(x):=|f''(x)|^s.
\]
By the regularity discussion at the beginning of this appendix, the family \(\{B_s\}_{s\in K'}\)
is uniformly \(C^1\), so there exists \(C_{K'}>0\) such that
\[
\sup_{s\in K'}\|(B_s)'\|_\infty\le C_{K'}.
\]
Hence for all \(u,v\in[0,1]\) and all \(s\in K'\),
\[
\bigl||f''(u)|^s-|f''(v)|^s\bigr|
=
|B_s(u)-B_s(v)|
\le C_{K'}|u-v|.
\]

Now let \(I=[c/d,a/b]\) be a Farey interval. By Lemma~\ref{lemma:exactT},
\[
Z_f(s)=2^{-s}\sum_I \frac{|f''(\xi_I)|^s}{(bd(b+d))^s},
\]
while by definition
\[
Z_f^{\mathrm{end}}(s)=2^{-s}\sum_I \frac{|f''(a/b)|^s}{(bd(b+d))^s}.
\]
Since \(\xi_I\in(c/d,a/b)\), we have
\[
\left|\xi_I-\frac ab\right|
\le
\left|\frac ab-\frac cd\right|
=
\frac1{bd}.
\]
Therefore, uniformly for \(s\in K'\),
\[\mkern-6mu
\left|
\frac{|f''(\xi_I)|^s-|f''(a/b)|^s}{(bd(b+d))^s}
\right|
\le
\frac{C_{K'}}{bd\,(bd(b+d))^{\Re(s)}}
\le
C_{K'}\,b^{-(\sigma_-+1)}d^{-(\sigma_-+1)}(b+d)^{-\sigma_-}.
\]

Thus the series defining \(Z_f-Z_f^{\mathrm{end}}\) is dominated on \(K'\) by
\[
\sum_I b^{-(\sigma_-+1)}d^{-(\sigma_-+1)}(b+d)^{-\sigma_-}.
\]
Dropping the Farey restriction only enlarges the sum, so it is enough to consider
\[
\sum_{b,d\ge1} b^{-(\sigma_-+1)}d^{-(\sigma_-+1)}(b+d)^{-\sigma_-}.
\]
Using \(b+d\ge b\), we obtain
\[
b^{-(\sigma_-+1)}d^{-(\sigma_-+1)}(b+d)^{-\sigma_-}
\le
b^{-(2\sigma_-+1)}d^{-(\sigma_-+1)}.
\]
Hence
\[
\sum_{b,d\ge1} b^{-(\sigma_-+1)}d^{-(\sigma_-+1)}(b+d)^{-\sigma_-}
\le
\left(\sum_{b\ge1} b^{-(2\sigma_-+1)}\right)
\left(\sum_{d\ge1} d^{-(\sigma_-+1)}\right).
\]
Both series converge because \(\sigma_->\tfrac12\). Therefore the defining series for
\(Z_f-Z_f^{\mathrm{end}}\) converges uniformly on \(K'\) by the Weierstrass \(M\)-test.
Since each term is holomorphic in \(s\), the sum is holomorphic on \(\operatorname{int}(K')\),
hence on a neighborhood of \(K\).
\end{proof}

\begin{lemma}[Strip-uniform bounds for \(H_s\) and \(\partial_u H_s\)]
\label{lem:Hs-strip}
Fix numbers
\[
\frac12<\alpha<\beta<1.
\]
Then there exist constants \(C_1=C_1(\alpha)\) and \(C_2=C_2(\alpha,\beta)\) such that for every
\[
s=\sigma+i\tau,\qquad \alpha\le \sigma\le \beta,
\]
and every \(u\in(0,1]\), one has
\[
|H_s(u)|\le C_1\,u^{-\sigma},
\]
and
\[
|\partial_u H_s(u)|\le C_2\,(1+|\tau|)\,u^{-\sigma-1}.
\]
\end{lemma}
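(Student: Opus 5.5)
The plan is to repeat the proofs of Lemma~\ref{lem:Hsbound} and Lemma~\ref{lem:Hsderiv}, keeping track of how each constant depends on \(s\). The one new observation is this: for \(x>0\) we have \(|x^{-s}|=x^{-\sigma}\) exactly, so the imaginary part \(\tau\) never enters a modulus bound of a power. It can only enter through the prefactor \(s\) that appears after differentiation, and \(|s|\le \beta+|\tau|\ll_\beta 1+|\tau|\).

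\textbf{Bound for \(H_s\).} Write
\[
H_s(u)=u^{-s}(1+u)^{-s}+\sum_{k\ge1}(k+u)^{-s}(k+1+u)^{-s}.
\]
The \(k=0\) term has modulus
\[
u^{-\sigma}(1+u)^{-\sigma}\le u^{-\sigma}.
\]
Each tail term has modulus at most \(k^{-2\sigma}\le k^{-2\alpha}\). The tail is therefore bounded by \(\zeta(2\alpha)\), which is finite because \(\alpha>\frac12\). Since \(u^{-\sigma}\ge1\) on \((0,1]\), we obtain
\[
|H_s(u)|\le (1+\zeta(2\alpha))\,u^{-\sigma}.
\]
So we may take \(C_1=1+\zeta(2\alpha)\), which depends only on \(\alpha\).

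\textbf{Bound for \(\partial_u H_s\).} First justify termwise differentiation. For \(u\in[\delta,1]\), the differentiated series is dominated by \(|s|\,k^{-2\alpha-1}\), so it converges locally uniformly in \(u\), and the derivative formula from the proof of Lemma~\ref{lem:Hsderiv} holds:
\[
\partial_u H_s(u)=-s\sum_{k\ge0}\Bigl[(k+u)^{-s-1}(k+1+u)^{-s}+(k+u)^{-s}(k+1+u)^{-s-1}\Bigr].
\]
The \(k=0\) bracket has modulus at most
\[
u^{-\sigma-1}+u^{-\sigma}\le 2u^{-\sigma-1}.
\]
For \(k\ge1\) the bracket is at most \(2k^{-2\sigma-1}\le 2k^{-2\alpha-1}\), and these sum to a constant depending only on \(\alpha\). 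Using \(u^{-\sigma-1}\ge1\) once more and \(|s|\le\beta+|\tau|\), we get
\[
|\partial_u H_s(u)|\le 2(\beta+|\tau|)\bigl(1+\zeta(2\alpha+1)\bigr)\,u^{-\sigma-1}.
\]
This gives \(C_2=2\max(\beta,1)\,(1+\zeta(2\alpha+1))\).

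\textbf{Main obstacle.} There is no real obstacle. The only point needing care is uniformity in \(\tau\), and the observations above handle it: all moduli of powers are exactly real powers, so the sole \(\tau\)-dependence is the linear factor \(|s|\) coming from differentiation.
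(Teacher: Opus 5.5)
Your proposal is correct and follows the paper's proof essentially verbatim. Both split off the \(k=0\) term, bound the tail by \(\sum_{k\ge1}k^{-2\alpha}\) (respectively \(\sum_{k\ge1}k^{-2\alpha-1}\)) using \(u^{-\sigma}\ge1\), and let \(\tau\) enter only through the prefactor \(|s|\le\beta+|\tau|\ll_\beta 1+|\tau|\) that appears after termwise differentiation.
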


\begin{proof}
Recall
\[
H_s(u)=\sum_{k\ge0}(k+u)^{-s}(k+1+u)^{-s},
\qquad \Re(s)>\frac12,\quad u\in(0,1].
\]

We first prove the bound for \(H_s(u)\). Split off the \(k=0\) term:
\[
H_s(u)=u^{-s}(1+u)^{-s}+\sum_{k\ge1}(k+u)^{-s}(k+1+u)^{-s}.
\]
Since \(k+u>0\) and \(k+1+u>0\), we have
\[
|(k+u)^{-s}(k+1+u)^{-s}|=(k+u)^{-\sigma}(k+1+u)^{-\sigma}.
\]
Hence
\[
|u^{-s}(1+u)^{-s}|
=
u^{-\sigma}(1+u)^{-\sigma}
\le u^{-\sigma}.
\]
Also, for \(k\ge1\),
\[
(k+u)^{-\sigma}(k+1+u)^{-\sigma}\le k^{-2\sigma}\le k^{-2\alpha}.
\]
Therefore
\[
\sum_{k\ge1}(k+u)^{-\sigma}(k+1+u)^{-\sigma}
\le \sum_{k\ge1}k^{-2\alpha}
=:C_\alpha<\infty.
\]
Since \(u\in(0,1]\), we have \(u^{-\sigma}\ge1\), so
\[
C_\alpha\le C_\alpha\,u^{-\sigma}.
\]
Combining the two estimates gives
\[
|H_s(u)|\le (1+C_\alpha)\,u^{-\sigma},
\]
which proves the first bound.

We now turn to the derivative. On every interval \(u\in[\delta,1]\) with \(\delta>0\), the defining
series and the differentiated series converge uniformly, so termwise differentiation is justified.
We obtain
\[
\partial_u H_s(u)
=
-s\sum_{k\ge0}(k+u)^{-s-1}(k+1+u)^{-s}
-s\sum_{k\ge0}(k+u)^{-s}(k+1+u)^{-s-1}.
\]

For the \(k=0\) term, we have
\[
|s|\Bigl(u^{-\sigma-1}(1+u)^{-\sigma}+u^{-\sigma}(1+u)^{-\sigma-1}\Bigr)
\le 2|s|\,u^{-\sigma-1}.
\]
For the tail \(k\ge1\), each summand is bounded by
\[\mkern-22mu
|s|\,(k+u)^{-\sigma-1}(k+1+u)^{-\sigma}
+
|s|\,(k+u)^{-\sigma}(k+1+u)^{-\sigma-1}
\le 2|s|\,k^{-2\sigma-1}\le 2|s|\,k^{-2\alpha-1}.
\]
Since \(\sum_{k\ge1}k^{-2\alpha-1}<\infty\), the full tail is bounded by \(C'_\alpha|s|\), and
again \(u^{-\sigma-1}\ge1\) on \((0,1]\). Therefore
\[
|\partial_u H_s(u)|\le C''_\alpha |s|\,u^{-\sigma-1}.
\]
On the strip \(\alpha\le \sigma\le \beta\),
\[
|s|=|\sigma+i\tau|\le \beta+|\tau|\le (1+\beta)(1+|\tau|),
\]
so
\[
|\partial_u H_s(u)|\le C_2(\alpha,\beta)\,(1+|\tau|)\,u^{-\sigma-1}.
\]
\end{proof}

\begin{lemma}[Strip-uniform bounds for \(B_s(x)=|f''(x)|^s\)]
\label{lem:Bs-strip}
Fix numbers
\[
\frac12<\alpha<\beta<1.
\]
Assume
\[
f\in C^3([0,1]),
\qquad
0<m\le |f''(x)|\le M
\quad (x\in[0,1]).
\]
Then there exist constants \(C_0,C_1,C_2>0\), depending only on \(\alpha,\beta,f\), such that for all
\[
s=\sigma+i\tau,\qquad \alpha\le \sigma\le \beta,
\]
and all \(x\in[0,1]\), one has
\[
|B_s(x)|\le C_0,\qquad \|B_s\|_\infty\le C_1,\qquad \|B_s'\|_\infty\le C_2(1+|\tau|).
\]
In particular, the affine interpolant/remainder decomposition used in the proof of
Proposition~\ref{prop:spec} remains valid on strips, with polynomial dependence on \(|\tau|\).
\end{lemma}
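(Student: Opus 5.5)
The plan is to obtain the three bounds directly from the representation \(B_s(x)=\exp\bigl(s\log|f''(x)|\bigr)\), with \(\log|f''(x)|\) confined to the compact interval \([\log m,\log M]\). The only slightly delicate feature is that \(s\) now ranges over an unbounded vertical strip. The modulus does not see \(\tau\), but the derivative does, through the factor \(s\).

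\emph{Sup bounds.} For \(s=\sigma+i\tau\) we have \(|B_s(x)|=|f''(x)|^\sigma\), because \(|\exp(i\tau\log|f''(x)|)|=1\). Since \(\alpha\le\sigma\le\beta<1\) and \(m\le|f''(x)|\le M\), we get
\[
|f''(x)|^\sigma\le\max\bigl(M^\alpha,M^\beta,m^\alpha,m^\beta\bigr)=:C_0.
\]
This bound is independent of \(\tau\) and of \(x\). The sup-norm bound follows immediately with \(C_1=C_0\).

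\emph{Derivative bound.} Because \(f\in C^3\) and \(|f''|\ge m>0\), the function \(f''\) never vanishes on \([0,1]\). By continuity it therefore has constant sign, so \(|f''|\) equals \(\pm f''\) and is \(C^1\). Differentiating gives
\[
B_s'(x)=s\,|f''(x)|^{s}\,\frac{f'''(x)}{f''(x)}.
\]
We then bound the three factors separately: \(|s|\le\beta+|\tau|\le(1+\beta)(1+|\tau|)\), the middle factor is at most \(C_0\) by the previous step, and \(|f'''/f''|\le\|f'''\|_\infty/m\). Multiplying these gives
\[
\|B_s'\|_\infty\le C_2(1+|\tau|),\qquad C_2=(1+\beta)\,C_0\,\|f'''\|_\infty/m.
\]

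\emph{Consequences for the decomposition.} The affine interpolant \(\ell_s\) has slope \(B_s(1)-B_s(0)\), which is bounded by \(2C_0\) uniformly on the strip. It follows that \(G_s=B_s-\ell_s\) satisfies \(\operatorname{Lip}(G_s)\ll 1+|\tau|\), and the same polynomial dependence carries over to the Fourier coefficient bounds \(|\widehat G_s(n)|\ll(1+|\tau|)/|n|\) and \(|c_{b,s}(n)|\ll 1/|n|\) used in the proof of Proposition~\ref{prop:spec}.

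There is no real obstacle here. The only point needing care is to justify that \(|f''|\) is differentiable, which holds because \(f''\) has constant sign, and to track that the sole source of \(\tau\)-growth is the factor \(s\) produced by the chain rule.
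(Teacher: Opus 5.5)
Your proof is correct and follows the paper's argument essentially verbatim: you use \(|B_s(x)|=|f''(x)|^\sigma\), the chain-rule formula \(B_s'=s\,|f''|^s f'''/f''\), and the estimate \(|s|\le(1+\beta)(1+|\tau|)\). Your constant \(C_0=\max(M^\alpha,M^\beta,m^\alpha,m^\beta)\) is slightly more careful than the paper's \(C_0=M^\beta\), which tacitly assumes \(M\ge1\); if \(M<1\), the correct bound is \(M^\alpha\).
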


\begin{proof}
Since \(0<m\le |f''(x)|\le M\) on \([0,1]\), for \(s=\sigma+i\tau\) and \(x\in[0,1]\),
\[
|B_s(x)|
=
\bigl||f''(x)|^s\bigr|
=
|f''(x)|^\sigma
\le M^\beta.
\]
Thus we may take
\[
C_0=C_1=M^\beta.
\]

Now differentiate. Since \(f''\) has constant sign on \([0,1]\) and is bounded away from \(0\),
the function \(\log|f''(x)|\) is \(C^1\), and
\[
B_s'(x)
=
s\,|f''(x)|^s\,\frac{f'''(x)}{f''(x)}.
\]
Therefore
\[
|B_s'(x)|
\le
|s|\,|f''(x)|^\sigma\left|\frac{f'''(x)}{f''(x)}\right|
\le
|s|\,M^\beta\,\frac{\|f'''\|_\infty}{m}.
\]
On the strip \(\alpha\le \sigma\le \beta\),
\[
|s|=|\sigma+i\tau|\le \beta+|\tau|\le (1+\beta)(1+|\tau|),
\]
so
\[
\|B_s'\|_\infty
\le
(1+\beta)\,M^\beta\,\frac{\|f'''\|_\infty}{m}\,(1+|\tau|).
\]

Finally, in the proof of Proposition~\ref{prop:spec}, one writes
\[
\ell_s(x):=(1-x)B_s(0)+xB_s(1),
\qquad
G_s(x):=B_s(x)-\ell_s(x).
\]
The bounds above imply
\[
|B_s(1)-B_s(0)|\le 2\|B_s\|_\infty\ll_{\alpha,\beta,f}1,
\]
hence
\[
\operatorname{Lip}(\ell_s)\ll_{\alpha,\beta,f}1,
\]
while
\[
\operatorname{Lip}(G_s)\le \|B_s'\|_\infty+\operatorname{Lip}(\ell_s)
\ll_{\alpha,\beta,f}(1+|\tau|).
\]
\end{proof}

\begin{proposition}[Strip-uniform equidistribution estimate]
\label{prop:spec-strip}
Fix numbers
\[
\frac12<\alpha<\beta<1.
\]
Assume
\[
f\in C^3([0,1]),
\qquad
0<m\le |f''(x)|\le M
\quad (x\in[0,1]).
\]
Then for every \(\varepsilon>0\) there exists \(C=C(\alpha,\beta,f,\varepsilon)>0\) such that for every
\[
s=\sigma+i\tau,\qquad \alpha\le \sigma\le \beta,
\]
and every \(b\ge1\), one has
\[
\Sigma_b(s)
=
\varphi(b)\left(\int_0^1 H_s(u)\,du\right)\left(\int_0^1 |f''(v)|^s\,dv\right)
+
O\!\left((1+|\tau|)^2\,b^{\frac{1+\sigma}{2}+\varepsilon}\right),
\]
where
\[
\Sigma_b(s):=\sum_{\substack{1\le r\le b\\(r,b)=1}} H_s(r/b)\,|f''(\overline r/b)|^s.
\]
\end{proposition}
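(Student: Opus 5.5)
The plan is to rerun the proof of Proposition~\ref{prop:spec} step by step, now with \(s=\sigma+i\tau\) in the strip \(\alpha\le\sigma\le\beta\), and to record how every constant depends on \(|\tau|\). The inputs that were uniform on compact sets are replaced by their strip versions. Lemma~\ref{lem:Hs-strip} gives \(|H_s(u)|\ll u^{-\sigma}\) with no \(\tau\)-loss and \(|\partial_uH_s(u)|\ll(1+|\tau|)u^{-\sigma-1}\). Lemma~\ref{lem:Bs-strip} gives \(\|B_s\|_\infty\ll1\) and \(\operatorname{Lip}(G_s)\ll 1+|\tau|\). As before, set \(N=R=\lfloor b^{1/2}\rfloor\), split \(B_s=\ell_s+G_s\), and replace \(G_s\) by its Fej\'er mean \((G_s)_N\). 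By Lemma~\ref{lem:fejer_moment} this costs \(O((1+|\tau|)b^{1/2}\log b)\) after summing \(|A_r|\ll b\). The bound \(\sum|A_r|\ll b\) uses only the \(\tau\)-free bound on \(H_s\).

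\textbf{Step 1: the zero mode.} Here we need Lemma~\ref{lem:unitavg} uniformly in the strip. I would redo that proof with explicit constants. The \(C^1\) Riemann-sum estimate for the smooth tail \(R_s\) picks up \(\|R_s'\|_\infty\ll 1+|\tau|\). The piece \(u^{1-s}Q_s(u)\) has derivative controlled by \(|1-s|\) and \(\|Q_s\|_{C^1}\ll(1+|\tau|)^2\), so it contributes \(O((1+|\tau|)^2)\). For the singular piece \(u^{-s}\), comparing sum and integral costs \(|s|\int_1^\infty x^{-\sigma-1}dx\ll 1+|\tau|\). The exact term \(-N^s/(1-s)\) is bounded by \(N^\sigma\), since \(|1-s|\ge 1-\beta\). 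Altogether this gives the zero-mode error \(O((1+|\tau|)^2 b^\sigma\tau(b))\). The affine part \(c_{b,s}(0)\) and \(\widehat G_s(0)\) only involve \(\|B_s\|_\infty\) and so carry no \(\tau\)-loss.

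\textbf{Step 2: the oscillatory modes (main obstacle).} The point to check is that the \(\tau\)-losses multiply to at most \((1+|\tau|)^2\). The Fourier coefficients satisfy \(|\widehat{(G_s)_N}(n)|\le|\widehat G_s(n)|\ll\operatorname{Lip}(G_s)/|n|\ll(1+|\tau|)/|n|\). The affine coefficients \(c_{b,s}(n)\ll|\beta_s|/|n|\ll1/|n|\) carry no loss. For \(S_b(n)\), the initial segment \(r\le R\) uses only \(|H_s|\) and gives \(b^{(1+\sigma)/2}\) with no loss. The Abel-summation tail uses Lemma~\ref{lem:inckloo} (independent of \(s\)) together with the total variation of \(H_s\) on \([R/b,1]\), which by Lemma~\ref{lem:Hs-strip} is \(\ll(1+|\tau|)b^\sigma R^{-\sigma}\). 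Hence \(S_b(n)\ll(1+|\tau|)(n,b)^{1/2}b^{(1+\sigma)/2+\eta}\). Multiplying by the Fourier coefficients and summing over \(n\) with the divisor grouping from the original proof yields \(O((1+|\tau|)^2b^{(1+\sigma)/2+\varepsilon})\). The Fej\'er truncation error and the zero-mode error are both dominated by this, since \(\sigma>1/2\) and \(\sigma<(1+\sigma)/2\).

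All remaining constants depend only on \(\alpha,\beta,f,\varepsilon\), which gives the stated estimate. The careful bookkeeping is in Step 2, where one must confirm that no hidden factor \(|s|\) enters twice beyond the two counted there, namely one from \(\operatorname{Lip}(G_s)\) and one from \(\partial_uH_s\).
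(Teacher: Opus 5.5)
Your proposal is correct and follows the paper's proof essentially step for step. It uses the same Fej\'er truncation with $N=R=\lfloor b^{1/2}\rfloor$, the same Fourier expansion of the periodic and affine parts, and the same Abel-summation/incomplete-Kloosterman bound for $S_b(n)$, with one factor $(1+|\tau|)$ from $\operatorname{Lip}(G_s)$ and one from $\partial_uH_s$. The only difference is in the zero mode. There you record a loss of $(1+|\tau|)^2$ coming from $\|Q_s\|_{C^1}$, while the paper asserts only $(1+|\tau|)$ for the strip version of Lemma~\ref{lem:unitavg}. Your bookkeeping is the more careful of the two, and in either case the term is absorbed because $b^\sigma\tau(b)\ll b^{(1+\sigma)/2+\varepsilon}$ for $\sigma<1$.
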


\begin{proof}
Write
\[
B_s(x):=|f''(x)|^s,
\qquad
A_r:=H_s(r/b).
\]

We first record the strip-uniform analogue of Lemma~\ref{lem:unitavg}:
\begin{equation}
\label{eq:unitavg-strip}
\sum_{\substack{1\le r\le b\\(r,b)=1}} H_s(r/b)
=
\varphi(b)\int_0^1 H_s(u)\,du
+
O_{\alpha,\beta}\!\left((1+|\tau|)\,b^\sigma \tau(b)\right).
\end{equation}
Indeed, the proof of Lemma~\ref{lem:unitavg} carries over verbatim once one replaces the
compact-uniform bounds used there by the strip-uniform bounds of Lemma~\ref{lem:Hs-strip}.
The only place where \(|\tau|\) appears is through the derivative bound for \(H_s\), and this yields
the factor \((1+|\tau|)\).

Now follow the proof of Proposition~\ref{prop:spec}. Let
\[
\ell_s(x):=(1-x)B_s(0)+xB_s(1),
\qquad
G_s(x):=B_s(x)-\ell_s(x).
\]
By Lemma~\ref{lem:Bs-strip},
\[
\|B_s\|_\infty\ll_{\alpha,\beta,f}1,
\qquad
\operatorname{Lip}(G_s)\ll_{\alpha,\beta,f}(1+|\tau|).
\]

Set
\[
N:=R:=\lfloor b^{1/2}\rfloor.
\]
By the Fej\'er-kernel estimate (Lemma~\ref{lem:fejer_moment}),
\[
\|G_s-(G_s)_N\|_\infty
\ll
\frac{\log(2N)}{N}\operatorname{Lip}(G_s)
\ll_{\alpha,\beta,f}
(1+|\tau|)\frac{\log(2N)}{N}.
\]
Therefore
\[\mkern-25mu
\Sigma_b(s)
=
\sum_{(r,b)=1}A_r\,(G_s)_N(\overline r/b)
+
\sum_{(r,b)=1}A_r\,\ell_s(\overline r/b)
+
O\!\left((1+|\tau|)\frac{\log(2N)}{N}\sum_{(r,b)=1}|A_r|\right).
\]
By Lemma~\ref{lem:Hs-strip},
\[
|A_r|=|H_s(r/b)|\ll_\alpha (r/b)^{-\sigma},
\]
so
\[
\sum_{(r,b)=1}|A_r|
\le
\sum_{r=1}^b |H_s(r/b)|
\ll_\alpha
b^\sigma\sum_{r=1}^b r^{-\sigma}
\ll_\alpha b.
\]
Hence the truncation error is
\begin{equation}
\label{eq:trunc-strip}
O\!\left((1+|\tau|)\,b^{1/2}\log(2b)\right).
\end{equation}

Next expand the periodic part:
\[
(G_s)_N(x)=\widehat G_s(0)+\sum_{0<|n|<N}\widehat{(G_s)_N}(n)e(nx).
\]
Since \(G_s\) is Lipschitz with
\[
\operatorname{Lip}(G_s)\ll (1+|\tau|),
\]
its Fourier coefficients satisfy
\[
|\widehat G_s(n)|\ll_{\alpha,\beta,f}\frac{1+|\tau|}{|n|}
\qquad (n\neq0),
\]
hence
\[
|\widehat{(G_s)_N}(n)|\le |\widehat G_s(n)|
\ll_{\alpha,\beta,f}\frac{1+|\tau|}{|n|}.
\]
So
\[
\sum_{(r,b)=1}A_r\,(G_s)_N(\overline r/b)
=
\widehat G_s(0)\sum_{(r,b)=1}A_r
+
\sum_{0<|n|<N}\widehat{(G_s)_N}(n)\,S_b(n),
\]
where
\[
S_b(n):=\sum_{(r,b)=1}A_r\,e\!\left(\frac{n\overline r}{b}\right).
\]

For the affine part, write
\[
\ell_s(x)=\alpha_s+\beta_s x,
\qquad \beta_s=B_s(1)-B_s(0).
\]
Since \(\|B_s\|_\infty\ll1\), we have \(|\beta_s|\ll1\). The discrete Fourier expansion on the \(b\)-grid gives
\[
\ell_s(m/b)=c_{b,s}(0)+\sum_{0<|n|\le b/2}c_{b,s}(n)e(nm/b),
\]
with
\[
c_{b,s}(0)=\int_0^1 \ell_s(x)\,dx+O(b^{-1}),
\qquad
|c_{b,s}(n)|\ll_{\alpha,\beta,f}\frac{1}{|n|}
\quad (0<|n|\le b/2).
\]
Therefore
\[
\sum_{(r,b)=1}A_r\,\ell_s(\overline r/b)
=
c_{b,s}(0)\sum_{(r,b)=1}A_r
+
\sum_{0<|n|\le b/2}c_{b,s}(n)S_b(n).
\]

We now bound \(S_b(n)\). Split
\[
S_b(n)=S_b^{\le R}(n)+S_b^{>R}(n),
\]
where
\[
S_b^{\le R}(n):=\sum_{\substack{1\le r\le R\\(r,b)=1}}A_r e\!\left(\frac{n\overline r}{b}\right),
\qquad
S_b^{>R}(n):=\sum_{\substack{R<r\le b\\(r,b)=1}}A_r e\!\left(\frac{n\overline r}{b}\right).
\]

For the initial segment,
\[
|S_b^{\le R}(n)|
\le
\sum_{r\le R}|A_r|
\ll_\alpha
b^\sigma\sum_{r\le R}r^{-\sigma}
\ll_\alpha
b^\sigma R^{1-\sigma}
\ll_\alpha
b^{\frac{1+\sigma}{2}}.
\]

For the tail, define
\[
a_r:=\ind_{(r,b)=1}e\!\left(\frac{n\overline r}{b}\right),
\qquad
T_n(T):=\sum_{1\le r\le T}a_r.
\]
By Lemma~\ref{lem:inckloo},
\[
\max_{1\le T\le b}|T_n(T)|
\ll_\eta
(n,b)^{1/2}b^{1/2+\eta}
\]
for any \(\eta>0\). Abel summation gives
\[
S_b^{>R}(n)
=
H_s(1)\,T_n(b)-H_s(R/b)\,T_n(R)
-\sum_{T=R}^{b-1}T_n(T)\Bigl(H_s((T+1)/b)-H_s(T/b)\Bigr).
\]
By Lemma~\ref{lem:Hs-strip},
\[
|H_s(1)|\ll_\alpha 1,
\qquad
|H_s(R/b)|\ll_\alpha (R/b)^{-\sigma}.
\]
Also, by the mean value theorem and Lemma~\ref{lem:Hs-strip},
\[
|H_s((T+1)/b)-H_s(T/b)|
\le
\frac1b\sup_{u\in[T/b,(T+1)/b]}|\partial_u H_s(u)|
\ll_{\alpha,\beta}
(1+|\tau|)\,b^\sigma T^{-\sigma-1}.
\]
Hence
\[
\sum_{T=R}^{b-1}|H_s((T+1)/b)-H_s(T/b)|
\ll_{\alpha,\beta}
(1+|\tau|)\,b^\sigma\sum_{T\ge R}T^{-\sigma-1}
\ll_{\alpha,\beta}
(1+|\tau|)\,b^\sigma R^{-\sigma}.
\]
Therefore
\[
|S_b^{>R}(n)|
\ll_{\alpha,\beta,\eta}
(n,b)^{1/2}b^{1/2+\eta}
\left(
1+(R/b)^{-\sigma}+(1+|\tau|)b^\sigma R^{-\sigma}
\right).
\]
Since \(R=\lfloor b^{1/2}\rfloor\), both \((R/b)^{-\sigma}\) and \(b^\sigma R^{-\sigma}\) are
\(\ll b^{\sigma/2}\). Thus
\[
S_b(n)\ll_{\alpha,\beta,\eta}
(1+|\tau|)\,(n,b)^{1/2}\,b^{\frac{1+\sigma}{2}+\eta}.
\]

Now sum the nonzero Fourier modes. For the periodic part,
\[
\sum_{0<|n|<N}|\widehat{(G_s)_N}(n)S_b(n)|
\ll_{\alpha,\beta,f,\eta}
(1+|\tau|)^2 b^{\frac{1+\sigma}{2}+\eta}
\sum_{1\le n<N}\frac{(n,b)^{1/2}}{n}.
\]
Grouping by \(d=(n,b)\) exactly as in the proof of Proposition~\ref{prop:spec},
\[
\sum_{1\le n<N}\frac{(n,b)^{1/2}}{n}
\ll \tau(b)\log(2N).
\]
Therefore
\[
\sum_{0<|n|<N}\widehat{(G_s)_N}(n)S_b(n)
=
O\!\left((1+|\tau|)^2 b^{\frac{1+\sigma}{2}+\eta}\tau(b)\log(2N)\right).
\]

Similarly, for the affine part,
\[
\sum_{0<|n|\le b/2}|c_{b,s}(n)S_b(n)|
\ll_{\alpha,\beta,f,\eta}
(1+|\tau|)\,b^{\frac{1+\sigma}{2}+\eta}
\sum_{1\le n\le b/2}\frac{(n,b)^{1/2}}{n},
\]
hence
\[
\sum_{0<|n|\le b/2}c_{b,s}(n)S_b(n)
=
O\!\left((1+|\tau|)\,b^{\frac{1+\sigma}{2}+\eta}\tau(b)\log(2b)\right).
\]

For the zero mode,
\[
\widehat G_s(0)+c_{b,s}(0)
=
\int_0^1 B_s(x)\,dx+O(b^{-1})
=
\int_0^1 |f''(x)|^s\,dx+O(b^{-1}),
\]
so by \eqref{eq:unitavg-strip},
\[
\bigl(\widehat G_s(0)+c_{b,s}(0)\bigr)\sum_{(r,b)=1}A_r
=
\varphi(b)\left(\int_0^1 H_s(u)\,du\right)\left(\int_0^1 |f''(x)|^s\,dx\right)
+
O\!\left((1+|\tau|)\,b^\sigma\tau(b)\right)
+
O(1).
\]

Since \(\sigma<1\), we have
\[
b^\sigma\tau(b)\ll_\varepsilon b^{\frac{1+\sigma}{2}+\varepsilon},
\]
and because \(\sigma>\tfrac12\),
\[
b^{1/2}\log(2b)\ll_\varepsilon b^{\frac{1+\sigma}{2}+\varepsilon}.
\]
Absorbing the factors \(\tau(b)\log(2b)\) into \(b^\varepsilon\), and then choosing \(\eta>0\)
sufficiently small relative to the final \(\varepsilon\), all the error terms combine into
\[
O\!\left((1+|\tau|)^2\,b^{\frac{1+\sigma}{2}+\varepsilon}\right).
\]
This proves the proposition.
\end{proof}

\begin{proposition}[Strip-uniform holomorphy and growth of the error term]
\label{prop:E-strip}
Fix numbers
\[
\frac35<\alpha<\beta<1.
\]
Define
\[
A(s):=
2^{-s}\left(\int_0^1 H_s(u)\,du\right)\left(\int_0^1 |f''(v)|^s\,dv\right)
\]
and
\[
E(s):=
2^{-s}\sum_{b\ge1} b^{-3s}R_b(s),
\]
where
\[
\Sigma_b(s)
=
\varphi(b)\left(\int_0^1 H_s(u)\,du\right)\left(\int_0^1 |f''(v)|^s\,dv\right)
+
R_b(s).
\]
Then \(E(s)\) is holomorphic on the strip
\[
\alpha<\Re(s)<\beta,
\]
and
\[
E(\sigma+i\tau)\ll_{\alpha,\beta,f}(1+|\tau|)^2
\qquad (\alpha\le \sigma\le \beta).
\]

Consequently, the function
\[
\widetilde Z_f^{\mathrm{end}}(s)
:=
A(s)\frac{\zeta(3s-1)}{\zeta(3s)}+E(s)
\]
is meromorphic on the strip
\[
\alpha<\Re(s)<\beta.
\]
On the overlap
\[
\max\left\{\alpha,\frac23\right\}<\Re(s)<\beta,
\]
it agrees with the original absolutely convergent series
\(Z_f^{\mathrm{end}}(s)\). It therefore gives the meromorphic continuation
of \(Z_f^{\mathrm{end}}\) to the strip. The error term \(E\) is holomorphic
there and has polynomial growth on vertical lines.

\end{proposition}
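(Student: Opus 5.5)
The plan is to feed the strip-uniform bound of Proposition~\ref{prop:spec-strip} into the same Weierstrass $M$-test argument used in the proof of Theorem~\ref{thm:res}, now on a vertical strip rather than a small disk.

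\emph{Holomorphy and growth of $E$.} Fix $\frac35<\alpha<\beta<1$ and write $R_b(s)=\Sigma_b(s)-\varphi(b)(\int_0^1 H_s)(\int_0^1|f''|^s)$. For each fixed $b$, $R_b$ is a finite combination of holomorphic functions on $\frac12<\Re(s)<1$. Indeed, $H_s(r/b)$ is holomorphic there (locally uniform convergence of the $k$-series), $|f''(\overline r/b)|^s$ is entire, and $\int_0^1 H_s$ and $\int_0^1|f''|^s$ are holomorphic by Lemma~\ref{lem:intHs} and dominated convergence. By Proposition~\ref{prop:spec-strip}, for $\alpha\le\sigma\le\beta$,
\[
|b^{-3s}R_b(s)|\ll (1+|\tau|)^2\, b^{-3\sigma+\frac{1+\sigma}{2}+\varepsilon}\le (1+|\tau|)^2\, b^{\frac12-\frac52\alpha+\varepsilon}.
\]
Since $\alpha>\frac35$, we have $\frac12-\frac52\alpha<-1$, so we may choose $\varepsilon$ small enough that the exponent is still $<-1$. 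On each compact rectangle $\{\alpha\le\sigma\le\beta,\ |\tau|\le T\}$ the series defining $E$ is then dominated by $(1+T)^2\sum_b b^{-1-\delta}$. The $M$-test gives uniform convergence, hence holomorphy of $E$ on the open strip. Using $|2^{-s}|\le 1$, we also obtain $|E(\sigma+i\tau)|\ll(1+|\tau|)^2\sum_b b^{-1-\delta}$ on the closed strip. The key point to check is that the constant in Proposition~\ref{prop:spec-strip} depends only on $(\alpha,\beta,f,\varepsilon)$ and not on $\tau$, which is exactly how that proposition is stated.

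\emph{Meromorphy of $\widetilde Z_f^{\mathrm{end}}$.} $A(s)$ is holomorphic on $\frac12<\Re(s)<1$. The quotient $\zeta(3s-1)/\zeta(3s)$ is meromorphic there, since $\Re(3s)>\frac95>1$ forces $\zeta(3s)\neq0$. Its only pole in the strip comes from $\zeta(3s-1)$ at $s=\frac23$. Hence $\widetilde Z_f^{\mathrm{end}}$ is meromorphic on the strip.

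\emph{Agreement with $Z_f^{\mathrm{end}}$.} On $\max\{\alpha,\frac23\}<\Re(s)<\beta$, the series \eqref{eq:Hsdecomp} converges absolutely. Substituting $\Sigma_b=\varphi(b)\cdot(\text{main})+R_b$ termwise splits it into $A(s)\sum_b\varphi(b)b^{-3s}$ plus $E(s)$; the split is legitimate because both pieces converge absolutely there, the first since $\Re(3s)>2$. Using $\sum_b\varphi(b)b^{-3s}=\zeta(3s-1)/\zeta(3s)$, this is exactly $\widetilde Z_f^{\mathrm{end}}$. The strip is connected, so by the identity theorem $\widetilde Z_f^{\mathrm{end}}$ is the unique meromorphic continuation of $Z_f^{\mathrm{end}}$ to the strip.

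\emph{Main obstacle.} The only delicate point is the exponent bookkeeping that makes $\frac35$ the threshold: $-3\sigma+\frac{1+\sigma}{2}<-1$ is equivalent to $\sigma>\frac35$. One also needs uniformity of the error term in $\tau$ up to the stated polynomial factor. Both are already supplied by Proposition~\ref{prop:spec-strip}.
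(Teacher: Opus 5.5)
Your proposal is correct and follows essentially the same route as the paper. The bound of Proposition~\ref{prop:spec-strip} and the exponent check $-3\sigma+\frac{1+\sigma}{2}<-1\iff\sigma>\frac35$ give a Weierstrass $M$-test, hence holomorphy of $E$ and the $(1+|\tau|)^2$ growth; $\zeta(3s)\neq0$ gives meromorphy; and $\sum_b\varphi(b)b^{-3s}=\zeta(3s-1)/\zeta(3s)$ gives agreement on the overlap. Your explicit justification of the termwise split, and your use of closed rectangles instead of shrinking to $\alpha'$, are only cosmetic improvements.

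One small point you share with the paper: the identity-theorem step needs the overlap to be nonempty, i.e.\ $\beta>\frac23$. This is harmless. The series defining $E$ does not depend on $(\alpha,\beta)$, so $\widetilde Z_f^{\mathrm{end}}$ is a single meromorphic function on $\frac35<\Re(s)<1$ that agrees with $Z_f^{\mathrm{end}}$ on $\frac23<\Re(s)<1$.
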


\begin{proof}
Let
\[
s=\sigma+i\tau,\qquad \alpha\le \sigma\le \beta.
\]
By Proposition~\ref{prop:spec-strip},
\[
|R_b(s)|
\ll_{\alpha,\beta,f,\varepsilon}
(1+|\tau|)^2\,b^{\frac{1+\sigma}{2}+\varepsilon}.
\]
Therefore
\[
|b^{-3s}R_b(s)|
=
b^{-3\sigma}|R_b(s)|
\ll_{\alpha,\beta,f,\varepsilon}
(1+|\tau|)^2\,b^{-3\sigma+\frac{1+\sigma}{2}+\varepsilon}.
\]
The exponent simplifies to
\[
-3\sigma+\frac{1+\sigma}{2}+\varepsilon
=
-\frac{5\sigma-1}{2}+\varepsilon.
\]
Since \(\sigma\ge \alpha\), we obtain the uniform bound
\[
|b^{-3s}R_b(s)|
\ll_{\alpha,\beta,f,\varepsilon}
(1+|\tau|)^2\,b^{-\frac{5\alpha-1}{2}+\varepsilon}.
\]
Now \(\alpha>\tfrac35\), so
\[
\frac{5\alpha-1}{2}>1.
\]
Choose \(\varepsilon>0\) so small that
\[
-\frac{5\alpha-1}{2}+\varepsilon<-1.
\]
Then
\[
\sum_{b\ge1} b^{-\frac{5\alpha-1}{2}+\varepsilon}
\]
converges. Hence, for every fixed \(\tau\),
\[
\sum_{b\ge1} b^{-3s}R_b(s)
\]
converges absolutely and uniformly in \(\sigma\in[\alpha,\beta]\), with the bound
\[
\sum_{b\ge1}|b^{-3s}R_b(s)|
\ll_{\alpha,\beta,f}
(1+|\tau|)^2.
\]
Since
\[
|2^{-s}|=2^{-\sigma}\le 2^{-\alpha},
\]
the same bound holds for the full error term:
\[
E(\sigma+i\tau)\ll_{\alpha,\beta,f}(1+|\tau|)^2.
\]
This proves the strip-uniform growth estimate.

To prove holomorphy, let \(K\) be any compact subset of the open strip
\[
\alpha<\Re(s)<\beta.
\]
Then there exist \(\alpha',\beta'\) with
\[
\alpha<\alpha'<\Re(s)<\beta'<\beta
\qquad (s\in K).
\]
Repeating the same argument with \(\alpha'\) in place of \(\alpha\), we obtain a convergent
numerical majorant independent of \(s\in K\). Hence the series defining \(E(s)\) converges
uniformly on \(K\) by the Weierstrass \(M\)-test. Since each summand is holomorphic in \(s\),
it follows that \(E(s)\) is holomorphic on the open strip \(\alpha<\Re(s)<\beta\).

Finally, \(A(s)\) is holomorphic on
\[
\frac12<\Re(s)<1
\]
by Lemma~\ref{lem:intHs} and dominated convergence. Since
\[
\Re(3s)>3\alpha>\frac95>1
\]
throughout the strip, the denominator \(\zeta(3s)\) does not vanish there.
Thus
\[
\widetilde Z_f^{\mathrm{end}}(s)
:=
A(s)\frac{\zeta(3s-1)}{\zeta(3s)}+E(s)
\]
is meromorphic on the strip, with the only possible singularity occurring
at \(s=\frac23\).

On the overlap
\[
\max\left\{\alpha,\frac23\right\}<\Re(s)<\beta,
\]
the original series is absolutely convergent, and the identity
\[
\sum_{b\ge1}\frac{\varphi(b)}{b^{3s}}
=
\frac{\zeta(3s-1)}{\zeta(3s)}
\]
is valid. Hence
\[
\widetilde Z_f^{\mathrm{end}}(s)
=
Z_f^{\mathrm{end}}(s)
\]
on this overlap. This proves that
\(\widetilde Z_f^{\mathrm{end}}\) is the meromorphic continuation of
\(Z_f^{\mathrm{end}}\) to the strip.

\end{proof}

\begin{theorem}[Half-plane refinement of Theorem~\ref{thm:res}]
\label{thm:res-halfplane}
Assume \eqref{eq:ass}. Then the local meromorphic continuation from
Theorem~\ref{thm:res} extends to the half-plane
\[
\Re(s)>\frac35.
\]
It is holomorphic there except for a simple pole at
\[
s=\frac23.
\]
Moreover,
\[
\Res_{s=2/3} Z_f(s)
=
\frac{\sqrt3\,\Gamma(1/3)^3}{2^{2/3}\pi^3}
\int_0^1 |f''(v)|^{2/3}\,dv.
\]
\end{theorem}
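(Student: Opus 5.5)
The plan is to glue together the strip-by-strip continuations of Proposition~\ref{prop:E-strip} and the endpoint replacement of Lemma~\ref{lem:replace-halfplane}, and then read off the residue exactly as in the proof of Theorem~\ref{thm:res}.

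\textbf{Continuation of \(Z_f\).} For any \(\frac35<\alpha<\beta<1\), Proposition~\ref{prop:E-strip} shows that
\[
\widetilde Z_f^{\mathrm{end}}(s)=A(s)\frac{\zeta(3s-1)}{\zeta(3s)}+E(s)
\]
is meromorphic on \(\alpha<\Re(s)<\beta\) and agrees with \(Z_f^{\mathrm{end}}\) on \(\max\{\alpha,\frac23\}<\Re(s)<\beta\). By the identity theorem these continuations are compatible on overlapping strips. Taking \(\alpha\downarrow\frac35\) and \(\beta\uparrow1\), they define a single meromorphic function on \(\frac35<\Re(s)<1\). On \(\Re(s)>\frac23\) the original series \(Z_f^{\mathrm{end}}\) already converges absolutely and is holomorphic. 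The two descriptions agree on the overlap \(\frac23<\Re(s)<1\), so together they give a meromorphic continuation of \(Z_f^{\mathrm{end}}\) to all of \(\Re(s)>\frac35\). Lemma~\ref{lem:replace-halfplane} says that \(Z_f-Z_f^{\mathrm{end}}\) is holomorphic on \(\Re(s)>\frac12\). Adding it gives the continuation of \(Z_f\) to \(\Re(s)>\frac35\).

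\textbf{Location of the singularities.} In \(\frac35<\Re(s)<1\) the only possible singularities come from \(\zeta(3s-1)/\zeta(3s)\). Here \(\Re(3s)>\frac95\), so \(\zeta(3s)\) has no zeros. The factor \(\zeta(3s-1)\) is holomorphic except for its pole at \(3s-1=1\), that is, at \(s=\frac23\). The factor \(A(s)\) is holomorphic on \(\frac12<\Re(s)<1\), since Lemma~\ref{lem:intHs} gives \(\int_0^1 H_s=\Gamma(1-s)\Gamma(2s-1)/\Gamma(s)\), and \(s\mapsto\int_0^1|f''|^s\) is entire. The term \(E(s)\) is holomorphic on each strip. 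Hence \(s=\frac23\) is the only possible singularity, and it is at most a simple pole.

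\textbf{The residue.} The pole is genuine because \(A(\frac23)\ne0\): \(\int_0^1 H_{2/3}=\Gamma(\frac13)^2/\Gamma(\frac23)>0\) and \(\int_0^1|f''|^{2/3}>0\). The residue is
\[
\Res_{s=2/3}Z_f(s)=A\!\left(\tfrac23\right)\cdot\frac{1}{3\zeta(2)}.
\]
This is the same expression as in the proof of Theorem~\ref{thm:res}, and it simplifies to the stated constant using \(\zeta(2)=\pi^2/6\) and \(\Gamma(\frac13)\Gamma(\frac23)=2\pi/\sqrt3\).

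\textbf{Main obstacle.} The only real difficulty is making sure every strip result (\(E\) holomorphic, \(\zeta(3s)\ne0\)) rests on the strip-uniform estimate of Proposition~\ref{prop:spec-strip}. That estimate is already established, so the remaining work is just the gluing bookkeeping. The vertical-growth bound on \(E\) is not needed here; it matters only for the later Tauberian application.
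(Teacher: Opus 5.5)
Your proposal is correct and follows the paper's proof essentially step by step. You glue the strip continuations of Proposition~\ref{prop:E-strip} as \(\alpha\downarrow\frac35\) and \(\beta\uparrow1\), add the difference \(Z_f-Z_f^{\mathrm{end}}\), which is holomorphic by Lemma~\ref{lem:replace-halfplane}, use \(\zeta(3s)\neq0\) for \(\Re(3s)>\frac95\) to isolate the simple pole of \(\zeta(3s-1)\) at \(s=\frac23\), and read off the residue \(A(\frac23)/(3\zeta(2))\).

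The differences are cosmetic. You glue \(Z_f^{\mathrm{end}}\) to the original series before adding the endpoint-replacement difference, whereas the paper adds it first and then glues. You also state explicitly that \(A(\frac23)\neq0\), so the pole is genuine; the paper leaves this implicit.
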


\begin{proof}

The defining series \(Z_f(s)\) and \(Z_f^{\mathrm{end}}(s)\) converge
absolutely and are holomorphic for
\[
\Re(s)>\frac23.
\]

Fix
\[
\frac35<\alpha<\beta<1.
\]
By Proposition~\ref{prop:E-strip}, the function
\[
\widetilde Z_f^{\mathrm{end}}(s)
=
A(s)\frac{\zeta(3s-1)}{\zeta(3s)}+E(s)
\]
is meromorphic on the strip
\[
\alpha<\Re(s)<\beta
\]
and agrees with \(Z_f^{\mathrm{end}}(s)\) on the nonempty overlap
\[
\max\left\{\alpha,\frac23\right\}<\Re(s)<\beta.
\]
It is therefore the unique meromorphic continuation of
\(Z_f^{\mathrm{end}}\) to that strip.

If two such strips overlap, their continued functions agree on the part of
the overlap lying in \(\Re(s)>\frac23\), where both coincide with the
original absolutely convergent series. By the identity theorem, they agree
on the entire overlap. Letting \(\alpha\downarrow\frac35\) and
\(\beta\uparrow1\), these strip continuations therefore glue to a
meromorphic continuation of \(Z_f^{\mathrm{end}}\) to
\[
\frac35<\Re(s)<1.
\]

By Lemma~\ref{lem:replace-halfplane}, the difference
\[
Z_f(s)-Z_f^{\mathrm{end}}(s)
\]
is holomorphic on \(\Re(s)>\frac12\). Hence
\[
\widetilde Z_f(s)
:=
\widetilde Z_f^{\mathrm{end}}(s)
+
\bigl(Z_f(s)-Z_f^{\mathrm{end}}(s)\bigr)
\]
defines a meromorphic continuation of \(Z_f\) to
\[
\frac35<\Re(s)<1.
\]
On \(\frac23<\Re(s)<1\), this continuation agrees with the original
Dirichlet series. Gluing it to the original function on
\(\Re(s)>\frac23\) therefore gives a meromorphic continuation to the whole
half-plane
\[
\Re(s)>\frac35.
\]

Since \(\Re(3s)>\frac95>1\) in this half-plane, one has
\(\zeta(3s)\neq0\). The error term \(E\) and the endpoint-replacement
difference are holomorphic. Consequently, the only singularity is the
simple pole of \(\zeta(3s-1)\) at
\[
s=\frac23.
\]

It remains to compute the residue. From the decomposition above,
\[
\Res_{s=2/3}Z_f(s)
=
A(2/3)\cdot \frac{1}{3\zeta(2)}.
\]
Now
\[
A(2/3)
=
2^{-2/3}\left(\int_0^1 H_{2/3}(u)\,du\right)\left(\int_0^1 |f''(v)|^{2/3}\,dv\right),
\]
and by Lemma~\ref{lem:intHs},
\[
\int_0^1 H_{2/3}(u)\,du=\frac{\Gamma(1/3)^2}{\Gamma(2/3)}.
\]
Using \(\zeta(2)=\pi^2/6\) and \(\Gamma(1/3)\Gamma(2/3)=2\pi/\sqrt3\), this simplifies to
\[
\Res_{s=2/3} Z_f(s)
=
\frac{\sqrt3\,\Gamma(1/3)^3}{2^{2/3}\pi^3}
\int_0^1 |f''(v)|^{2/3}\,dv.
\]
\end{proof}

\begin{remark}
Theorem~\ref{thm:res-halfplane} strengthens the earlier disk-based local continuation
statement. It is the analytic input used in Appendix~\ref{app2},
Subsection~\ref{ssec:analyticcompletion}.
\end{remark}
\section{Proofs for Section~\ref{sec_6}}\label{app:section6}

This appendix contains the details omitted from the main presentation of
Section~\ref{sec_6}. The order follows the proof: tropical-zeta and bulk
bookkeeping, the exact local decomposition, the three second-layer packages,
and finally the balanced-complement estimates.

\subsection{Tropical zeta, the Farey counting function, and bulk bookkeeping}

For one parabolic arc the primitive support defects are indexed by Farey
neighbors.  If their heights are \(p\) and \(q\), the child has height
\(p+q\), and the corresponding reciprocal tropical size is
\[
 m_{p,q}=pq(p+q).
\]
Thus the boundary Dirichlet series is \eqref{eq:FGamma}, and its summatory
function is \eqref{eq:Bcount}.  The tropical-zeta calculation in
Subsection~\ref{ss:specialdomain} identifies this series with the
\(SU(3)\) Witten zeta quotient in \eqref{eq:ZL}.  In particular, the pole at
\(s=2/3\) translates, by standard Perron--Stieltjes summation, into the
leading bulk scale
\begin{equation}\label{eq:BExpansion}
 B(X)=\kappa_{2/3}X^{2/3}+\kappa_{1/2}X^{1/2}+O(X^\theta)
\end{equation}
for some \(\theta<1/2\), after the continuation input available in the main
paper.  The precise constants are not needed in the final coefficient, because
the corresponding bulk packages cancel.  What is needed is that all naive
head--tail Stieltjes transforms are governed by the same Farey counting data.

The bulk bookkeeping is as follows.  The raw decomposition is
\[
 E_L=T_1+T_2-T_0+H_0+H_{12}+1.
\]
The quadratic tail term is exactly the triangular-density replacement for
\(T_0\):
\[
 T_2(n)=T^{\rm dens}_0(n).
\]
Therefore
\[
 T_2(n)-T_0(n)=R_{T_0}(n).
\]
This removes the bulk density contribution before averaging.  The linear term
\(T_1\) does not stand alone either: it must be paired with the mean part of
\(H_0\), giving the weighted tail in \eqref{eq:weightedTailId} below.  The
two-dimensional bulk average of \(H_{12}\) vanishes; only edge regimes and
moving-prefix defects survive.  In short:
\[
\begin{array}{c|c|c}
\text{package} & \text{bulk source} & \text{fate}\\ \hline
T_2-T_0 & \text{triangular density of the threshold grid} & R_{T_0}\\
H_0+T_1 & \text{first-power head/tail mass} & \text{weighted tail}\\
H_{12} & \text{complete-period quadratic phase} & \text{edge plus prefix}\\
1 & \text{global Euler term} & O(1)\text{ in the average}
\end{array}
\]
This is the formal reason the pole at \(s=2/3\) is present in the calculation
but absent from the final averaged coefficient.

\subsection{The column formula}

In one corner of the square \([-n,n]^2\), put
\[
 i=n-X,
 \qquad
 j=n-Y.
\]
The removed corner is described by
\begin{equation}\label{eq:cornerIneq}
 \sqrt i+\sqrt j<\sqrt n,
 \qquad 0\le i,j\le n.
\end{equation}
For \(i=0\), inequality \eqref{eq:cornerIneq} gives \(0\le j<n\), hence
exactly \(n\) removed points.  For \(1\le i<n\), it is equivalent to
\[
 j<n+i-2\sqrt{ni}.
\]
Thus the number of removed \(j\)'s in this column is
\[
 \left\lceil n+i-2\sqrt{ni}\right\rceil=n+i-\floor{2\sqrt{ni}}.
\]
The column \(i=n\) contributes no removed point.  Therefore the exact removed
count in one corner is
\begin{equation}\label{eq:cornerCount}
 C(n)=n+\sum_{i=1}^{n-1}\left(n+i-\floor{2\sqrt{ni}}\right).
\end{equation}
The boundary strip \(i=0\) is essential; omitting it would lose an \(O(n)\)
contribution.

Using
\[
 \floor{x}=x-\psi(x)-\frac12,
 \qquad
 \psi(x)=\{x\}-\frac12,
\]
we get
\begin{align*}
 C(n)
 &=n+\sum_{i=1}^{n-1}\left(n+i-2\sqrt{ni}+\psi(2\sqrt{ni})+\frac12\right)\\
 &=\frac32n^2-\frac12
 -2\sqrt n\sum_{i=1}^{n-1}\sqrt i
 +S_{\mathrm{col}}(n),
\end{align*}
where
\[
 S_{\mathrm{col}}(n)=\sum_{i=1}^{n-1}\psi(2\sqrt{ni}).
\]
Euler--Maclaurin gives
\begin{equation}\label{eq:EMsqrt}
 \sum_{i=1}^{n-1}\sqrt i
 =\frac23 n^{3/2}-\frac12 n^{1/2}+\zeta\!\left(-\frac12\right)+\frac1{24}n^{-1/2}+O(n^{-3/2}).
\end{equation}
Substitution into \eqref{eq:cornerCount} gives
\[
 C(n)=\frac16n^2+n-2\zeta\!\left(-\frac12\right)\sqrt n+S_{\mathrm{col}}(n)-\frac7{12}+O(n^{-1}).
\]
Since
\[
 N_L(n)=(2n+1)^2-4C(n),
\]
we obtain
\[
 N_L(n)=\frac{10}{3}n^2-4S_{\mathrm{col}}(n)+8\zeta\!\left(-\frac12\right)\sqrt n+\frac{10}{3}+O(n^{-1}).
\]
This proves \eqref{eq:columnFormula}.  Averaging gives
\[
 A_L(N)=-4N^{-1}\sum_{n\le N}S_{\mathrm{col}}(n)+\frac{16}{3}\zeta\!\left(-\frac12\right)\sqrt N+O(1).
\]
Combining this with Theorem~\ref{thm:mainL} proves \eqref{eq:triSaw}.  Since
the square sum equals twice the triangular sum plus \(O(N)\) diagonal and
boundary-line terms, \eqref{eq:squareSaw} follows.

\begin{remark}
The cancellation of the linear term is transparent in this computation.  The
ambient square has \((2n+1)^2=4n^2+4n+1\) lattice points.  The four corrected
corner counts contain the same \(4n\) term.  Hence no lattice-perimeter term
survives.  This agrees with the geometric fact that \(\partial L\) has no
positive-length rational-slope side.  If the domain had genuine rational
polygonal sides, this cancellation would be replaced by the usual Ehrhart
lattice-perimeter contribution.
\end{remark}

\subsection{The local head--tail formula and the global Euler term}

We recall the elementary local calculation because it is the source of all
five terms in \eqref{eq:ledger}.  Let \(\lambda_1,\lambda_2\) be Farey
neighbors.  Since \(\det(\lambda_1,\lambda_2)=1\), the corresponding cell is
unimodular in the primitive edge coordinates.  Its reciprocal size is
\(m=m_{p,q}=pq(p+q)\), and at time \(n\) we put
\[
 u=\frac{n}{m}.
\]
Let
\[
 x=\{n\alpha_{\lambda_1}\},
 \qquad
 y=\{n\alpha_{\lambda_2}\}.
\]
The local cell is the half-open shifted triangle
\[
 \Delta_u=\{(X,Y)\in\R^2_{\ge0}:X+Y<u\}
\]
intersected with the shifted lattice \((x,y)+\Z^2\).  The following count is
the local generalized Pick formula.

\begin{lemma}[Shifted half-open triangle count]\label{lem:triCount}
Let \(u>0\) and \(0\le x,y<1\).  Define
\[
 N(u;x,y)=\#\left(((x,y)+\Z^2_{\ge0})\cap \Delta_u\right).
\]
Then
\begin{equation}\label{eq:NuK}
 N(u;x,y)=\frac{K(K+1)}2,
 \qquad
 K=\max\{0,\ceil{u-x-y}\}.
\end{equation}
If \(u\ge1\) and
\[
 \phi=u-K,
\]
then
\begin{equation}\label{eq:headLocal}
 \frac12u^2+\frac12u-N(u;x,y)
 =u\phi+\frac12\phi-\frac12\phi^2.
\end{equation}
If \(0<u<1\), then
\begin{equation}\label{eq:tailLocal}
 N(u;x,y)=\ind_{x+y<u},
\end{equation}
and hence
\begin{equation}\label{eq:tailLocalFormula}
 \frac12u^2+\frac12u-N(u;x,y)=\frac12u^2+\frac12u-\ind_{x+y<u}.
\end{equation}
\end{lemma}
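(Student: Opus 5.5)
The argument is a direct lattice count followed by algebra, so I will do it in three short steps: count the points of $(x,y)+\Z^2_{\ge0}$ in $\Delta_u$ level by level, then substitute $K=u-\phi$ into the result, then specialize to $u<1$. Nothing beyond the definitions is needed.

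\emph{Step 1: the count \eqref{eq:NuK}.} A point $(x+a,y+b)$ with $a,b\in\Z_{\ge0}$ lies in $\Delta_u$ exactly when $a+b<T$, where $T:=u-x-y$. Its coordinates are automatically nonnegative because $x,y\ge0$.
\begin{itemize}
\item If $T\le0$, there are no such pairs, and $K=0$.
\item If $T>0$, the condition $a+b<T$ for the integer $a+b$ is equivalent to $a+b\le\ceil{T}-1$. This holds both when $T$ is an integer and when it is not.
\end{itemize}
For each level $k=a+b\in\{0,\dots,K-1\}$ there are $k+1$ pairs. Summing gives $\sum_{k=0}^{K-1}(k+1)=K(K+1)/2$ with $K=\max\{0,\ceil{T}\}$, which is \eqref{eq:NuK}.

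\emph{Step 2: the head identity \eqref{eq:headLocal}.} For $u\ge1$, since $x+y<2$, we have $T>u-2\ge-1$. Hence $\ceil{T}\ge0$ and the maximum in the definition of $K$ is inactive. The identity itself is pure algebra. Write $K=u-\phi$ and expand:
\[
\tfrac12u^2+\tfrac12u-\tfrac12(u-\phi)^2-\tfrac12(u-\phi)
=u\phi-\tfrac12\phi^2+\tfrac12\phi .
\]
This is the claimed formula. It holds for any value of $K$ once $\phi:=u-K$, so no further range condition on $\phi$ is needed.

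\emph{Step 3: the tail identity \eqref{eq:tailLocal}.} For $0<u<1$ we have $T<1$, so $\ceil{T}\le 1$ and therefore $K\in\{0,1\}$.
\begin{itemize}
\item $K=1$ exactly when $T>0$, that is, when $x+y<u$; then $N=1$.
\item Otherwise $K=0$ and $N=0$.
\end{itemize}
So $N(u;x,y)=\ind_{x+y<u}$, and \eqref{eq:tailLocalFormula} follows by substitution.

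The only point requiring care is the half-open convention: the strict inequality in $\Delta_u$ is what produces $\ceil{\cdot}$ rather than $\floor{\cdot}+1$, and the boundary cases where $T$ is an integer must be checked. I expect no real obstacle beyond this bookkeeping.
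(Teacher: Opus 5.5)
Your proof is correct and follows the paper's argument: the same level-by-level count over \(i+j\in\{0,\dots,K-1\}\), the same substitution \(K=u-\phi\) with direct expansion for the head identity, and the same reduction to a single possible point in the tail case. The only difference is in the tail case. You read off \(K\in\{0,1\}\) from the formula, whereas the paper notes that \(\Delta_u\) lies in the unit square, so only \((x,y)\) can be counted.
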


\begin{proof}
A point of \(((x,y)+\Z^2_{\ge0})\cap\Delta_u\) has the form \((x+i,y+j)\),
where \(i,j\in\Z_{\ge0}\), and the condition is
\[
 i+j<u-x-y.
\]
The possible values of \(i+j\) are \(0,1,\ldots,K-1\), where
\(K=\max\{0,\ceil{u-x-y}\}\).  For fixed \(s=i+j\), there are \(s+1\) choices
of \((i,j)\).  Thus
\[
 N(u;x,y)=\sum_{s=0}^{K-1}(s+1)=\frac{K(K+1)}2.
\]
If \(u\ge1\), put \(\phi=u-K\).  Then
\[
 N(u;x,y)=\frac{(u-\phi)(u-\phi+1)}2,
\]
and a direct expansion gives \eqref{eq:headLocal}.  If \(0<u<1\), then
\(\Delta_u\) is contained in the unit square, and the only possible shifted
lattice point is \((x,y)\).  This gives \eqref{eq:tailLocal} and
\eqref{eq:tailLocalFormula}.
\end{proof}

By the phase identity \eqref{eq:phaseID},
\[
 \{n\alpha_{\lambda_1+\lambda_2}\}=\{x+y-u\}.
\]
Therefore
\begin{equation}\label{eq:phiXYU}
 \Phi_{p,q}(n)=x+y-\{x+y-u\}=u-\ceil{u-x-y}
\end{equation}
whenever \(u\ge1\).  In the notation of Lemma~\ref{lem:triCount}, this says
\(\phi=\Phi_{p,q}(n)\).

Expanding \eqref{eq:headLocal} with \(u=n/m_{p,q}\) gives the head
contribution
\[
 \frac{n}{m_{p,q}}\Phi_{p,q}(n)
 +\frac12\Phi_{p,q}(n)
 -\frac12\Phi_{p,q}(n)^2.
\]
When \(0<u<1\), formula \eqref{eq:tailLocalFormula} gives the tail
contribution
\[
 \frac12\left(\frac{n}{m_{p,q}}\right)
 +\frac12\left(\frac{n}{m_{p,q}}\right)^2
 -\ind_{\{n\alpha_{\lambda_1}\}+\{n\alpha_{\lambda_2}\}<n/m_{p,q}}.
\]
Summing over the four parabolic arcs gives \(H_0,H_{12},T_1,T_2,T_0\), as in
Proposition~\ref{prop:ledger}.

It remains to identify the global constant.  The half-open convention assigns
every internal edge and every internal vertex of the Farey dissection to
exactly one adjacent cell.  Hence no local Euler term accumulates along
internal seams.  The only remaining Euler contribution is the Euler
characteristic of the whole convex disk \(nL\), namely \(1\).  Equivalently,
it is the constant term already present in the ambient square count
\((2n+1)^2\).  This proves the exact global decomposition \eqref{eq:ledger}.

\subsection{The phase identity}

Let
\[
 \lambda_1=(a,b),\qquad \lambda_2=(c,d)
\]
be Farey neighbors, with
\[
 a+b=p,
 \qquad
 c+d=q,
 \qquad
 ad-bc=1.
\]
Then \(\lambda_{12}=\lambda_1+\lambda_2=(a+c,b+d)\) has height \(p+q\).
Using \eqref{eq:alpha},
\begin{align*}
 \alpha_{\lambda_1}+\alpha_{\lambda_2}-\alpha_{\lambda_{12}}
 &\equiv
 -\frac{ab}{p}-\frac{cd}{q}+\frac{(a+c)(b+d)}{p+q}
 \pmod1.
\end{align*}
Putting the right-hand side over the common denominator \(pq(p+q)\), the
numerator is
\[
 -abq(p+q)-cdp(p+q)+pq(a+c)(b+d).
\]
Using \(b=p-a\) and \(d=q-c\), this simplifies to
\[
 (aq-pc)^2.
\]
But
\[
 aq-pc=a(c+d)-(a+b)c=ad-bc=1.
\]
Thus
\[
 \alpha_{\lambda_1}+\alpha_{\lambda_2}-\alpha_{\lambda_{12}}
 \equiv \frac{1}{pq(p+q)}\pmod1.
\]
Equivalently,
\[
 \alpha_{\lambda_{12}}
 \equiv
 \alpha_{\lambda_1}+\alpha_{\lambda_2}-\frac1{m_{p,q}}
 \pmod1.
\]

\begin{remark}
If the rational representative \(-\frac{ab}{a+b}\) is chosen for
\(\alpha_{(a,b)}\), the formula above holds as an equality, without reduction
modulo \(1\).
\end{remark}

\subsection{The weighted-tail proof for \(H_0+T_1\)}

For a primitive direction \(\lambda\), define the signed side length
\begin{equation}\label{eq:Slam}
 S_\lambda(n)=4n\sum_{\substack{\tau=(\lambda_1,\lambda_2)\\m_\tau\le n}}
 \frac{\ind_{\lambda=\lambda_1}+\ind_{\lambda=\lambda_2}-\ind_{\lambda=\lambda_1+\lambda_2}}{m_\tau}.
\end{equation}
Collecting coefficients of the phases in \eqref{eq:H0} gives
\begin{equation}\label{eq:H0Slam}
 H_0(n)=\sum_\lambda S_\lambda(n)\{n\alpha_\lambda\}.
\end{equation}
Since \(\gcd(ab,a+b)=1\) for primitive \(\lambda=(a,b)\), the sequence
\(\{n\alpha_\lambda\}\) runs through the full grid of size \(|\lambda|\).
Hence
\begin{equation}\label{eq:thetaSplit}
 \{n\alpha_\lambda\}=\theta_\lambda(n)+\frac12-\frac{1}{2|\lambda|},
\end{equation}
where \(\theta_\lambda\) has period \(|\lambda|\) and mean zero.  This splits
\(H_0=H_0^{\rm mean}+\Omega_0\).

For the oscillatory part, expand \eqref{eq:Slam} back into triangle
contributions.  A fixed triangle with heights \(p,q,p+q\) contributes sums of
the form
\[
 \frac4m\sum_{m\le n\le N}n\theta_\lambda(n),
 \qquad
 |\lambda|\in\{p,q,p+q\}.
\]
The partial sums of \(\theta_\lambda\) are \(O(|\lambda|)\); Abel summation
gives \(O(N|\lambda|/m)\).  Since \(m=pq(p+q)\), this is \(O(N/(pq))\).
Summing over \(pq(p+q)\le N\) gives
\[
 \sum_{n\le N}\Omega_0(n)=O(N\log^2N)=o(N^{3/2}).
\]

For the mean part define
\begin{equation}\label{eq:Wpq}
 W_{p,q}=\frac{1}{pq(p+q)}\left(\frac1p+\frac1q-\frac{1}{p+q}\right).
\end{equation}
The Tornheim identities
\begin{equation}\label{eq:Tornheim}
 \sum_{\gcd(p,q)=1}\frac{1}{pq(p+q)}=2,
 \qquad
 \sum_{\gcd(p,q)=1}W_{p,q}=2
\end{equation}
follow by M\"obius inversion from the classical evaluations
\[
 \sum_{a,b\ge1}\frac1{ab(a+b)}=2\zeta(3),
\]
and
\[
 \sum_{a,b\ge1}\frac1{ab(a+b)}\left(\frac1a+\frac1b-\frac1{a+b}\right)=2\zeta(4).
\]
The mean contribution of a head triangle is
\[
 2nf_{p,q}\left(1-\frac1p-\frac1q+\frac1{p+q}\right).
\]
Adding
\[
 T_1(n)=2n\sum_{m_{p,q}>n}f_{p,q}
\]
and using \eqref{eq:Tornheim} gives the exact identity
\begin{equation}\label{eq:weightedTailId}
 H_0^{\rm mean}(n)+T_1(n)
 =2n\sum_{\substack{\gcd(p,q)=1\\m_{p,q}>n}}W_{p,q}.
\end{equation}

We now extract the edge coefficient with the multiplicity and the error terms
explicit.  Summing first over \(n\), one pair contributes
\[
 2W_{p,q}\sum_{n\le \min(N,m_{p,q}-1)} n
 =W_{p,q}\min(N,m_{p,q})^2+O(W_{p,q}N).
\]
The total contribution of the error term is \(O(N\log^2N)=o(N^{3/2})\).
Fix \(p\).  As \(q\to\infty\),
\begin{equation}\label{eq:Wasy}
 W_{p,q}=\frac{1}{p^2q^2}+O_p(q^{-3}),
 \qquad
 m_{p,q}=pq^2+p^2q.
\end{equation}
The coprime counting estimate
\begin{equation}\label{eq:coprimeDensity}
 \#\{q\le X:\gcd(q,p)=1\}=\frac{\varphi(p)}{p}X+O(\tau(p))
\end{equation}
is used uniformly on dyadic intervals.  Put \(Q_N=(N/p)^{1/2}\).  For
\(q\le Q_N\), \(m_{p,q}\le N\) to leading order, and \eqref{eq:Wasy} gives
\[
 W_{p,q}m_{p,q}^2=q^2+O_p(q).
\]
Thus
\[
 \sum_{\substack{q\le Q_N\\\gcd(p,q)=1}} W_{p,q}m_{p,q}^2
 \sim
 \frac{\varphi(p)}{p}\frac{Q_N^3}{3}
 =\frac{\varphi(p)}{3p^{5/2}}N^{3/2}.
\]
For \(q>Q_N\), the contribution is
\[
 W_{p,q}N^2=\frac{N^2}{p^2q^2}+O_p(N^2q^{-3}),
\]
and hence
\[
 \sum_{\substack{q>Q_N\\\gcd(p,q)=1}}W_{p,q}N^2
 \sim
 \frac{\varphi(p)}{p}\frac{N^2}{p^2Q_N}
 =\frac{\varphi(p)}{p^{5/2}}N^{3/2}.
\]
One edge therefore contributes
\[
 \frac43\frac{\varphi(p)}{p^{5/2}}.
\]
The opposite edge contributes the same amount.  Summing over \(p\) gives
\[
 N^{-3/2}\sum_{n\le N}(H_0(n)+T_1(n))
 \longrightarrow
 \frac83\sum_{p\ge1}\frac{\varphi(p)}{p^{5/2}}
 =\frac83\frac{\zeta(3/2)}{\zeta(5/2)}.
\]
For the complement after removing \(\min(p,q)\le P\), use
\(W_{p,q}\ll p^{-2}q^{-2}\) in the region \(P<p\le q\).  The part with
\(m_{p,q}\le N\) is bounded by
\[
 \sum_{p>P}\sum_{q\le (N/p)^{1/2}}\frac{(pq^2)^2}{p^2q^2}
 \ll
 \sum_{p>P}\sum_{q\le (N/p)^{1/2}}q^2
 \ll
 N^{3/2}\sum_{p>P}p^{-3/2}
 \ll
 N^{3/2}P^{-1/2}.
\]
The part with \(m_{p,q}>N\) is bounded by
\[
 N^2\sum_{p>P}\sum_{q>(N/p)^{1/2}}\frac{1}{p^2q^2}
 \ll
 N^{3/2}\sum_{p>P}p^{-3/2}
 \ll
 N^{3/2}P^{-1/2}.
\]
This proves Proposition~\ref{prop:H0T1}.

\subsection{The complete-period part of \(H_{12}\)}

Let
\[
 F_{p,q}(n)=\frac12\Phi_{p,q}(n)-\frac12\Phi_{p,q}(n)^2.
\]
The complete-period mean is
\begin{equation}\label{eq:Qpq}
 Q(p,q)=\langle F_{p,q}\rangle_{\rm per}
 =-
 \frac{(p^2+pq+q^2)^2}{12p^2q^2(p+q)^2}.
\end{equation}
Indeed, over a complete period the three phases have independent uniform
distributions on the grids of sizes \(p,q,p+q\).  If \(X_d\) is uniform on
\(\{0,1/d,\ldots,(d-1)/d\}\), then
\[
 \mathbb E X_d=\frac12-\frac{1}{2d},
 \qquad
 \operatorname{Var}(X_d)=\frac{d^2-1}{12d^2}.
\]
Substituting into
\[
 \mathbb E\left(\frac12\Phi-\frac12\Phi^2\right)
 =\frac12\mathbb E\Phi-\frac12(\mathbb E\Phi)^2-\frac12\operatorname{Var}(\Phi)
\]
gives \eqref{eq:Qpq}.  As \(q\to\infty\) with \(p\) fixed,
\[
 Q(p,q)\to -\frac{1}{12p^2}.
\]
Therefore the same edge extraction as above gives
\begin{equation}\label{eq:H12perLimit}
 N^{-3/2}\sum_{n\le N}4\sum_{m_{p,q}\le n}Q(p,q)
 \longrightarrow
 -\frac49\sum_{p\ge1}\frac{\varphi(p)}{p^{7/2}}
 =-\frac49\frac{\zeta(5/2)}{\zeta(7/2)}.
\end{equation}
This is the complete-period part of Proposition~\ref{prop:H12corr}.

\subsection{The moving-prefix correction for \(H_{12}\)}

The complete-period replacement is not the whole story.  After a triangle
becomes active at \(n=m_{p,q}\), all full periods cancel, but the last prefix
of the period remains.  This prefix has a finite-strip contribution.

Using \eqref{eq:phaseID}, put
\[
 u=\frac{n}{m_{p,q}},
 \qquad
 x=\{n\alpha_{\lambda_1}\},
 \qquad
 y=\{n\alpha_{\lambda_2}\}.
\]
Then
\[
 \Phi_{p,q}(n)=x+y-\{x+y-u\}=u+\floor{x+y-u}.
\]
For fixed \(p\) and \(q\to\infty\), \(x\) remains on the \(p\)-grid and
\(y\) becomes continuous.  Put
\[
 A_p(u)=\sum_{i=0}^{p-1}\left(u-\frac{i}{p}\right)_+.
\]
A direct integration over the continuous \(y\)-variable gives the strip mean
\begin{equation}\label{eq:fpstrip}
 f_p(u)=\frac{u^2}{2}+\frac{u}{2p}-\frac{A_p(u)}p.
\end{equation}
Since \(Q(p,q)\to -1/(12p^2)\), the centered strip density is
\[
 g_p(u)=f_p(u)+\frac{1}{12p^2},
 \qquad
 \int_0^1g_p(u)\dd u=0.
\]
Let
\begin{equation}\label{eq:Gpdef}
 G_p(v)=\int_0^v g_p(u)\dd u
 =\frac{v^3}{6}+\frac{v^2}{4p}
 -\frac{1}{2p}\sum_{i=0}^{p-1}\left(v-\frac{i}{p}\right)_+^2
 +\frac{v}{12p^2}.
\end{equation}
For one fixed strip, the prefix contribution is
\begin{equation}\label{eq:prefixPair}
 m_{p,q}G_p\left(\left\{\frac{N}{m_{p,q}}\right\}\right)+o(m_{p,q}),
\end{equation}
uniformly over the fixed-strip range.  With \(q=\sqrt{N/p}\,t\), one has
\(m_{p,q}=Nt^2+o(N)\) and \(\{N/m_{p,q}\}=\{t^{-2}\}+o(1)\).  Therefore
\begin{equation}\label{eq:IpIntegral}
 I_p=\int_0^1 t^2G_p(\{t^{-2}\})\dd t
\end{equation}
and, for fixed \(P\),
\begin{equation}\label{eq:FSH12}
 \lim_{N\to\infty}N^{-3/2}\sum_{n\le N}R_{12}^{\min(p,q)\le P}(n)
 =8\sum_{p\le P}\frac{\varphi(p)}{p^{3/2}}I_p.
\end{equation}
The overlap \(p,q\le P\) contains only finitely many pairs and contributes
\(o(N^{3/2})\) to the raw time sum.

The integral \eqref{eq:IpIntegral} may be evaluated explicitly.  Decomposing
the intervals on which \(\{t^{-2}\}\) equals \(v\) gives an expression through Hurwitz zeta function
\[
 I_p=\frac12\int_0^1G_p(v)\zeta\!\left(\frac52,1+v\right)\dd v.
\]
Integration by parts, together with
\[
 \partial_v\zeta(s,1+v)=-s\zeta(s+1,1+v),
\]
and the Hurwitz shift identity gives exactly \eqref{eq:Ipdef}.  Equivalently,
\[
 I_p=\frac{K_p}{p}-\frac{1}{3p}+\frac{1}{18p^2}+\frac{4\zeta(-1/2)}{3p^{3/2}}.
\]

\subsection{A product-discrepancy lemma for the balanced parts}

The balanced complements in the \(H_{12}\) and \(T_0\) analyses are controlled
by the same elementary two-dimensional Euler--Maclaurin estimate.  We state it
explicitly in the form used below.

Let
\[
 \mu_p=\frac1p\sum_{i=0}^{p-1}\delta_{i/p},
 \qquad
 \eta_p=\mu_p-dx.
\]
Define the half-open discrepancy primitive
\[
 D_p(x)=\eta_p([0,x)),
 \qquad 0\le x\le1.
\]
Then
\begin{equation}\label{eq:DpBound}
 |D_p(x)|\le \frac1p.
\end{equation}

\begin{lemma}[Product-discrepancy estimate]\label{lem:prodDisc}
Let \(K\) be a bounded polygonal kernel on \([0,1]^2\), piecewise \(C^1\),
whose singular set is contained in finitely many line segments of slope
\(-1\), and whose mixed distributional derivative \(\partial_x\partial_yK\)
is a finite signed measure.  Then
\begin{equation}\label{eq:prodDisc}
 \left|\iint_{[0,1]^2}K(x,y)\,d\eta_p(x)d\eta_q(y)\right|
 \le
 \frac{C(K)}{pq},
\end{equation}
where \(C(K)\) is controlled by the total variation of
\(\partial_x\partial_yK\) and by the total variations of the boundary traces
of \(K\).  Moreover, for the triangular kernels used below, the constants are
uniform in the threshold parameter.
\end{lemma}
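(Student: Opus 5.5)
The plan is to reduce the double integral against the product measure \(\eta_p\otimes\eta_q\) to an integral of the primitives \(D_p(x)D_q(y)\) against the mixed derivative of \(K\), via two-dimensional integration by parts (Hardy--Krause/Koksma--Hlawka style). Both \(\eta_p\) and \(\eta_q\) have total mass zero. Their primitives \(D_p\), \(D_q\) vanish at \(x=0\) and at \(x=1\), since \(\eta_p([0,1))=0\) for the half-open convention. Moreover, \(\|D_p\|_\infty\le 1/p\) by \eqref{eq:DpBound}.

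\textbf{Step 1: integrate by parts in \(x\).} For fixed \(y\), I integrate by parts in \(x\) against \(d\eta_p(x)=dD_p(x)\). This gives \(\int_0^1K(x,y)\,d\eta_p(x)=-\int_0^1 D_p(x)\,\partial_xK(x,y)\,dx\) plus boundary terms, where \(\partial_xK\) is understood as the distributional derivative. The boundary terms vanish because \(D_p(0)=D_p(1)=0\). One must take care that the atoms of \(\mu_p\) at \(i/p\) may meet jumps of \(K\). The half-open convention for \(D_p\), together with a right-continuity convention for \(K\) along its singular segments, makes the Stieltjes formula exact.

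\textbf{Step 2: integrate by parts in \(y\).} I then integrate the resulting function of \(y\) against \(d\eta_q(y)\) and repeat the same step. This yields
\[
\iint K\,d\eta_p\,d\eta_q=\iint D_p(x)D_q(y)\,d(\partial_x\partial_yK)(x,y),
\]
together with boundary traces of the form \(\int D_p(x)\,\partial_xK(x,1)\,dx\cdot D_q(1)\). Each such trace is multiplied by a vanishing endpoint value of a primitive, so it drops out. If the authors' half-open convention does not give exact vanishing at \(x=1\), these terms survive as one-dimensional traces. They are then bounded by \(\|D_p\|_\infty\|D_q\|_\infty\,\TV(\partial_xK(\cdot,0\text{ or }1))\), which explains why \(C(K)\) also involves the variation of the boundary traces.

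\textbf{Step 3: bound the result.} Taking absolute values gives \(|\cdot|\le \|D_p\|_\infty\|D_q\|_\infty\,\|\partial_x\partial_yK\|_{\TV}+(\text{traces})\le C(K)/(pq)\).

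\textbf{Main obstacle.} The main difficulty is justifying Step 2 for kernels that jump across lines of slope \(-1\). Across such a line \(x+y=c\), the mixed derivative \(\partial_x\partial_yK\) contains a measure supported on the segment, proportional to the derivative of the jump along the line. This measure is finite precisely because the line is transversal to both coordinate directions; an axis-parallel singularity would instead produce products like \(\delta\otimes\delta'\). I would verify the measure property directly: mollify \(K\), apply the smooth integration-by-parts identity, and pass to the limit. The passage to the limit uses weak-\(*\) convergence of the mixed derivatives, whose total variation is bounded uniformly. It also uses the fact that \(D_p\otimes D_q\) is continuous off finitely many axis-parallel lines, which meet the slope \(-1\) segments only at finitely many points carrying no mass.

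\textbf{Uniformity.} For the triangular kernels \(\ind_{x+y<u}\) and their polynomial multiples, the total variation of the mixed derivative is the length of the segment \(\{x+y=u\}\cap[0,1]^2\) times a bounded jump. This is at most \(\sqrt2\) times a constant independent of \(u\), which gives the uniformity in the threshold parameter.
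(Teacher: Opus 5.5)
Your Steps 1--3 are the paper's argument: Stieltjes integration by parts in \(x\), then in \(y\), followed by the bound \(\|D_p\|_\infty\|D_q\|_\infty\|\partial_x\partial_yK\|_{\TV}\). Your observation that the endpoint terms vanish outright, because \(D_p(0)=D_p(1)=0\), is correct for continuous \(K\) and slightly cleaner than the paper's retained boundary integrals.

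The genuine gap is in your ``main obstacle'' and ``uniformity'' paragraphs, where the regularity analysis is backwards. In the coordinates \(t=x+y\), \(s=x-y\) one has \(\partial_x\partial_y=\partial_t^2-\partial_s^2\). So a jump \(J(s)\) of \(K\) across \(\{x+y=c\}\) contributes \(\pm J(s)\,\delta'(t-c)\) to \(\partial_x\partial_yK\). This is a first-order distribution, not a measure, unless \(J\equiv0\). Transversality is exactly what makes a jump fatal. An axis-parallel jump \(J(y)\ind_{\{x<c\}}\) instead gives \(-\delta(x-c)\otimes dJ(y)\), which is a finite measure. Equivalently, \(\ind_{\{x+y<u\}}\) has infinite Vitali variation: on a grid of mesh \(h\), each of the \(\asymp 1/h\) squares straddling the line has \(|\Delta K|=1\). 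Your mollification step fails for the same reason, since \(\|\partial_x\partial_y(K*\rho_\varepsilon)\|_{L^1}\asymp 1/\varepsilon\).

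This is not merely a flaw in the argument: the estimate is false for the indicator kernels you apply it to. Take \(p\equiv 3\pmod 4\) and \(q=(p+1)/2\), which are coprime. A direct count gives
\[
pq\iint \ind_{\{x+y<1/2\}}\,d\eta_p\,d\eta_q
= E_{p,q}(\tfrac12)
= -\frac{(p-1)^2}{16p}.
\]
For example, \(p=43\), \(q=22\) gives \(I_{p,q}(\tfrac12)=132\) and \(E\approx-2.56\). So the integral is \(\asymp 1/p\), not \(O(1/(pq))\).

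The lemma's hypothesis therefore forces \(K\) to be continuous across the diagonal segments. At most the normal derivative may jump there, by some amount \([\partial_tK]\), and this contributes the line measure \([\partial_tK]\,\delta(t-c)\), whose mass is at most the segment length times the jump. The ``triangular kernels used below'' are the threshold-integrated ones. The first is \((v-x-y)_+=\int_0^v\ind_{\{x+y<u\}}\dd u\), with \(\partial_x\partial_y(v-x-y)_+=\delta(x+y-v)\) of mass at most \(1\) for every \(v\); the second is \(K^{12}_v\). This is precisely why the paper bounds the primitives \(\mathcal E_{p,q}(v)\) and \(\mathcal E^{12}_{p,q}(v)\) rather than \(E_{p,q}(u)\) and \(e^{12}_{p,q}(u)\) directly. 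Your uniformity argument goes through once you run it for these kernels, with ``jump'' meaning the jump of the normal derivative rather than of \(K\) itself.
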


\begin{proof}
We give the proof with the half-open convention fixed above.  In one variable,
Stieltjes integration by parts gives
\[
 \int_0^1 f(x)\,d\eta_p(x)
 =-\int_{[0,1]}D_p(x)\,df(x)+\mathcal B_p(f),
\]
where \(\mathcal B_p(f)\) is a finite sum of endpoint terms, each with
coefficient bounded by \(O(p^{-1})\).  Applying this first in \(x\) and then
in \(y\), we obtain
\begin{align*}
 \iint K\,d\eta_p\,d\eta_q
 &=\iint D_p(x)D_q(y)\,d(\partial_x\partial_yK)(x,y)\\
 &\quad +\text{boundary integrals on }x=0,1\text{ and }y=0,1.
\end{align*}
The interior term is bounded by
\[
 \|D_p\|_\infty\|D_q\|_\infty\|\partial_x\partial_yK\|_{\TV}
 \ll \frac1{pq}\|\partial_x\partial_yK\|_{\TV}.
\]
The boundary terms are estimated in the same way, using one discrepancy
primitive and the boundary variation of the corresponding trace; the endpoint
coefficients contribute at most the same order.  This proves
\eqref{eq:prodDisc}.  The kernels below are integrals of indicators of
diagonal half-planes or piecewise quadratic functions of \(x+y-u\); their
mixed distributional derivatives are supported on finitely many diagonal line
segments with uniformly bounded mass.  Hence the constants are uniform.
\end{proof}

\subsection{The balanced complement for \(H_{12}\)}

Let
\[
 \Phi_u(x,y)=u+\floor{x+y-u},
 \qquad
 \mathcal F_u(x,y)=\frac12\Phi_u(x,y)-\frac12\Phi_u(x,y)^2.
\]
The continuous average of \(\mathcal F_u\) over \([0,1]^2\) is zero.  With the
normalized grid measures \(\mu_p,\mu_q\), the full grid average decomposes as
\[
 \iint \mathcal F_u\,d\mu_p d\mu_q=f_p(u)+f_q(u)+e^{12}_{p,q}(u),
\]
where the strip averages \(f_p,f_q\) are the one-dimensional terms and
\begin{equation}\label{eq:e12}
 e^{12}_{p,q}(u)=\iint \mathcal F_u(x,y)\,d\eta_p(x)d\eta_q(y)
\end{equation}
is the balanced part.  After subtracting its full-period mean, define
\begin{equation}\label{eq:E12}
 \mathcal E^{12}_{p,q}(v)=
 \int_0^v\left(e^{12}_{p,q}(u)-\int_0^1e^{12}_{p,q}(w)\dd w\right)\dd u.
\end{equation}
The integrated kernel
\[
 K^{12}_v(x,y)=\int_0^v\left(\mathcal F_u(x,y)-\int_0^1\mathcal F_w(x,y)\dd w\right)\dd u
\]
is piecewise polynomial.  Its singularities lie on \(x+y=v\) and \(x+y=v+1\),
and its mixed distributional derivative has uniformly bounded total
variation.  Lemma~\ref{lem:prodDisc} therefore gives
\begin{equation}\label{eq:E12bound}
 |\mathcal E^{12}_{p,q}(v)|\ll \frac1{pq},
 \qquad 0\le v\le1.
\end{equation}

For one Farey pair, complete periods of \(F_{p,q}-Q(p,q)\) cancel exactly.
Hence the balanced part of the incomplete-period contribution is a single
prefix, bounded by
\[
 m_{p,q}\,|\mathcal E^{12}_{p,q}(\{N/m_{p,q}\})|+O(p+q)
 \ll p+q.
\]
The error term \(O(p+q)\) is the ordinary Riemann-sum error over the \(p+q\)
slow blocks of one period.  Summing over all head pairs gives
\[
 \sum_{pq(p+q)\le N}(p+q)
 \ll N\log N=o(N^{3/2}).
\]
The finite-strip tail is smaller still.  From \eqref{eq:Ipdef},
\[
 I_p\sim \frac{1}{1440p^4},
\]
and therefore
\[
 \sum_{p>P}\frac{\varphi(p)}{p^{3/2}}I_p\ll P^{-7/2}.
\]
Thus the finite-strip formula \eqref{eq:FSH12} exhausts the \(H_{12}\) prefix
correction.  Proposition~\ref{prop:H12corr} follows.

\subsection{The threshold residual \(R_{T_0}\)}

We prove Proposition~\ref{prop:RT0}.  For \(0\le u\le1\), define the grid
count
\begin{equation}\label{eq:IpqCount}
 I_{p,q}(u)=\#\left\{0\le i<p,
 0\le j<q:
 \frac{i}{p}+\frac{j}{q}<u\right\}.
\end{equation}
Over a complete residue system modulo \(pq\), the phase pair
\[
 n\mapsto (\{n\alpha_{\lambda_1}\},\{n\alpha_{\lambda_2}\})
\]
runs exactly once through the \(p\times q\) grid.  This follows because the
phase multipliers are invertible modulo \(p\) and \(q\), and the Chinese
remainder theorem combines the two congruences.

The product-discrepancy decomposition is exact.  Put
\[
 A_p(u)=\sum_{i=0}^{p-1}\left(u-\frac{i}{p}\right)_+,
 \qquad
 \Delta_p(u)=A_p(u)-\frac12pu^2.
\]
Define
\begin{equation}\label{eq:EpqT0}
 E_{p,q}(u)=I_{p,q}(u)-qA_p(u)-pA_q(u)+\frac12pqu^2.
\end{equation}
Then
\begin{equation}\label{eq:T0decomp}
 \frac12u^2-\frac{I_{p,q}(u)}{pq}
 =-\frac{\Delta_p(u)}p-\frac{\Delta_q(u)}q-\frac{E_{p,q}(u)}{pq}.
\end{equation}
The first two terms give the finite strips; the last term is genuinely
balanced.

For fixed \(p\) and \(q\to\infty\),
\[
 I_{p,q}(u)=qA_p(u)+O(p)
\]
uniformly in \(u\).  Hence the strip defect is \(q\Delta_p(u)+O(p)\).  The
coefficient is
\begin{equation}\label{eq:KpIntegral}
 K_p=\frac13\int_0^1\Delta_p(u)u^{-3/2}\dd u
 =\frac13\left(\frac{8p}{3}-1-\frac4{\sqrt p}\sum_{i=0}^{p-1}\sqrt i\right).
\end{equation}
The fixed-strip extraction gives, for every fixed \(P\),
\begin{equation}\label{eq:FST0}
 \lim_{N\to\infty}N^{-3/2}\sum_{n\le N}R^{\rm strip,\min(p,q)\le P}_{T_0}(n)
 =-8\sum_{p\le P}\frac{\varphi(p)}{p^{5/2}}K_p.
\end{equation}
The factor \(4\) comes from the four arcs and the factor \(2\) from the two
edge orientations.  The overlap \(p,q\le P\) has total lifetime
\[
 \sum_{p,q\le P}pq(p+q)=O(P^5),
\]
so it contributes \(O(P^5/N^{3/2})\) after normalization.  Since
\(K_p=1/3+O(p^{-1/2})\), the omitted strip tail is
\[
 \sum_{p>P}\frac{\varphi(p)}{p^{5/2}}K_p\ll P^{-1/2}.
\]

It remains to control the balanced term.  Let
\[
 \nu_p=\sum_{i=0}^{p-1}\delta_{i/p}-pdx,
 \qquad
 \nu_q=\sum_{j=0}^{q-1}\delta_{j/q}-qdy.
\]
Then
\[
 E_{p,q}(u)=\iint_{x+y<u}d\nu_p(x)d\nu_q(y),
\]
and its primitive is
\begin{equation}\label{eq:T0prim}
 \mathcal E_{p,q}(v)=\int_0^vE_{p,q}(u)\dd u
 =\iint (v-x-y)_+\,d\nu_p(x)d\nu_q(y).
\end{equation}
Let
\[
 B_p(x)=\nu_p([0,x)),
 \qquad
 B_q(y)=\nu_q([0,y)).
\]
These functions are uniformly bounded.  Applying the same integration-by-parts
argument as in Lemma~\ref{lem:prodDisc}, now with unnormalized measures, gives
\begin{equation}\label{eq:T0primBound}
 |\mathcal E_{p,q}(v)|\ll1,
 \qquad 0\le v\le1.
\end{equation}
For small \(v\), after separating the origin atom, the support consists of two
axis strips and a small triangle, giving the sharper bound
\begin{equation}\label{eq:smallv}
 |\mathcal E^\circ_{p,q}(v)|\ll (p+q)v^2+pqv^3.
\end{equation}

We now spell out the moving-threshold replacement.  Put
\[
 B=pq,
 \qquad
 r=p+q,
 \qquad
 m=Br,
 \qquad
 M\le m,
 \qquad
 V=M/m.
\]
Assume first \(B\le M\).  Write \(n=kB+a\), \(0\le a<B\).  In a complete
\(B\)-block the phase points run once through the \(p\times q\) grid.  Freezing
the threshold at \(k/r\) gives the block value \(E_{p,q}(k/r)\).  During the
block, the threshold moves through an interval of length \(1/r\).  The
difference between the moving and frozen block is the product discrepancy of
the diagonal strip
\[
 \frac{k}{r}\le x+y<\frac{k}{r}+\frac1r.
\]
After the two strip defects have been subtracted, this strip discrepancy is
estimated by applying \eqref{eq:T0primBound} to the interval
\([k/r,k/r+1/r]\).  Both discrepancy primitives are bounded, and the diagonal
strip has width \(1/r\) but is multiplied by \(r\) when the block is unfolded
from threshold to time.  Hence the moving-freezing error of one block is
\(O(1)\).

Consequently, summing complete blocks and then applying summation by parts to
the frozen sum gives
\begin{equation}\label{eq:movingReplacement}
 \sum_{n\le M}^{\rm mix}1
 =r\mathcal E_{p,q}(V)+O\left(1+\frac{M}{pq}\right),
\end{equation}
where the left-hand side denotes the time-summed mixed contribution of the
pair after subtracting the two strip defects.  In the full-lifetime range
\(m\le N\), \eqref{eq:movingReplacement} gives \(O(p+q)\) per pair, and
\[
 \sum_{pq(p+q)\le N}(p+q)
 \ll N\log N.
\]
In the partial many-block range \(m>N\) and \(pq\le N\), it gives
\[
 \sum_{pq\le N}\left(1+\frac{N}{pq}\right)
 \ll N\log^2N.
\]
Both are \(o(N^{3/2})\).

Finally consider the short-block range \(pq>N\).  For \(n\le N\),
\[
 u_n=\frac{n}{pq(p+q)}<\frac1{p+q}<\min\left\{\frac1p,\frac1q\right\}.
\]
Thus the threshold triangle can contain only the grid point \((0,0)\).  The
indicator in \eqref{eq:T0} can then be nonzero only if \(p\mid n\) and
\(q\mid n\), equivalently \(pq\mid n\).  Since \(pq>N\ge n\), this is
impossible.  Thus the indicator vanishes.  Moreover \(A_p(u_n)=A_q(u_n)=u_n\),
and the balanced short-block summand is
\[
 u_n\left(\frac1p+\frac1q\right)-\frac12u_n^2
 =\frac{n}{p^2q^2}-\frac12\frac{n^2}{p^2q^2(p+q)^2}.
\]
Therefore
\[
 \sum_{pq>N}\sum_{n\le N}\left|\frac{n}{p^2q^2}\right|
 \ll
 N^2\sum_{pq>N}\frac1{p^2q^2}
 \ll N\log N=o(N^{3/2}).
\]
This completes the proof of the balanced complement and hence
Proposition~\ref{prop:RT0}.

\subsection{Final algebra}

Combining Propositions~\ref{prop:H0T1}, \ref{prop:H12corr}, and
\ref{prop:RT0}, the coefficient of \(N^{3/2}\) in the time-summed error is
\[
 C=\frac83\frac{\zeta(3/2)}{\zeta(5/2)}
 -\frac49\frac{\zeta(5/2)}{\zeta(7/2)}
 +8\sum_{p\ge1}\frac{\varphi(p)}{p^{3/2}}I_p
 -8\sum_{p\ge1}\frac{\varphi(p)}{p^{5/2}}K_p.
\]
Using \eqref{eq:IpKp}, the \(K_p\)-series cancels and
\begin{align*}
 C&=\frac83\sum_p\frac{\varphi(p)}{p^{5/2}}
 -\frac49\sum_p\frac{\varphi(p)}{p^{7/2}}
 -\frac83\sum_p\frac{\varphi(p)}{p^{5/2}}
 +\frac49\sum_p\frac{\varphi(p)}{p^{7/2}}
 +\frac{32}{3}\zeta\!\left(-\frac12\right)\sum_p\frac{\varphi(p)}{p^3}\\
 &=\frac{32}{3}\zeta\!\left(-\frac12\right)\frac{\zeta(2)}{\zeta(3)}.
\end{align*}
This proves Theorem~\ref{thm:mainL} and Corollary~\ref{cor:saw}.

\subsection{Bibliographical perspective}

The calculation in this section lies at the meeting point of several classical
traditions, but its combination seems specific to the tropical setting.  The
Dirichlet series \(m_{p,q}^{-s}=p^{-s}q^{-s}(p+q)^{-s}\) is the diagonal
Mordell--Tornheim series, equivalently Witten's \(\mathrm{SU}(3)\) zeta series.
It belongs to a tradition extending from Tornheim's harmonic double series and
Mordell's multiple sums to Witten's two-dimensional gauge theory, Zagier's
Witten zeta functions, and the modern theory of Witten and root-system zeta
functions
\cite{Tornheim1950,Mordell1958,Witten1991QuantumGauge2D,Zagier1994ValuesZeta,MatsumotoTsumura2006,Komori2008MordellTornheim}.
The Farey-neighbor parametrization and primitive-visible constraint belong to
the arithmetic of Farey fractions and visible lattice points, where Hata,
Boca--Cobeli--Zaharescu, and Boca--Zaharescu's correlation theory provide close
analogues \cite{Hata1995Farey,BocaCobeliZaharescu2000,BocaZaharescu2005}.  On
the lattice-counting side, the result sits between the smooth convex-body
theory of Huxley, Kr\"atzel, Nowak, and collaborators
\cite{HuxleyNowak1996,KratzelNowak1992,Huxley2000,IvicKratzelKuehleitnerNowak2004}
and the rational-polyhedral Ehrhart theory surveyed by Beck--Robins
\cite{BeckRobins2015}.  The domain is neither a rational polygon with stable
Ehrhart quasi-polynomial and lattice-perimeter term, nor a smooth strictly
convex curve governed directly by curvature-based exponential sums.  The Farey
dissection replaces both mechanisms by exact local tropical enumeration.  The
local half-open Pick formula, the Mordell--Tornheim pole, complete-period phase
averages, and moving-prefix corrections are classical in flavor. The new point
is their interaction in a single global decomposition: the \(s=2/3\) bulk
residue is present analytically but cancels arithmetically, while the surviving
coefficient comes from edge strips and finite-prefix defects.

\section*{Acknowledgments}
M.S. thanks Grigory Mikhalkin for his enduring encouragement and Dmitrii Korshunov for several inspiring communications, in particular for pointing out that \(L\) is the ``typical shape'' of a convex lattice polygon. He is especially grateful to Stanislav Shkolnikov, whose extensive numerical experiments revealed a structural issue in the lattice-point-counting mechanism, led to the completion of Section~\ref{sec_6}, and substantially simplified the coefficient \(C_L\).

The work of E.L. and M.S. was supported by the Simons Foundation under grant SFI-MPS-T-Institutes-00007697 and by the Ministry of Education and Science of the Republic of Bulgaria under grant DO1-239/10.12.2024. E.L. gratefully acknowledges Cinvestav for a sabbatical leave during which this work was prepared and submitted, as well as the Institute for the Mathematical Sciences of the Americas (IMSA) at the University of Miami for its hospitality and support on several occasions during the preparation of this work. E.L. also thanks E.S. for assistance with the \LaTeX{} preparation of the manuscript and for valuable technical and practical support.

N.K. thanks Fedor Petrov for suggesting the use of equiaffine invariants in this problem. He also acknowledges the Young Russian Mathematics grant (2018--2020), under which the investigation of the residue problem began; its completion ultimately required six additional years.

\printbibliography 

\newpage
\thispagestyle{empty}
{\centering
\ 
\vspace{77pt}

Guangdong Technion-Israel Institute of Technology\\
241 Daxue Road, Jinping District\\ 
Shantou, Guangdong Province 515063, China\\
\vspace{5pt}

nikaanspb[at]gmail.com\vspace{81pt}

The Center for Research and Advanced Studies\\ of the National Polytechnic Institute (Cinvestav)\\
Av. Instituto Politécnico Nacional 2508\\
Col. San Pedro Zacatenco, Alcaldía Gustavo A. Madero\\
Mexico City 07360, Mexico\\\vspace{4pt}
and\\\vspace{4pt}
Institute of Mathematics and Informatics\\
Bulgarian Academy of Sciences\\
Akad. G. Bonchev, Sofia 1113, Bulgaria \\\vspace{5pt} 

elupercio[at]gmail.com\\\vspace{81pt}

Institute of Mathematics and Informatics\\
Bulgarian Academy of Sciences\\
Akad. G. Bonchev, Sofia 1113, Bulgaria \\\vspace{5pt}

m.shkolnikov[at]math.bas.bg\\

}

\end{document}